\theoremstyle{plain}
\newtheorem{theorem}{Theorem}[section]
\newtheorem{corollary}[theorem]{Corollary}
\newtheorem{proposition}[theorem]{Proposition}
\newtheorem{lemma}[theorem]{Lemma}
\newtheorem{definition}[theorem]{Definition} 
\theoremstyle{definition}
\newtheorem{example}[theorem]{Example}
\newtheorem{remark}[theorem]{Remark}
\newcommand{\Bl}{\mathrm{Bl}}
\newcommand{\Lich}{\mathcal{D}}
\newcommand{\Tr}{\mathrm{Tr}}
\newcommand{\Hess}{\mathrm{Hess}}
\newcommand{\Ric}{\mathrm{Ric}}
\renewcommand{\d}{\partial}
\newcommand{\db}{\overline{\partial}}
\renewcommand{\O}{\mathrm{O}}
\newcommand{\Id}{\mathrm{Id}}
\newcommand{\Riem}{\mathrm{Rm}}
\newcommand{\pr}{\mathrm{pr}}
\begin{document}
	
	\title{Weighted extremal metrics on blowups}
	\author{Michael Hallam}
	\address{Department of Mathematics, Aarhus University, Ny Munkegade 118, DK-8000 Aarhus C, Denmark.}
	\email{hallam@math.au.dk}
	\maketitle

	\begin{abstract}
		We show that if a compact K{\"a}hler manifold admits a weighted extremal metric for the action of a torus, so too does its blowup at a relatively stable point that is fixed by both the torus action and the extremal field. This generalises previous results on extremal metrics by Arezzo--Pacard--Singer and Sz{\'e}kelyhidi to many other canonical metrics, including extremal Sasaki metrics, deformations of K{\"a}hler--Ricci solitons and \(\mu\)-cscK metrics. In a sequel to this paper, we use this result to study the weighted K-stability of weighted extremal manifolds.
	\end{abstract}

	\tableofcontents
	
	\section{Introduction}
	
		Ever since the seminal work of Calabi \cite{Cal54}, the problem of whether a compact K{\"a}hler manifold admits a canonical K{\"a}hler metric has been a driving force in the field of K{\"a}hler geometry. The most studied among canonical metrics are the K{\"a}hler--Einstein metrics, but much research in the last thirty years has been motivated by the constant scalar curvature K{\"a}hler (cscK) or, more generally, extremal problem \cite{Cal82}. Alongside these, there are many other notions of canonical metric available, for example K{\"a}hler--Ricci solitons which arise as (possibly singular) limits of the K{\"a}hler--Ricci flow \cite{CW20,CSW18}. Other important examples include extremal Sasaki metrics \cite{BGS08}, which provide a notion of canonical metric on manifolds of odd real dimension, and conformally K{\"a}hler--Einstein--Maxwell (cKEM) metrics \cite{LeB10, AM19}.
		
		It was shown recently that all of these examples can be treated under the same framework through the notion of a \emph{weighted extremal metric}, due to Lahdili \cite{Lah19}. Around the same time, Inoue independently introduced a closely related generalisation of cscK metrics and K{\"a}hler--Ricci solitons called \emph{\(\mu\)-cscK} metrics, which form a special subclass of Lahdili's metrics \cite{Ino22}. 
		
		To define the weighted extremal equation, one considers a compact real torus \(T\) acting on a compact K{\"a}hler manifold \((M,\omega)\) by hamiltonian isometries. This has a moment map \(\mu:M\to\mathfrak{t}^*\) with image \(P\subset\mathfrak{t}^*\) a convex polytope. The ``weights" of the weighted cscK equation are then smooth positive functions \(v,w:P\to\mathbb{R}_{>0}\). Using these functions, one can deform the scalar curvature \(S(\omega)\) to the \emph{\((v,w)\)-weighted scalar curvature}: \[S_{v,w}(\omega):=\frac{1}{w(\mu)}\left(v(\mu)S(\omega)-2\Delta(v(\mu))+\frac{1}{2}\Tr(g\circ\Hess(v)(\mu))\right).\] A \emph{\((v,w)\)-weighted extremal metric} is a metric \(\omega\) such that the gradient of \(S_{v,w}(\omega)\) is a real holomorphic vector field, called the \emph{extremal field}. For the special case in which \(S_{v,w}(\omega)\) is constant, we call \(\omega\) a \emph{\((v,w)\)-weighted cscK metric}. Through various choices of the weight functions \(v\) and \(w\), one can recover all of the examples of canonical metrics described above; we refer to Section \ref{sec:background} for more details and precise definitions.
		
		In the setting of cscK metrics, an important general construction was given by Arezzo--Pacard, who showed that if a manifold with discrete automorphism group admits a cscK metric, so too does its blowup at a point \cite{AP06}. They also showed that one can drop the assumption of discrete automorphisms, provided one instead blows up a suitable sufficiently large collection of points \cite{AP09}. The results were later extended by Arezzo--Pacard--Singer and Sz{\'e}kelyhidi to the extremal setting \cite{APS11,Sze12}, where one blows up points fixed by the extremal field. In \cite{APS11} the points satisfy collections of conditions concerning automorphism groups; in \cite{Sze12} it was clarified that the points need only satisfy a suitable stability condition. In the cscK case, Sz{\'e}kelyhidi considered the case of blowing up a single point, characterising existence of a cscK metric on the blowup in terms of K-stability of the blowup in complex dimension \(n>2\) \cite{Sze15}. Recently, Dervan--Sektnan extended this result of Sz{\'e}kelyhidi to the extremal case (and extended the cscK result to complex dimension \(n=2\)), classifying when the blowup at a point admits an extremal metric in terms of relative K-stability of the blowup \cite{DS21}.
		
		Not only do these results furnish many examples of extremal metrics, but they are also historically important for their use in partially proving various formulations of the Yau--Tian--Donaldson conjecture. For instance, after Donaldson proved that a polarised manifold admitting a cscK metric is K-semistable \cite{Don05}, Stoppa used Donaldson's result along with the theorem of Arezzo--Pacard \cite{AP06} to prove that polarised manifolds with discrete automorphisms are K-stable \cite{Sto09,Sto11}. Later, Stoppa--Sz{\'e}kelyhidi used the blowup result of Arezzo--Pacard--Singer for extremal metrics \cite{APS11} to prove relative K-polystability of extremal manifolds \cite{SS11}.
		
		In this paper, we generalise the results \cite{APS11,Sze12} of Arezzo--Pacard--Singer and Sz{\'e}kelyhidi to the weighted extremal setting. Let \((M,\omega)\) be a compact K{\"a}hler manifold, and let \(T\) be a real torus acting effectively on \(M\) by hamiltonian isometries, with moment map \(\mu:M\to\mathfrak{t}^*\). Given a \(T\)-invariant K{\"a}hler potential \(\varphi\) with respect to \(\omega\), one can write down an explicit moment map \(\mu_\varphi\) for \(\omega_\varphi:=\omega+i\d\db\varphi\) such that \(\mu_\varphi(M)=P=\mu(M)\). Thus, modifying the moment map in this way, one can define the weighted scalar curvature \(S_{v,w}(\omega_\varphi)\) for any such \(\varphi\), and seek a \(T\)-invariant solution of the weighted extremal equation in the class \([\omega]\).
		
		Let \(p\in M\) be a fixed point of the \(T\)-action, and denote by \(\pi:\Bl_pM\to M\) the blowup of \(M\) at \(p\), with exceptional divisor \(E\subset\Bl_pM\). The \(T\)-action on \(M\) lifts to \(\Bl_pM\), and for \(\epsilon>0\) sufficiently small the cohomology class \([\pi^*\omega-\epsilon E]\) contains a \(T\)-invariant K{\"a}hler metric \(\omega_\epsilon\). In Lemma \ref{lem:epsilon_moment_map} below, we show that there exists a moment map \(\mu_\epsilon:\Bl_pM\to\mathfrak{t}^*\) for \(\omega_\epsilon\) whose moment polytope \(P_\epsilon:=\mu_\epsilon(\Bl_pM)\) is contained in the moment polytope \(P:=\mu(M)\). Thus, given a choice of weight functions \(v\) and \(w\) on \(P\), we can restrict these to \(P_\epsilon\), and search for a \((v,w)\)-weighted extremal metric in the class \([\pi^*\omega-\epsilon E]\). 
		
		In order for the blowup \(\Bl_pM\) to admit a weighted extremal metric, we will need the point \(p\) to satisfy a stability condition. Roughly, we first construct a certain subgroup \(H\) of the group of \(T\)-commuting hamiltonian isometries \(G\) of \((M,\omega)\). Choosing an invariant inner product on the Lie algebra \(\mathfrak{h}\) of \(H\), we may identify \(\mathfrak{h}\) with its dual \(\mathfrak{h}^*\), and thus consider the moment map \(\mu_H:M\to\mathfrak{h}^*\) for the \(H\)-action as a map \(\mu_H^{\#}:M\to\mathfrak{h}\). The point \(p\) is \emph{relatively stable} if \(\mu_H^{\#}(p)\in\mathfrak{h}_p\), i.e. the vector field generated by \(\mu_H^{\#}(p)\) fixes \(p\). We refer to Section \ref{sec:deformation} for the full definition of \(H\), but for now we remark that if the torus \(T\) is maximal then \(H=T\), and any fixed point of \(T\) will automatically be relatively stable.
		
		\begin{theorem}\label{thm:main}
			Let \((M,\omega)\) be a \((v,w)\)-weighted extremal manifold, and let \(p\in M\) be a relatively stable point that is fixed by both the \(T\)-action and the extremal field. Denote by \(\pi:\Bl_pM\to M\) the blowup of \(M\) at \(p\) with exceptional divisor \(E\subset\Bl_pM\). Then for all \(\epsilon>0\) sufficiently small, the class \([\pi^*\omega-\epsilon^2 E]\) contains a \((v,w)\)-weighted extremal metric.
		\end{theorem}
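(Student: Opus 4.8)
The plan is to follow the gluing strategy of Arezzo--Pacard--Singer and Sz\'ekelyhidi, adapted to the weighted setting via the weighted Lichnerowicz operator. First I would set up the family of approximately weighted-extremal metrics on $\Bl_pM$. Starting from the weighted extremal metric $\omega$ on $M$, one removes a small ball around $p$ and glues in a rescaled copy of the Burns--Simanca (scalar-flat ALE) metric on the total space of $\mathcal{O}(-1)$ over $\mathbb{P}^{n-1}$, interpolated in an annulus. Because $p$ is fixed by $T$, the torus action extends over the neck and the exceptional divisor, and the moment map $\mu_\epsilon$ of Lemma \ref{lem:epsilon_moment_map} has image $P_\epsilon\subset P$, so the weight functions $v,w$ restrict sensibly. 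Crucially, near $p$ the moment map is approximately constant equal to $\mu(p)$, so on the neck region $S_{v,w}(\omega_\epsilon)$ is, to leading order, $v(\mu(p))^{-1}w(\mu(p))$ times the unweighted scalar curvature of the model; since Burns--Simanca is scalar-flat this means the approximate solution has weighted scalar curvature error concentrated away from $E$ and controllably small in suitable weighted H\"older norms, with the error matching (after rescaling) the one in the unweighted extremal case.

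Next I would linearise. The relevant operator is the weighted Lichnerowicz operator $\Lich_{v,w}$ whose kernel on functions consists of (real parts of) potentials for holomorphic vector fields vanishing at the relevant points; the weighted extremal equation is $\Lich_{v,w}\varphi = (\text{error})$ modulo the finite-dimensional obstruction coming from holomorphy potentials. On $M$ with the extremal field fixing $p$, the reduced automorphism data is governed by the group $H$ from Section \ref{sec:deformation}; the relative stability hypothesis $\mu_H^\#(p)\in\mathfrak{h}_p$ is exactly what is needed so that the obstruction arising from $H$-holomorphy potentials can be killed by moving within the space of metrics obtained from the $H$-action (an implicit-function / balancing argument on the finite-dimensional moment-map level, as in Sz\'ekelyhidi's reformulation of APS). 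I would construct a right inverse for $\Lich_{v,w}$ on $\Bl_pM$ with norm bounded by a controlled negative power of $\epsilon$, by patching a right inverse on $M$ (away from $p$, using that $\Lich_{v,w}$ is surjective modulo holomorphy potentials there) with a right inverse on the ALE model (using that the model $\Lich$ has good mapping properties in weighted spaces and the deficiency is the constants/linear growth modes), the matching done on the annular neck. The weighted operator differs from the unweighted one only by lower-order terms with coefficients determined by $v,w$ and their derivatives at $\mu(p)$, so these estimates are formally identical to the extremal case once one fixes the correct weighted Sobolev/H\"older setup.

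Finally I would run the fixed point argument: writing the weighted extremal equation as $\Lich_{v,w}\varphi + Q(\varphi) = (\text{error}) + (\text{obstruction term})$ with $Q$ the quadratic-and-higher nonlinearity, composing with the right inverse turns this into a contraction on a small ball in the weighted H\"older space (for $\epsilon$ small), using that both the error and the operator norm of $Q$ are small in the appropriate scaling; the obstruction term is simultaneously solved via the finite-dimensional balancing enabled by relative stability. This produces a genuine $(v,w)$-weighted extremal metric in $[\pi^*\omega-\epsilon^2E]$ for all small $\epsilon$. The main obstacle I expect is the interplay between the weight functions and the finite-dimensional obstruction/balancing: one must check that the group $H$ and the weighted moment map machinery are compatible so that relative stability genuinely kills the cokernel of $\Lich_{v,w}$ on the blowup, and that the weighted deformation of the complex structure / K\"ahler class does not introduce new obstructions beyond those present in the extremal case. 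A secondary technical point is verifying the model computation that $S_{v,w}$ of the glued metric is controlled --- i.e. that the Hessian and Laplacian terms in the definition of $S_{v,w}$, evaluated on the nearly-constant moment map over the neck, do not spoil the decay rate of the error term.
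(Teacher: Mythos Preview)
Your proposal is correct and follows essentially the same approach as the paper: glue the Burns--Simanca metric to $\omega$ to obtain $\omega_\epsilon$, establish moment map estimates so that $v,w$ restrict and the weighted terms are small near $E$, construct a right inverse for the weighted linearised operator by patching the weighted inverse on $M_p$ with the \emph{unweighted} inverse on $\Bl_0\mathbb{C}^n$, run a contraction mapping to solve the equation modulo a finite-dimensional obstruction in $\overline{\mathfrak{h}}$, and then invoke Sz\'ekelyhidi's balancing argument along the $H^c$-orbit of $p$ (enabled by relative stability) to kill this obstruction. The paper additionally uses an improved approximate solution via a Green-type function $\Gamma$ (your ``matching to higher order'') to obtain the precise expansion $h_{p,\epsilon}=s+\epsilon^{2n-2}c_n\overline{\mu_H^{\#}(p)}+O(\epsilon^\kappa)$ needed for the balancing step, but this is implicit in your reference to the unweighted extremal case.
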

		
		This result can be generalised to the following:
	
	\begin{theorem}\label{thm:extension}
		Let \((M,\omega)\) be a \((v,w)\)-weighted extremal manifold of dimension \(n\), and let \(p_1,\ldots,p_N\) be a collection of \(T\)-fixed points in \(M\) that are also fixed by the extremal field. Let \(a_1,\ldots,a_N\in\mathbb{R}_{>0}\) be such that the vector field generated by \[\sum_{j=1}^Na_j^{n-1}\mu_H^{\#}(p_j)\in \mathfrak{h}\] vanishes at each of the \(p_j\). Denote by \(\pi:\Bl_{p_1,\ldots,p_N}M\to M\) the blowup of \(M\) at the points \(p_1,\ldots,p_N\) with exceptional divisors \(E_j:=\pi^{-1}(p_j)\). Then for all \(\epsilon>0\) sufficiently small, the class \([\pi^*\omega-\epsilon^2(a_1E_1+\cdots+a_NE_N)]\) contains a \((v,w)\)-weighted extremal metric.
	\end{theorem}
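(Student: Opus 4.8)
The plan is to run exactly the gluing-and-perturbation scheme that establishes Theorem~\ref{thm:main}, the only new ingredient being the bookkeeping for $N$ simultaneous gluings and the weights $a_j$. First I would construct a family of $T$-invariant, approximately $(v,w)$-weighted extremal metrics $\omega_\epsilon$ on $\Bl_{p_1,\dots,p_N}M$ in the class $[\pi^*\omega-\epsilon^2(a_1E_1+\cdots+a_NE_N)]$: away from the $p_j$ one takes (the lift of) $\omega$, and in a small ball about each $p_j$ one glues in the standard scalar-flat K\"ahler ALE metric on $\Bl_0\mathbb C^n$, rescaled so that $E_j$ carries weight $a_j\epsilon^2$ (so the model at $p_j$ is dilated by a linear factor of order $\sqrt{a_j}\,\epsilon$), interpolating on the annular necks. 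Since each $p_j$ is $T$-fixed, the moment map is constant to first order near $p_j$, so $v(\mu)$ and $w(\mu)$ are constant there to high order and the $(v,w)$-weighted extremal operator agrees with the ordinary extremal operator on each neck up to a controlled error; the single-point gluing estimates therefore apply at each $p_j$ verbatim, and because the necks are disjoint the errors simply add. One also wants the evident $N$-point analogue of Lemma~\ref{lem:epsilon_moment_map}, so that the restricted weights make sense on the moment polytope of $\omega_\epsilon$.

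Next I would set up the nonlinear equation for a $T$-invariant potential $\phi_\epsilon$, whose linearisation at $\omega_\epsilon$ is the $(v,w)$-weighted Lichnerowicz operator $\Lich_{v,w}$. Working in the weighted H\"older spaces adapted to the $N$ necks, I would invert $\Lich_{v,w}$ modulo the finite-dimensional space $\mathcal H$ of holomorphy potentials for $\mathfrak h$, with bounds on the right inverse uniform in $\epsilon$; this is the single-point estimate, and its uniformity survives because the $N$ gluing regions decouple to leading order. A contraction-mapping argument then yields, for all small $\epsilon$, a solution of the weighted extremal equation \emph{modulo} $\mathcal H$, leaving a finite-dimensional obstruction valued in $\mathfrak h^*\cong\mathfrak h$, together with the residual freedom of precomposing with the $H$-action.

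Finally I would compute the leading-order term of this obstruction. As in the single-point case, the contribution of the neck at $p_j$ scales like $(a_j\epsilon^2)^{n-1}$ times the value at $p_j$ of the relevant holomorphy potential, that is like $a_j^{n-1}\epsilon^{2(n-1)}\mu_H^{\#}(p_j)$, while cross-terms between distinct necks are of strictly higher order; hence, after dividing by $\epsilon^{2(n-1)}$, the obstruction converges as $\epsilon\to 0$ to the element of $\mathfrak h$ generating the vector field $\sum_{j=1}^N a_j^{n-1}\mu_H^{\#}(p_j)$, evaluated along the $p_j$. The hypothesis that this vector field vanishes at every $p_j$ is precisely the statement that the leading obstruction is zero; combined with the surjectivity in the $H$-directions of the derivative of the obstruction map (inherited from the relative-stability argument of Theorem~\ref{thm:main}), the implicit function theorem then produces, for all sufficiently small $\epsilon$, a choice of parameters making the obstruction vanish identically, and the corresponding metric is the desired $(v,w)$-weighted extremal metric. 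Given Theorem~\ref{thm:main}, I expect the only real obstacle to be this bookkeeping: checking that the linear and error estimates at the $N$ necks are uniform in $\epsilon$ (so the analysis genuinely decouples) and that the leading obstruction is additive over the $p_j$ with the weights $a_j^{n-1}$. Both hold because every interaction between distinct gluing regions, and every effect of the weights beyond their constant values at the $p_j$, is of higher order in $\epsilon$ than the terms controlling the argument.
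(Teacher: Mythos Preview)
Your proposal is correct and matches the paper's approach. The paper does not actually give a proof of this multi-point statement; immediately after stating it, the author writes that ``the proof generalises easily to blowing up a finite collection of points satisfying a suitable joint stability condition,'' and your sketch identifies precisely the bookkeeping needed for that generalisation (independent gluings at disjoint necks, additivity of the leading obstruction with weights $a_j^{n-1}$, and the finite-dimensional endgame inherited from \cite{Sze12}).

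One small imprecision: the hypothesis that $\sum_j a_j^{n-1}\mu_H^{\#}(p_j)$ vanishes at each $p_j$ is not the statement that the leading obstruction is \emph{zero}; it is the statement that the leading term of $h_{\epsilon}$ lies in the joint stabiliser $\bigcap_j\mathfrak{h}_{p_j}$, which is the multi-point analogue of relative stability (Definition~\ref{def:relatively_stable}). One then runs the Sz\'ekelyhidi argument of Proposition~\ref{prop:point_q}, moving the points in their $H^c$-orbits, to force the full $h_\epsilon$ into $\overline{\mathfrak{t}'}$. You essentially say this in the next sentence, so the gap is only in the phrasing.
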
 
	
	\begin{remark}
		Although we do not explicitly prove Theorem \ref{thm:extension} in this paper, as in \cite{Sze12}, we note that the techniques of the proof of Theorem \ref{thm:main} can be extended to prove Theorem \ref{thm:extension} in a straightforward manner.
	\end{remark}

		The structure of the proof follows closely the work \cite{Sze12}. Namely, we first construct an approximate solution \(\omega_\epsilon\) of the weighted extremal equation by gluing \(\omega\) to a rescaling of a model metric near the exceptional divisor \(E\), called the \emph{Burns--Simanca metric}. The approximate solution \(\omega_\epsilon\) is then deformed to a genuine solution via the contraction mapping theorem. 
		
		Despite the structural similarities, there are many new technical obstacles that arise in the course of the proof. First among these is the construction of the moment map \(\mu_\epsilon\), whose image \(P_\epsilon\) is contained in \(P\). It is not enough to know that \(\mu_\epsilon\) exists, but we also require a careful understanding of \(\mu_\epsilon\) near the exceptional divisor. Furthermore, there are many new terms in the weighted extremal equation that must be estimated in order to apply the contraction mapping theorem. The bulk of the groundwork towards these estimates is carried out in Section \ref{sec:estimates_moment_maps}, where we use the explicit description of \(\mu_\epsilon\) from Lemma \ref{lem:epsilon_moment_map}.
		
		There is one conceptual point of interest that arises in the proof, namely why it is possible to glue in the Burns--Simanca metric---a scalar flat metric---rather than some \((v,w)\)-weighted analogue which has vanishing \((v,w)\)-weighted scalar curvature. To give a rough justification, near the exceptional divisor \(E\), the image of \(\mu_\epsilon\) is a small region of \(P_\epsilon\), on which the weight functions \(v\) and \(w\) appear approximately constant. The Burns--Simanca metric can be considered as a weighted cscK metric with \emph{constant weight functions}. Thus, it is plausible that on this region we can deform the Burns--Simanca metric to a weighted cscK metric with the required weight functions. This is partly justified by deformation results on weighted extremal metrics found in \cite[Chapter 6.1]{Hal22}. Rather than taking this path of deforming the Burns--Simanca metric to some other metric which we glue in, we simply deform the approximate metric \(\omega_\epsilon\) built from the Burns--Simanca metric directly to a weighted extremal metric in one clean stroke.

		Taking different choices of weight functions \(v\) and \(w\), we obtain analogues of \cite{APS11,Sze12} for other kinds of canonical K{\"a}hler metrics. For example, suppose that \([\omega]\) is the first Chern class \(c_1(L)\) of an ample line bundle \(L\). For certain choices of \(v,w\) depending on an element \(\xi\in\mathfrak{t}\), \((v,w)\)-weighted extremal metrics on \(M\) correspond to extremal Sasaki metrics on the unit circle bundle \(S\) of \(L^*\) with Sasaki--Reeb vector field \(\xi\) \cite{AC21}.
		
		\begin{corollary}\label{cor:Sasaki}
			Let \((M,L)\) be a \(T\)-equivariant polarised manifold, and suppose that \(\omega\in c_1(L)\) induces an extremal Sasaki metric on \(S\subset L^*\) with Sasaki--Reeb field \(\xi\in\mathfrak{t}\). Let \(p\in M\) be a relatively stable point fixed by \(T\) and the extremal field. Then for all rational \(\epsilon>0\) sufficiently small there exists a K{\"a}hler metric \(\omega_\epsilon\) on the blowup \(\Bl_pM\) in the class \(c_1(\pi^*L-\epsilon^2E)\) satisfying the following: for all \(k>0\) such that \(k\epsilon\in\mathbb{Z}\), \(k\omega_\epsilon\) induces an extremal Sasaki metric on the unit circle bundle of \(k(-\pi^*L+\epsilon^2E)\) with Sasaki--Reeb field \(\xi\).
		\end{corollary}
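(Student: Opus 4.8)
The plan is to deduce Corollary \ref{cor:Sasaki} from Theorem \ref{thm:main} together with the Apostolov--Calderbank correspondence \cite{AC21} between extremal Sasaki metrics and weighted extremal Kähler metrics. Recall the shape of that correspondence: given the \(T\)-equivariant polarisation \((M,L)\) and an element \(\xi\in\mathfrak{t}\), the hypothesis that \(\xi\) is a Sasaki--Reeb field for \(S\subset L^*\) means precisely that a certain affine-linear function \(\ell_\xi\) on \(\mathfrak{t}^*\)---normalised using the linearisation of the \(T\)-action on \(L\) together with the \(S^1\)-fibre action---is strictly positive on \(P=\mu(M)\); one then forms explicit weight functions \(v_\xi,w_\xi\) on \(P\) built from \(\ell_\xi\), with the property that a \(T\)-invariant metric in \(c_1(L)\) induces an extremal Sasaki metric on \(S\) with Reeb field \(\xi\) if and only if it is \((v_\xi,w_\xi)\)-weighted extremal, the extremal field of the weighted problem being the transverse part of the Sasaki extremal field. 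In particular the hypothesis of the corollary says exactly that \(\omega\) is a \((v_\xi,w_\xi)\)-weighted extremal metric on \(M\), and that \(p\) is relatively stable and fixed by \(T\) and by the (weighted) extremal field---which is precisely the input required by Theorem \ref{thm:main}.

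Applying Theorem \ref{thm:main} with \(v=v_\xi\) and \(w=w_\xi\) then yields, for every sufficiently small \(\epsilon>0\), a \(T\)-invariant \((v_\xi,w_\xi)\)-weighted extremal metric \(\omega_\epsilon\) on \(\Bl_pM\) in the class \(\pi^*c_1(L)-\epsilon^2[E]\), where the weights are understood as the restrictions of \(v_\xi,w_\xi\) to the moment polytope \(P_\epsilon=\mu_\epsilon(\Bl_pM)\) of Lemma \ref{lem:epsilon_moment_map}. Since \(P_\epsilon\subset P\), the functions \(v_\xi,w_\xi\) remain smooth and positive on \(P_\epsilon\) and \(\ell_\xi>0\) there, so \(\xi\) continues to define a Sasaki--Reeb field on the blowup side.

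It remains to run the correspondence in reverse on \(\Bl_pM\). Fix \(k\) as in the statement, so that (using that \(\epsilon\) is rational) \(k(-\pi^*L+\epsilon^2E)\) is a genuine \(T\)-equivariant line bundle whose unit circle bundle is defined, and note that \(k\omega_\epsilon\) lies in \(c_1\bigl(k(\pi^*L-\epsilon^2E)\bigr)\). The crucial point is that the affine function and weight functions that \cite{AC21} attaches to the polarisation \(k(\pi^*L-\epsilon^2E)\) and the Reeb field \(\xi\) coincide, after the canonical rescaling of the moment map by \(k\), with \(\ell_\xi,v_\xi,w_\xi\) restricted to \(P_\epsilon\): the linearisation of the \(T\)-action on \(\pi^*L\) is pulled back from that on \(L\), the additive constant in \(\ell_\xi\) is governed by the \(S^1\)-fibre direction and is unaffected by the blowup, and the contribution of the exceptional part is pinned down exactly by the normalisation of \(\mu_\epsilon\) in Lemma \ref{lem:epsilon_moment_map}---which is precisely what forces \(P_\epsilon\subset P\). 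Granting this, \(k\omega_\epsilon\) is weighted extremal for the very weights governing the Sasaki problem on the unit circle bundle of \(k(-\pi^*L+\epsilon^2E)\), and hence induces an extremal Sasaki metric there with Reeb field \(\xi\), as claimed. The main obstacle is exactly this last compatibility check: one must verify that the two a priori different constructions of the normalising affine function---one carried out on \(P\), one on \(P_\epsilon\)---agree under the inclusion \(P_\epsilon\subset P\), and this is where the explicit description of \(\mu_\epsilon\) near \(E\) from Lemma \ref{lem:epsilon_moment_map} is indispensable; everything else is a direct concatenation of Theorem \ref{thm:main} with the cited correspondence.
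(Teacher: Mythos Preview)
Your proposal is correct and takes essentially the same approach as the paper: translate the extremal Sasaki hypothesis into a \((v_\xi,w_\xi)\)-weighted extremal condition via \cite{AC21}, apply Theorem \ref{thm:main} to obtain a weighted extremal metric on \(\Bl_pM\), and run the correspondence back using that \(P_\epsilon\subset P\) and that the weighted extremal property is scale-invariant. You are somewhat more explicit than the paper about the compatibility check for the affine function \(\ell_\xi\) on the blowup side, but the underlying argument is the same.
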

		
		We have stated the application to extremal Sasaki metrics because this is the main instance in which the interpretation of the theorem is clear cut. For example, we cannot conclude that the blowup of a manifold with a K{\"a}hler--Ricci soliton also admits a K{\"a}hler--Ricci soliton. For starters, the blowup may not be Fano, and even if it is, the class \([\pi^*\omega-\epsilon^2E]\) only admits a weighted extremal metric for \(\epsilon>0\) sufficiently small, so cannot be taken to be the canonical class in general. What's more, the K{\"a}hler--Ricci soliton equation corresponds to a weighted extremal metric with weight functions \(v=w=e^{\langle\xi,-\rangle}\) \emph{and} extremal field \(\xi\in\mathfrak{t}\). Our blowup result leaves the weight functions \(v\) and \(w\) unchanged, however we do not have control over the extremal field---in general it will only be a small deformation of the original extremal field \(\xi\), so will not match the weight functions. Thus, the metric we produce is only a small deformation of a K{\"a}hler--Ricci soliton, rather than a genuine K{\"a}hler--Ricci soliton.
		
		Nonetheless, it is still useful to know of the existence of a weighted extremal metric on the blowup. For example, this carries interesting ramifications about the automorphism group of the blowup, via the Matsushima--Lichnerowicz theorem for weighted extremal manifolds \cite[Theorem B.1]{Lah19}. Furthermore, the blowup result can be used to prove relative weighted K-polystability of manifolds admitting weighted extremal metrics. Indeed, in the sequel \cite{sequel} to this paper, we prove that a weighted extremal manifold is relatively weighted K-polystable with respect to a maximal torus. The proof is along the lines of Stoppa \cite{Sto09, Sto11}, Stoppa--Sz{\'e}kelyhidi \cite{SS11}, and Dervan \cite{Der18}, and directly uses Theorem \ref{thm:main} along with the weighted K-semistability of weighted cscK manifolds proven by Lahdili \cite{Lah19} and Inoue \cite{Ino20}.
		
		\renewcommand{\abstractname}{Acknowledgements}
		\begin{abstract}
		I thank Eveline Legendre for a helpful remark on moment polytopes, and Lars Sektnan for valuable advice on weighted H{\"o}lder spaces and comments on the manuscript. I also thank Zakarias Sj{\"o}str{\"o}m Dyrefelt and Ruadha{\'i} Dervan for their interest and comments, and the referee for their helpful remarks.
		\end{abstract}
	
	\section{Background}\label{sec:background}
	
		\subsection{K{\"a}hler geometry}
		
			We briefly lay out our notation and terminology for K{\"a}hler metrics. Let \((M,\omega)\) be a compact \(n\)-dimensional K{\"a}hler manifold. The Ricci curvature of \(\omega\) is \[\Ric(\omega):=-\frac{i}{2\pi}\d\db\log\omega^n,\] which is shorthand for choosing a system of local holomorphic coordinates \((z_1,\ldots, z_n)\), and defining \(\Ric(\omega):=-\frac{i}{2\pi}\d\db \log\det(g_{\j\bar{k}})\); the definition is then independent of the choice of local holomorphic coordinates. The scalar curvature is \[S(\omega):=\Lambda_\omega\Ric(\omega)=\frac{n\,\Ric(\omega)\wedge\omega^{n-1}}{\omega^n}.\]  
			
			For a smooth function \(f\in C^\infty(M,\mathbb{C})\), we write \(\nabla^{1,0}f\) for the projection of the gradient \(\nabla f\) to \(T^{1,0}M\). In local coordinates, \(\nabla^{1,0}f=g^{j\bar{k}}\d_{\bar{k}}f\). The operator \(\Lich:C^\infty(M,\mathbb{C})\to\Gamma(M,T^{1,0}\otimes\Omega^{0,1})\) is defined by \[\Lich f:=\db\nabla^{1,0}f,\] where \(\db\) is the del-bar operator of the holomorphic vector bundle \(T^{1,0}M\). The \emph{Lichnerowicz operator} is \[f\mapsto\Lich^*\Lich f,\] where the adjoint of \(\Lich\) is taken with respect to the \(L^2\)-metric on tensors determined by \(\omega\). It is straightforward but tedious to derive the following product rule for the adjoint of \(\Lich\): \begin{equation*}\label{eq:Lich_product}
				\Lich^*(fA)=f\Lich^*A-(\db^*A,\nabla^{1,0}f)-(PA,\db f)+(\Lich f,A),
			\end{equation*} for \(f\in C^\infty(M,\mathbb{C})\) and \(A\in\Gamma(M,T^{1,0}\otimes\Omega^{0,1})\). Here \((\,,)\) is the pointwise hermitian inner product on tensors, and \(P:\Gamma(M,T^{1,0}\otimes\Omega^{0,1})\to\Gamma(M,\Omega^{0,1})\) is the first order linear differential operator \[PA:=-g^{\bar{k}m}\d_m(g_{\bar{k}\ell}A^\ell_{\bar{j}})d\overline{z}^j.\] 
		
			Note the adjoint operator \(\db^*\) is given by the formula \begin{equation}\label{eq:db_adjoint}
				\db^*A=-g^{\bar{k}m}\d_m(g_{\bar{j}\ell}A^\ell_{\bar{k}})g^{\bar{j}i}\frac{\d}{\d z^i}.
			\end{equation} This appears similar to the operator \(P\), but it is not the quite same since the indices \(\bar{j}\) and \(\bar{k}\) are swapped. However, if \(A\) has the symmetry \[g_{\bar{j}\ell}A^\ell_{\bar{k}}=g_{\bar{k}\ell}A^\ell_{\bar{j}}\] then we will indeed have \(\db^*A=(PA)^\#\), where \(\#\) is conversion from a \((0,1)\)-form to a \((1,0)\)-vector field via the metric. This relation is satisfied in the important situation \(A=\Lich h\), in which case \(g_{\bar{j}\ell}A^\ell_{\bar{k}}=\d_{\bar{j}}\d_{\bar{k}}h\) at the centre of a normal coordinate system. For later use, we therefore record the following: \begin{lemma}\label{lem:Lich_product}
				For \(f,g\in C^\infty(M,\mathbb{C})\), \[\Lich^*(f\Lich g)=f\Lich^*\Lich g-2(\db^*\Lich g,\nabla^{1,0}f)+(\Lich f,\Lich g).\]
			\end{lemma}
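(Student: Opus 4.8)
The plan is to obtain the lemma by specialising the general product rule for $\Lich^*$ recorded just above, namely
\[\Lich^*(fA)=f\Lich^*A-(\db^*A,\nabla^{1,0}f)-(PA,\db f)+(\Lich f,A),\]
to the case $A=\Lich g$. With this substitution the first, second and fourth terms on the right immediately become $f\Lich^*\Lich g$, $-(\db^*\Lich g,\nabla^{1,0}f)$ and $(\Lich f,\Lich g)$, which are three of the terms in the claimed identity. So the only real content is to show that the remaining term $-(P\Lich g,\db f)$ equals $-(\db^*\Lich g,\nabla^{1,0}f)$, so that the two middle terms coalesce into $-2(\db^*\Lich g,\nabla^{1,0}f)$.

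To see this, I would first check that $A=\Lich g$ satisfies the symmetry $g_{\bar j\ell}A^\ell_{\bar k}=g_{\bar k\ell}A^\ell_{\bar j}$. Since $g_{\bar j\ell}A^\ell_{\bar k}$ is a genuine tensor, it suffices to verify the identity at an arbitrary point, and there one may compute in a normal coordinate system centred at that point. Writing $A^\ell_{\bar j}=\d_{\bar j}(g^{\ell\bar k}\d_{\bar k}g)$ and using that the first derivatives of $g$ vanish at the centre, one gets $g_{\bar m\ell}A^\ell_{\bar j}=\d_{\bar j}\d_{\bar m}g$, which is manifestly symmetric in $\bar j$ and $\bar m$. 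As explained in the discussion preceding \eqref{eq:db_adjoint}, this symmetry is precisely what is needed to conclude $\db^*A=(PA)^{\#}$, i.e. that the $(1,0)$-vector field $\db^*\Lich g$ is the metric dual of the $(0,1)$-form $P\Lich g$. Since also $\nabla^{1,0}f=(\db f)^{\#}$ and $\#$ intertwines the pointwise hermitian inner products on $(0,1)$-forms and $(1,0)$-vector fields, we get $(P\Lich g,\db f)=(\db^*\Lich g,\nabla^{1,0}f)$, and the lemma follows.

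The argument is entirely routine, so there is no genuine obstacle; the only place demanding any care is the index bookkeeping, namely checking that the swap of $\bar j$ and $\bar k$ distinguishing the formulas for $\db^*$ and $P$ is exactly compensated by the symmetry of $\Lich g$, and confirming that the hermitian inner product conventions are consistent on both sides when passing through $\#$. The substantive input, the general product rule for $\Lich^*$, is taken as already established.
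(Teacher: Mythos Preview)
Your proposal is correct and follows essentially the same route as the paper: the lemma is obtained by specialising the product rule for \(\Lich^*\) to \(A=\Lich g\), then using the symmetry \(g_{\bar j\ell}(\Lich g)^\ell_{\bar k}=\d_{\bar j}\d_{\bar k}g\) at the centre of a normal coordinate system to identify \(\db^*\Lich g=(P\Lich g)^{\#}\), which collapses the two middle terms. The only cosmetic point is the unfortunate clash of the symbol \(g\) for both the metric and the function, which you have handled correctly.
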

			
			If \(\nabla^{1,0}f\) is a holomorphic vector field, we call \(f\) a \emph{holomorphy potential}. The Lichnerowicz operator is a self-adjoint elliptic operator whose kernel consists of the holomorphy potentials. Although we will not need this, we remark that a holomorphic vector field has a holomorphy potential precisely if the vector field vanishes somewhere \cite{LS94}, and so the set of holomorphic vector fields arising from holomorphy potentials is independent of the choice of K{\"a}hler metric.
			
			Let \(\mathcal{H}\) be the space of smooth K{\"a}hler potentials with respect to \(\omega\). For \(\varphi\in\mathcal{H}\), we write \(\omega_\varphi:=\omega+i\d\db\varphi\) for the corresponding K{\"a}hler metric. The scalar curvature determines an operator \(S:\mathcal{H}\to C^\infty(M,\mathbb{R})\), \(\varphi\mapsto S(\omega_\varphi)\). The linearisation \(L_\varphi\) of the scalar curvature operator at \(\varphi\in\mathcal{H}\) is given by \[L_\varphi\psi=\Lich_\varphi^*\Lich_\varphi\psi+\frac{1}{2}\nabla_\varphi S(\omega_\varphi)\cdot\nabla_\varphi\psi,\] for \(\psi\in C^\infty(M,\mathbb{R})=T_\varphi\mathcal{H}\), where \(\Lich_\varphi\) and \(\nabla_\varphi\) denote the operators defined by the K{\"a}hler metric \(\omega_\varphi\).

		\subsection{Weighted extremal metrics}
		
			In this section, we review the weighted cscK metrics introduced by Lahdili \cite{Lah19}.
		
			Take: \begin{enumerate}
				\item \((M,\omega)\) a compact K{\"a}hler manifold,
				\item \(T\) a real torus acting effectively on \((M,\omega)\) by hamiltonian isometries,
				\item \(\mu:M\to \mathfrak{t}^*\) a moment map for the \(T\)-action,
				\item \(P:=\mu(M)\subset\mathfrak{t}^*\) the moment polytope,
				\item \(v,w:M\to\mathbb{R}_{>0}\) positive smooth functions.
			\end{enumerate} Note \(\mu(M)\) is indeed a convex polytope by a theorem of Atiyah \cite{Ati82} and Guillemin--Sternberg \cite{GS82}. Our convention for moment maps is the following: given an element \(\xi\in\mathfrak{t}\), \[\langle d\mu,\xi\rangle = -\omega(\xi,-),\] where we abuse notation by conflating the element \(\xi\) of \(\mathfrak{t}\) with the real holomorphic vector field it generates on \(M\); here \(\langle-,-\rangle\) denotes the natural pairing between \(\mathfrak{t}^*\) and \(\mathfrak{t}\). \begin{definition}
				Given the above data, we define the \emph{\(v\)-weighted scalar curvature} of \(\omega\) to be \[S_v(\omega):=v(\mu)S(\omega)-2\Delta(v(\mu))+\frac{1}{2}\Tr(g\circ\Hess(v)(\mu)).\] Here  \(S(\omega)=\Lambda_\omega\Ric(\omega)\) is the scalar curvature, \(\Delta=-\d^*\d\) is the K{\"a}hler Laplacian of \(\omega\), and \(g\) is the Riemannian metric determined by \(\omega\).
			\end{definition} Concretely, the term \(\Tr(g\circ\Hess(v)(\mu))\) may be written \[\sum_{a,b}v_{,ab}(\mu)g(\xi_a,\xi_b),\] where \(\xi_1,\ldots,\xi_r\) is a basis of \(\mathfrak{t}\), and \(v_{,ab}\) denotes the \(ab\)-partial derivative of \(v\) with respect to the dual basis of \(\mathfrak{t}^*\).
		
			\begin{remark}
				While the definition of \(S_v(\omega)\) may seem arbitrary at first, the formula arises naturally as an infinite-dimensional moment map on the space \(\mathcal{J}^T\) of \(T\)-invariant almost complex structures compatible with \(\omega\), when one perturbs the metric on this space using the weight function \(v\) \cite[Section 4]{Lah19}. What makes this curvature worth studying is that it can further recover many well-known and important examples of canonical metrics in K{\"a}hler geometry---see Example \ref{ex:weighted_cscK_metrics} below.
			\end{remark}
		
			\begin{remark}
				In \cite{Lah19}, the \(v\)-weighted scalar curvature is instead written \[S_v(\omega)=v(\mu)S(\omega)+2\Delta(v(\mu))+\Tr(g\circ\Hess(v)(\mu)).\] Our weighted scalar curvature is equal to half of this, and the differences in signs and constants are due to the differences between the Riemannian and K{\"a}hler curvatures and Laplacians.
			\end{remark}
		
			\begin{definition}[{\cite{Lah19}}]\label{def:weighted_extremal}
				The metric \(\omega\) is: \begin{enumerate}
					\item a \emph{\((v,w)\)-weighted cscK metric}, if \[S_v(\omega)=c_{v,w}w(\mu),\] where \(c_{v,w}\) is a constant;
					\item a \emph{\((v,w)\)-weighted extremal metric} if the function \[S_{v,w}(\omega):=S_v(\omega)/w(\mu)\] is a holomorphy potential with respect to \(\omega\).
				\end{enumerate}  
			\end{definition}
		
			Sometimes we shorten the full name to just a \((v,w)\)-cscK metric, or a \((v,w)\)-extremal metric. If the weight functions \(v\) and \(w\) are understood or irrelevant, we may also refer to such a metric simply as a weighted cscK metric, or a weighted extremal metric.
		
			\begin{remark}\label{rem:weighted_extremal}
				Our definition of weighted extremal metric is slightly different to that in \cite{Lah19}. Namely, in \cite{Lah19} it is required that the function \(S_v(\omega)/w(\mu)\) is of the form \(w_{\text{ext}}(\mu)\), where \(w_{\text{ext}}:P\to\mathbb{R}\) is an affine linear function. Here we do not require that \(S_v(\omega)/w(\mu)\) can be described as such, but note that since this function is a \(T\)-invariant holomorphy potential, we can enlarge the torus to \(T'\supset T\) by taking the torus generated by a basis \(\xi_1,\ldots,\xi_r\) for \(\mathfrak{t}\) together with the holomorphic vector field \(\xi\) determined by \(S_v(\omega)/w(\mu)\). If \(P'\) is the moment polytope of \(T'\), \(S_v(\omega)/w(\mu)\) is then the composition of the affine linear function \(\langle\xi,-\rangle:P'\to\mathbb{R}\) with \(\mu_{\mathfrak{t}'}:M\to\mathfrak{t}'\). When the torus \(T\) is maximal, our two definitions therefore coincide.
			\end{remark}
		
			\begin{example}\label{ex:weighted_cscK_metrics}
				Fix an element \(\xi\in\mathfrak{t}\), and denote by \(\ell_\xi:\mathfrak{t}^*\to\mathbb{R}\) the corresponding element of \((\mathfrak{t}^*)^*\). Let \(a\) be a constant such that \(a+\ell_\xi>0\) on \(P\). Many standard canonical metrics can be obtained from certain choices of the functions \(v,w\) \cite[Section 3]{Lah19}: \begin{enumerate}
					\item {\bf CscK:} Taking \(v\) and \(w\) constant, the weighted cscK equation reduces to \[S(\omega)=c,\] which is the usual cscK equation.
					
					\item {\bf Extremal:} Taking \(v\) and \(w\) constant again, a weighted extremal metric is precisely an extremal metric in the usual sense, meaning \(\nabla^{1,0}S(\omega)\) is a holomorphic vector field.
					
					\item {\bf K{\"a}hler--Ricci soliton:} For \(M\) Fano and \(\omega\in c_1(X)\), the K{\"a}hler--Ricci soliton equation is \[\Ric(\omega)-\omega=-\frac{1}{2}\mathcal{L}_{J\xi}\omega,\] where \(\mathcal{L}_{J\xi}\) is the Lie derivative with respect to the real holomorphic vector field \(J\xi\). A weighted extremal metric in \(c_1(X)\) with weights \(v=w=e^{\ell_\xi}\) is a K{\"a}hler--Ricci soliton provided the extremal field is \(J\xi\). That is, the K{\"a}hler--Ricci soliton equation may be written \[\nabla S_{v,w}(\omega)=J\xi\] for the choice of weights \(v=w=e^{\ell_\xi}\). 
					
					The K{\"a}hler--Ricci soliton equation has an extensive literature. One important context in which they arise is as the Gromov--Hausdorff limits of solutions to the K{\"a}hler--Ricci flow, which is a powerful theorem of Chen--Wang \cite{CW20}.
					
					\item {\bf Extremal Sasaki:} Suppose that \([\omega]\) is the first Chern class \(c_1(L)\) of an ample line bundle \(L\to M\). A choice of K{\"a}hler metric \(\omega_\varphi\in[\omega]\) then corresponds to a Sasaki metric on the unit circle bundle \(S\) of \(L^*\). Letting \[v:=(a+\ell_\xi)^{-n-1},\quad w:=(a+\ell_\xi)^{-n-3},\] a \((v,w)\)-extremal metric on \(M\) then corresponds to an extremal Sasaki metric on \(S\) with Sasaki--Reeb field \(\xi\) \cite{AC21,ACL21}.
					
					\item {\bf Conformally K{\"a}hler--Einstein Maxwell:}  Letting \[v=(a+\ell_\xi)^{-2n+1},\quad w=(a+\ell_\xi)^{-2n-1},\] a \((v,w)\)-cscK metric on \(M\) then corresponds to a conformally K{\"a}hler Einstein--Maxwell metric introduced in \cite{LeB10, AM19}; see \cite{Lah20}.
					
					\item {\bf \(v\)-soliton:} Suppose \(M\) is Fano and \([\omega]=c_1(X)\). Taking \(v\) arbitrary and defining \[w(p)=2v(p)(n+\langle d\log v(p),p\rangle),\] the \((v,w)\)-cscK equation then becomes the \(v\)-soliton equation \[\Ric(\omega)-\omega=i\d\db\log v(\mu),\] introduced in \cite{Mab03} and studied in \cite{BWN14,HL20,AJL21}.
					
					\item {\bf \(\mu\)-cscK:} In \cite{Ino22,Ino20}, Inoue has introduced and studied a class of \emph{\(\mu\)-cscK metrics}. These are a special class of weighted extremal metrics, given by the same weight functions \(v=w=e^{\ell_\xi}\) and extremal field \(\xi\) as for K{\"a}hler--Ricci solitons, only one drops the condition of \(M\) being Fano \cite[Section 2.1.6]{Ino22}.
				\end{enumerate}
			\end{example}

			For an element \(\xi\in\mathfrak{t}\), we will write \(\mu^\xi:=\langle\mu,\xi\rangle=\ell_\xi\circ\mu\), where the pairing \(\langle-,-\rangle\) is the natural one on \(\mathfrak{t}^*\otimes\mathfrak{t}\). When we have chosen a basis \(\{\xi_a\}\) for \(\mathfrak{t}\), we will also write \(\mu^a\) in place of \(\mu^{\xi_a}\). The function \(\mu^\xi\) is then a hamiltonian for the infinitesimal action of \(\xi\) on \(M\). 
			
			We are interested in finding a weighted extremal metric in the class \([\omega]\). Given a \(T\)-invariant K{\"a}hler potential \(\varphi\in\mathcal{H}^T\), let \[\mu_\varphi:=\mu+d^c\varphi.\] That is, for any \(\xi\in\mathfrak{t}\), \[\mu_\varphi^\xi:=\mu^\xi+d^c\varphi(\xi),\] where we abuse notation by writing \(\xi\) for the vector field it generates on \(M\). Our convention is that \(d^c:=\frac{i}{2}(\db-\d)\), so that \(dd^c=i\d\db\).
		
			\begin{lemma}[{\cite[Lemma 1]{Lah19}}]\label{lem:moment_map_change}
				With the above definition, \(\mu_\varphi\) is a moment map for the \(T\)-action with respect to \(\omega_\varphi\), and \(\mu_\varphi(M)=P\), where \(P\) is the moment polytope for \(\mu=\mu_0\).
			\end{lemma}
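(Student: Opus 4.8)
The plan is to prove the two assertions in turn: that \(\mu_\varphi\) obeys the defining equation of a moment map for \(\omega_\varphi\), and that \(\mu_\varphi(M)=P\).

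\emph{Moment map equation.} Fix \(\xi\in\mathfrak t\) and identify it with the real holomorphic vector field it generates. Since \(\mu\) is a moment map for \(\omega\) we have \(d\mu^\xi=-\iota_\xi\omega\), so in view of \(dd^c\varphi=i\d\db\varphi\) it is enough to show \(d\big(d^c\varphi(\xi)\big)=-\iota_\xi(i\d\db\varphi)\); summing the two relations then yields \(d\mu_\varphi^\xi=-\iota_\xi\omega_\varphi\). For this I would apply Cartan's formula to the real \(1\)-form \(d^c\varphi\):
\[
\mathcal L_\xi(d^c\varphi)=d\big(\iota_\xi d^c\varphi\big)+\iota_\xi\big(dd^c\varphi\big)=d\big(d^c\varphi(\xi)\big)+\iota_\xi(i\d\db\varphi).
\]
Because \(\xi\) is holomorphic its flow preserves the complex structure, hence commutes with \(d^c\), so \(\mathcal L_\xi(d^c\varphi)=d^c(\mathcal L_\xi\varphi)=d^c\big(\xi(\varphi)\big)=0\) as \(\varphi\) is \(T\)-invariant; rearranging gives the claimed identity. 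Linearity of \(\xi\mapsto\mu_\varphi^\xi\) is built into the definition, and \(T\)-invariance of \(\mu_\varphi\) follows from that of \(\mu\) together with \(\mathcal L_\eta(d^c\varphi)=0\) and \([\xi,\eta]=0\) for \(\xi,\eta\in\mathfrak t\). Hence \(\mu_\varphi\) is a genuine moment map for the \(T\)-action with respect to \(\omega_\varphi\).

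\emph{Equality of polytopes.} The key observation is that \(\mu_\varphi\) and \(\mu\) agree on the \(T\)-fixed locus \(Z\subset M\): for \(p\in Z\) every \(\xi\in\mathfrak t\) vanishes at \(p\), so \(d^c\varphi(\xi)|_p=(d^c\varphi)_p(\xi_p)=0\) and therefore \(\mu_\varphi(p)=\mu(p)\). By the Atiyah--Guillemin--Sternberg convexity theorem applied to the \(T\)-invariant Kähler forms \(\omega\) and \(\omega_\varphi\), each moment map is locally constant on \(Z\) (for each \(\xi\) its differential \(-\iota_\xi(\cdot)\) vanishes at points of \(Z\)), so it takes only finitely many values there, and the associated moment polytope is the convex hull of those values (its vertices being among them). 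Since \(\mu(Z)=\mu_\varphi(Z)\), this gives \(\mu_\varphi(M)=\operatorname{conv}\mu_\varphi(Z)=\operatorname{conv}\mu(Z)=\mu(M)=P\).

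I expect the only genuinely delicate point to be keeping the sign conventions straight in the first part (those for \(d^c\), for \(dd^c\), and for the moment map normalisation); once the Cartan computation is set up correctly the rest is forced, and the polytope equality is essentially immediate from the agreement on \(Z\). An alternative route for the second part is to join \(\omega\) to \(\omega_\varphi\) by the Kähler path \(\omega_t:=(1-t)\omega+t\omega_\varphi=\omega+t\,i\d\db\varphi\), whose moment maps \(\mu_t=\mu+t\,d^c\varphi\) all restrict to the same map on \(Z\); the same convexity input then shows \(\mu_t(M)\) is independent of \(t\), and in particular \(\mu_1(M)=\mu_0(M)=P\).
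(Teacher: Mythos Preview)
The paper does not supply its own proof of this lemma; it merely cites \cite[Lemma 1]{Lah19}. Your argument is correct and is the standard one: the Cartan-formula computation establishes the moment map equation (using \(T\)-invariance of \(\varphi\) and holomorphicity of \(\xi\) to kill \(\mathcal L_\xi(d^c\varphi)\)), and the equality of polytopes follows from the Atiyah--Guillemin--Sternberg convexity theorem together with the observation that \(\mu_\varphi=\mu\) on the fixed locus. The paper itself invokes the same convexity theorem later (e.g.\ in Lemma~\ref{lem:epsilon_moment_map}) for a closely related purpose, so your use of it here is entirely in keeping with the ambient toolkit.
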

		
			With this lemma in mind, it then makes sense to search for a \((v,w)\)-extremal metric in the class \([\omega]\), which is \(T\)-invariant by fiat.
		
			\begin{lemma}[{\cite[Lemma 2]{Lah19}}]\label{lem:const_cvw}
				With the above choice of moment map \(\mu_\varphi\), the following quantities are independent of the choice of \(\varphi\in\mathcal{H}^T\): \begin{enumerate}
					\item \(\int_Mv(\mu_\varphi)\,\omega_\varphi^n\),
					\item \(\int_Mv(\mu_\varphi)\,\Ric(\omega_\varphi)\wedge\omega_\varphi^{n-1}+\int_M\langle dv(\mu_\varphi),-\Delta_{\varphi}\mu_\varphi\rangle\,\omega_\varphi^n\),
					\item \(\int_M S_v(\omega_\varphi)\,\omega_\varphi^n\).
				\end{enumerate} It follows that the constant \(c_{v,w}\) of Definition \ref{def:weighted_extremal} is fixed, given by \[c_{v,w}=\frac{\int_MS_v(\omega)\,\omega^n}{\int_Mw(\mu)\,\omega^n}.\]
			\end{lemma}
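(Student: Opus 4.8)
The plan is to show that each of the three quantities has vanishing derivative along an arbitrary smooth path, so fix $(\varphi_t)\subset\mathcal{H}^T$ with $\dot\varphi_t=\psi\in C^\infty(M,\mathbb{R})^T$ and abbreviate $\omega_t:=\omega_{\varphi_t}$, $\mu_t:=\mu_{\varphi_t}$, $\Delta_t$, $g_t$. The preliminary computations I would assemble first are: $\dot\omega_t=i\d\db\psi$, hence $\dot{(\omega_t^n)}=n\,i\d\db\psi\wedge\omega_t^{n-1}=(\Delta_t\psi)\,\omega_t^n$ (in the conventions of the paper, where $\Lambda_\omega i\d\db=\Delta$ on functions); $\dot\mu_t=d^c\psi$, immediate from the normalisation $\mu_\varphi=\mu+d^c\varphi$, which together with $d\mu^a_t=-\iota_{\xi_a}\omega_t$ unwinds to $\dot\mu^a_t=d^c\psi(\xi_a)=\tfrac12\langle d\psi,d\mu^a_t\rangle_{g_t}$; and $\dot{\Ric}(\omega_t)=-\tfrac{i}{2\pi}\d\db(\Delta_t\psi)$, from $\Ric(\omega_t)=-\tfrac{i}{2\pi}\d\db\log\omega_t^n$.

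Part (1) is then easy. Differentiating gives $\int_M\langle dv(\mu_t),\dot\mu_t\rangle\,\omega_t^n+\int_M v(\mu_t)(\Delta_t\psi)\,\omega_t^n$, and since $\langle dv(\mu_t),\dot\mu_t\rangle=\sum_a v_{,a}(\mu_t)\dot\mu^a_t=\tfrac12\langle d\psi,d(v(\mu_t))\rangle$, an integration by parts turns the first integral into $-\int_M\psi\,\Delta_t(v(\mu_t))\,\omega_t^n$, which cancels the second by self-adjointness of $\Delta_t$. (Alternatively, the $T$-equivariant Moser isotopy from $\omega$ to $\omega_t$ has $T$-invariant generating vector field and pulls $\mu_t$ back to $\mu$, giving $\int_M v(\mu_t)\,\omega_t^n=\int_M v(\mu)\,\omega^n$ directly; but this is a purely symplectic argument, blind to the complex structure, so it does not treat (2) or (3).)

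For (3) I would first simplify the integrand. The chain rule for the Laplacian reads $\Delta(v(\mu))=\langle dv(\mu),\Delta\mu\rangle+\tfrac12\Tr(g\circ\Hess(v)(\mu))$, using $g(\xi_a,\xi_b)=\langle d\mu^a,d\mu^b\rangle_g$, and substituting this into the definition of $S_v$ collapses it to
\[ S_v(\omega)=v(\mu)S(\omega)-\Delta(v(\mu))-\langle dv(\mu),\Delta\mu\rangle. \]
Since $\int_M\Delta(v(\mu_\varphi))\,\omega_\varphi^n=0$ and $v(\mu)S(\omega)\,\omega^n=n\,v(\mu)\Ric(\omega)\wedge\omega^{n-1}$, it follows that
\[ \int_M S_v(\omega_\varphi)\,\omega_\varphi^n=n\int_M v(\mu_\varphi)\Ric(\omega_\varphi)\wedge\omega_\varphi^{n-1}+\int_M\langle dv(\mu_\varphi),-\Delta_\varphi\mu_\varphi\rangle\,\omega_\varphi^n, \]
so that (3) is a linear combination of the two ingredients $A_\varphi:=\int_M v(\mu_\varphi)\Ric(\omega_\varphi)\wedge\omega_\varphi^{n-1}$ and $B_\varphi:=\int_M\langle dv(\mu_\varphi),-\Delta_\varphi\mu_\varphi\rangle\,\omega_\varphi^n$ that together also constitute (2). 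Thus both (2) and (3) follow once one proves $\dot A_t=\dot B_t=0$, and this is the step I expect to be the main obstacle. Differentiating $A_t$ and feeding in $\dot{\Ric}(\omega_t)=-\tfrac{i}{2\pi}\d\db(\Delta_t\psi)$, one integrates by parts repeatedly---using that $\Ric$ and $\omega_t$ are closed---to move the $\d\db$'s onto $v(\mu_t)$ and onto $\psi$; the resulting higher-order terms in $\psi$ must then be cancelled by those produced by differentiating $B_t$, where one expands $\tfrac{d}{dt}(\Delta_t\mu^a_t)$ into the contributions $-\bigl(i\d\db\psi,\,i\d\db\mu^a_t\bigr)$ and $\Delta_t\bigl(d^c\psi(\xi_a)\bigr)$ from the varying metric and the varying potential respectively, and uses $\dot\mu^a_t=\tfrac12\langle d\psi,d\mu^a_t\rangle$ together with the standard identities relating $\Delta_t\mu^a_t$, $\Ric(\omega_t)$ and the Killing field $\xi_a$. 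The bookkeeping of all these terms---and of the conventions relating $\Lambda i\d\db$ to the various Laplacians---is the technical core; conceptually it records that $-\Delta_\varphi\mu_\varphi$ is exactly the Hamiltonian potential of $\Ric(\omega_\varphi)$, so that $A_\varphi+B_\varphi$ and $nA_\varphi+B_\varphi$ both represent $\varphi$-independent equivariant cohomological pairings.

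Finally, the formula for $c_{v,w}$ is immediate: integrating the weighted cscK equation $S_v(\omega_\varphi)=c_{v,w}\,w(\mu_\varphi)$ over $M$, the left-hand side equals $\int_M S_v(\omega)\,\omega^n$ by (3), while $\int_M w(\mu_\varphi)\,\omega_\varphi^n=\int_M w(\mu)\,\omega^n$ by (1) applied to $w$, so $c_{v,w}=\int_M S_v(\omega)\,\omega^n\big/\int_M w(\mu)\,\omega^n$, independently of the potential $\varphi$.
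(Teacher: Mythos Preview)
The paper does not give its own proof of this lemma; it is stated with a citation to \cite[Lemma~2]{Lah19} and no argument is supplied. So there is nothing in the paper to compare your approach against directly.

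That said, a few remarks on your proposal. Your treatment of (1) is complete and correct, and your rewriting of $S_v(\omega)$ as $v(\mu)S(\omega)-\Delta(v(\mu))-\langle dv(\mu),\Delta\mu\rangle$ (which the paper also derives, just before Lemma~\ref{lem:Phi}) cleanly reduces (3) to the combination $nA_\varphi+B_\varphi$. Your observation that (2) and (3) are two independent linear combinations of $A_\varphi$ and $B_\varphi$, so that their constancy is equivalent to $\dot A_t=\dot B_t=0$, is also correct. The gap is that you do not actually carry out the computation showing $\dot A_t=\dot B_t=0$: you describe the ingredients (the variation formulas for $\Ric$, $\Delta_t\mu_t^a$, and $\mu_t^a$, and repeated integration by parts) and correctly identify the conceptual reason it works---that $-\Delta_\varphi\mu_\varphi$ is the Hamiltonian for $\Ric(\omega_\varphi)$, so these are equivariant cohomological pairings---but the bookkeeping is not done. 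Since this is precisely the step you yourself flag as ``the main obstacle'', the proposal as written is a correct outline rather than a proof; to complete it you would either carry out the integration-by-parts calculation in full, or make the equivariant cohomology argument rigorous (e.g.\ via the equivariant pushforward/Duistermaat--Heckman formalism as in Lahdili's original proof).
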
 
		
			\begin{remark}\label{rem:Ricci_moment_map}
				The significance of \(-\Delta_\varphi\mu_\varphi\) in \emph{(2)} of Lemma \ref{lem:const_cvw} is that it is a moment map for the Ricci curvature \(\Ric(\omega_\varphi)\), see \cite[Lemma 5]{Lah19}. That is, for any \(\xi\in\mathfrak{t}\) and \(x\in M\), \[\langle d(-\Delta_\varphi\mu_\varphi)(x),\xi\rangle = \Ric(\omega_\varphi)(x)(-,\xi_x).\]
			\end{remark}
		
			We will also need to understand well the linearisation of the weighted scalar curvature operator. Recall for a K{\"a}hler metric \(\omega\), the linearisation of the usual scalar curvature operator \(S:\mathcal{H}\to C^\infty(M,\mathbb{R})\) at \(\varphi\in\mathcal{H}\) is \begin{equation}\label{eq:unweighted_linearisation}
			L_\varphi\psi=\Lich_\varphi^*\Lich_\varphi\psi+\frac{1}{2}\nabla_\varphi S(\omega_\varphi)\cdot\nabla_\varphi\psi.
			\end{equation}
			
			In the weighted setting, a very similar formula holds:
			
			\begin{proposition}[{\cite[Lemma B.1]{Lah19}}]
				The linearisation of the weighted scalar curvature operator \(S_{v,w}:\mathcal{H}^T\to C^\infty(M,\mathbb{R})^T\) at \(\varphi\in\mathcal{H}^T\) is given by \[\check{L}_{\varphi}(\psi)=\frac{v(\mu_\varphi)}{w(\mu_\varphi)}\Lich^*_{v,\varphi}\Lich_\varphi\psi+\frac{1}{2}\nabla_\varphi S_{v,w}(\omega_\varphi)\cdot\nabla_\varphi\psi\] for \(\psi\in C^\infty(M,\mathbb{R})^T=T_\varphi\mathcal{H}^T\). Here \[\Lich^*_{v,\varphi}A:=\frac{1}{v(\mu_\varphi)}\Lich_\varphi(v(\mu_\varphi)A)\] for \(A\in\Gamma(T^{1,0}\otimes\Omega^{0,1})^T\).
			\end{proposition}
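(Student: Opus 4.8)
The plan is to differentiate the weighted scalar curvature $S_{v,w}(\omega_\varphi)=S_v(\omega_\varphi)/w(\mu_\varphi)$ directly, the one structural wrinkle being that the moment map $\mu_\varphi=\mu+d^c\varphi$ varies with $\varphi$ alongside $\omega_\varphi$. After replacing $\omega$ by $\omega_\varphi$ we may take $\varphi=0$; write $\omega_t:=\omega+t\,i\d\db\psi$ and $\mu_t:=\mu+t\,d^c\psi$. First I would record the variation of the moment map: for a basis $\xi_1,\dots,\xi_r$ of $\mathfrak t$ with hamiltonians $\mu^a$ one has $\tfrac{d}{dt}\big|_{t=0}\mu^a_t=d^c\psi(\xi_a)$, and the defining relation $d\mu^a=-\omega(\xi_a,-)$ rewrites this as $\tfrac12\,\nabla\psi\cdot\nabla\mu^a$ in terms of Riemannian gradients; hence $\tfrac{d}{dt}\big|_{t=0}f(\mu_t)=\tfrac12\,\nabla(f(\mu))\cdot\nabla\psi$ for every smooth $f$ on $P$, and in particular $\tfrac{d}{dt}\big|_{t=0}w(\mu_t)=\tfrac12\,\nabla w(\mu)\cdot\nabla\psi$. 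Granting this, the step from $v$ to $(v,w)$ is purely formal: by the quotient rule, once one knows $\tfrac{d}{dt}\big|_{t=0}S_v(\omega_t)=\Lich^*\bigl(v(\mu)\Lich\psi\bigr)+\tfrac12\,\nabla S_v(\omega)\cdot\nabla\psi$, dividing by $w(\mu)$ and absorbing the quotient-rule cross term into the first-order part produces exactly $\tfrac{v(\mu)}{w(\mu)}\Lich^*_v\Lich\psi+\tfrac12\,\nabla S_{v,w}(\omega)\cdot\nabla\psi$, using $\Lich^*_v\Lich\psi=\tfrac1{v(\mu)}\Lich^*(v(\mu)\Lich\psi)$ and $\nabla S_{v,w}=\tfrac1{w(\mu)}\nabla S_v-\tfrac{S_{v,w}}{w(\mu)}\nabla w(\mu)$. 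So the whole content lies in the linearisation of $S_v$.

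For that I would expand $S_v(\omega_t)=v(\mu_t)S(\omega_t)-2\Delta_t\bigl(v(\mu_t)\bigr)+\tfrac12\Tr\bigl(g_t\circ\Hess(v)(\mu_t)\bigr)$ and differentiate term by term at $t=0$, using \eqref{eq:unweighted_linearisation} for $\tfrac{d}{dt}S(\omega_t)$, the standard variation of the Laplacian $\tfrac{d}{dt}\big|_{t=0}\Delta_t h=-(i\d\db\psi,i\d\db h)$ for fixed $h$, and the moment-map variation above together with $\tfrac{d}{dt}\big|_{t=0}v_{,ab}(\mu_t)=\sum_c v_{,abc}(\mu)\,\dot\mu^c$. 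Several pieces combine immediately. The terms carrying $S(\omega)$ organise into part of the first-order term: since $\dot v=\tfrac12\nabla v(\mu)\cdot\nabla\psi$, the sum $\dot v\,S(\omega)+\tfrac12 v(\mu)\nabla S\cdot\nabla\psi$ is exactly the contribution of $v(\mu)S(\omega)$ to $\tfrac12\nabla S_v\cdot\nabla\psi$, while differentiating $v(\mu)S(\omega)$ also produces the term $v(\mu)\Lich^*\Lich\psi$. The top-order derivatives of $v$ cancel, because the term $\tfrac12\sum_{a,b,c}v_{,abc}(\mu)\,\dot\mu^c\,g(\xi_a,\xi_b)$ coming from $\Hess(v)(\mu_t)$ equals the matching part of $\tfrac14\nabla\Tr(g\circ\Hess(v)(\mu))\cdot\nabla\psi$ in view of $\nabla\mu^c\cdot\nabla\psi=2\dot\mu^c$. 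What survives is $v(\mu)\Lich^*\Lich\psi+\tfrac12\nabla S_v(\omega)\cdot\nabla\psi$ together with a remainder of second-order terms coming from $-2\Delta_t(v(\mu_t))$ and from the pairing of the metric variation against $\Hess(v)(\mu)$.

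The remaining, genuinely laborious, step is to show that this remainder equals $-2\bigl(\db^*\Lich\psi,\nabla^{1,0}v(\mu)\bigr)+\bigl(\Lich v(\mu),\Lich\psi\bigr)$, which by Lemma \ref{lem:Lich_product} equals $\Lich^*(v(\mu)\Lich\psi)-v(\mu)\Lich^*\Lich\psi$; adding back the surviving $v(\mu)\Lich^*\Lich\psi$ then gives the claimed $\Lich^*(v(\mu)\Lich\psi)$. I would check this pointwise in a normal coordinate system, where it reduces to identities among the Kähler curvature, the covariant Hessian of $\psi$, and the first two derivatives of $v(\mu)$. The structural inputs that make it go through are: (i) each $\nabla^{1,0}\mu^a$ is a holomorphic vector field, so $\Lich\mu^a=0$ and hence $\Lich v(\mu)=\sum_{a,b}v_{,ab}(\mu)\,\db\mu^b\otimes\nabla^{1,0}\mu^a$ involves only second derivatives of $v$; (ii) the $\xi_a$ are Killing, which kills the Lie-derivative contributions produced when the varying Laplacian acts on the $\mu^a$-dependence of $v(\mu)$; and (iii) the commutation identity relating $\db^*\Lich h$ to $\d\Delta h$ and a Ricci term, concretely the observation recorded before Lemma \ref{lem:Lich_product} that $\db^*(\Lich h)=(P\Lich h)^{\#}$ together with the formula for $P$. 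I expect this reconciliation of terms to be the main obstacle: it is a direct but bookkeeping-heavy Kähler computation, and conceptually it is forced by the fact (noted in the remark after the definition of $S_v$) that $S_v$ arises as an infinite-dimensional moment map, whose linearisation must be self-adjoint modulo the first-order transport term. With this identity established, dividing through by $w(\mu)$ as in the first paragraph finishes the proof.
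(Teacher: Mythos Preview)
The paper does not prove this proposition at all: it is stated with a citation to \cite[Lemma B.1]{Lah19} and used as a black box, with the subsequent Lemma \ref{lem:weighted_linearisation} merely rewriting the conclusion via Lemma \ref{lem:Lich_product}. So there is no ``paper's own proof'' to compare against.

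As to your approach itself, it is the natural one and is essentially correct as a plan. Your reduction from the \((v,w)\)-case to the \(v\)-case via the quotient rule and the moment-map variation \(\dot\mu^a=\tfrac12\nabla\mu^a\cdot\nabla\psi\) is right, and your identification of the target identity --- that the second-order remainder from differentiating \(-2\Delta_t(v(\mu_t))+\tfrac12\Tr(g_t\circ\Hess(v)(\mu_t))\) must equal \(\Lich^*(v(\mu)\Lich\psi)-v(\mu)\Lich^*\Lich\psi\) --- is exactly the crux. You correctly flag this last step as the laborious one and give the right structural inputs (holomorphicity of \(\nabla^{1,0}\mu^a\), the Killing property, the relation between \(\db^*\Lich\) and \(P\Lich\)), but you do not actually carry out the computation. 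That is a gap in completeness rather than in strategy: nothing you have written is wrong, and the missing step is precisely the normal-coordinate bookkeeping you describe. Incidentally, note that the displayed definition of \(\Lich^*_{v,\varphi}\) in the statement contains a typo --- the right-hand side should read \(\tfrac{1}{v(\mu_\varphi)}\Lich^*_\varphi(v(\mu_\varphi)A)\), which is the interpretation you have (correctly) used.
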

		
			We now show how to rewrite \(\check{L}_\varphi\) in terms of \(L_\varphi\); for simplicity of notation we will drop the omnipresent subscript \(\varphi\). First, for any metric \(\omega\) we will write \[S_{v,w}(\omega)=\frac{v(\mu)}{w(\mu)}S(\omega)+\Phi_{v,w}(\omega),\] where \begin{equation}\label{eq:Phi}
				\Phi_{v,w}(\omega):=-\frac{2}{w(\mu)}\Delta(v(\mu))+\frac{1}{2w(\mu)}\Tr(g\circ\Hess(v)(\mu)).
			\end{equation} It follows that \[\nabla S_{v,w}(\omega)\cdot\nabla\psi=\frac{v(\mu)}{w(\mu)}\nabla S(\omega)\cdot\nabla\psi+S(\omega)\nabla\left(\frac{v(\mu)}{w(\mu)}\right)\cdot\nabla\psi+\nabla\Phi_{v,w}(\omega)\cdot\nabla\psi.\] Applying Lemma \ref{lem:Lich_product},\begin{align*}
				v(\mu)\Lich^*_v\Lich\psi &= \Lich^*(v(\mu)\Lich\psi) \\
				&= v(\mu)\Lich^*\Lich\psi-2(\db^*\Lich\psi,\nabla^{1,0}(v(\mu)))+(\Lich\psi,\Lich(v(\mu))).
			\end{align*} Putting this all together:
			
			\begin{lemma}\label{lem:weighted_linearisation}
				The linearisation \(\check{L}\) of the weighted scalar curvature operator \(S_{v,w}\) can be written \begin{align*}
					\check{L}(\psi) &= \frac{v(\mu)}{w(\mu)}L(\psi)-\frac{2}{w(\mu)}(\db^*\Lich\psi,\nabla^{1,0}(v(\mu)))+\frac{1}{w(\mu)}(\Lich\psi,\Lich(v(\mu))) \\
					&+ \frac{1}{2}S(\omega)\nabla\left(\frac{v(\mu)}{w(\mu)}\right)\cdot\nabla\psi+\frac{1}{2}\nabla\Phi_{v,w}(\omega)\cdot\nabla\psi,
				\end{align*} where \(L\) is the linearisation \eqref{eq:unweighted_linearisation} of the usual scalar curvature operator \(S\).
			\end{lemma}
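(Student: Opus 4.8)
The plan is simply to assemble the three identities recorded immediately before the statement. Starting from the formula $\check L(\psi)=\tfrac{v(\mu)}{w(\mu)}\Lich^*_{v}\Lich\psi+\tfrac12\nabla S_{v,w}(\omega)\cdot\nabla\psi$ of the preceding Proposition, I would expand the leading elliptic term, then the gradient term, and finally recognise the unweighted linearisation $L$ inside the result.

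For the leading term, since $v$ is strictly positive on $P$, I would divide the identity $v(\mu)\Lich^*_v\Lich\psi=\Lich^*(v(\mu)\Lich\psi)$ — obtained from Lemma \ref{lem:Lich_product} with $f=v(\mu)$ and $g=\psi$ — by $v(\mu)$ and multiply by $v(\mu)/w(\mu)$, giving
\[
\frac{v(\mu)}{w(\mu)}\Lich^*_v\Lich\psi=\frac{v(\mu)}{w(\mu)}\Lich^*\Lich\psi-\frac{2}{w(\mu)}\bigl(\db^*\Lich\psi,\nabla^{1,0}(v(\mu))\bigr)+\frac{1}{w(\mu)}\bigl(\Lich\psi,\Lich(v(\mu))\bigr).
\]
For the gradient term, writing $S_{v,w}(\omega)=\tfrac{v(\mu)}{w(\mu)}S(\omega)+\Phi_{v,w}(\omega)$ with $\Phi_{v,w}$ as in \eqref{eq:Phi} and applying the Leibniz rule for the gradient of a product (hitting in particular the quotient $v/w$) yields
\[
\nabla S_{v,w}(\omega)\cdot\nabla\psi=\frac{v(\mu)}{w(\mu)}\nabla S(\omega)\cdot\nabla\psi+S(\omega)\nabla\!\left(\frac{v(\mu)}{w(\mu)}\right)\!\cdot\nabla\psi+\nabla\Phi_{v,w}(\omega)\cdot\nabla\psi.
\]

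Substituting both expansions into $\check L(\psi)$, the summands $\tfrac{v(\mu)}{w(\mu)}\Lich^*\Lich\psi$ and $\tfrac12\cdot\tfrac{v(\mu)}{w(\mu)}\nabla S(\omega)\cdot\nabla\psi$ combine, by \eqref{eq:unweighted_linearisation}, into $\tfrac{v(\mu)}{w(\mu)}L(\psi)$; collecting the remaining terms gives exactly the stated formula. The computation is purely algebraic and there is no genuine obstacle; the only point demanding care is the bookkeeping of the weight factors — in particular keeping the cross term $S(\omega)\nabla(v(\mu)/w(\mu))\cdot\nabla\psi$, produced when the Leibniz rule differentiates the quotient $v/w$, separate from the $\tfrac{v(\mu)}{w(\mu)}L(\psi)$ term, and tracking the signs inherited from the conventions for $\Lich^*_v$ and $d^c$.
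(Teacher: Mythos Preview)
Your proposal is correct and follows essentially the same route as the paper: expand $\tfrac{v(\mu)}{w(\mu)}\Lich^*_v\Lich\psi$ via Lemma~\ref{lem:Lich_product}, expand $\nabla S_{v,w}(\omega)\cdot\nabla\psi$ via the decomposition $S_{v,w}=\tfrac{v(\mu)}{w(\mu)}S(\omega)+\Phi_{v,w}(\omega)$ and the Leibniz rule, and then recognise $\tfrac{v(\mu)}{w(\mu)}L(\psi)$ from \eqref{eq:unweighted_linearisation}. The paper presents exactly these three steps in the paragraphs preceding the lemma.
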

		
			It will also be important to understand the extra term \(\Phi_{v,w}(\omega)\). For this, let us pick a normal Riemannian coordinate system \(x_1,\ldots,x_{2n}\) and compute at the centre: \begin{align*}
				&2\Delta(v(\mu)) \\
				= &\sum_k\frac{\d^2}{\d x_k^2}(v(\mu))\\
				=& \sum_k\frac{\d}{\d x_k}\left(\sum_a v_{,a}(\mu)\frac{\d\mu^a}{\d x_k}\right) \\
				=&\sum_{k}\sum_a v_{,a}(\mu)\frac{\d^2\mu^a}{\d x_k^2}+\sum_{k}\sum_{a,b}v_{,ab}(\mu)\frac{\d\mu^a}{\d x_k}\frac{\d\mu^b}{\d x_k} \\
				=&2\sum_a v_{,a}(\mu)\Delta\mu^a+\sum_{a,b}v_{,ab}(\mu)g(\nabla \mu^a,\nabla\mu^b) \\
				=&2\sum_a v_{,a}(\mu)\Delta\mu^a+\sum_{a,b}v_{,ab}(\mu)g(\xi_a,\xi_b).
			\end{align*} Of course, the last term here is just \(\Tr(g\circ\Hess(v)(\mu))\). From this calculation and \eqref{eq:Phi} we conclude: \begin{lemma}\label{lem:Phi}
			The term \(\Phi_{v,w}(\omega)\) is a linear combination of functions of the form \(u_a(\mu)\Delta\mu^a\) and \(u_{ab}(\mu)g(\xi_a,\xi_b)\), where the \(u_a\) and \(u_{ab}\) are among finitely many fixed smooth functions on the moment polytope \(P\) depending only on \(v\), \(w\) and the basis \(\{\xi_a\}\).
		\end{lemma}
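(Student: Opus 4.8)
The plan is essentially to read off the conclusion from the coordinate computation carried out immediately above the statement. First I would record the identity that computation produces: at the centre of an arbitrary normal Riemannian coordinate system,
\[2\Delta(v(\mu)) = 2\sum_a v_{,a}(\mu)\,\Delta\mu^a + \sum_{a,b}v_{,ab}(\mu)\,g(\xi_a,\xi_b).\]
Each term here---\(\Delta(v(\mu))\), each \(\Delta\mu^a\), and each \(g(\xi_a,\xi_b)\)---is a well-defined scalar function on \(M\), with no reference to the choice of coordinates. Since every point of \(M\) arises as the centre of some normal coordinate system, the identity therefore holds pointwise on all of \(M\). This globalisation step is the only point that needs a moment's thought, and it is immediate.

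Next I would substitute this into the defining formula \eqref{eq:Phi} for \(\Phi_{v,w}(\omega)\), using also that \(\Tr(g\circ\Hess(v)(\mu)) = \sum_{a,b}v_{,ab}(\mu)\,g(\xi_a,\xi_b)\) by definition. Collecting terms yields
\[\Phi_{v,w}(\omega) = -\sum_a \frac{2v_{,a}(\mu)}{w(\mu)}\,\Delta\mu^a \;-\; \frac{1}{2}\sum_{a,b}\frac{v_{,ab}(\mu)}{w(\mu)}\,g(\xi_a,\xi_b),\]
which is exactly the asserted form with \(u_a := -2v_{,a}/w\) and \(u_{ab} := -\tfrac{1}{2}v_{,ab}/w\).

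It then only remains to verify that the \(u_a\) and \(u_{ab}\) have the stated properties: they are smooth on \(P\) because \(v\) is smooth and \(w\) is smooth and nowhere zero on \(P\); there are finitely many of them because the indices \(a,b\) range over the finite set labelling the fixed basis \(\{\xi_a\}\) of \(\mathfrak{t}\); and they depend only on \(v\), \(w\) and that basis, all dependence on \(\omega\) being carried by the factors \(\Delta\mu^a\) and \(g(\xi_a,\xi_b)\). There is no substantive obstacle in the argument---it is a direct consequence of the computation already displayed, the only subtlety being the (routine) observation that that computation, though performed at the centre of normal coordinates, is an identity of intrinsically defined functions.
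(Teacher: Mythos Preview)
Your proof is correct and follows exactly the paper's approach: the paper simply states that the lemma follows from the displayed computation of \(2\Delta(v(\mu))\) together with \eqref{eq:Phi}, and you have spelled out this substitution explicitly, arriving at the correct formulas \(u_a = -2v_{,a}/w\) and \(u_{ab} = -\tfrac{1}{2}v_{,ab}/w\).
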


	\section{Setting up the problem}
			
			Now that we have reviewed the relevant background material, we can proceed with setting up the proof of Theorem \ref{thm:main}. Structurally this will largely follow \cite{Sze12}, although the technicalities differ as there are many new terms that arise in the weighted setting. 
			
			Let \((M,\omega)\) be a weighted extremal manifold, and let \(p\in M\) be a fixed point of the \(T\)-action and the extremal field. We wish to show that, under a certain stability condition on \(p\), the blowup \(\Bl_pM\) admits a weighted extremal metric in the class \([\pi^*\omega-\epsilon^2E]\) for all \(\epsilon>0\) sufficiently small, where \(\pi:\Bl_pM\to M\) is the blowup map and \(E\) is the exceptional divisor of the blowup. 
			
			We begin by defining an approximate solution \(\omega_\epsilon\) to the weighted extremal equation on \(\Bl_pM\). This approximate solution is constructed by gluing the given weighted extremal metric \(\omega\) on \(M\) to a suitable rescaling of a model metric \(\eta\) on \(\Bl_0\mathbb{C}^n\) over a small neighbourhood of \(E\). This model metric \(\eta\) is called the \emph{Burns--Simanca metric}, and we cover its properties in Section \ref{sec:Burns-Simanca}. The gluing construction is then described in Section \ref{sec:approx_soln}.
			
			Given the approximate solution \(\omega_\epsilon\), we then seek to deform it to a metric \(\widetilde{\omega}_\epsilon\) that solves the weighted extremal equation up to a finite dimensional obstruction. A result of Sz{\'e}kelyhidi shows that if \(p\) is relatively stable then this obstruction can be overcome, and hence a weighted extremal metric on \(\Bl_pM\) exists. We set up the deformation problem in Section \ref{sec:deformation}.

			The main technical tool used in deforming \(\omega_\epsilon\) to \(\widetilde{\omega}_\epsilon\) is a family of weighted H{\"o}lder norms on \(\Bl_pM\), depending on \(\epsilon\). These are introduced in Section \ref{sec:weighted_norms}, where we cover some of their basic properties.

		\subsection{Burns--Simanca metric}\label{sec:Burns-Simanca}
		
			In this section we describe the Burns--Simanca metric \(\eta\) on \(\Bl_0\mathbb{C}^n\), which is a scalar-flat and asymptotically Euclidean K{\"a}hler metric. On \(\Bl_0\mathbb{C}^2\), it can be written explicitly:  \begin{equation}\label{eq:BS_metric_dim2}
				\eta:=i\d\db (|\zeta|^2+\log|\zeta|),
			\end{equation} where \(\zeta=(\zeta_1,\zeta_2)\) are the standard coordinates on \(\mathbb{C}^2\backslash\{0\}\cong\Bl_0\mathbb{C}^2\backslash\mathbb{P}^1\). This metric was first shown to be scalar-flat by Burns (see \cite[p. 594]{LeB88} and \cite[Remark 1]{Sim91}). 
			
			On \(\Bl_0\mathbb{C}^n\) for \(n>2\), the metric \(\eta\) was constructed by Simanca \cite{Sim91}. In this case there is no explicit formula available, however there is an asymptotic expansion of the metric. To describe this, first on \(\Bl_0\mathbb{C}^n\backslash E\) write \(\zeta=(\zeta_1,\ldots,\zeta_n)\) for the standard complex coordinates pulled back from \(\mathbb{C}^n\backslash\{0\}\). The metric \(\eta\) satisfies \begin{equation}\label{eq:BS_metric}
				\eta=i\d\db(|\zeta|^{2}+g(\zeta))
			\end{equation} on \(\Bl_0\mathbb{C}^n\backslash E\), where \[g(\zeta)=-|\zeta|^{4-2n}+\O(|\zeta|^{3-2n})\] as \(|\zeta|\to\infty\). Here a smooth real-valued function \(h\) is declared to be \(\O(|\zeta|^\ell)\) if it lies in the weighted H{\"o}lder space \(C^{k,\alpha}_{\ell}(\Bl_0\mathbb{C}^n)\) for all \(k\) and \(\alpha\in(0,1)\). We will go over the weighted H{\"o}lder norms in Section \ref{sec:weighted_norms}, but for now an equivalent definition is that for any multi-index \(I=(i_1,\ldots,i_n,j_1,\ldots,j_n)\), there exists a constant \(C_I>0\) such that \[\d_Ih:=\frac{\d^{|I|}h}{\d x_1^{i_1}\cdots\d x_n^{i_n}\d y_1^{j_1}\cdots\d y_n^{j_n}}\] satisfies \(|\d_I h|\leq C_I|\zeta|^{\ell-|I|}\) for all \(|\zeta|\gg0\), where \(\zeta_k=x_k+iy_k\) and \(|I|:=i_1+\cdots +i_n+j_1+\cdots+j_n\).
			
			In our situation, we will have a compact torus \(T\) acting on \(\Bl_0\mathbb{C}^n\), lifting a linear \(T\)-action on \(\mathbb{C}^n\). Changing the basis of \(\mathbb{C}^n\), this action can be assumed to be diagonal, in which case it is straightforward from \eqref{eq:BS_metric_dim2} and the construction in \cite{Sim91} that \(\eta\) is \(T\)-invariant. We claim that there exists a moment map \(\mu_\eta:\Bl_0\mathbb{C}^n\to\mathfrak{t}^*\) for the  \(T\)-action. To see this, note from \cite[Proposition 1]{Sim91} that on \(\Bl_0\mathbb{C}^n\backslash E\), \(\eta\) can be written \(i\d\db(s(u)+\log(u))\), where \(u=|\zeta|^2\), and \(s:[0,\infty)\to\mathbb{R}\) is a smooth function with \(s'(0)>0\). The component \(i\d\db\log u\) simply is the pullback of the Fubini--Study metric on \(\mathbb{P}^{n-1}\) to \(\mathcal{O}_{\mathbb{P}^{n-1}}(-1)\cong \Bl_0\mathbb{C}^n\), for which we already have a moment map. So it suffices to construct a moment map for the remaining term \(i\d\db s(u)\), but such a moment map is given by \(d^c(s(u))\), since \(s(u)\) is \(T\)-invariant.

			By \eqref{eq:BS_metric} and Lemma \ref{lem:moment_map_change}, on \(\Bl_0\mathbb{C}^n\backslash E\) a moment map for the \(T\)-action is \[\mu_\eta^\xi=\sum_{j=1}^nA^\xi_{j}|\zeta_j|^2+d^cg(\xi),\] where \(\xi\in\mathfrak{t}\) and \(\mathrm{diag}(A_1^\xi,\ldots,A_n^\xi)\in GL(\mathbb{C}^n)\) is the infinitesimal generator of the action of \(\xi\) on \(\mathbb{C}^n\).\footnote{Recall we use the shorthand \(\mu^\xi:=\langle\mu,\xi\rangle\) for moment maps, and conflate \(\xi\in\mathfrak{t}\) with the vector field it generates.} In the case \(n=2\) we take \(g:=\log|\zeta|\) and the same formula holds. By uniqueness of moment maps up to addition of constants, this formula extends over the exceptional divisor \(E\) to a well-defined moment map on \(\Bl_0\mathbb{C}^n\). We note the first term of \(\mu_\eta^\xi\) is \(\O(|\zeta|^2)\), and claim the remaining term is \(\O(|\zeta|^{4-2n})\) when \(n>2\). To see this, note that \(g\) is \(\O(|\zeta|^{4-2n})\), and so \(d^cg\) is \(\O(|\zeta|^{3-2n})\). But \(\xi\) is \(\O(|\zeta|)\), so \(d^cg(\xi)\) is \(\O(|\zeta|^{4-2n})\) by the Leibniz rule. In the case \(n=2\), \(d^cg\) is \(\O(|\zeta|^{-1})\) and \(d^cg(\xi)\) is \(\O(1)\). We record this for future use:
			
			\begin{lemma}\label{lem:BS_moment_map}
				There exists a moment map \(\mu_\eta\) for the \(T\)-action on \((\Bl_0\mathbb{C}^n,\eta)\), which on \(\Bl_0\mathbb{C}^n\backslash E\) can be written \[\mu_\eta^\xi=\sum_{j=1}^nA^\xi_{j}|\zeta_j|^2+d^cg(\xi),\] where \(g\) is defined by \[\eta=i\d\db(|\zeta|^2+g(\zeta))\] and satisfies \(g=\O(|\zeta|^{4-2n})\) when \(n>2\) and \(g=\log|\zeta|\) when \(n=2\). Furthermore, \(d^cg(\xi)\) is \(\O(|\zeta|^{4-2n})\) for \(n\geq2\).
			\end{lemma}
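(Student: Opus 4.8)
The plan is to reconstruct \(\mu_\eta\) from Simanca's explicit description of the K\"ahler potential of \(\eta\) and then read off the asymptotics using the weighted H\"older calculus. First I would note that \(\eta\) is \(T\)-invariant: after a linear change of coordinates the \(T\)-action on \(\mathbb{C}^n\) is diagonal, and \(\eta\) is then in fact \(U(n)\)-invariant --- for \(n=2\) directly from \eqref{eq:BS_metric_dim2}, and for \(n>2\) because the potential produced in \cite{Sim91} depends only on \(u=|\zeta|^2\). By \cite[Proposition 1]{Sim91} one may write \(\eta=i\d\db(s(u)+\log u)\) on \(\Bl_0\mathbb{C}^n\backslash E\) with \(s\) smooth on \([0,\infty)\); thus \(s(u)\) extends to a global smooth \(T\)-invariant function on \(\Bl_0\mathbb{C}^n\), while \(i\d\db\log u\) extends over \(E\) as the pullback of the Fubini--Study form under \(\Bl_0\mathbb{C}^n\cong\mathcal{O}_{\mathbb{P}^{n-1}}(-1)\to\mathbb{P}^{n-1}\), so that \(\eta=i\d\db\log u+i\d\db s(u)\) globally.

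Next I would build a global moment map \(\mu_\eta\) by summing the evident moment maps of the two summands: the pullback of the Fubini--Study moment map takes care of \(i\d\db\log u\), and \(d^c(s(u))\) takes care of \(i\d\db s(u)\), the latter being a moment map since \(s(u)\) is smooth and \(T\)-invariant (this is immediate from Cartan's formula, and is the mechanism underlying Lemma \ref{lem:moment_map_change}). On the connected dense open set \(\Bl_0\mathbb{C}^n\backslash E\cong\mathbb{C}^n\backslash\{0\}\) we also have \(\eta=i\d\db(|\zeta|^2+g)\) with \(g=s(u)+\log u-u\) a function of \(u\) alone; applying Lemma \ref{lem:moment_map_change} with base metric \(i\d\db|\zeta|^2\) and potential \(g\) shows that \(\xi\mapsto\sum_{j=1}^nA^\xi_j|\zeta_j|^2+d^cg(\xi)\) is a moment map for \(\eta\) on this set. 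Any two moment maps for a torus action on a connected manifold differ by a constant in \(\mathfrak{t}^*\), so after adjusting \(\mu_\eta\) by such a constant it agrees with the latter expression on \(\Bl_0\mathbb{C}^n\backslash E\); in particular the a priori only locally defined formula \(\sum_j A^\xi_j|\zeta_j|^2+d^cg(\xi)\) extends smoothly across \(E\), which is the content of the existence and formula assertions.

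It remains to check the decay. For \(n>2\) the expansion \(g(\zeta)=-|\zeta|^{4-2n}+\O(|\zeta|^{3-2n})\) gives \(g=\O(|\zeta|^{4-2n})\) because \(\O(|\zeta|^{3-2n})\subset\O(|\zeta|^{4-2n})\); for \(n=2\) we have \(g=\log|\zeta|\) by definition. In either case \(d^cg=\O(|\zeta|^{3-2n})\), since \(d^c\) has constant coefficients in the flat coordinates on \(\mathbb{C}^n\backslash\{0\}\) and hence lowers the weight by one (for \(n=2\), \(d^cg=\O(|\zeta|^{-1})\)). The vector field generated by \(\xi\) is linear, hence \(\O(|\zeta|)\), so by the Leibniz rule for weighted norms \(d^cg(\xi)=\O(|\zeta|^{3-2n}\cdot|\zeta|)=\O(|\zeta|^{4-2n})\) --- for \(n=2\) this uses \(4-2n=0\). \textbf{Expected main obstacle.} There is very little difficulty here beyond bookkeeping: the genuine input is the construction, potential, and asymptotics of \(\eta\) from \cite{Sim91}, which I take as given, and the only step requiring any care is passing from the explicit local formula to a globally defined moment map via the uniqueness-up-to-constants principle.
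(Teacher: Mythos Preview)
Your proposal is correct and follows essentially the same approach as the paper: both arguments establish \(T\)-invariance of \(\eta\), split the potential as \(s(u)+\log u\) via \cite{Sim91} to build a global moment map from the Fubini--Study pullback and \(d^c(s(u))\), match this with the formula \(\sum_jA_j^\xi|\zeta_j|^2+d^cg(\xi)\) on \(\Bl_0\mathbb{C}^n\backslash E\) by uniqueness up to constants, and deduce the decay of \(d^cg(\xi)\) from \(d^cg=\O(|\zeta|^{3-2n})\) and \(\xi=\O(|\zeta|)\) via the Leibniz rule.
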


		\subsection{The approximate solution}\label{sec:approx_soln}
		
			Let \((M,\omega)\) be a weighted extremal manifold, and let \(p\in M\) be a common fixed point for both the \(T\)-action and the extremal field. Let \((z_1,\ldots,z_n)\) be a system of normal coordinates centred at \(p\), with respect to which the action of \(T\) is linear and diagonal. Such coordinates exist by the Bochner linearisation theorem, which says we may choose holomorphic coordinates about \(p\) with respect to which the \(T\)-action is linear. The \(T\)-action is unitary on \(T_pM\), so we may further find a linear change of coordinates such that the \(T\)-action is diagonal in these coordinates, and such that the induced linear coordinates on \(T_pM\) are orthonormal. Lastly, taking a Taylor expansion of the metric \(\omega\) about \(p\), one sees that the linear terms in \(\omega\) can only be non-zero in the directions where the torus acts trivially. Performing a quadratic change of coordinates as in \cite[Proposition 1.14]{Sze14}, we produce normal coordinates for which the \(T\)-action is diagonal.
			
			Without loss of generality, we can assume the coordinates \(z_1,\ldots,z_n\) are well-defined for \(|z|<2\). For \(r<2\), let \[B_r:=\{z\in M:|z|<r\}\] be the ball of radius \(r\) centred at \(p\), and define \[\widetilde{B}_r:=\pi^{-1}(B_r)\subset\Bl_pM.\] Similarly, for the standard complex coordinates \(\zeta\) on \(\mathbb{C}^n\) and \(R>0\), we will write \[D_R:=\{\zeta\in\mathbb{C}^n:|\zeta|<R\},\] and define \[\widetilde{D}_R:=\pi^{-1}(D_R)\subset \Bl_0\mathbb{C}^n.\]
			
			For \(\epsilon>0\) sufficiently small, let \begin{equation}\label{eq:r_epsilon}
			r_\epsilon:=\epsilon^{\frac{2n-1}{2n+1}},\quad R_\epsilon:=\epsilon^{-1}r_\epsilon.
			\end{equation} We then have \(r_\epsilon\to0\) as \(\epsilon\to0\), and \(R_\epsilon\to\infty\) as \(\epsilon\to0\). We will identify \(\widetilde{B}_{r_\epsilon}\subset \Bl_pM\) with the subset \(\widetilde{D}_{R_\epsilon}\subset\Bl_0\mathbb{C}^n\) via \(\iota_\epsilon:\widetilde{B}_{r_\epsilon}\to\widetilde{D}_{R_\epsilon}\), which is the unique lift of the map \(\iota_\epsilon:B_{r_\epsilon}\to D_{R_\epsilon}\), \(\iota_\epsilon(z):=\epsilon^{-1}z\).
			
			Let \(\rho:\mathbb{R}_{\geq0}\to[0,1]\) be a smooth function such that \(\rho(x)=0\) for \(x<1\) and \(\rho(x)=1\) for \(x>2\). Given \(\epsilon>0\) sufficiently small, on \(\widetilde{B}_1\backslash \widetilde{B}_\epsilon\) we define \begin{equation}\label{eq:gammas}
			\gamma_{1,\epsilon}(z):=\rho(|z|/r_\epsilon),\quad\quad\quad\gamma_{2,\epsilon}:=1-\gamma_{1,\epsilon},
			\end{equation} where \(r_\epsilon\) is defined in \eqref{eq:r_epsilon} and the coordinates \(z\) are pulled back from \(M\). We extend the \(\gamma_{j,\epsilon}\) to smooth functions on all of \(\Bl_pM\) by taking \(\gamma_{1,\epsilon}|_{\widetilde{B}_\epsilon}:=0\), \(\gamma_{1,\epsilon}|_{\Bl_pM\backslash \widetilde{B}_1}:=1\), and \(\gamma_{2,\epsilon}:=1-\gamma_{1,\epsilon}\). 
			
			The metric \(\omega\) has an expansion \begin{equation}\label{eq:omega_about_p}
				\omega=i\d\db(|z|^2+f(z))
			\end{equation} about \(p\), where \(f(z)\) is \(\O(|z|^4)\), by definition of normal coordinates. Note that \[\hat{f}(z):= \int_{T}f(tz)dt\] is \(T\)-invariant, \(\O(|z|^4)\), and also satisfies \eqref{eq:omega_about_p} since all other terms and operators in \eqref{eq:omega_about_p} are \(T\)-invariant or equivariant, so we will assume henceforth that \(f\) in \eqref{eq:omega_about_p} is \(T\)-invariant. Since \(\pi:\widetilde{B}_1\backslash E\to B_1\backslash\{p\}\) is a biholomorphism, the coordinates \(z_1,\ldots,z_n\) on \(B_1\backslash\{p\}\) lift to coordinates on \(\widetilde{B}_1\backslash E\), which we denote by the same symbols. We define the approximate solution \(\omega_\epsilon\) on three separate regions as follows: \begin{enumerate}
				\item On \(\Bl_pM\backslash\widetilde{B}_1\), \[\omega_\epsilon:=\pi^*\omega.\]
				\item On \(\widetilde{B}_1\backslash\widetilde{B}_\epsilon\), \[\omega_\epsilon:=i\d\db(|z|^2+\gamma_{1,\epsilon}(z)f(z)+\gamma_{2,\epsilon}(z)\epsilon^2g(\epsilon^{-1}z)),\] where \(f\) is defined in \eqref{eq:omega_about_p} and \(g\) is defined in \eqref{eq:BS_metric}.
				\item  On \(\widetilde{B}_\epsilon\), \[\omega_\epsilon:=\iota_\epsilon^*(\epsilon^2\eta),\] where \(\iota_\epsilon:\widetilde{B}_{\epsilon}\to\widetilde{D}_1\) is the biholomorphism lifting the map \(B_\epsilon\to D_1\), \(z\mapsto \epsilon^{-1}z\).
			\end{enumerate} It is easy to see these constructions give a well-defined real closed \((1,1)\)-form on \(\Bl_pM\). Furthermore, for \(\epsilon>0\) sufficiently small, the growth conditions on \(f\) and \(g\) imply \(\omega_\epsilon\) is a K{\"a}hler metric. Lastly, \(\omega_\epsilon\) is equal to \(\omega\) outside \(\widetilde{B}_{2r_\epsilon}\), and equal to \(\iota_\epsilon^*(\epsilon^2\eta)\) on \(\widetilde{B}_{r_\epsilon}\).
		
			We now describe an explicit moment map \(\mu_\epsilon:\Bl_pM\to\mathfrak{t}^*\) for \(\omega_\epsilon\). \begin{lemma}\label{lem:epsilon_moment_map}
				There exists a moment map \(\mu_\epsilon:\Bl_pM\to\mathfrak{t}^*\) for \(\omega_\epsilon\), satisfying: \begin{enumerate}
					\item On \(\Bl_pM\backslash\widetilde{B}_{2r_\epsilon}\), \[\mu_\epsilon=\pi^*\mu,\]
					\item On \(\widetilde{B}_1\backslash\widetilde{B}_\epsilon\),  \[\mu_\epsilon=\mu+d^c(\gamma_{2,\epsilon}(z)(\epsilon^2g(\epsilon^{-1}z)-f(z))),\] 
					\item On \(\widetilde{B}_{r_\epsilon}\backslash E\), \[\mu_\epsilon=\mu(p)+\epsilon^2\sum_{j=1}^nA_{j}|\epsilon^{-1}z_j|^2+d^c(\epsilon^2g(\epsilon^{-1}z)),\] 
				\end{enumerate} where \(\mu:M\to\mathfrak{t}^*\) is the moment map for \(\omega\), the functions \(f\) and \(g\) are defined in \eqref{eq:omega_about_p} and \eqref{eq:BS_metric} respectively, and \(A_{j}\in\mathfrak{t}^*\) is the diagonal matrix representation of the \(\mathfrak{t}\)-action on \(T_pM\cong\mathbb{C}^n\). For \(\epsilon>0\) sufficiently small, the image of \(\mu_\epsilon\) is a convex polytope \(P_\epsilon\) contained in the moment polytope \(P:=\mu(M)\).
			\end{lemma}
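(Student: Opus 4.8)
The plan is to glue the three regional formulas for \(\mu_\epsilon\)---which are essentially derived already in the discussion preceding the statement---into a single globally defined moment map, and then to read off \(P_\epsilon\subseteq P\) from the Atiyah--Guillemin--Sternberg convexity theorem. I would proceed in three steps: (1) patch the formulas on \(\Bl_pM\setminus\widetilde{B}_1\), on \(\widetilde{B}_1\setminus\widetilde{B}_\epsilon\), and on \(\widetilde{B}_\epsilon\) into a smooth \(T\)-invariant function satisfying \(\langle d\mu_\epsilon,\xi\rangle=-\omega_\epsilon(\xi,-)\) away from \(E\); (2) extend it smoothly across \(E\); (3) establish \(P_\epsilon\subseteq P\). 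For step (1): on the overlap \(\widetilde{B}_1\setminus\widetilde{B}_{2r_\epsilon}\) the cutoff \(\gamma_{2,\epsilon}\) vanishes, so the second formula collapses to \(\mu\), which agrees with \(\pi^*\mu\); on the overlap \(\widetilde{B}_{r_\epsilon}\setminus\widetilde{B}_\epsilon\) we have \(\gamma_{2,\epsilon}\equiv1\), and combining the expansion \eqref{eq:omega_about_p} with Lemma \ref{lem:moment_map_change} identifies \(\mu-d^cf\) with the Euclidean moment map normalised to take the value \(\mu(p)\) at \(p\), which is exactly the third formula on this annulus. On each of the three regions the displayed form is a moment map for \(\omega_\epsilon\) by Lemma \ref{lem:moment_map_change}, together with the facts that moment maps pull back under the \(T\)-equivariant biholomorphism \(\iota_\epsilon\) and that rescaling \(\eta\) by \(\epsilon^2\) rescales its moment map by \(\epsilon^2\). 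Since a moment map for a torus action is unique up to an additive constant, agreement on the overlaps forces the three pieces to glue, and \(T\)-invariance is automatic because every potential used is \(T\)-invariant.

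Step (2) is where the Burns--Simanca analysis of Section \ref{sec:Burns-Simanca} is needed. On \(\widetilde{B}_\epsilon\) we have \(\omega_\epsilon=\iota_\epsilon^*(\epsilon^2\eta)\), and Lemma \ref{lem:BS_moment_map} provides a moment map \(\mu_\eta\) defined on all of \(\Bl_0\mathbb{C}^n\), including over its exceptional divisor. Hence \(\mu(p)+\iota_\epsilon^*(\epsilon^2\mu_\eta)\) is a smooth moment map for \(\omega_\epsilon\) on \(\widetilde{B}_\epsilon\), and by the transformation rule for \(d^c\) under the rescaling \(z\mapsto\epsilon^{-1}z\) its restriction to \(\widetilde{B}_\epsilon\setminus E\) coincides with the third formula; thus \(\mu_\epsilon\) extends smoothly over \(E\), completing the construction of the global moment map.

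For step (3) I would apply the Atiyah--Guillemin--Sternberg theorem \cite{Ati82,GS82}: \(P_\epsilon\) is the convex hull of the values taken by \(\mu_\epsilon\) on the connected components of the \(T\)-fixed locus of \(\Bl_pM\), and these form a finite set, since \(d\mu_\epsilon^\xi=-\omega_\epsilon(\xi,-)\) vanishes on the fixed locus so that \(\mu_\epsilon\) is locally constant there. As \(\pi\) is \(T\)-equivariant and an isomorphism off \(E\), the fixed components of \(\Bl_pM\) are the proper transforms of the fixed components of \(M\), together with the fixed components of \(E=\mathbb{P}(T_pM)\) for the linearised \(T\)-action. On a proper transform \(\widetilde{F}\), connectedness together with the equality \(\mu_\epsilon=\pi^*\mu\) off \(\widetilde{B}_{2r_\epsilon}\) forces \(\mu_\epsilon(\widetilde{F})=\{\mu(F)\}\subseteq P\). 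On a fixed component contained in \(E\), the third formula shows \(\mu_\epsilon\) is constant there with value \(\mu(p)+\epsilon^2 b\), where \(b\) is the constant value of \(\mu_\eta\) on the corresponding fixed component of \(E\subset\Bl_0\mathbb{C}^n\); a short computation from the form of \(\mu_\eta\) in Lemma \ref{lem:BS_moment_map} shows that \(b\) is a nonnegative linear combination of the weights \(A_1,\ldots,A_n\) of the \(T\)-action on \(T_pM\). Finally, the equivariant Darboux normal form of the moment map at the fixed point \(p\) identifies a neighbourhood of \(\mu(p)\) in \(P\) with the truncated cone \(\{\mu(p)+\sum_k t_k A_k:t_k\geq0,\ \sum_k t_k<\delta\}\), so \(\mu(p)+\epsilon^2 b\in P\) once \(\epsilon\) is small enough. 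Hence every fixed-component value of \(\mu_\epsilon\) lies in the convex set \(P\), and therefore \(P_\epsilon\subseteq P\).

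I expect step (3) to be the main obstacle; steps (1) and (2) are essentially bookkeeping with the cutoff functions and the \(\epsilon^{-1}\)-rescaling once Lemmas \ref{lem:moment_map_change} and \ref{lem:BS_moment_map} are available. The work in step (3) is to pin down the fixed-point values of \(\mu_\epsilon\) precisely---in particular to extract the value of \(\mu_\eta\) on the exceptional divisor of \(\Bl_0\mathbb{C}^n\) from the Burns--Simanca description---and to relate the weights \(A_j\) to the local structure of \(\mu\) near \(p\) through the normal form; a little extra care is also needed for fixed components of \(M\) of positive dimension that pass through \(p\).
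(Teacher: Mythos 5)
Your proposal follows the paper's argument essentially verbatim: the three regional formulas are obtained from Lemma \ref{lem:moment_map_change} together with the identification \(\omega_\epsilon=\iota_\epsilon^*(\epsilon^2\eta)\) on \(\widetilde{B}_\epsilon\), the extension across \(E\) comes from Lemma \ref{lem:BS_moment_map}, and the inclusion \(P_\epsilon\subset P\) is deduced from the Atiyah--Guillemin--Sternberg description of the moment polytope as the convex hull of the fixed-point images. Your step (3) merely spells out the fixed-point bookkeeping (values on proper transforms versus on the fixed locus of \(E\), and the local cone structure of \(P\) at \(\mu(p)\)) that the paper leaves implicit in its one-line appeal to Atiyah--Guillemin--Sternberg.
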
	\begin{proof} On \(\Bl_pM\backslash\widetilde{B}_1\), we take \(\mu_\epsilon:=\pi^*\mu\), where \(\mu\) is the moment map for \(\omega\). This fixes a normalisation for the moment map. On \(\widetilde{B}_1\backslash\widetilde{B}_\epsilon\), \begin{align*}
				\omega_\epsilon-\omega &= 	i\d\db(\gamma_{1,\epsilon}(z)f(z)+\gamma_{2,\epsilon}(z)\epsilon^2g(\epsilon^{-1}z)-f(z))\\
				&= i\d\db(\gamma_{2,\epsilon}(z)(\epsilon^2g(\epsilon^{-1}z)-f(z))).
			\end{align*} Hence by Lemma \ref{lem:moment_map_change}, the moment map on \(\widetilde{B}_1\backslash\widetilde{B}_\epsilon\) is \[\mu_\epsilon:=\mu+d^c(\gamma_{2,\epsilon}(z)(\epsilon^2g(\epsilon^{-1}z)-f(z)));\] note this agrees with the chosen normalisation on \(\Bl_p M\backslash\widetilde{B}_1\), since \(\gamma_{2,\epsilon}=0\) outside of \(\widetilde{B}_{2r_\epsilon}\).  On the region \(\widetilde{B}_{r_\epsilon}\backslash\widetilde{B}_\epsilon\), we have \(\gamma_{2,\epsilon}=1\) and so \[\mu_\epsilon=(\mu-d^cf)+d^c(\epsilon^2g(\epsilon^{-1}z)).\] Note by \eqref{eq:omega_about_p} and Lemma \ref{lem:moment_map_change}, the first term \(\mu-d^c f\) is the Euclidean moment map on \(B_1\) with normalisation \(\mu(p)-d^cf(p)=\mu(p)\). It follows that the moment map on \(\widetilde{B}_\epsilon\backslash E\) is \[\mu_\epsilon=\mu(p)+\epsilon^2\sum_{j=1}^nA_{j}|\epsilon^{-1}z_j|^2+d^c(\epsilon^2g(\epsilon^{-1}z)),\] where \(A_{j}\in\mathfrak{t}^*\) is the diagonal matrix representation of the \(\mathfrak{t}\)-action on \(T_pM\cong\mathbb{C}^n\). We know from Lemma \ref{lem:BS_moment_map} that this formula extends smoothly to a moment map on \(\widetilde{B}_\epsilon\), hence we have a moment map \(\mu_\epsilon\) for \(\omega_\epsilon\). 
			
			It remains to show that the image \(P_\epsilon:=\mu_\epsilon(\Bl_pM)\) is a subset of \(P:=\mu(M)\); by the Atiyah--Guillemin--Sternberg theorem  \cite{Ati82,GS82}, both \(P_\epsilon\) and \(P\) are convex polytopes. By observing that \(\mu_\epsilon\) is a small modification of \(\pi^*\mu\) near the exceptional locus,  we will show that convexity itself implies \(P_\epsilon\subset P\).
			
			To this end, we first show that for any open neighbourhood \(U\) of \(\mu(p)\) in \(\mathfrak{t}^*\), there exists \(\epsilon_0>0\) such that for all \(\epsilon>0\) satisfying \(\epsilon <\epsilon_0\), we have \(\mu_\epsilon(\tilde{B}_{2r_\epsilon})\subset U\). To see this, first note by \cite[p. 158]{Sze12}, on \(\widetilde{B}_{2r_\epsilon}\backslash\widetilde{B}_{r_\epsilon}\), \[\gamma_{1,\epsilon}(z)f(z)+\gamma_{2,\epsilon}(z)\epsilon^2g(\epsilon^{-1}z)=\O(|z|^4),\] and so \begin{align*}
				\mu_\epsilon - \mu &= d^c(\gamma_{1,\epsilon}(z)f(z) + \gamma_{2,\epsilon}(z)\epsilon^2g(\epsilon^{-1}z) - f(z)) \\
				&= \O(|z|^4),
			\end{align*} noting that although \(d^c\) of something \(\O(|z|^4)\) is \(\O(|z|^3)\), we implicitly evaluate at holomorphic vector fields that vanish at \(p\), which are \(\O(|z|)\). Since \(z\) here is constrained to \(\widetilde{B}_{2r_\epsilon}\), the above term is then \(\O(r_\epsilon^4)\) as \(\epsilon\to0\), and so tends to \(0\) as \(\epsilon\to0\). Similarly by the estimates on \(g\) in Lemma \ref{lem:BS_moment_map}, \(\mu_\epsilon-\mu(p)\) is \(\O(r_\epsilon^2 + \epsilon R_\epsilon^{4-2n})\) on \(\widetilde{B}_{r_\epsilon}\), and also tends to \(0\) as \(\epsilon\to0\).  These estimates on \(\mu_\epsilon\) in terms of \(\mu\) and \(\mu(p)\) imply the claim in the first sentence of this paragraph.
			
			Since \(\mu(M\backslash B_{2r_\epsilon}) = \mu_\epsilon(\Bl_p M\backslash \widetilde{B}_{2r_\epsilon})\), we need only consider the shape of the moment polytopes \(P\) and \(P_\epsilon\) near \(\mu(p)\). If \(\mu(p)\) is an interior point of \(P\) then we take \(U\) above to be contained in \(P\) and we are done. Otherwise, \(\mu(p)\) lies on the boundary of \(P\); let us write \(P=H_1\cap\cdots\cap H_k\) as the intersection of closed half-spaces in \(\mathfrak{t}^*\), where \(k\) is minimal so each \(H_j\) corresponds to a face of \(P\). Then \(\mu(p)\in\d H_{j_1}\cap\cdots\cap \d H_{j_\ell}\) for some \(1\leq j_1<\cdots<j_\ell\leq k\) where \(\ell\geq 1\) is maximal. We choose an open neighbourhood \(U\) of \(\mu(p)\) sufficiently small so that: (a) every vertex (except for \(\mu(p)\) if \(\mu(p)\) is a vertex) has an open neighbourhood disjoint with \(U\), and (b) \(P\cap U = H_{j_1}\cap\cdots\cap H_{j_\ell}\cap U\). We then take \(\epsilon_0\) so that for \(0<\epsilon<\epsilon_0\), \(\mu_\epsilon(\widetilde{B}_{2r_\epsilon})\subset U\). Now suppose that for some \(0<\epsilon<\epsilon_0\), there exists \(\xi\in P_\epsilon\) such that \(\xi\notin U\backslash P\). Then \(\xi\notin H_{j_1}\), without loss of generality. Let \(\xi'\in P\) be a vertex of \(P\) distinct from \(\mu(p)\) such that the straight line segment from \(\mu(p)\) to \(\xi'\) lies on \(\d P\cap \d H_{j_1}\). By our choice of \(U\), \(\xi'\) has an open neighbourhood \(V\) such that \(V\cap P = V\cap P_\epsilon\). Denote by \(L\) the half-open line segment from \(\xi\) to \(\xi'\), not including \(\xi'\). Since \(P_\epsilon\) is convex, \(L\) must lie within \(P_\epsilon\). However, note that no point of \(L\) is within \(H_{j_1}\), and hence no point of \(L\) lies within \(P\). Since \(P\cap V = P_\epsilon\cap V\), there exist points in \(L\) that do not lie in \(P_\epsilon\); a contradiction.
			\end{proof}

			\begin{remark}
				The inclusion \(P_\epsilon\subset P\) allows us to restrict \(v\) and \(w\) to \(P_\epsilon\), so the weighted scalar curvature \(S_{v,w}(\omega_\epsilon)\) is well-defined and it makes sense to search for a \((v,w)\)-extremal metric on \(\Bl_pM\). We remark that the inclusion \(P_\epsilon\subset P\) may not be strict, in particular if \(\mu(p)\) lies in the interior of \(P\) then we will have \(P_\epsilon=P\) for all \(\epsilon\) (I thank Eveline Legendre for pointing out this possibility).
			\end{remark}

		\subsection{The deformation problem}\label{sec:deformation}
		
			Let \((M,\omega)\) be a weighted extremal manifold. Then \(X:=\nabla S_{v,w}(\omega)\) is a \(T\)-invariant real holomorphic vector field and \(JX\) preserves \(\omega\), where \(J\) is the integrable almost complex structure of \(X\). Let \(p\) be a fixed point of both \(T\) and the extremal field \(X\). We make the following definitions: \begin{enumerate}
			\item \(G\) is the group of \(T\)-commuting hamiltonian isometries of \((M,\omega)\),
			\item \(G_p\) is the subgroup of \(G\) fixing \(p\),
			\item \(T'\subset G_p\) is a maximal torus, and
			\item \(H\subset G\) is the subgroup of automorphisms commuting with \(T'\).
			\end{enumerate} We write the Lie algebras of \(T'\) and \(H\) as \(\mathfrak{t}'\) and \(\mathfrak{h}\) respectively, and note the inclusions \(\mathfrak{t}\subset\mathfrak{t}'\subset\mathfrak{h}\). If \(T\) was maximal in the hamiltonian isometry group to begin with, these inclusions are all equalities.
			
			\begin{remark}
			In the previous section, we constructed a \(T\)-invariant metric \(\omega_\epsilon\) using \(T\)-invariant coordinates \(z\) near the fixed point \(p\). Since \(T'\) also acts by hamiltonian isometries and fixes the point \(p\), we can assume that the \(z\)-coordinates are in fact \(T'\)-invariant, and thus all the constructions from the previous section, including \(\omega_\epsilon\), are \(T'\)-invariant as well.
			
			In addition, we claim that if \(\varphi\) is a \(T'\)-invariant K{\"a}hler potential, then the weighted scalar curvature \(S_{v,w}(\omega_\epsilon+i\d\db\varphi)\) is also \(T'\)-invariant. To see this, by the chain rule it suffices to show the moment map \(\mu_\epsilon\) for \(T\) is \(T'\)-invariant. Let \(\mu_\epsilon^a\) be a component function of \(\mu_\epsilon\) generating \(\xi_a\in\mathfrak{t}\), and let \(\xi\in\mathfrak{t}'\). We write \(\widetilde{\xi}_a\) and \(\widetilde{\xi}\) for the corresponding vector fields on \(\Bl_pM\). Then \(\widetilde{\xi}(\mu_\epsilon^a)\) is the hamiltonian generator of \([\widetilde{\xi},\widetilde{\xi}_a]=0\), hence this function is constant. It vanishes at a maximum of \(\mu_\epsilon^a\), and is therefore identically zero.
			\end{remark}
			
			We may assume that the moment maps \(\mu\) for \(T\) and \(\mu_H\) for \(H\) satisfy \[\overline{\mu}:=\int_M\mu\,w(\mu)\omega^n=0\in\mathfrak{t}^*,\quad\overline{\mu}_H:=\int_M\mu_H\,w(\mu)\omega^n=0\in\mathfrak{h}^*.\] This is achieved by replacing \(\mu\) with \(\mu-\overline{\mu}\) and \(\mu_H\) with \(\mu_H-\overline{\mu}_H\); this clearly preserves the moment map equation, and to see equivariance is maintained for \(\mu_H\), note \begingroup
			\allowdisplaybreaks \begin{align*}
			\mathrm{ad}(\xi)^*(\overline{\mu}_H) &= \int_M\mathrm{ad}(\xi)^*(\mu_H)w(\mu)\omega^n \\
			&= \int_M\mathcal{L}_\xi(\mu_H)w(\mu)\omega^n \\
			&= \int_M\mathcal{L}_\xi(\mu_H w(\mu)\omega^n) \\
			&= 0 \\
			&= \mathcal{L}_\xi(\overline{\mu}_H).
			\end{align*} \endgroup Here \(\mathrm{ad}(\xi)^*\) denotes the coadjoint action of an element \(\xi\in\mathfrak{h}\) on \(\mathfrak{h}^*\); in the second line we used equivariance of \(\mu_H\), and in the third we used \(\mathcal{L}_\xi\omega=0\) as well as \(\xi(\mu)=0\), which follows since \(\mathfrak{t}\) is central in \(\mathfrak{h}\). Note this adjustment of moment maps shifts the moment polytope \(P\), however we can also translate the weight functions \(v,w\) by \(\overline{\mu}\) so that they are well defined on this shift, and this preserves the weighted extremal condition. 
			
			On the compact Lie algebra \(\mathfrak{h}\), we now fix the \(H\)-invariant inner product \begin{equation}\label{eq:inner_product}
			\langle\xi,\xi'\rangle_{\mathfrak{h}}:=\int_M\langle\mu_H,\xi\rangle\langle\mu_H,\xi'\rangle w(\mu)\omega^n,
			\end{equation} where the pairing \(\langle-,-\rangle\) inside the integral is the natural dual pairing between \(\mathfrak{h}^*\) and \(\mathfrak{h}\). Via this inner product, we identify the Lie algebra and its dual \(\mathfrak{h}\cong\mathfrak{h}^*\). Under this identification, the moment map \(\mu_H\) for the \(H\)-action can then be considered to take values in \(\mathfrak{h}\), rather than \(\mathfrak{h}^*\); we will write \(\mu_H^{\#}\) when we do this. \begin{definition}\label{def:relatively_stable}
			We say the point \(p\) is \emph{relatively stable} if \(\mu_H^{\#}(p)\in\mathfrak{h}_p\), that is, the vector field generated by \(\mu_H^{\#}(p)\in\mathfrak{h}\) fixes the point \(p\).
			\end{definition}  We remark this notion does not depend on the particular choice of invariant inner product; any other invariant product will differ from the chosen one by an equivariant isomorphism \(\mathfrak{h}\to\mathfrak{h}\), and any such isomorphism preserves \(\mathfrak{h}_p\).
			
			For a Lie subalgebra \(\mathfrak{s}\) of \(\mathfrak{h}\), we will write \[\overline{\mathfrak{s}}:=\{h\in C^\infty(M,\mathbb{R}):dh=\omega(-,Y)\text{ for some } Y\in\mathfrak{s}\},\] so that \(J\nabla:\overline{\mathfrak{s}}\to\mathfrak{s}\) is a surjection with kernel the constant functions.
			
			For each \(\epsilon>0\) sufficiently small, we will define a lifting function \(\ell_\epsilon:\overline{\mathfrak{h}}\to C^\infty(\Bl_pM,\mathbb{R})^{T'}\) in terms of the metric \(\omega_\epsilon\) constructed in \ref{sec:approx_soln}. Write \(\mathfrak{h}'\) for the orthogonal complement of \(\mathfrak{t}'\) in \(\mathfrak{h}\) with respect to the fixed invariant metric, so that \(\mathfrak{h}=\mathfrak{t}'\oplus\mathfrak{h}'\). This yields a decomposition \(\overline{\mathfrak{h}}=\overline{\mathfrak{t}'}\oplus\mathfrak{h}'\), where we have identified elements of \(\mathfrak{h}'\) with their generators in \(\overline{\mathfrak{h}}\) that are normalised to vanish at \(p\). 
			
			Any element \(h\in\overline{\mathfrak{t}'}\) generates a real holomorphic vector field \(Y\) on \(M\) that vanishes at \(p\) via the hamiltonian equation \(dh=\omega(-,Y)\), and \(Y\) lifts to a real holomorphic vector field \(\widetilde{Y}\) on \(\Bl_pM\). We define \(\ell_\epsilon(h)\) to be the hamiltonian potential for \(\widetilde{Y}\) with respect to \(\omega_\epsilon\), normalised so that \(\ell_\epsilon(h)=\pi^*h\) outside of \(\widetilde{B}_{2r_\epsilon}\) (recall that \(\omega_\epsilon=\omega\) outside \(\widetilde{B}_{2r_\epsilon}\)). For \(h\in\mathfrak{h}'\), we define \(\ell_\epsilon(h)=\pi^*(\gamma_{1,\epsilon}h)\), where \(\gamma_{1,\epsilon}\) was defined in Section \ref{sec:approx_soln}. This defines \(\ell_\epsilon\) uniquely on \(\overline{\mathfrak{h}}=\overline{\mathfrak{t}'}\oplus\mathfrak{h}'\).
			
			On \(\Bl_pM\), the weighted extremal problem can be written \begin{equation}\label{eq:weighted_extremal}
				S_{v,w}(\omega_\epsilon+i\d\db\varphi)-\frac{1}{2}\nabla_{\epsilon} h\cdot\nabla_{\epsilon}\varphi = h
			\end{equation} for \(\varphi\in\mathcal{H}_{\Bl_pM}^T\), where \(\nabla_\epsilon\) is the gradient operator of \(\omega_\epsilon\) and \(h\) is a \(T\)-invariant holomorphy potential with respect to \(\omega_\epsilon\) \cite[Lemma 4.10]{Sze14}. We will not attempt to deform \(\omega_\epsilon\) to a solution of \eqref{eq:weighted_extremal} directly, but instead prove the following direct analogue of \cite[Proposition 14]{Sze12}.
		
			\begin{proposition}\label{prop:approx_equation}
				Let \((M,\omega)\) be a \((v,w)\)-weighted extremal manifold, and let \(p\in M\) be fixed by both \(T\) and the extremal field \(X\). For all sufficiently small \(\epsilon>0\), there exist \(\varphi_\epsilon\in C^\infty(\Bl_pM)^{T'}\) and \(h_{p,\epsilon}\in\overline{\mathfrak{h}}\) such that \(\omega_\epsilon+i\d\db \varphi_\epsilon>0\) and \[S_{v,w}(\omega_\epsilon+i\d\db \varphi_\epsilon)-\frac{1}{2}\nabla_\epsilon\ell_\epsilon(h_{p,\epsilon})\cdot\nabla_\epsilon \varphi_\epsilon=\ell_\epsilon(h_{p,\epsilon}).\] Moreover, for \(\epsilon>0\) sufficiently small the following expansion holds in \(\overline{\mathfrak{h}}\): \[h_{p,\epsilon} = s + \epsilon^{2n-2}c_n\overline{\mu_H^{\#}(p)}+h_{p,\epsilon}',\] where \(s:=S_{v,w}(\omega)\in\overline{\mathfrak{h}}\) is the weighted scalar curvature generating the extremal field on \(M\), \(c_n\) is a constant depending only on \(n\),  \(\overline{\mu_H^{\#}(p)}\) is a fixed lift of \(\mu_H^{\#}(p)\) to \(\overline{\mathfrak{h}}\), and the \(h_{p,\epsilon}\) satisfy \(|h_{p,\epsilon}'|\leq C\epsilon^\kappa\) for some \(\kappa>2n-2\) and \(C>0\) independent of \(\epsilon\).
			\end{proposition}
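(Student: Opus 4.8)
The proof is the weighted analogue of the gluing-and-perturbation argument of \cite{Sze12}. The plan is to encode the weighted extremal equation as the vanishing of a nonlinear map on the $\epsilon$-weighted H\"older spaces of Section~\ref{sec:weighted_norms}, to estimate the error of the approximate solution $\omega_\epsilon$, to construct a right inverse for the linearisation with norm growing at a controlled rate in $\epsilon$, and then to invoke the contraction mapping theorem; the expansion of $h_{p,\epsilon}$ is read off afterwards from the leading term of the error. Concretely, fix $k$ and $\alpha\in(0,1)$, and for each small $\epsilon>0$ consider
\[
\mathcal{F}_\epsilon(\varphi,h):=S_{v,w}(\omega_\epsilon+i\d\db\varphi)-\tfrac12\nabla_\epsilon\ell_\epsilon(h)\cdot\nabla_\epsilon\varphi-\ell_\epsilon(h)
\]
as a map from $C^{k+4,\alpha}_\epsilon(\Bl_pM)^{T'}\times\overline{\mathfrak{h}}$ to $C^{k,\alpha}_\epsilon(\Bl_pM)^{T'}$, which is well defined since $\ell_\epsilon$ is linear and all operators involved are $T'$-equivariant, so the ensuing iteration stays in $T'$-invariant spaces; a zero of $\mathcal{F}_\epsilon$ with $\omega_\epsilon+i\d\db\varphi>0$ is exactly the metric we want, and elliptic regularity (the linearisation has positive fourth-order symbol $\tfrac{v}{w}\Lich^*\Lich$) upgrades such a $\varphi$ to a smooth one. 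Choosing the maximal torus $\mathfrak{t}'$ to contain the extremal field, we may assume $s:=S_{v,w}(\omega)\in\overline{\mathfrak{t}'}$, and then $\omega_\epsilon=\pi^*\omega$, $\mu_\epsilon=\pi^*\mu$ and $\ell_\epsilon(s)=\pi^*s$ all hold outside $\widetilde{B}_{2r_\epsilon}$, so the error $\mathcal{F}_\epsilon(0,s)=S_{v,w}(\omega_\epsilon)-\ell_\epsilon(s)$ is supported in $\widetilde{B}_{2r_\epsilon}$.

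For the \textbf{error estimate}, use the splitting $S_{v,w}(\omega)=\tfrac{v}{w}S(\omega)+\Phi_{v,w}(\omega)$ of \eqref{eq:Phi}. On $\widetilde{B}_{r_\epsilon}$ one has $\omega_\epsilon=\iota_\epsilon^*(\epsilon^2\eta)$, whose ordinary scalar curvature vanishes, so the error is governed by $\Phi_{v,w}(\epsilon^2\eta)$ together with $\ell_\epsilon(s)$; by Lemma~\ref{lem:Phi}, $\Phi_{v,w}$ is built from $\Delta\mu_\epsilon^a$ and $g(\xi_a,\xi_b)$, and the explicit formula for $\mu_\epsilon$ on $\widetilde{B}_{r_\epsilon}$ in Lemma~\ref{lem:epsilon_moment_map}(3), combined with the moment-map estimates of Section~\ref{sec:estimates_moment_maps}, bounds this by a power of $\epsilon$. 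On the gluing annulus $\widetilde{B}_{2r_\epsilon}\setminus\widetilde{B}_{r_\epsilon}$ the remaining contributions are controlled from the growth conditions $f=\O(|z|^4)$ and $g=-|\zeta|^{4-2n}+\O(|\zeta|^{3-2n})$ together with the choices $r_\epsilon=\epsilon^{(2n-1)/(2n+1)}$, $R_\epsilon=\epsilon^{-1}r_\epsilon$ of \eqref{eq:r_epsilon}. The upshot is $\|\mathcal{F}_\epsilon(0,s)\|_{C^{k,\alpha}_\epsilon}\lesssim\epsilon^{2n-2}$, with the leading $\epsilon^{2n-2}$-term carried by the $-|\zeta|^{4-2n}$ part of the Burns--Simanca potential.

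The main obstacle is the \textbf{right inverse}. The differential of $\mathcal{F}_\epsilon$ at $(0,s)$ sends $(\psi,h')$ to $\check{L}_{\omega_\epsilon}\psi-\tfrac12\nabla_\epsilon\ell_\epsilon(s)\cdot\nabla_\epsilon\psi-\ell_\epsilon(h')$ --- call this operator $\mathcal{L}_\epsilon$ --- and by Lemma~\ref{lem:weighted_linearisation} the operator $\check{L}_{\omega_\epsilon}$ equals $\tfrac{v}{w}L_{\omega_\epsilon}$ plus lower-order terms involving $\db^*\Lich_\epsilon\psi$, $\Lich_\epsilon(v(\mu_\epsilon))$, $\nabla_\epsilon\Phi_{v,w}(\omega_\epsilon)$ and $\nabla_\epsilon(v/w)$; its principal part is $\tfrac{v}{w}\Lich^*_\epsilon\Lich_\epsilon$, the fourth-order operator of the unweighted theory up to the bounded positive factor $v/w$. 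Following \cite{Sze12,APS11}, one enlarges the domain by $\overline{\mathfrak{h}}$ through the lifting $\ell_\epsilon$ so as to kill the approximate cokernel, records Sz\'ekelyhidi's estimate giving a right inverse of $L_{\omega_\epsilon}$ with norm $\lesssim\epsilon^{-\delta}$ for a fixed $\delta$ (obtained by patching the analysis on $M$ to that on the asymptotically Euclidean $\Bl_0\mathbb{C}^n$), and then shows that the extra weighted terms --- whose coefficients are small near $E$ because the moment polytope image $\mu_\epsilon(\widetilde{B}_{r_\epsilon})$ shrinks to a point --- perturb the operator by an amount that is small in the $C^{k+4,\alpha}_\epsilon\to C^{k,\alpha}_\epsilon$ operator norm relative to $\epsilon^{-\delta}$, so a Neumann series still yields a right inverse $P_\epsilon$ with $\|P_\epsilon\|\lesssim\epsilon^{-\delta}$. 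This is exactly where the uniform mapping properties of the weighted H\"older norms of Section~\ref{sec:weighted_norms} and the estimates of Section~\ref{sec:estimates_moment_maps} are indispensable, and it is the step with no counterpart in \cite{Sze12}.

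Finally, write $\mathcal{F}_\epsilon(\varphi,h)=\mathcal{F}_\epsilon(0,s)+\mathcal{L}_\epsilon(\varphi,h-s)+Q_\epsilon(\varphi,h)$ with $Q_\epsilon$ collecting the terms of order $\geq2$; then $\mathcal{F}_\epsilon=0$ is equivalent to the fixed-point problem $(\varphi,h-s)=-P_\epsilon\big(\mathcal{F}_\epsilon(0,s)+Q_\epsilon(\varphi,h)\big)$. Quadratic estimates for $Q_\epsilon$ --- delicate because $\Phi_{v,w}$ is nonlinear in the metric and $v(\mu_\varphi),w(\mu_\varphi)$ depend on $\varphi$, but again supplied by the weighted-norm machinery --- show this map is a contraction on a ball of radius a fixed power of $\epsilon$ in $C^{k+4,\alpha}_\epsilon\times\overline{\mathfrak{h}}$ once $\epsilon$ is small, producing $(\varphi_\epsilon,h_{p,\epsilon})$ with $\omega_\epsilon+i\d\db\varphi_\epsilon>0$ by smallness of $\varphi_\epsilon$. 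Projecting the fixed-point identity onto the $\overline{\mathfrak{h}}$-factor gives that $h_{p,\epsilon}-s$ equals minus the $\overline{\mathfrak{h}}$-component of $P_\epsilon\mathcal{F}_\epsilon(0,s)$ up to an error $\O(\epsilon^\kappa)$ with $\kappa>2n-2$; since pairing the error against a holomorphy potential $h\in\overline{\mathfrak{h}}$ with respect to the measure $w(\mu)\omega^n$ produces $c_n\epsilon^{2n-2}h(p)$ to leading order --- just as the $-|\zeta|^{4-2n}$ term pairs with the linear moment-map coordinates in the unweighted computation of \cite{APS11,Sze12} --- the inner product \eqref{eq:inner_product} identifies this component with $\epsilon^{2n-2}c_n\overline{\mu_H^{\#}(p)}$. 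This yields the claimed expansion and finishes the proof.
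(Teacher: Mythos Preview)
Your outline captures the broad contraction-mapping strategy correctly, but it omits the one technical ingredient that actually produces both the coefficient \(c_n\overline{\mu_H^{\#}(p)}\) and the remainder bound \(|h'_{p,\epsilon}|\leq C\epsilon^\kappa\) with \(\kappa>2n-2\): the Green's-function-type correction \(\Gamma\) of Lemma~\ref{lem:Gamma}. In the paper one first solves \(\tfrac{v}{w}\Lich_v^*\Lich\Gamma=h|_{M_p}\) on \(M_p\) with \(\Gamma=-|z|^{4-2n}+\cdots\), so that (i) the distributional equation forces \(h=c_n\overline{\mu_H^{\#}(p)}\) via the pairing with \(\delta_p\), and (ii) adding \(\epsilon^{2n-2}\gamma_1\Gamma\) to the K\"ahler potential matches \(\omega_\epsilon\) with \(\epsilon^2\eta\) to one higher order near \(E\). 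Only after making this correction does one obtain the sharp estimate \(\|\mathcal{N}(0,0)\|_{C^{4,\alpha}_{\delta}}\leq Cr_\epsilon^{4-\delta}\) of Proposition~\ref{prop:N_bounded_norm}, which with \(\delta\) close to \(4-2n\) gives \(r_\epsilon^{4-\delta}\sim\epsilon^{\kappa}\) for some \(\kappa>2n-2\). The \(\epsilon^{2n-2}h\) term is then put into the ansatz by hand, and the contraction mapping solves for the remainder \(f'\) with \(|f'|\leq 2C_1r_\epsilon^{4-\delta}\epsilon^\theta\).

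Without the \(\Gamma\) correction, your error \(\mathcal{F}_\epsilon(0,s)\) on the gluing annulus contains the unmatched term \(\gamma_2(z)\cdot(-\epsilon^{2n-2}|z|^{4-2n})\); since the cutoff \(\gamma_2\) varies on scale \(r_\epsilon\), applying the fourth-order operator produces a contribution whose \(C^{0,\alpha}_{\delta-4}\)-norm is only \(O(\epsilon^{2n-2})\), not \(o(\epsilon^{2n-2})\). The fixed point then gives \(|h_{p,\epsilon}-s|\lesssim\epsilon^{2n-2}\), which is not enough to isolate the leading coefficient with a remainder of strictly higher order. Your attempt to recover the expansion a posteriori by ``pairing the error against a holomorphy potential with respect to \(w(\mu)\omega^n\)'' does not go through as stated: the \(\overline{\mathfrak{h}}\)-component of \(P_\epsilon\) is defined via the glued right-inverse on \(M_p\) (equation~\eqref{eq:dummy}), not by an \(L^2\)-projection on \(\Bl_pM\), so you would still need to show that this component equals \(c_n\epsilon^{2n-2}\overline{\mu_H^{\#}(p)}\) up to \(o(\epsilon^{2n-2})\)---and that computation is precisely what Lemma~\ref{lem:Gamma} packages. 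You should also be explicit about the weight \(\delta\) in your H\"older spaces throughout; the bare notation \(C^{k,\alpha}_\epsilon\) hides the choice \(\delta\in(4-2n,0)\) on which both the right-inverse bound and the error estimate depend.
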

		
			Note that if \(h_{p,\epsilon}\in \overline{\mathfrak{t}'}\) then \(\ell_\epsilon(h_{p,\epsilon})\) is a \(T\)-invariant holomorphy potential on \(\Bl_pM\), so equation \eqref{eq:weighted_extremal} is satisfied and \(\omega_\epsilon+i\d\db \varphi_\epsilon\) is weighted extremal. Denote by \(H^c\) the complexification of the group \(H\), which acts on \(M\). Suppose there exists a point \(q\) in the \(H^c\)-orbit of \(p\) for which the condition \(h_{q,\epsilon}\in\overline{\mathfrak{t}'}\) is satisfied. Then \(\Bl_qM\) admits a \((v,w)\)-weighted extremal metric, but since the manifolds \(\Bl_pM\) and \(\Bl_qM\) are \(T\)-equivariantly biholomorphic, this implies \(\Bl_pM\) admits a \((v,w)\)-weighted extremal metric. The exact same argument as \cite[p. 11 -- Proof of Theorem 1]{Sze12} shows that if \(p\) is relatively stable, then such a point \(q\) exists:

			\begin{proposition}[{\cite{Sze12}}]\label{prop:point_q}
				If Proposition \ref{prop:approx_equation} holds, and the point \(p\) is relatively stable in the sense of Definition \ref{def:relatively_stable}, then there exists a point \(q\) in the \(H^c\)-orbit of \(p\) such that \(\Bl_qM\) admits a weighted extremal metric. Since \(\Bl_qM\) and \(\Bl_pM\) are \(T\)-equivariantly diffeomorphic, there exists a weighted extremal metric on \(\Bl_pM\).
			\end{proposition}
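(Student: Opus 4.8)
The statement is a finite-dimensional reduction that runs exactly as in \cite[p.~11, proof of Theorem~1]{Sze12}, so the plan is to import that argument and merely check that its inputs are supplied by Proposition~\ref{prop:approx_equation}. The target is a point $q$ in the $H^c$-orbit of $p$ with $h_{q,\epsilon}\in\overline{\mathfrak{t}'}$: for such a $q$ the function $\ell_\epsilon(h_{q,\epsilon})$ is a genuine $T$-invariant holomorphy potential on $\Bl_qM$, so by the discussion preceding the statement $\omega_\epsilon+i\d\db\varphi_\epsilon$ solves the weighted extremal equation on $\Bl_qM$, and then $\Bl_pM$ carries a weighted extremal metric because it is $T$-equivariantly biholomorphic to $\Bl_qM$ via an element of $H^c$. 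First I would observe that Proposition~\ref{prop:approx_equation} is available at every point of the orbit: since $H$ centralises $T'$, the torus $T'$ — hence $T$, and the extremal field, which after choosing $T'$ appropriately lies in $\mathfrak{t}'$ — is central in $H^c$, so each $q=\exp(iv)\cdot p$ with $v\in\mathfrak{h}$ is again fixed by $T$ and by the extremal field. One also needs the gluing construction and the expansion of Proposition~\ref{prop:approx_equation} to depend continuously on the blown-up point, uniformly for $q$ near $p$; granting this, for small $\epsilon$ one has $h_{q,\epsilon}=s+\epsilon^{2n-2}c_n\overline{\mu_H^\#(q)}+h'_{q,\epsilon}$ with $|h'_{q,\epsilon}|\le C\epsilon^\kappa$, $\kappa>2n-2$, and $C$ uniform in $q$.

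Next I would translate the condition $h_{q,\epsilon}\in\overline{\mathfrak{t}'}$. Since $s=S_{v,w}(\omega)$ is the holomorphy potential of the extremal field, which lies in $\mathfrak{t}'$, we have $s\in\overline{\mathfrak{t}'}$, hence $\pr_{\mathfrak{h}'}s=0$. Moreover the stabiliser of $p$ in $H$ is exactly $T'$: an element of $G$ fixing $p$ and commuting with $T'$ lies in the centraliser of the maximal torus $T'$ of $G_p$, hence in $T'$; thus $\mathfrak{h}_p=\mathfrak{t}'$, and relative stability of $p$ says precisely that $\mu_H^\#(p)\in\mathfrak{t}'$, i.e.\ $\pr_{\mathfrak{h}'}\mu_H^\#(p)=0$. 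Projecting the expansion of $h_{q,\epsilon}$ onto $\mathfrak{h}'$, the equation $h_{q,\epsilon}\in\overline{\mathfrak{t}'}$ becomes equivalent to
\[
\pr_{\mathfrak{h}'}\big(\mu_H^\#(q)\big)=-\tfrac{1}{c_n}\,\epsilon^{-(2n-2)}\,\pr_{\mathfrak{h}'}\big(h'_{q,\epsilon}\big)=:E_\epsilon(q),
\]
where $E_\epsilon$ is continuous in $q$ with $\sup_{q}|E_\epsilon(q)|=O(\epsilon^{\kappa-(2n-2)})\to 0$.

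It remains to solve this equation for $q$ near $p$ in the orbit, and here I would argue as follows. Because $\mathfrak{h}_p=\mathfrak{t}'$, a neighbourhood of $p$ in $H^c\cdot p$ is parametrised by $v\in\mathfrak{h}'$ via $q_v:=\exp(iv)\cdot p$. Set $\Phi(v):=\pr_{\mathfrak{h}'}\mu_H^\#(q_v)$; then $\Phi(0)=0$ by relative stability, and differentiating the defining relation of the moment map gives $\langle D\Phi_0(w),u\rangle_{\mathfrak{h}}=\pm g(w,u)|_p$ for $w,u\in\mathfrak{h}'$ (conflating an element of $\mathfrak{h}$ with the vector field it generates), a definite symmetric form that is nondegenerate since the generated vector field vanishes at $p$ only for elements of $\mathfrak{t}'$. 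Hence $D\Phi_0\colon\mathfrak{h}'\to\mathfrak{h}'$ is invertible, so $|\Phi(v)|\ge c|v|$ on a small sphere $\{|v|=r\}$; for $\epsilon$ small the map $v\mapsto\Phi(v)-E_\epsilon(q_v)$ is then homotopic to $\Phi$ on that sphere, has nonzero degree, and therefore a zero $v_\epsilon\to0$. Taking $q:=q_{v_\epsilon}$ completes the construction.

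The main obstacle is not this final root-finding step but the uniformity it rests on: one must know that the approximate metric, the contraction-mapping scheme behind Proposition~\ref{prop:approx_equation}, and the resulting expansion of $h_{\bullet,\epsilon}$ all vary continuously with the blown-up point, with the error term $O(\epsilon^\kappa)$ uniformly in $q$ near $p$. Establishing this amounts to going back through the proof of Proposition~\ref{prop:approx_equation} with the base point as a parameter, exactly as in \cite{Sze12}; by contrast the remaining ingredients — $s\in\overline{\mathfrak{t}'}$, the identification $\mathfrak{h}_p=\mathfrak{t}'$, and the computation of $D\Phi_0$ — are routine.
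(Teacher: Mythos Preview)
Your proposal is correct and reconstructs precisely the argument from \cite[p.~11, proof of Theorem~1]{Sze12} that the paper invokes without reproducing. The paper gives no independent proof here, so your approach is the same as the paper's by construction; your identification of the uniformity-in-$q$ of the contraction scheme as the only nontrivial ingredient is also accurate and matches what must be checked when carrying \cite{Sze12} over to the weighted setting.
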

		
		\begin{remark}
		We have taken care to make our constructions invariant under the maximal torus \(T'\) in \(H\). It will seem that we will never use this condition, however it is an essential ingredient of \cite[p. 11 -- Proof of Theorem 1]{Sze12}, so must be included in the present work.
		\end{remark}
		
			Thus, our only goal now is to prove Proposition \ref{prop:approx_equation}, as Proposition \ref{prop:point_q} will then imply Theorem \ref{thm:main}. Before launching into the proof, we must introduce the weighted H{\"o}lder norms.
		
		\subsection{Weighted norms}\label{sec:weighted_norms}
		
			We will define the weighted H{\"o}lder norms on three manifolds: \(M_p:=M\backslash\{p\}\), \(\Bl_0\mathbb{C}^n\) and \(\Bl_pM\). These are modifications of the \(C^{k,\alpha}\) norms that depend on an extra parameter \(\delta\in\mathbb{R}\), and in the case of \(\Bl_pM\) a further parameter \(\epsilon>0\). On the non-compact manifolds \(M_p\) and \(\Bl_0\mathbb{C}^n\) they allow or enforce certain growth or decay conditions at the ends, depending on the sign of \(\delta\).
			
			For \(f:M_p\to\mathbb{R}\), define \[\|f\|_{C^{k,\alpha}_\delta(M_p)}:=\|f\|_{C^{k,\alpha}(M\backslash B_{1})}+\sup_{0<r<1/2}r^{-\delta}\|f(rz)\|_{C^{k,\alpha}(B_2\backslash B_1)}.\] On \(M\backslash B_{1}\) we calculate the norm with respect to the metric \(\omega\), and on \(B_2\backslash B_1\) with respect to the fixed Euclidean metric defined by the coordinates \((z_1,\ldots,z_n)\) from Section \ref{sec:approx_soln}. The space \(C^{k,\alpha}_\delta(M_p)\) is the set of locally \(C^{k,\alpha}\)-functions on \(M_p\) with finite \(\|\cdot\|_{C^{k,\alpha}_\delta(M_p)}\)-norm.
			
			For \(f:\Bl_0\mathbb{C}^n\to\mathbb{R}\), let \[\|f\|_{C^{k,\alpha}_\delta(\Bl_0\mathbb{C}^n)}:=\|f\|_{C^{k,\alpha}(\widetilde{D}_1)}+\sup_{r>1}r^{-\delta}\|f(r\zeta)\|_{C^{k,\alpha}(\widetilde{D}_2\backslash \widetilde{D}_1)}.\] On \(\widetilde{D}_1\) we compute the norm with respect to the fixed metric \(\eta\), and on \(\widetilde{D}_2\backslash\widetilde{D}_1\) we use the Euclidean metric defined by the \(\zeta\)-coordinates. The space \(C^{k,\alpha}_\delta(\Bl_0\mathbb{C}^n)\) is the set of locally \(C^{k,\alpha}\)-functions on \(\Bl_0\mathbb{C}^n\) with finite \(\|\cdot\|_{C^{k,\alpha}_\delta(\Bl_0\mathbb{C}^n)}\)-norm.
			
			Let \(f\in C^{k,\alpha}(\Bl_pM,\mathbb{R})\). The \(C^{k,\alpha}_{\delta,\epsilon}\)-weighted norm of \(f\) is defined as \[\|f\|_{C^{k,\alpha}_{\delta,\epsilon}}:=\|f\|_{C^{k,\alpha}(\Bl_pM\backslash\widetilde{B}_1)}+\sup_{\epsilon\leq r\leq 1/2}r^{-\delta}\|f(rz)\|_{C^{k,\alpha}(\widetilde{B}_2\backslash\widetilde{B}_1)}+\epsilon^{-\delta}\|(\iota_\epsilon)_*(f)\|_{C^{k,\alpha}(\widetilde{D}_1)}.\] Here the norms are computed with respect to fixed background metrics, and we recall the map \(\iota_\epsilon:\widetilde{B}_\epsilon\to\widetilde{D}_1\) is the biholomorphism lifting the map \(B_\epsilon\to D_1\), \(z\mapsto\epsilon^{-1}z\). Note we do not include \(\Bl_pM\) in the notation for this norm; whenever the manifold is not specified, we take it that the norm is on \(\Bl_pM\).
			
			We must also define the weighted H{\"o}lder norms of a tensor \(T\) on \(\Bl_pM\). On \(\Bl_pM\backslash\widetilde{B}_1\) this is done as normal, with respect to the fixed  metric \(\omega\): \[\|T\|_{C^{k,\alpha}(\Bl_pM\backslash\widetilde{B}_1)}.\] Suppose that \(T\) is a section of \((T^*\widetilde{M})^m\otimes (T\widetilde{M})^\ell\), where \(\widetilde{M}:=\Bl_pM\). We define \(\sigma(T):=\ell-m\). On \(\widetilde{B}_2\backslash E\) we have the Euclidean coordinates \(z_1,\ldots,z_n\); we define \(\iota_r:\widetilde{B}_{2r}\backslash\widetilde{B}_r\to\widetilde{B}_2\backslash\widetilde{B}_1\) by \(\iota_r(z):=r^{-1}z\) for \(\epsilon\leq r\leq 1/2\). On \(\widetilde{B}_1\backslash\widetilde{B}_\epsilon\), the weighted norm is then \[\sup_{\epsilon\leq r\leq 1/2}r^{-\delta}\|r^{\sigma(T)}(\iota_r)_*T\|_{C^{k,\alpha}(\widetilde{B}_2\backslash\widetilde{B}_1)}.\] Finally, on the region \(\widetilde{B}_\epsilon\) we identify \(\widetilde{B}_\epsilon\) with \(\widetilde{D}_1\) via \(\iota_\epsilon:\widetilde{B}_\epsilon\to\widetilde{D}_1\) and take: \[\epsilon^{-\delta}\|\epsilon^{\sigma(T)}(\iota_\epsilon)_*T\|_{C^{k,\alpha}(\widetilde{D}_1)}.\] Thus, overall: \[\|T\|_{C^{k,\alpha}_{\delta,\epsilon}}:=\|T\|_{C^{k,\alpha}(\Bl_pM\backslash\widetilde{B}_1)}+\sup_{\epsilon\leq r\leq 1/2}r^{-\delta}\|r^{\sigma(T)}(\iota_r)_*T\|_{C^{k,\alpha}(\widetilde{B}_2\backslash\widetilde{B}_1)}+\epsilon^{-\delta}\|\epsilon^{\sigma(T)}(\iota_\epsilon)_*T\|_{C^{k,\alpha}(\widetilde{D}_1)}.\] This agrees with our definition of the \(C^{k,\alpha}_{\delta,\epsilon}\)-norm in the case \(T\) is a function. Note the central term in the norm on \(\widetilde{B}_2\backslash\widetilde{B}_1\) is equivalent to pulling back the components of the tensor \(r^{-\delta}T\) in the \(z\)-coordinates by \(\iota_r^{-1}\) and summing the \(C^{k,\alpha}\)-norms of these. However, the final term on \(\widetilde{D}_1\) does not have a similar description as the rescaling \(\iota_\epsilon\) is only in one direction, namely the fibre coordinate of \(\mathcal{O}_{\mathbb{P}^{n-1}}(-1)\). Equivalently, we could consider pulling back \(T\) to \(\widetilde{D}_1\), and then measuring its \(C^{k,\alpha}\)-norm with respect a fixed metric on \(\widetilde{D}_1\), but using the metric \((\epsilon^2\eta)^{-m}\otimes(\epsilon^2\eta)^{\ell}\) on the vector bundle \((T^*\widetilde{D}_1)^m\otimes(T\widetilde{D}_1)^\ell\).

			We similarly define the weighted \(C^k\)-norms \(\|\cdot\|_{C^k_{\delta,\epsilon}}\), without the H{\"o}lder coefficient \(\alpha\). The following properties will be useful: \begin{lemma}\label{lem:weighted_norm_properties} Let \(\epsilon>0\) and \(\delta,\delta'\in\mathbb{R}\). Then:
				 \begin{enumerate}
					\item If \(\delta\leq\delta'\) then \(\|T\|_{C^{k,\alpha}_{\delta,\epsilon}}\leq \|T\|_{C^{k,\alpha}_{\delta',\epsilon}}\) for all tensors \(T\).
					\item If \(\delta>\delta'\) then \(\|T\|_{C^{k,\alpha}_{\delta,\epsilon}}\leq\epsilon^{\delta'-\delta} \|T\|_{C^{k,\alpha}_{\delta',\epsilon}}\) for all tensors \(T\).
					\item There is a constant \(C>0\), independent of \(\delta\), \(\delta'\) and \(\epsilon\), such that \(\|ST\|_{C^{k,\alpha}_{\delta+\delta',\epsilon}}\leq C\|S\|_{C^{k,\alpha}_{\delta,\epsilon}}\|T\|_{C^{k,\alpha}_{\delta',\epsilon}}\) for all tensors \(S,T\). Here \(ST\) can mean either the tensor product \(S\otimes T\), or a contraction of any number of dual pairs in \(S\otimes T\).
					\item There is a constant \(C>0\) independent of \(\epsilon>0\) such that \(\|T\|_{C^0}\leq C\|T\|_{C^0_{0,\epsilon}}\) for all tensors \(T\in\Gamma(\widetilde{M},(T^*\widetilde{M})^k)\) with \(k\geq0\), where the \(C^0\)-norm is fixed independent of \(\epsilon\). In the case \(k=0\), i.e. \(T=f\) is a function, this is an equivalence of norms.
					\item There is a uniform equivalence of norms on functions \[\|f\|_{C^{k,\alpha}_{\delta,\epsilon}}\sim\|\gamma_{1,\epsilon}f\|_{C^{k,\alpha}_{\delta,\epsilon}\left(M_p\right)}+\epsilon^{-\delta}\|\gamma_{2,\epsilon}(\iota_\epsilon^{-1}(\zeta))f(\iota_\epsilon^{-1}( \zeta))\|_{C^{k,\alpha}_{\delta}\left(\Bl_0\mathbb{C}^n\right)}\] independent of \(\epsilon\).
					\item There is a constant \(C>0\) independent of \(\epsilon\) such that \(\|f\|_{C^{0,\alpha}_{0,\epsilon}}\leq C\|f\|_{C^1}\) for all \(f\in C^\infty(\Bl_pM;\mathbb{R})\), where we take a fixed \(C^1\)-norm on \(\Bl_pM\) independent of \(\epsilon\).
				\end{enumerate} 
			\end{lemma}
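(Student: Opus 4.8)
The plan is to establish all six items by unwinding the definition of $\|\cdot\|_{C^{k,\alpha}_{\delta,\epsilon}}$ as a sum of three contributions---an outer term on $\Bl_pM\setminus\widetilde{B}_1$, a ``neck'' term $\sup_{\epsilon\le r\le 1/2}r^{-\delta}\|r^{\sigma(T)}(\iota_r)_*T\|_{C^{k,\alpha}}$, and an inner term on $\widetilde{D}_1$---and by tracking how each transforms under the rescalings $\iota_r$ and $\iota_\epsilon$. The only input beyond bookkeeping will be that the ordinary space $C^{k,\alpha}$ on a \emph{fixed} compact domain with \emph{fixed} background metrics is a Banach algebra under tensor product and contraction, with a constant depending only on $k$, $\alpha$ and the domain. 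For (1) and (2) I would use that on the relevant range $0<\epsilon\le r\le 1/2<1$, so $\delta\mapsto r^{-\delta}$ and $\delta\mapsto\epsilon^{-\delta}$ are non-decreasing: when $\delta\le\delta'$ each of the three terms of $\|T\|_{C^{k,\alpha}_{\delta,\epsilon}}$ is dominated termwise by the corresponding term of $\|T\|_{C^{k,\alpha}_{\delta',\epsilon}}$, giving (1); when $\delta>\delta'$ I would write $r^{-\delta}=r^{\delta'-\delta}r^{-\delta'}\le\epsilon^{\delta'-\delta}r^{-\delta'}$ (using $\epsilon\le r$ and $\delta'-\delta<0$), $\epsilon^{-\delta}=\epsilon^{\delta'-\delta}\epsilon^{-\delta'}$, and $1\le\epsilon^{\delta'-\delta}$ for the outer term since $\epsilon\le1$, so summing the three terms gives (2) with no extra constant.

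The key item is (3), for which the structural observation is that $\sigma$ is additive under tensor product and unchanged by contracting a dual pair, so $\sigma(ST)=\sigma(S)+\sigma(T)$ and the rescaling weights split: $r^{\sigma(ST)}=r^{\sigma(S)}r^{\sigma(T)}$, $r^{-(\delta+\delta')}=r^{-\delta}r^{-\delta'}$, and likewise with $r$ replaced by $\epsilon$. On each of the three fixed domains the Banach algebra estimate gives $\|r^{\sigma(ST)}(\iota_r)_*(ST)\|_{C^{k,\alpha}}\le C\|r^{\sigma(S)}(\iota_r)_*S\|_{C^{k,\alpha}}\,\|r^{\sigma(T)}(\iota_r)_*T\|_{C^{k,\alpha}}$; multiplying by $r^{-(\delta+\delta')}$, taking the supremum over $r$, and using $\sup(fg)\le(\sup f)(\sup g)$ for $f,g\ge0$ bounds the neck term of $\|ST\|_{C^{k,\alpha}_{\delta+\delta',\epsilon}}$ by $C$ times the product of the neck terms of $\|S\|_{C^{k,\alpha}_{\delta,\epsilon}}$ and $\|T\|_{C^{k,\alpha}_{\delta',\epsilon}}$, and analogously for the outer and inner terms; then $\sum_i a_ib_i\le(\sum_i a_i)(\sum_i b_i)$ for non-negative reals finishes (3).

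For (4) and (6) I would compare the $z$-Euclidean metric (and the metric $\eta$ on $\widetilde{D}_1$) appearing in the weighted norm with a fixed background metric on $\Bl_pM$. On $\Bl_pM\setminus\widetilde{B}_1$ all metrics are fixed and mutually comparable; on the neck $\widetilde{B}_{2r}\setminus\widetilde{B}_r$ the quantity $r^{\sigma(T)}(\iota_r)_*T$ is precisely $T$ read off in the $z$-coordinates, and since $\sigma(T)\le 0$ for a covariant tensor and the $z$-Euclidean metric is dominated by the background metric near $E$, this controls the fixed $C^0$-norm of $T$ from above; covering $\Bl_pM$ by $\Bl_pM\setminus\widetilde{B}_1$, the necks, and $\widetilde{B}_\epsilon$ then gives (4). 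In the function case $\sigma=0$ and the three terms of $\|f\|_{C^0_{0,\epsilon}}$ are literally $\sup|f|$ over the three regions, whose maximum is $\|f\|_{C^0}$, giving the asserted equivalence. For the H\"older seminorm in (6) I would use $\iota_r^{-1}(\zeta)=r\zeta$ and $\alpha<1$: since $|f(r\zeta)-f(r\zeta')|\le\|f\|_{C^1}\,r\,|\zeta-\zeta'|$, the H\"older quotient is at most $\|f\|_{C^1}\,r\,|\zeta-\zeta'|^{1-\alpha}\le C\|f\|_{C^1}$ because $r\le 1/2$ and $\widetilde{B}_2\setminus\widetilde{B}_1$ has bounded diameter; the inner term is handled identically with $r$ replaced by $\epsilon$, and the $C^0$-part as above.

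I expect (5) to be the main obstacle, to be proved by a partition-of-unity argument built on (3). First I would check that $\gamma_{1,\epsilon}$ and $\gamma_{2,\epsilon}$ have $C^{k,\alpha}_{0,\epsilon}$-norm bounded independently of $\epsilon$: the transition region of $\gamma_{j,\epsilon}$ is $\widetilde{B}_{2r_\epsilon}\setminus\widetilde{B}_{r_\epsilon}$, and after applying $\iota_r$ with $r$ comparable to $r_\epsilon$ the cutoff becomes the fixed bump $\rho$, of bounded $C^{k,\alpha}$-norm, while elsewhere it is locally constant; by (3) with $\delta'=0$, multiplication by $\gamma_{j,\epsilon}$ is then bounded on every $C^{k,\alpha}_{\delta,\epsilon}$ with norm independent of $\epsilon$. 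Writing $f=\gamma_{1,\epsilon}f+\gamma_{2,\epsilon}f$ bounds $\|f\|_{C^{k,\alpha}_{\delta,\epsilon}}$ by the sum of the weighted norms of the two pieces, and it then remains to identify $\|\gamma_{1,\epsilon}f\|_{C^{k,\alpha}_{\delta,\epsilon}}$ with $\|\gamma_{1,\epsilon}f\|_{C^{k,\alpha}_\delta(M_p)}$, and $\|\gamma_{2,\epsilon}f\|_{C^{k,\alpha}_{\delta,\epsilon}}$ with $\epsilon^{-\delta}$ times the $C^{k,\alpha}_\delta(\Bl_0\mathbb{C}^n)$-norm of the rescaled $\gamma_{2,\epsilon}f$, and to run the reverse comparison. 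Here $\gamma_{1,\epsilon}f$ is supported in $\{|z|>r_\epsilon\}$ with $r_\epsilon>\epsilon$ for small $\epsilon$, so its inner $\widetilde{B}_\epsilon$-contribution vanishes and its neck supremum effectively runs over $r_\epsilon/2\le r\le 1/2$, matching the $M_p$-norm; and $\gamma_{2,\epsilon}f$ pulled back by $\iota_\epsilon^{-1}$ is supported in $\{|\zeta|<2R_\epsilon\}$ with $R_\epsilon\to\infty$, while the factor $\epsilon^{-\delta}$ is precisely what reconciles the neck and inner weights of the $\Bl_pM$-norm with those of the $\Bl_0\mathbb{C}^n$-norm. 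The delicate point will be the overlap annulus $\widetilde{B}_{2r_\epsilon}\setminus\widetilde{B}_{r_\epsilon}$, where both $\gamma_{j,\epsilon}$ are nontrivial; there one checks directly that, after rescaling to a fixed-shape annulus on which $\rho$ and $1-\rho$ are bounded, the neck contributions to all the norms involved agree up to a uniform constant. Carrying out this matching in both directions gives the claimed equivalence of norms.
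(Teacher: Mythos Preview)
Your treatment of items (1)--(5) is along the right lines; the paper itself declares these ``known and straightforward'' and only writes out a proof of (6), so there is nothing to compare there beyond noting that your sketches of (4) and (5) would need some of the same care with blowup coordinates that I describe below.

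For (6) there is a genuine slip. You assert
\[
|f(r\zeta)-f(r\zeta')|\le\|f\|_{C^1}\,r\,|\zeta-\zeta'|,
\]
which would follow from a bound \(\sup|\partial_{z_j}f|\le\|f\|_{C^1}\). But the coordinates \(z_1,\dots,z_n\) do \emph{not} extend smoothly across the exceptional divisor, and for a generic smooth \(f\) on \(\Bl_pM\) the quantity \(|\partial_{z_j}f|\) blows up like \(|z|^{-1}\) as \(z\to 0\); concretely, in the chart \((w_1,\dots,w_n)=(z_1,z_2/z_1,\dots,z_n/z_1)\) one has \(\partial_{z_1}=\partial_{w_1}-\sum_{j\ge 2}(z_j/z_1^2)\partial_{w_j}\), and the coefficients \(z_j/z_1^2\) are unbounded. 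So the displayed inequality is false in general, and the extra factor of \(r\) you gain from it is illusory.

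The paper's proof confronts exactly this point: rather than bounding \(|\partial_{z_j}f|\), it computes \(r\,|\partial_{z_j}f(rz)|\) in the \(w\)-coordinates and shows via the chain rule that this combination is controlled by \(\sum_j|\partial_{w_j}f(rz)|\le C\|f\|_{C^1}\), the factor of \(r\) precisely cancelling the \(1/|z_1|\) singularity. The same device, with \(\nu\)-coordinates on \(\widetilde D_1\), handles the inner region. Your conclusion survives, since the H\"older seminorm only needs the weaker bound \(|f(r\zeta)-f(r\zeta')|\le C\|f\|_{C^1}\,|\zeta-\zeta'|\), but the argument as written does not establish it; you should replace the Lipschitz step by the coordinate computation the paper carries out.
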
 
			
			Most of these are already known and straightforward; the most difficult perhaps is (6), so we shall prove this as an example.
		
			\begin{proof}[Proof of (6)]
				Outside of \(\widetilde{B}_1\) this follows from the usual inequality \[\|f\|_{C^{0,\alpha}(\Bl_pM\backslash\widetilde{B}_1)}\leq C\|f\|_{C^1(\Bl_pM\backslash\widetilde{B}_1)}.\] For the component of the norm on \(\widetilde{B}_1\backslash\widetilde{B}_\epsilon\), for \(\epsilon\leq r\leq 1/2\) we must estimate \[\|f(rz)\|_{C^{0,\alpha}(\widetilde{B}_2\backslash\widetilde{B}_1)},\] which is bounded above by the \(C^1\)-norm of \(f(rz)\) over \(\widetilde{B}_2\backslash\widetilde{B}_1\). The \(C^0\)-norm of \(f(rz)\) is bounded by \(\|f\|_{C^0(\Bl_pM)}\). For the derivatives, consider \[\left|\frac{\d}{\d z_j}(f(rz))\right|=r\left|\frac{\d f}{\d z_j}(rz)\right|.\] In this form, we cannot immediately bound the right hand side by the \(C^1\)-norm, since the coordinates \(z_j\) do not extend to the blowup and we require a uniform bound as \(\epsilon\to0\). Instead choose coordinates \((w_1,\ldots,w_n)=(z_1,\frac{z_2}{z_1},\ldots,\frac{z_n}{z_1})\) on \(\widetilde{B}_2\) such that \(|z_1|>a|(z_2,\ldots,z_n)|\) for a fixed small \(a>0\). Note this implies \(|z_j|/|z_1|\leq 1/a\) for all \(j\).  For \(z\) in the intersection of this coordinate domain and \(\widetilde{B}_2\backslash \widetilde{B}_1\), we also have \(1/|z_1|\leq C\) for some \(C>0\). Hence \begin{align*}
					r\left|\frac{\d f}{\d z_1}(rz)\right| &= 	\left|r\frac{\d f}{\d w_1}(rz)-\frac{z_2}{z_1^2}\frac{\d f}{\d w_2}(rz)-\cdots-\frac{z_n}{z_1^2}\frac{\d f}{\d w_n}(rz)\right| \\
					&\leq C\sum_{j=1}^n\left|\frac{\d f}{\d w_j}(rz)\right|,
				\end{align*} where \(C>0\) depends only on \(a\). Similarly \[r\left|\frac{\d f}{\d z_j}(rz)\right|=\left|\frac{1}{z_1}\frac{\d f}{\d w_j}(rz)\right|\leq C\left|\frac{\d f}{\d w_j}(rz)\right|\] for \(2\leq j\leq n\). Since the coordinates \(w_j\) are well-defined on the blowup, this gives a uniform bound on this coordinate domain by \(C\|f\|_{C^1}\). Covering \(\widetilde{B}_2\backslash\widetilde{B}_1\) by the analogous coordinate domains where \(z_2\neq0,\ldots,z_n\neq0\), we get a uniform bound by \(C\|f\|_{C^1}\) on this region.
				
				It remains to prove a bound on \(\widetilde{B}_\epsilon\). We need to estimate \[\|(\iota_\epsilon)_*f\|_{C^{0,\alpha}(\widetilde{D}_1)}.\] Once again we can reduce to estimating the \(C^1\)-norm of \((\iota_\epsilon)_*f\) on \(\widetilde{D}_1\). Similarly to above, we choose coordinates \((\nu_1,\ldots,\nu_n)=(\zeta_1,\frac{\zeta_2}{\zeta_1},\ldots,\frac{\zeta_n}{\zeta_n})\) on \(\widetilde{D}_1\) for the region \(|\zeta_1|>a|(\zeta_2,\ldots,\zeta_n)|\). The \(C^0\)-norm is bounded by \(\|f\|_{C^1}\), and the \(\nu\)-derivatives satisfy \[\left|\frac{\d}{\d \nu_j}((\iota_\epsilon)_*f)\right|=\left|\frac{\d}{\d \nu_j}(f(\epsilon \nu_1,\nu_2,\ldots,\nu_n))\right|\leq\left|\frac{\d f}{\d \nu_j}(\epsilon \nu_1,\nu_2,\ldots,\nu_n)\right|\leq\|f\|_{C^1}.\] Covering \(\widetilde{D}_1\) by similarly defined coordinate charts, we produce a uniform bound by \(\|f\|_{C^1}\) on \(\widetilde{D}_1\), which finishes the proof.
			\end{proof}
			
			\begin{remark}\label{rem:coordinates}
			Using the coordinates \(w_1,\ldots,w_n\) from the above proof, we can give an interpretation of how the weighted norms on \(\widetilde{B}_\epsilon\) can be computed in coordinates. For example, if \(\xi\) is a section of the \(T^{1,0}\)-bundle of \(\Bl_pM\), writing it as \[\xi=\xi^1\frac{\d}{\d w^1}+\xi^2\frac{\d}{\d w^2}+\cdots+\xi^n\frac{\d}{\d w^n}\] on \(\widetilde{B}_\epsilon\), its pushforward to \(\widetilde{D}_1\) is \[\xi^1(\iota_\epsilon^{-1}(\nu))\epsilon^{-1}\frac{\d}{\d\nu_1}+\xi^2(\iota_\epsilon^{-1}(\nu))\frac{\d}{\d\nu_2}+\cdots+\xi^n(\iota_\epsilon^{-1}(\nu))\frac{\d}{\d\nu_n}.\] Multiplying this all by \(\epsilon=\epsilon^{\sigma(\xi)}\), we are computing the H{\"o}lder norms of \[\xi^1(\iota_\epsilon^{-1}(\nu)),\quad\epsilon\xi^2(\iota_\epsilon^{-1}(\nu)),\quad\ldots\quad\epsilon\xi^n(\iota_\epsilon^{-1}(\nu))\] on the appropriate coordinate domain on \(\widetilde{D}_1\). In particular, from this it is easy to see there is a uniform bound on \(\|\xi\|_{C^{k,\alpha}_{0,\epsilon}}\) independent of \(\epsilon\), for any \(k\) and \(\alpha\). Furthermore, if \(\xi\) is the lift of a vector field on \(M\) that vanishes at \(p\), note the coefficient \(\xi^1\) vanishes along the exceptional locus. In this case we can even produce a uniform bound on \(\|\xi\|_{C^{k,\alpha}_{1,\epsilon}}\) independent of \(\epsilon\).
			\end{remark}

			We finish by collecting some useful estimates. Let \(g_\epsilon\) be the Riemannian metric on \(\Bl_pM\) corresponding to \(\omega_\epsilon\) defined in Section \ref{sec:approx_soln}, and recall the functions \(\gamma_{1,\epsilon}\) and \(\gamma_{2,\epsilon}\) defined in \eqref{eq:gammas}. Given a K{\"a}hler potential \(\varphi\), we will write \(g_{\epsilon,\varphi}\) for the Riemannian metric corresponding to \(\omega_{\epsilon,\varphi}:=\omega_\epsilon+i\d\db\varphi\), where \(\omega_\epsilon\) is the metric from Section \ref{sec:approx_soln}. For a Riemannian metric \(g\), we write \(\Riem(g)\) for the full Riemann curvature tensor of \(g\).
			
			\begin{lemma}[{\cite[pp. 166--167]{Sze14}}]\label{lem:metric_and_gamma_estimates}
				The norms \[\|g_\epsilon\|_{C^{2,\alpha}_{0,\epsilon}},\, \|g_\epsilon^{-1}\|_{C^{2,\alpha}_{0,\epsilon}},\,\|\gamma_{1,\epsilon}\|_{C^{4,\alpha}_{0,\epsilon}},\,\|\gamma_{2,\epsilon}\|_{C^{4,\alpha}_{0,\epsilon}}\] are uniformly bounded independent of \(\epsilon\). Furthermore, given \(c_0>0\), there exists \(C>0\) independent of \(\epsilon\) such that for all K{\"a}hler potentials \(\varphi\in C^{4,\alpha}(\Bl_pM)\) satisfying \(\|\varphi\|_{C^{4,\alpha}_{2,\epsilon}}\leq c_0\), the following hold: \[\|g_{\epsilon,\varphi}\|_{C^{2,\alpha}_{0,\epsilon}},\,\|g_{\epsilon,\varphi}^{-1}\|_{C^{2,\alpha}_{0,\epsilon}},\,\|\Riem(g_{\epsilon,\varphi})\|_{C^{0,\alpha}_{-2,\epsilon}}\leq C.\] If \(\varphi\) instead satisfies \(\|\varphi\|_{C^{4,\alpha}_{\delta,\epsilon}}< c_0\) for some \(\delta\in\mathbb{R}\), then \[\|g_{\epsilon,\varphi}-g_\epsilon\|_{C^{2,\alpha}_{\delta-2,\epsilon}},\,\|g_{\epsilon,\varphi}^{-1}-g_\epsilon^{-1}\|_{C^{2,\alpha}_{\delta-2,\epsilon}},\,\|\Riem(g_{\epsilon,\varphi})-\Riem(g_\epsilon)\|_{C^{0,\alpha}_{\delta-4,\epsilon}}\leq C\|\varphi\|_{C^{4,\alpha}_{\delta,\epsilon}}.\] 
			\end{lemma}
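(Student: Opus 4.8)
The plan is to follow \cite[pp.~166--167]{Sze14} essentially unchanged: the weight functions $v,w$ play no role, so this is purely a statement about the glued metric $\omega_\epsilon$, and every estimate is established region by region, using the decomposition of $\Bl_pM$ into the outer region $\Bl_pM\setminus\widetilde{B}_1$, the annular region $\widetilde{B}_1\setminus\widetilde{B}_\epsilon$, and the central region $\widetilde{B}_\epsilon$. On each of these, after the rescalings built into the weighted norms, $\omega_\epsilon$ is a fixed model metric plus a controlled perturbation, and the uniform bounds follow by combining the three contributions.

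On $\Bl_pM\setminus\widetilde{B}_1$ one has $\omega_\epsilon=\pi^*\omega$ and $\gamma_{1,\epsilon}\equiv1$, $\gamma_{2,\epsilon}\equiv0$, so everything there is $\epsilon$-independent. On $\widetilde{B}_\epsilon$ one has $\omega_\epsilon=\iota_\epsilon^*(\epsilon^2\eta)$, and since $\sigma(g_\epsilon)=-2$, inserting the factor $\epsilon^{\sigma(g_\epsilon)}$ from the definition of the weighted norm pushes $\omega_\epsilon$ forward to the fixed metric $\eta$ on $\widetilde{D}_1$; likewise the weight $-2$ in $C^{0,\alpha}_{-2,\epsilon}$ is precisely what cancels the scaling of curvature under $\eta\mapsto\epsilon^2\eta$, so the central contributions to the three norms are the fixed norms of $\eta$, $\eta^{-1}$ and $\Riem(\eta)$. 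On the annular region $\omega_\epsilon=i\d\db(|z|^2+\gamma_{1,\epsilon}f+\gamma_{2,\epsilon}\epsilon^2g(\epsilon^{-1}z))$; rescaling by $\iota_r$ for $\epsilon\le r\le 1/2$ and multiplying by $r^{\sigma(g_\epsilon)}=r^{-2}$, the Euclidean part $i\d\db|z|^2$ is scale invariant, the $f$-correction is controlled using $f=\O(|z|^4)$ (so $i\d\db f\in C^{k,\alpha}_2$ near $p$), and the Burns--Simanca correction using the decay $g=\O(|\zeta|^{4-2n})$ for $n>2$ from Lemma \ref{lem:BS_moment_map}, which makes $i\d\db(\epsilon^2g(\epsilon^{-1}z))$ have entries of order $\O((\epsilon/|z|)^{2n-2})\le 1$ since $r\ge\epsilon$; here $\|\gamma_{j,\epsilon}\|_{C^{4,\alpha}_{0,\epsilon}}\le C$ because the $k$-th derivatives of $\gamma_{j,\epsilon}$ are $\O(r_\epsilon^{-k})$ and $\gamma_{j,\epsilon}$ is non-constant only on $\widetilde{B}_{2r_\epsilon}\setminus\widetilde{B}_{r_\epsilon}$, where the rescaling is by $r\sim r_\epsilon$. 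The product rule, Lemma \ref{lem:weighted_norm_properties}(3), then bounds all these contributions, and since on the overlap region $\widetilde{B}_{2r_\epsilon}\setminus\widetilde{B}_{r_\epsilon}$ both corrections are in fact uniformly small (for $f$ because $|z|\le 2r_\epsilon\to 0$; for the Burns--Simanca term because $(\epsilon/r_\epsilon)^{2n-2}=\epsilon^{2(2n-2)/(2n+1)}\to 0$), on every region $g_\epsilon$ is uniformly equivalent to a fixed model metric and hence uniformly elliptic; Cramer's rule and Lemma \ref{lem:weighted_norm_properties}(3) then bound $\|g_\epsilon^{-1}\|_{C^{2,\alpha}_{0,\epsilon}}$, and writing $\Riem(g_\epsilon)$ as the universal polynomial in $g_\epsilon^{-1}$, $\d g_\epsilon$, $\d^2g_\epsilon$ gives $\|\Riem(g_\epsilon)\|_{C^{0,\alpha}_{-2,\epsilon}}\le C$, the two derivatives shifting the weight $0\mapsto -2$. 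The case $n=2$, with $g=\log|\zeta|$, goes through identically.

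The perturbed statements follow the same pattern. Since $g_{\epsilon,\varphi}-g_\epsilon$ is the $(0,2)$-tensor associated with $i\d\db\varphi$, differentiating twice lowers both order and weight by $2$, giving $\|g_{\epsilon,\varphi}-g_\epsilon\|_{C^{2,\alpha}_{\delta-2,\epsilon}}\le C\|\varphi\|_{C^{4,\alpha}_{\delta,\epsilon}}$ for any $\delta$; with $\delta=2$ this yields $\|g_{\epsilon,\varphi}\|_{C^{2,\alpha}_{0,\epsilon}}\le C$, and shrinking $c_0$ if necessary (using that $\omega_{\epsilon,\varphi}$ is Kähler) keeps $g_{\epsilon,\varphi}$ uniformly elliptic, so Cramer's rule and Lemma \ref{lem:weighted_norm_properties}(3) bound $\|g_{\epsilon,\varphi}^{-1}\|_{C^{2,\alpha}_{0,\epsilon}}$ and, via $g_{\epsilon,\varphi}^{-1}-g_\epsilon^{-1}=-g_{\epsilon,\varphi}^{-1}(i\d\db\varphi)g_\epsilon^{-1}$, also $\|g_{\epsilon,\varphi}^{-1}-g_\epsilon^{-1}\|_{C^{2,\alpha}_{\delta-2,\epsilon}}\le C\|\varphi\|_{C^{4,\alpha}_{\delta,\epsilon}}$. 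As $\Riem(g_{\epsilon,\varphi})$ is the same universal polynomial in $g_{\epsilon,\varphi}^{-1}$, $\d g_{\epsilon,\varphi}$, $\d^2g_{\epsilon,\varphi}$, we obtain $\|\Riem(g_{\epsilon,\varphi})\|_{C^{0,\alpha}_{-2,\epsilon}}\le C$; and writing $\Riem(g_{\epsilon,\varphi})-\Riem(g_\epsilon)=\int_0^1\frac{d}{dt}\Riem(g_\epsilon+t\,i\d\db\varphi)\,dt$ expresses the difference as a sum of products of factors bounded in the relevant $C_{0,\epsilon}$-norms with $i\d\db\varphi$ and its first two derivatives, so Lemma \ref{lem:weighted_norm_properties}(3) gives $\|\Riem(g_{\epsilon,\varphi})-\Riem(g_\epsilon)\|_{C^{0,\alpha}_{\delta-4,\epsilon}}\le C\|\varphi\|_{C^{4,\alpha}_{\delta,\epsilon}}$, the two extra derivatives moving the weight $\delta-2\mapsto\delta-4$.

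I expect the main obstacle to be the bookkeeping on the annular region: one must verify that both corrections to the flat metric there---the $\O(|z|^4)$ tail of $\omega$ and the rescaled Burns--Simanca correction $\epsilon^2g(\epsilon^{-1}z)$---are uniformly controlled in the weighted norms (and uniformly small on the overlap), which is exactly where the choice $r_\epsilon=\epsilon^{(2n-1)/(2n+1)}$, the constraint $\epsilon\le r\le 1/2$, and the decay rates of Lemma \ref{lem:BS_moment_map} enter. The only other delicate point is the uniform ellipticity needed to pass from $g_\epsilon$ and $g_{\epsilon,\varphi}$ to their inverses, which holds because on each region these metrics are uniformly small perturbations of fixed model metrics.
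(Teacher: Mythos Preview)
Your proposal is correct and follows the standard approach from \cite[pp.~166--167]{Sze14}; note that the paper itself does not prove this lemma but simply cites that reference, so there is nothing further to compare.
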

			
			Finally, we will estimate the lifting function \(\ell_\epsilon:\overline{\mathfrak{h}}\to C^\infty(\Bl_pM)^{T'}\) from Section \ref{sec:deformation}.
			
			\begin{lemma}\label{lem:ell_estimate}
			There exists a constant \(C>0\) independent of \(\epsilon\) such that \[\|\ell_\epsilon(h)\|_{C^{1,\alpha}_{0,\epsilon}}\leq C|h|\] for all \(h\in\overline{\mathfrak{h}}\), where the norm on the right hand side is any choice of fixed norm on \(\overline{\mathfrak{h}}\) independent of \(\epsilon\).
			\end{lemma}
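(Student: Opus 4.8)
The plan is to estimate \(\ell_\epsilon\) separately on the two summands of the splitting \(\overline{\mathfrak{h}}=\overline{\mathfrak{t}'}\oplus\mathfrak{h}'\) used in Section \ref{sec:deformation} to define it. Since \(\ell_\epsilon\) is linear and each summand is finite dimensional, it suffices to produce, for each \(h\) in a fixed basis of \(\overline{\mathfrak{t}'}\) and of \(\mathfrak{h}'\), a bound \(\|\ell_\epsilon(h)\|_{C^{1,\alpha}_{0,\epsilon}}\leq C\) with \(C\) independent of \(\epsilon\); writing a general \(h\) in this basis then gives the stated estimate. For \(h\in\mathfrak{h}'\) we have \(\ell_\epsilon(h)=\pi^*(\gamma_{1,\epsilon}h)\), so by the multiplicativity of the weighted norms (Lemma \ref{lem:weighted_norm_properties}(3)) and the uniform bound on \(\|\gamma_{1,\epsilon}\|_{C^{4,\alpha}_{0,\epsilon}}\) from Lemma \ref{lem:metric_and_gamma_estimates} it is enough to bound \(\|\pi^*h\|_{C^{1,\alpha}_{0,\epsilon}}\). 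Because \(h(p)=0\), and hence \(h=\O(|z|)\) near \(p\), this last bound follows from a coordinate computation identical to the proof of Lemma \ref{lem:weighted_norm_properties}(6) (using the coordinates \(w_1,\dots,w_n\) that descend to the blowup, and noting that on the transition annuli \(\gamma_{1,\epsilon}\) rescales to the fixed cutoff \(\rho(|\cdot|)\)); alternatively one reduces via Lemma \ref{lem:weighted_norm_properties}(5) to the manifolds \(M_p\) and \(\Bl_0\mathbb{C}^n\), where the statement is classical.

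For \(h\in\overline{\mathfrak{t}'}\) the function \(\ell_\epsilon(h)\) is the \(\omega_\epsilon\)-hamiltonian of the lift \(\widetilde{Y}\) to \(\Bl_pM\) of the holomorphic vector field \(Y=J\nabla h\) on \(M\), normalised so that \(\ell_\epsilon(h)=\pi^*h\) on \(\Bl_pM\backslash\widetilde{B}_{2r_\epsilon}\); in particular \(d\ell_\epsilon(h)=\omega_\epsilon(\cdot,\widetilde{Y})\). Since \(h\in\overline{\mathfrak{t}'}\), the field \(Y\) vanishes at \(p\), so Remark \ref{rem:coordinates} (applied to basis elements, then extended by linearity) gives \(\|\widetilde{Y}\|_{C^{1,\alpha}_{0,\epsilon}}\leq C|h|\). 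Combining this with the uniform bound \(\|\omega_\epsilon\|_{C^{2,\alpha}_{0,\epsilon}}\leq C\) from Lemma \ref{lem:metric_and_gamma_estimates} and the multiplicativity property Lemma \ref{lem:weighted_norm_properties}(3), and noting that the contraction \(\omega_\epsilon(\cdot,\widetilde{Y})\) is a \(1\)-form (so the weights add to \(0\)), we obtain
\[
\|d\ell_\epsilon(h)\|_{C^{0,\alpha}_{0,\epsilon}}\leq C\,\|\omega_\epsilon\|_{C^{0,\alpha}_{0,\epsilon}}\,\|\widetilde{Y}\|_{C^{0,\alpha}_{0,\epsilon}}\leq C|h|.
\]
It remains to upgrade this bound on the differential to a bound on \(\ell_\epsilon(h)\) itself. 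Unwinding the definition of the weighted norm, the bound on \(\|d\ell_\epsilon(h)\|_{C^{0,\alpha}_{0,\epsilon}}\) says precisely that \(|\nabla_z\ell_\epsilon(h)|\leq C|h|\) on each dyadic annulus \(\{r\leq|z|\leq 2r\}\) with \(\epsilon\leq r\leq 1/2\), together with the analogous statement on \(\widetilde{D}_1\); integrating inwards from the region \(\Bl_pM\backslash\widetilde{B}_{2r_\epsilon}\), where \(\ell_\epsilon(h)=\pi^*h\) already has \(C^0\)-norm at most \(C|h|\), yields \(\|\ell_\epsilon(h)\|_{C^0(\Bl_pM)}\leq C|h|\). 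Finally, in each of the three pieces of \(\|\ell_\epsilon(h)\|_{C^{1,\alpha}_{0,\epsilon}}\) the \(C^0\)-part is dominated by \(\|\ell_\epsilon(h)\|_{C^0}\) and the first-derivative and Hölder parts are dominated by the corresponding parts of \(\|d\ell_\epsilon(h)\|_{C^{0,\alpha}_{0,\epsilon}}\), with an extra factor of \(r\leq 1/2\) (respectively \(\epsilon\)) to spare — here it is essential that the weight is \(\delta=0\) and the radii are bounded by \(1/2\). Hence \(\|\ell_\epsilon(h)\|_{C^{1,\alpha}_{0,\epsilon}}\leq C|h|\), which completes the proof.

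The main obstacle is this last step in the case \(h\in\overline{\mathfrak{t}'}\): passing from uniform control of \(d\ell_\epsilon(h)\) in the weighted norm to uniform control of \(\ell_\epsilon(h)\) itself, which combines the propagation of the \(C^0\)-bound inwards from the fixed region \(\Bl_pM\setminus\widetilde{B}_{2r_\epsilon}\) with the elementary but slightly delicate comparison between the \(\delta=0\) weighted norms of a function and of its differential. Everything else is a routine application of the multiplicativity and pullback properties recorded in Lemma \ref{lem:weighted_norm_properties} together with the uniform geometry estimates of Lemma \ref{lem:metric_and_gamma_estimates} and Remark \ref{rem:coordinates}.
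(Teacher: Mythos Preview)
Your argument is correct and follows essentially the same two-case structure as the paper. There are two points worth flagging.

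First, a minor imprecision in the \(\mathfrak{h}'\) case: you reduce to bounding \(\|\pi^*h\|_{C^{1,\alpha}_{0,\epsilon}}\) on all of \(\Bl_pM\), but \(\pi^*h\) need not be \(C^{1,\alpha}\) across the exceptional divisor even when \(h(p)=0\). The paper avoids this by observing that \(\gamma_{1,\epsilon}\pi^*h\) is supported on \(\Bl_pM\setminus\widetilde{B}_{r_\epsilon}\), so one only needs \(\|h\|_{C^{1,\alpha}_0(M_p)}\), which is finite for any smooth \(h\) and bounded by \(C|h|\) by finite-dimensionality. Your intent is clearly the same; just be careful to restrict to the support of \(\gamma_{1,\epsilon}\) before invoking multiplicativity.

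Second, for \(h\in\overline{\mathfrak{t}'}\) your \(C^0\)-bound on \(\ell_\epsilon(h)\) is obtained by integrating the uniform gradient estimate inward from the region where \(\ell_\epsilon(h)=\pi^*h\). This works, and your accounting of the weighted norm pieces (with the spare factors of \(r\) and \(\epsilon\)) is correct. The paper instead uses a one-line moment-polytope argument: since \(\ell_\epsilon(h)\) differs from \(\langle\mu_{T',\epsilon},\pr(h)\rangle\) by a constant depending linearly on \(h\), and the image of \(\mu_{T',\epsilon}\) lies in the fixed polytope \(P'\subset(\mathfrak{t}')^*\), the \(C^0\)-bound is immediate. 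The paper's route is cleaner and avoids the integration step you yourself identify as the main obstacle; your route is more self-contained in that it does not appeal to the Atiyah--Guillemin--Sternberg convexity statement for the \(T'\)-action on \(\Bl_pM\).
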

			
			\begin{proof}
			Recalling \(\overline{\mathfrak{h}}=\overline{\mathfrak{t}'}\oplus\mathfrak{h}'\), we treat the cases \(h\in \overline{\mathfrak{t}'}\) and \(h\in \mathfrak{h}'\) separately.
			
			For \(h\in\overline{\mathfrak{t}'}\), the real holomorphic vector field \(\xi_h\) generated by \(h\) vanishes at \(p\), so lifts to \(\widetilde{\xi}_h\) on \(\Bl_pM\). The function \(\ell_\epsilon(h)\) is then the holomorphy potential for \(\widetilde{\xi}_h\) with respect to \(\omega_\epsilon\) that is equal to \(\pi^*h\) on \(\Bl_pM\backslash\widetilde{B}_1\). The norm can be computed as \[\|\ell_\epsilon(h)\|_{C^{1,\alpha}_{0,\epsilon}}=\|\ell_\epsilon(h)\|_{C^0}+\|d\ell_\epsilon(h)\|_{C^{0,\alpha}_{-1,\epsilon}}.\] By the estimates on \(g_\epsilon\) from Lemma \ref{lem:metric_and_gamma_estimates}, note \[\|d\ell_\epsilon(h)\|_{C^{0,\alpha}_{-1,\epsilon}}=\|g_\epsilon g_\epsilon^{-1}d\ell_\epsilon(h)\|_{C^{0,\alpha}_{-1,\epsilon}}\leq C\|g_\epsilon\|_{C^{1,\alpha}_{0,\epsilon}}\|\nabla_\epsilon\ell_\epsilon(h)\|_{C^{0,\alpha}_{-1,\epsilon}}\leq C\|\widetilde{\xi}_h\|_{C^{0,\alpha}_{-1,\epsilon}}.\] Following Remark \ref{rem:coordinates}, we can produce a bound \(\|\widetilde{\xi}_h\|_{C^{0,\alpha}_{-1,\epsilon}}\leq C|h|\) by considering the supremum of \(\|\widetilde{\xi}_h\|_{C^{0,\alpha}_{-1,\epsilon}}\) over the compact unit ball in \(\overline{\mathfrak{t}'}\). Since the \(C^{0,\alpha}_{-1,\epsilon}\)-norm of vector fields decreases in \(\epsilon\), the bound is independent of \(\epsilon\). So it remains to estimate \(\|\ell_\epsilon(h)\|_{C^0}\). This is straightforward however; note that the \(T'\)-action on \(M\) has a moment map \(\mu_{T'}\) with moment polytope \(P'\subset(\mathfrak{t}')^*\), and the lift of this action to \(\Bl_pM\) has a moment map \(\mu_{T',\epsilon}\) whose image is contained in \(P'\). Writing \(\pr(h)\) for the image of \(h\) in \(\mathfrak{t}'\), we have \[\ell_\epsilon(h)=\langle \mu_{T',\epsilon},\pr(h)\rangle+h(q)-\langle \mu_{T'}(q),\pr(h)\rangle\] for any fixed \(q\in M\backslash B_1\). Since the image of \(\mu_{T',\epsilon}\) is contained in \(P'\), this expression gives a uniform bound \(\|\ell_\epsilon(h)\|_{C^0}\leq C|h|\).
			
			Next we take \(h\in\mathfrak{h}'\). Recall that \(h\) vanishes at \(p\), and the lift of \(h\) is defined as \(\ell_\epsilon(h):=\gamma_{1,\epsilon}\pi^*h\). In particular, since this function is supported on \(\Bl_pM\backslash\widetilde{B}_\epsilon\), \[\|\ell_\epsilon(h)\|_{C^{1,\alpha}_{0,\epsilon}}=\|\gamma_{1,\epsilon}h\|_{C^{1,\alpha}_{0,\epsilon}}\leq\|\gamma_{1,\epsilon}\|_{C^{1,\alpha}_{0,\epsilon}}\|h\|_{C^{1,\alpha}_{0,\epsilon}(\Bl_pM\backslash\widetilde{B}_\epsilon)}\leq C\|h\|_{C^{1,\alpha}_0(M_p)},\] where we applied the estimate on \(\gamma_{1,\epsilon}\) from Lemma \ref{lem:metric_and_gamma_estimates}. The norm \(\|h\|_{C^{1,\alpha}_0(M_p)}\) is well-defined, and we get a uniform bound \(\|h\|_{C^{1,\alpha}_0(M_p)}\leq C|h|\) by finite-dimensionality of \(\mathfrak{h}'\).
			\end{proof}

	\section{Moment map estimates}\label{sec:estimates_moment_maps}
	
		In the weighted scalar curvature \(S_{v,w}\) and its derivatives, the moment map appears in several terms. Most importantly are functions of the form \(u(\mu_\epsilon)\) for a fixed smooth function \(u:P\to\mathbb{R}\), and the Laplacian \(\Delta_\epsilon\mu_\epsilon^a\) appearing in \(\Phi_{v,w}(\omega_\epsilon)\). In this section we collect some fundamental estimates on the moment map that are used in many of the later proofs. Throughout this section we will use \(C\) to denote a positive constant that is independent of \(\epsilon\) and may vary from line to line.
		
		\begin{lemma}\label{lem:estimate_epsilon_moment_map}
			Given \(c_0>0\), there exists \(C>0\) independent of \(\epsilon\) such that \[\|\mu_{\epsilon,\varphi}\|_{C^{0,\alpha}_{0,\epsilon}}\leq C\] and \[\|\mu_{\epsilon,\varphi}-\mu_\epsilon\|_{C^{0,\alpha}_{1,\epsilon}}\leq C\|\varphi\|_{C^{4,\alpha}_{2,\epsilon}}\] for all K{\"a}hler potentials \(\varphi\in C^{4,\alpha}(\Bl_pM)^T\) satisfying \(\|\varphi\|_{C^{4,\alpha}_{2,\epsilon}}<c_0\).
		\end{lemma}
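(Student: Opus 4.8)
The plan is to reduce the statement to a uniform bound on $\mu_\epsilon$ itself together with the potential-dependent part of the estimate. By Lemma~\ref{lem:moment_map_change} applied to the metric $\omega_\epsilon$, the moment map of $\omega_{\epsilon,\varphi}=\omega_\epsilon+i\d\db\varphi$ is $\mu_{\epsilon,\varphi}=\mu_\epsilon+d^c\varphi$; in terms of a fixed basis $\{\xi_a\}$ of $\mathfrak t$ this reads $\mu_{\epsilon,\varphi}^a-\mu_\epsilon^a=d^c\varphi(\widetilde\xi_a)$, where $\widetilde\xi_a$ is the lift to $\Bl_pM$ of the generator $\xi_a$. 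Since $p$ is $T$-fixed, $\xi_a$ vanishes at $p$, so $\widetilde\xi_a$ vanishes along $E$. Granting the bounds $\|\mu_\epsilon\|_{C^{0,\alpha}_{0,\epsilon}}\le C$ and $\|\mu_{\epsilon,\varphi}-\mu_\epsilon\|_{C^{0,\alpha}_{1,\epsilon}}\le C\|\varphi\|_{C^{4,\alpha}_{2,\epsilon}}$, the first displayed inequality follows from the triangle inequality, Lemma~\ref{lem:weighted_norm_properties}(1) (to drop the weight from $1$ to $0$), and the hypothesis $\|\varphi\|_{C^{4,\alpha}_{2,\epsilon}}<c_0$: $\|\mu_{\epsilon,\varphi}\|_{C^{0,\alpha}_{0,\epsilon}}\le\|\mu_\epsilon\|_{C^{0,\alpha}_{0,\epsilon}}+\|\mu_{\epsilon,\varphi}-\mu_\epsilon\|_{C^{0,\alpha}_{1,\epsilon}}\le C+Cc_0$. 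So it suffices to prove the two reduced bounds.

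For the potential-dependent bound: since $\widetilde\xi_a$ is the lift of a vector field vanishing at $p$, Remark~\ref{rem:coordinates} gives $\|\widetilde\xi_a\|_{C^{0,\alpha}_{1,\epsilon}}\le C$ uniformly in $\epsilon$. On the other hand $d^c\varphi$ has weighted norm comparable to that of $d\varphi$ uniformly in $\epsilon$ (both are first-order operators built from the fixed complex structure), and exterior differentiation lowers the weight by one, so $\|d^c\varphi\|_{C^{0,\alpha}_{0,\epsilon}}\le C\|d\varphi\|_{C^{0,\alpha}_{0,\epsilon}}\le C\|\varphi\|_{C^{1,\alpha}_{1,\epsilon}}\le C\|\varphi\|_{C^{4,\alpha}_{2,\epsilon}}$, the last step using Lemma~\ref{lem:weighted_norm_properties}(1) together with $C^{1,\alpha}\subset C^{4,\alpha}$. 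The product rule Lemma~\ref{lem:weighted_norm_properties}(3), with weights $0$ and $1$ summing to $1$, then gives $\|d^c\varphi(\widetilde\xi_a)\|_{C^{0,\alpha}_{1,\epsilon}}\le C\|d^c\varphi\|_{C^{0,\alpha}_{0,\epsilon}}\|\widetilde\xi_a\|_{C^{0,\alpha}_{1,\epsilon}}\le C\|\varphi\|_{C^{4,\alpha}_{2,\epsilon}}$; summing over the finitely many components $a$ yields the second displayed bound.

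Bounding $\|\mu_\epsilon\|_{C^{0,\alpha}_{0,\epsilon}}$ is the technical core, and we treat the three regions of this norm using the explicit description in Lemma~\ref{lem:epsilon_moment_map}. On $\Bl_pM\setminus\widetilde B_1$, and in the annular component at scales $r\ge2r_\epsilon$, we have $\mu_\epsilon=\pi^*\mu$, and since $\mu$ is a fixed smooth function on $M$ the rescalings $\mu(r\,\cdot)$ are bounded in $C^{0,\alpha}(\widetilde B_2\setminus\widetilde B_1)$ uniformly for $r\le1/2$. On $\widetilde D_1$ we push the formula of Lemma~\ref{lem:epsilon_moment_map}(3) forward by $\iota_\epsilon$: using $\epsilon^2|\epsilon^{-1}z_j|^2=|z_j|^2$ and the fact that $\iota_\epsilon$ commutes with $d^c$ and with the linear vector fields $\xi$, so that $d^c(\epsilon^2g(\epsilon^{-1}z))(\xi)$ pushes forward to $\epsilon^2\,d^cg(\xi)$, we obtain $(\iota_\epsilon)_*\mu_\epsilon=\mu(p)+\epsilon^2\mu_\eta$ on $\widetilde D_1$; since $\mu_\eta$ extends to a fixed smooth function on $\Bl_0\mathbb C^n$ by Lemma~\ref{lem:BS_moment_map}, this is bounded by $|\mu(p)|+C\epsilon^2$. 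The remaining case is the annular component at scales $\epsilon\le r<2r_\epsilon$, where Lemma~\ref{lem:epsilon_moment_map}(2) writes $\mu_\epsilon$ as $\mu$ plus the correction $d^c\big(\gamma_{2,\epsilon}(z)(\epsilon^2g(\epsilon^{-1}z)-f(z))\big)(\xi)$; after the rescaling $\iota_r$ this correction must be bounded in $C^{0,\alpha}(\widetilde B_2\setminus\widetilde B_1)$ uniformly in $\epsilon$ and $r$. Expanding the $d^c$ by Leibniz and feeding in $f=\O(|z|^4)$, the expansion $g=-|z|^{4-2n}+\O(|z|^{3-2n})$ (resp.\ $g=\log|z|$ for $n=2$) and $d^cg(\xi)=\O(|z|^{4-2n})$ from Lemma~\ref{lem:BS_moment_map}, and $\|\gamma_{2,\epsilon}\|_{C^{4,\alpha}_{0,\epsilon}}\le C$ from Lemma~\ref{lem:metric_and_gamma_estimates}, one finds every resulting term carries a factor $\O(r_\epsilon^4)$ or $\O(\epsilon^{2n-2}r_\epsilon^{4-2n})$; the choice $r_\epsilon=\epsilon^{(2n-1)/(2n+1)}$ makes the exponent of the latter equal to $(8n-6)/(2n+1)>0$, so all corrections in fact tend to $0$ as $\epsilon\to0$, and the same scaling argument controls the H\"older seminorm. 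Assembling the three regions gives $\|\mu_\epsilon\|_{C^{0,\alpha}_{0,\epsilon}}\le C$.

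The one genuine obstacle is this last step: keeping careful track of how the cut-off $\gamma_{2,\epsilon}$, the normal-coordinate error $f$ and the Burns--Simanca correction $g$ (together with its moment-map companion $d^cg$) rescale under $\iota_r$ in the transition annulus $\epsilon\le r<2r_\epsilon$, and verifying that the accumulated powers of $\epsilon$ and $r_\epsilon$ stay bounded --- which is exactly where the specific value of $r_\epsilon$ enters. Everything else is a formal consequence of the product and inclusion properties of Lemma~\ref{lem:weighted_norm_properties} and of reading off the explicit formulas in Lemmas~\ref{lem:BS_moment_map} and \ref{lem:epsilon_moment_map}.
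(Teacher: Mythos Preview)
Your proof is correct, but it takes a noticeably more laborious route to the first inequality than the paper does. You bound $\|\mu_\epsilon\|_{C^{0,\alpha}_{0,\epsilon}}$ by explicit analysis of the three regions in the glued construction (Lemma~\ref{lem:epsilon_moment_map}), tracking the rescalings on the transition annulus and checking that the powers of $\epsilon$ and $r_\epsilon$ stay bounded, and then combine this with the second inequality via the triangle inequality. The paper sidesteps all of this by bounding $\|\mu_{\epsilon,\varphi}\|_{C^{0,\alpha}_{0,\epsilon}}$ directly and uniformly in $\varphi$: it invokes Lemma~\ref{lem:weighted_norm_properties}(6) to reduce to a $C^1$ bound, then observes that the image of any moment map lies in the fixed polytope $P$ (giving the $C^0$ part) and that $\nabla_{\epsilon,\varphi}\mu^a_{\epsilon,\varphi}=J\widetilde\xi_a$ is a fixed vector field independent of both $\epsilon$ and $\varphi$ (giving the derivative part, via $\|d\mu_{\epsilon,\varphi}\|\le C\|g_{\epsilon,\varphi}\|\,\|\nabla_{\epsilon,\varphi}\mu_{\epsilon,\varphi}\|$). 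This is shorter and never touches the explicit gluing formulas; in particular the ``genuine obstacle'' you flag in the annulus simply does not arise.

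For the second inequality the two arguments are essentially the same product-rule computation on $\mu_{\epsilon,\varphi}^a-\mu_\epsilon^a=d^c\varphi(\widetilde\xi_a)$, except that you and the paper split the weight $1=0+1$ the opposite way: you put weight $1$ on $\widetilde\xi_a$ (using Remark~\ref{rem:coordinates}) and weight $0$ on $d^c\varphi$, while the paper puts weight $1$ on $d^c\varphi$ and weight $0$ on $\widetilde\xi_a$. Both work.
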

		
		\begin{proof}
			We begin by proving \(\|\mu_{\epsilon,\varphi}\|_{C^{0,\alpha}_{0,\epsilon}}\leq C\). By \emph{(6)} of Lemma \ref{lem:weighted_norm_properties} it suffices to estimate the \(C^1\)-norm of \(\mu_{\epsilon,\varphi}\). First, \(\mu_{\epsilon,\varphi}\) clearly has a uniform \(C^0\) bound, since its image is contained in the moment polytope \(P\). To estimate the derivative \(d\mu_{\epsilon,\varphi}\), note we have \begin{align*}
				\|d\mu_{\epsilon,\varphi}\|_{C^0} &\leq C \|g_{\epsilon,\varphi}\|_{C^0}\|g_{\epsilon,\varphi}^{-1}d\mu_{\epsilon,\varphi}\|_{C^0} \\
				&\leq C\|\nabla_{\epsilon,\varphi}\mu_{\epsilon,\varphi}\|_{C^0}.
			\end{align*} Here we have used properties \emph{(3)} and \emph{(4)} of Lemma \ref{lem:weighted_norm_properties} together with the estimate \(\|g_{\epsilon,\varphi}\|_{C^{0,\alpha}_{0,\epsilon}}\leq C\) of Lemma \ref{lem:metric_and_gamma_estimates}. Now, note the component functions \(\mu_{\epsilon,\varphi}^a\) of \(\mu_{\epsilon,\varphi}\) with respect to a basis \(\{\xi_a\}\) of \(\mathfrak{t}\) satisfy \(\nabla_{\epsilon,\varphi}\mu^a_{\epsilon,\varphi}=J\widetilde{\xi}_a\) by definition of the moment map. Hence \(\|\nabla_{\epsilon,\varphi}\mu_{\epsilon,\varphi}\|_{C^0}\) is uniformly bounded, and we have proven the estimate for \(\mu_{\epsilon,\varphi}\).
			
			For \(\|\mu_{\epsilon,\varphi}-\mu_\epsilon\|_{C^{0,\alpha}_{1,\epsilon}}\) note that \[\mu_{\epsilon,\varphi}^a-\mu_\epsilon^a=d^c\varphi(\widetilde{\xi}_a).\] We have \(\|d^c\varphi\|_{C^{0,\alpha}_{1,\epsilon}}\leq C\|\varphi\|_{C^{1,\alpha}_{2,\epsilon}}\leq C\|\varphi\|_{C^{4,\alpha}_{2,\epsilon}}\), and \(\|\widetilde{\xi}_a\|_{C^{0,\alpha}_{0,\epsilon}}\leq C\) by Remark \ref{rem:coordinates}.
		\end{proof}
		
		\begin{lemma}\label{lem:laplacian_order_tau}
			There exists \(C>0\) independent of \(\epsilon\) such that \[\|\Delta_\epsilon\mu_\epsilon\|_{C^{0,\alpha}_{0,\epsilon}}\leq C.\]
		\end{lemma}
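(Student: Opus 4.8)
The plan is to reduce the statement to a weighted $C^{2,\alpha}$-estimate on the moment map $\mu_\epsilon$ itself. Fix a basis $\{\xi_a\}$ of $\mathfrak t$ and write $\mu_\epsilon^a$ for the components. The Kähler Laplacian $\Delta_\epsilon$ is, up to a fixed constant, the contraction of $g_\epsilon^{-1}$ with $i\d\db$, and $i\d\db$ maps $C^{2,\alpha}_{\delta,\epsilon}\to C^{0,\alpha}_{\delta-2,\epsilon}$ with operator norm bounded independently of $\epsilon$ (the holomorphic coordinate structure near $E$ being fixed). Hence property \emph{(3)} of Lemma \ref{lem:weighted_norm_properties} together with the bound $\|g_\epsilon^{-1}\|_{C^{0,\alpha}_{0,\epsilon}}\le C$ of Lemma \ref{lem:metric_and_gamma_estimates} gives $\|\Delta_\epsilon h\|_{C^{0,\alpha}_{\delta-2,\epsilon}}\le C\|h\|_{C^{2,\alpha}_{\delta,\epsilon}}$ for any function $h$. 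Since $\Delta_\epsilon$ annihilates constants, I would apply this with $\delta=2$ and $h=\mu_\epsilon^a-c_a$ for a suitable constant $c_a$; the lemma then follows from the uniform bound
\[\|\mu_\epsilon^a-c_a\|_{C^{2,\alpha}_{2,\epsilon}}\le C .\]

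To prove this bound I would control the derivative of $\mu_\epsilon^a$ globally and its values near the exceptional divisor separately. For the derivative, the moment map identity gives $d\mu_\epsilon^a=-\iota_{\widetilde\xi_a}\omega_\epsilon$, where $\widetilde\xi_a$ is the lift of $\xi_a$ to $\Bl_pM$. Since $p$ is fixed by $T$, the field $\xi_a$ vanishes at $p$, so by Remark \ref{rem:coordinates} one has $\|\widetilde\xi_a\|_{C^{1,\alpha}_{1,\epsilon}}\le C$; combined with $\|\omega_\epsilon\|_{C^{1,\alpha}_{0,\epsilon}}\le C$ from Lemma \ref{lem:metric_and_gamma_estimates} and property \emph{(3)} of Lemma \ref{lem:weighted_norm_properties}, this yields $\|d\mu_\epsilon^a\|_{C^{1,\alpha}_{1,\epsilon}}\le C$. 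In particular $|\nabla\mu_\epsilon^a|\le Cr$ and $|\nabla^2\mu_\epsilon^a|\le C$ at distance $r$ from $p$, which controls the $C^1$- and $C^2$-parts (and then, by interpolation, the Hölder part) of the weighted norm on $\widetilde B_1$; on $\Bl_pM\setminus\widetilde B_1$ one just uses that $\mu_\epsilon^a=\mu^a$ is a fixed function. It remains to pin down the $C^0$-part on $\widetilde B_1$, i.e.\ that $|\mu_\epsilon^a-c_a|\le Cr^2$ at distance $r$. For this I would use Lemma \ref{lem:epsilon_moment_map}\emph{(3)}: since $\omega_\epsilon=\iota_\epsilon^*(\epsilon^2\eta)$ on $\widetilde B_\epsilon$ and the Burns--Simanca moment map $\mu_\eta$ of Lemma \ref{lem:BS_moment_map} extends to a \emph{bounded} smooth function on $\Bl_0\mathbb C^n$, one has $(\iota_\epsilon)_*(\mu_\epsilon^a-c_a)=\epsilon^2\mu_\eta^a$ on $\widetilde D_1$ for the correct $c_a$, so $\|(\iota_\epsilon)_*(\mu_\epsilon^a-c_a)\|_{C^{2,\alpha}(\widetilde D_1)}\le C\epsilon^2$ and the innermost term of $\|\mu_\epsilon^a-c_a\|_{C^{2,\alpha}_{2,\epsilon}}$ is $\epsilon^{-2}\cdot C\epsilon^2=C$; integrating $d\mu_\epsilon^a$ radially outward from $E$ then gives $|\mu_\epsilon^a-c_a|\le C(r^2+\epsilon^2)\le Cr^2$ on $\widetilde B_1\setminus\widetilde B_\epsilon$.

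The hard part will be the exponent bookkeeping: $\widetilde\xi_a$ only has \emph{weight one} near $E$ (it is tangent to, but does not vanish on, the exceptional divisor), so $d\mu_\epsilon^a$ has weight one and $\mu_\epsilon^a$ weight two — exactly enough, and no more — and one must check the correction terms in Lemma \ref{lem:epsilon_moment_map} are lower order after rescaling, which is where the choice $r_\epsilon=\epsilon^{(2n-1)/(2n+1)}$ matters. (Note the tempting shortcut via Remark \ref{rem:Ricci_moment_map}, that $-\Delta_\epsilon\mu_\epsilon^a$ is a Ricci moment map, gives only $\|d(\Delta_\epsilon\mu_\epsilon^a)\|_{C^{0,\alpha}_{-1,\epsilon}}\le C$ and hence, after integration, only a logarithmic-in-$\epsilon^{-1}$ bound, so it does not suffice on its own.) An equivalent, more hands-on route is to estimate $\Delta_\epsilon\mu_\epsilon^a$ directly on the three regions of Lemma \ref{lem:epsilon_moment_map}: on $\Bl_pM\setminus\widetilde B_{2r_\epsilon}$ it equals the fixed function $\Delta_\omega\mu^a$; on $\widetilde B_{r_\epsilon}$ it equals $\iota_\epsilon^*(\Delta_\eta\mu_\eta^a)$, a rescaling of a fixed bounded function on $\Bl_0\mathbb C^n$ (using that $\eta$ is asymptotically Euclidean and $\mu_\eta^a$ grows at most quadratically, so $\Delta_\eta\mu_\eta^a\in C^{0,\alpha}_0(\Bl_0\mathbb C^n)$); and on the gluing annulus $\widetilde B_{2r_\epsilon}\setminus\widetilde B_{r_\epsilon}$ one rescales by $r_\epsilon$ and uses the decay of $f$, $g$ and $d^cg$ together with Lemma \ref{lem:metric_and_gamma_estimates} to see the non-Euclidean contributions are $\epsilon$-small after rescaling.
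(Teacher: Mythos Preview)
Your proposal is correct, and in fact you sketch two valid arguments. The second, ``hands-on'' route you describe at the end---estimating $\Delta_\epsilon\mu_\epsilon^a$ region by region as a fixed function on $\Bl_pM\setminus\widetilde B_{2r_\epsilon}$, as $\iota_\epsilon^*(\Delta_\eta\mu_\eta^a)$ on $\widetilde B_{r_\epsilon}$, and by an expansion $\Delta_\epsilon\mu_\epsilon^a=\sum_jA_j^a+\O(|z|^\tau)$ on the gluing annulus---is precisely the paper's proof. The paper does not pursue your primary route at all.

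Your primary route, reducing to $\|\mu_\epsilon^a-\mu^a(p)\|_{C^{2,\alpha}_{2,\epsilon}}\le C$ via the moment map identity $d\mu_\epsilon^a=-\omega_\epsilon(\widetilde\xi_a,-)$ together with $\|\widetilde\xi_a\|_{C^{1,\alpha}_{1,\epsilon}}\le C$, is a genuinely different and rather cleaner packaging: it avoids the explicit asymptotic analysis on the annulus and does not touch the particular exponent $r_\epsilon=\epsilon^{(2n-1)/(2n+1)}$ at all (that choice only enters the paper's annulus computation, not your global estimate). Two small remarks: the ``interpolation'' you invoke is unnecessary, since the bound $\|d\mu_\epsilon^a\|_{C^{1,\alpha}_{1,\epsilon}}\le C$ already encodes the full first- and second-derivative and H\"older information needed, so it combines directly with the $C^0_{2,\epsilon}$-bound; and the constant $c_a$ you seek is simply $\mu^a(p)$, as Lemma~\ref{lem:epsilon_moment_map}(3) makes explicit. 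The paper's region-by-region approach, while more laborious, does yield the slightly sharper intermediate statement $\Delta_\epsilon\mu_\epsilon^a=\sum_jA_j^a+\O(|z|^\tau)$ near the exceptional divisor, though this refinement is not used elsewhere.
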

		
		\begin{proof}
			We will use the explicit formula for the moment map in Lemma \ref{lem:epsilon_moment_map}. On \(\Bl_pM\backslash\widetilde{B}_{2r_\epsilon}\), since \(\Delta_\epsilon\mu_\epsilon=\Delta\mu\) is independent of \(\epsilon\) there is a uniform bound on this region, by \(\|\Delta\mu\|_{C^1}\) for example. 
			
			On \(\widetilde{B}_{r_\epsilon}\) the metric and moment map are pulled back from \(\widetilde{D}_{R_\epsilon}\subset\Bl_p\mathbb{C}^n\) via \(\iota_\epsilon\), where \(R_\epsilon:=\epsilon^{-1}r_\epsilon\). Since the vector fields \(\widetilde{\xi}_a\) are invariant under pushforward by the dilation map \(\iota_\epsilon\), the component functions \(\mu^a_\epsilon\) of the moment map are also pulled back from \(\widetilde{D}_{R_\epsilon}\). Hence, on \(\widetilde{B}_{r_\epsilon}\), \begin{align*}
				\Delta_\epsilon\mu_\epsilon^a &= \Delta_{\iota_\epsilon^*(\epsilon^2\eta)}\iota_\epsilon^*\left(\epsilon^2\sum_jA_{j}^a |\zeta_j|^2+\epsilon^2d^cg(\widetilde{\xi_a})\right) \\
				&= \iota_\epsilon^*\left(\Delta_\eta\left(\sum_jA_{j}^a |\zeta_j|^2+d^cg(\widetilde{\xi_a})\right)\right) \\
				&=\iota_\epsilon^*(\Delta_\eta\mu_\eta^a).
			\end{align*} On \(\Bl_0\mathbb{C}^n\) we have the expansion \[\eta=i\d\db\left(|\zeta|^2+g(\zeta)\right)\] as \(|\zeta|\to\infty\), where \(g(\zeta)=\O(|\zeta|^{4-2n})\) if \(n>2\) and \(g=\log|\zeta|\) if \(n=2\). It follows \(\Delta_\eta\) has the expansion \(\Delta_\eta=\Delta_{\mathrm{Euc}}+h^{j\bar{k}}\d_j\d_{\bar{k}}\), where \(\Delta_{\mathrm{Euc}}\) is the Euclidean Laplacian in the \(\zeta\)-coordinates and \(h^{j\bar{k}}\) is \(\O(|\zeta|^{2-2n})\). Since \(d^cg(\widetilde{\xi}_a)\) is \(\O(|\zeta|^{4-2n})\) this implies \[\Delta_\eta\mu_\eta^a = \sum_j A_{j}^a+\O(|\zeta|^{2-2n}),\] so \(\Delta_\eta\mu_\eta^a\) is \(\O(1)\) on \(\Bl_0\mathbb{C}^n\). It follows that \[\|\Delta_\epsilon\mu_\epsilon\|_{C^{0,\alpha}_{0,\epsilon}\left(\widetilde{B}_{r_\epsilon}\right)}=\|\Delta_\eta\mu_\eta\|_{C^{0,\alpha}_0\left(\widetilde{D}_{R_\epsilon}\right)}\leq \|\Delta_\eta\mu_\eta\|_{C^{0,\alpha}_0\left(\Bl_0\mathbb{C}^n\right)}.\] Hence \(\|\Delta_\epsilon\mu_\epsilon\|_{C^{0,\alpha}_{0,\epsilon}\left(\widetilde{B}_{r_\epsilon}\right)}\) is uniformly bounded independent of \(\epsilon\).
			
			The only remaining region to estimate is the annulus \(\widetilde{B}_{2r_\epsilon}\backslash\widetilde{B}_{r_\epsilon}\), on which \(\mu_\epsilon\) takes the form \[\mu_\epsilon=\mu_{\mathrm{Euc}}+d^c(\gamma_{1,\epsilon}(z)f(z)+\epsilon^2\gamma_{2,\epsilon}(z)g(\epsilon^{-1}z)).\] We claim that the function \(\gamma_{1,\epsilon}(z)f(z)+\epsilon^2\gamma_{2,\epsilon}(z)g(\epsilon^{-1}z)\) is \(\O(|z|^{2+\tau})\) on the annulus \(\widetilde{B}_{2r_\epsilon}\backslash\widetilde{B}_{r_\epsilon}\), for all \(\tau>0\) sufficiently small. That is to say, for fixed \(\tau>0\) small enough, there is a uniform bound on the \(C^{k,\alpha}_{2+\tau,\epsilon}\)-norm of this function over the annulus, independent of \(\epsilon\). To see this, first note that \(\gamma_{1,\epsilon}f\) is \(\O(|z|^4)\) so it suffices to estimate \(\epsilon^2\gamma_{2,\epsilon}(z)g(\epsilon^{-1}z)\), or equivalently \(\epsilon^2g(\epsilon^{-1}z)\) since \(\gamma_{2,\epsilon}\) is uniformly \(\O(1)\). In the case \(n>2\), \(g\) is \(\O(|\zeta|^{4-2n})\), and \[r_\epsilon^{-3}(\epsilon^2g(\epsilon^{-1}r_\epsilon z))=\epsilon^{-1}R_\epsilon^{1-2n}(R_\epsilon^{2n-4}g(R_\epsilon z)),\] where  \(1\leq|z|\leq 2\). The \(C^{k,\alpha}\)-norm of \(R_\epsilon^{2n-4}g(R_\epsilon z)\) is uniformly bounded on the annulus \(1\leq|z|\leq 2\), hence the \(C^{k,\alpha}\)-norm of this expression is bounded by \[\epsilon^{-1}R_\epsilon^{1-2n}=\epsilon^{\frac{2n-3}{2n+1}}\to0\] as \(\epsilon\to0\), where we used the definition \(r_\epsilon:=\epsilon^{\frac{2n-1}{2n+1}}\). Hence \(\epsilon^2g(\epsilon^{-1})\) is \(\O(|z|^3)\) on \(\widetilde{B}_{2r_\epsilon}\backslash\widetilde{B}_{r_\epsilon}\). In the case \(n=2\) we have \(g=\log|\zeta|\), and \[r_\epsilon^{-2-\tau}(\epsilon^2g(\epsilon^{-1}r_\epsilon z))=\epsilon^{2-\tau}r_\epsilon^{-2}(R_\epsilon^{-\tau}\log|R_\epsilon z|).\] The \(C^{k,\alpha}\)-norm of \(R_\epsilon^{-\tau}\log|R_\epsilon z|\) is uniformly bounded on \(1\leq|z|\leq2\), so the \(C^{k,\alpha}\)-norm of this expression is bounded by \[ \epsilon^{2-\tau}r_\epsilon^{-2}=\epsilon^{2-\tau-2\frac{2n-1}{2n+1}}\to0\] as \(\epsilon\to0\) provided \(\tau<4/5\). It follows that \(\epsilon^2g(\epsilon^{-1}z)\) is \(\O(|z|^{2+\tau})\) on \(\widetilde{B}_{2r_\epsilon}\backslash\widetilde{B}_{r_\epsilon}\) for \(n=2\) and \(0<\tau<4/5\).
			
			We see from this that the Laplacian \(\Delta_\epsilon\) satisfies \(\Delta_\epsilon=\Delta_{\mathrm{Euc}}+h^{j\bar{k}}\d_j\d_{\bar{k}}\) on this region, where \(h^{j\bar{k}}\) is uniformly \(\O(|z|^\tau)\). Hence \[\Delta_\epsilon\mu_\epsilon^a=\sum_jA_{j}^a+\O(|z|^\tau)\] on the annulus. This gives \[\|\Delta_\epsilon\mu_\epsilon^a\|_{C^{0,\alpha}_{0,\epsilon}\left(\widetilde{B}_{2r_\epsilon}\backslash\widetilde{B}_{r_\epsilon}\right)}\leq C(1+r_\epsilon^\tau)\] and the right hand side is uniformly bounded. This completes the estimate on the annulus, so we have a uniform bound on \(\|\Delta_\epsilon\mu_\epsilon\|_{C^{0,\alpha}_{0,\epsilon}}\) on all of \(\Bl_pM\).
		\end{proof}
	
		\begin{lemma}\label{lem:estimates:Ricci_moment_map}
			Given \(c_0>0\) there exists \(C>0\) independent of \(\epsilon\) such that \[\|\Delta_{\epsilon,\varphi}\mu_{\epsilon,\varphi}\|_{C^{0,\alpha}_{0,\epsilon}}\leq C\] and \[\|\Delta_{\epsilon,\varphi}\mu_{\epsilon,\varphi}-\Delta_{\epsilon}\mu_{\epsilon}\|_{C^{0,\alpha}_{0,\epsilon}}\leq C\|\varphi\|_{C^{4,\alpha}_{2,\epsilon}}\] for all \(\varphi\) such that \(\|\varphi\|_{C^{4,\alpha}_{2,\epsilon}}\leq c_0\).
		\end{lemma}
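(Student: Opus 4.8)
The plan is to deduce the first bound from the second: by Lemma~\ref{lem:laplacian_order_tau}, the triangle inequality, and the hypothesis $\|\varphi\|_{C^{4,\alpha}_{2,\epsilon}}\leq c_0$,
\[\|\Delta_{\epsilon,\varphi}\mu_{\epsilon,\varphi}\|_{C^{0,\alpha}_{0,\epsilon}}\leq\|\Delta_\epsilon\mu_\epsilon\|_{C^{0,\alpha}_{0,\epsilon}}+\|\Delta_{\epsilon,\varphi}\mu_{\epsilon,\varphi}-\Delta_\epsilon\mu_\epsilon\|_{C^{0,\alpha}_{0,\epsilon}}\leq C+Cc_0,\]
uniformly in $\epsilon$, so everything reduces to the second inequality. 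I would prove this componentwise in a basis $\{\xi_a\}$ of $\mathfrak{t}$: recalling $\mu_{\epsilon,\varphi}^a-\mu_\epsilon^a=d^c\varphi(\widetilde{\xi}_a)$ from the definition of $\mu_{\epsilon,\varphi}$ (and Lemma~\ref{lem:moment_map_change}), write
\[\Delta_{\epsilon,\varphi}\mu_{\epsilon,\varphi}^a-\Delta_\epsilon\mu_\epsilon^a=(\Delta_{\epsilon,\varphi}-\Delta_\epsilon)\mu_\epsilon^a+\Delta_{\epsilon,\varphi}\big(d^c\varphi(\widetilde{\xi}_a)\big),\]
estimate each summand in $C^{0,\alpha}_{0,\epsilon}$, and sum over the finitely many $a$.

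For the first summand I would use that the K\"ahler Laplacian on a function is a universal constant times the trace of $i\d\db$ of that function against the metric, so $(\Delta_{\epsilon,\varphi}-\Delta_\epsilon)\mu_\epsilon^a$ is (up to a constant) the contraction of $g_{\epsilon,\varphi}^{-1}-g_\epsilon^{-1}$ with $i\d\db\mu_\epsilon^a$. Lemma~\ref{lem:metric_and_gamma_estimates} with $\delta=2$ gives $\|g_{\epsilon,\varphi}^{-1}-g_\epsilon^{-1}\|_{C^{0,\alpha}_{0,\epsilon}}\leq C\|\varphi\|_{C^{4,\alpha}_{2,\epsilon}}$, so by the product rule (3) of Lemma~\ref{lem:weighted_norm_properties} it remains to show $\|i\d\db\mu_\epsilon^a\|_{C^{0,\alpha}_{0,\epsilon}}\leq C$. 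For this I would differentiate the defining relation $\d_{\bar k}\mu_\epsilon^a=-i(g_\epsilon)_{\ell\bar k}\widetilde{\xi}_a^\ell$ of the moment map (in holomorphic coordinates, the $\widetilde{\xi}_a^\ell$ being the components of the $(1,0)$-part of $\widetilde{\xi}_a$), which exhibits $i\d\db\mu_\epsilon^a$ as built from a single derivative of $g_\epsilon$ and a single derivative of $\widetilde{\xi}_a$. Since $p$ is $T$-fixed, $\widetilde{\xi}_a$ is the lift of a vector field vanishing at $p$, so Remark~\ref{rem:coordinates} gives $\|\widetilde{\xi}_a\|_{C^{k,\alpha}_{1,\epsilon}}\leq C$, while $\|g_\epsilon\|_{C^{2,\alpha}_{0,\epsilon}}\leq C$ by Lemma~\ref{lem:metric_and_gamma_estimates}; one derivative lowers the weight by one, placing $i\d\db\mu_\epsilon^a$ in $C^{0,\alpha}_{0,\epsilon}$ with the required uniform bound after one more use of the product rule.

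For the second summand I would first show $\|d^c\varphi(\widetilde{\xi}_a)\|_{C^{3,\alpha}_{2,\epsilon}}\leq C\|\varphi\|_{C^{4,\alpha}_{2,\epsilon}}$: this function is $d\varphi\in C^{3,\alpha}_{1,\epsilon}$ (of norm $\leq C\|\varphi\|_{C^{4,\alpha}_{2,\epsilon}}$) paired with $\widetilde{\xi}_a$, which by Remark~\ref{rem:coordinates} again lies in $C^{3,\alpha}_{1,\epsilon}$ uniformly, so the product rule contributes weight $1+1=2$. Applying $\Delta_{\epsilon,\varphi}$ — once more the contraction of the uniformly bounded $g_{\epsilon,\varphi}^{-1}\in C^{2,\alpha}_{0,\epsilon}$ (Lemma~\ref{lem:metric_and_gamma_estimates}) with $i\d\db(\,\cdot\,)$, which lowers the weight by two — lands $\Delta_{\epsilon,\varphi}\big(d^c\varphi(\widetilde{\xi}_a)\big)$ in $C^{1,\alpha}_{0,\epsilon}\hookrightarrow C^{0,\alpha}_{0,\epsilon}$ with norm $\leq C\|\varphi\|_{C^{4,\alpha}_{2,\epsilon}}$, which finishes the estimate. (One could instead argue from Remark~\ref{rem:Ricci_moment_map}, using that $-\Delta_\epsilon\mu_\epsilon$ and $-\Delta_{\epsilon,\varphi}\mu_{\epsilon,\varphi}$ are Ricci moment maps and that their difference is, up to an additive constant, the $\widetilde{\xi}_a$-hamiltonian of $i\d\db\log(\omega_{\epsilon,\varphi}^n/\omega_\epsilon^n)$; but then one has to control that normalising constant separately, which the direct argument avoids.)

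The step I expect to be the crux is the weight bookkeeping near the exceptional divisor. A crude count would only place $i\d\db\mu_\epsilon^a$ and $\Delta_{\epsilon,\varphi}\big(d^c\varphi(\widetilde{\xi}_a)\big)$ in $C^{0,\alpha}_{-1,\epsilon}$ — too weak, since it is exactly the $C^{0,\alpha}_{0,\epsilon}$-bound that the contraction-mapping scheme of the subsequent sections requires. The extra weight in both places is bought entirely by the vanishing of $\widetilde{\xi}_a$ at $p$, which upgrades the lifted field from $C^{k,\alpha}_{0,\epsilon}$ to $C^{k,\alpha}_{1,\epsilon}$ as recorded in Remark~\ref{rem:coordinates}; checking that this improved decay genuinely propagates through $i\d\db$ and $\Delta_{\epsilon,\varphi}$ is the delicate point, the remainder being routine use of Lemmas~\ref{lem:weighted_norm_properties} and~\ref{lem:metric_and_gamma_estimates}.
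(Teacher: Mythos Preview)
Your proof is correct and follows essentially the same approach as the paper: the same reduction of the first bound to the second via Lemma~\ref{lem:laplacian_order_tau}, the same decomposition \((\Delta_{\epsilon,\varphi}-\Delta_\epsilon)\mu_\epsilon^a+\Delta_{\epsilon,\varphi}(d^c\varphi(\widetilde{\xi}_a))\), and the same weight bookkeeping in which the crucial extra unit of decay is supplied by \(\|\widetilde{\xi}_a\|_{C^{k,\alpha}_{1,\epsilon}}\leq C\) from Remark~\ref{rem:coordinates}. Your identification of this last point as the crux is exactly right, and the paper handles it in precisely the way you describe (writing \(i\d\db\mu_\epsilon^a=\d(g_\epsilon\cdot g_\epsilon^{-1}\db\mu_\epsilon^a)\) and using that \(g_\epsilon^{-1}\db\mu_\epsilon^a\) is essentially \(\widetilde{\xi}_a\)).
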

	
		\begin{proof}
			Note it suffices to prove the second estimate, since by the previous lemma, \begin{align*}
				\|\Delta_{\epsilon,\varphi}\mu_{\epsilon,\varphi}\|_{C^{0,\alpha}_{0,\epsilon}} &\leq \|\Delta_{\epsilon,\varphi}\mu_{\epsilon,\varphi}-\Delta_{\epsilon}\mu_{\epsilon}\|_{C^{0,\alpha}_{0,\epsilon}}+\|\Delta_{\epsilon}\mu_{\epsilon}\|_{C^{0,\alpha}_{0,\epsilon}} \\
				&\leq C+\|\Delta_{\epsilon,\varphi}\mu_{\epsilon,\varphi}-\Delta_{\epsilon}\mu_{\epsilon}\|_{C^{0,\alpha}_{0,\epsilon}}.
			\end{align*} Dropping the \(\epsilon\) subscripts, we compute this as \begin{align*}
			\Delta_\varphi\mu_\varphi^a-\Delta\mu^a &= g_\varphi^{-1}i\d\db\mu_\varphi^a-g^{-1}i\d\db\mu^a \\
			&= (g_\varphi^{-1}-g^{-1})i\d\db(\mu^a+d^c\varphi(\widetilde{\xi}_a))+g^{-1}i\d\db(\mu_\varphi^a-\mu^a)\\
			&=(g_\varphi^{-1}-g^{-1})i\d\db(\mu^a+d^c\varphi(\widetilde{\xi}_a))+g^{-1}i\d\db(d^c\varphi(\widetilde{\xi}_a)) \\
			&= (g_\varphi^{-1}-g^{-1})i\d\db\mu^a+g_\varphi^{-1}i\d\db(d^c\varphi(\widetilde{\xi}_a)) .
			\end{align*} From here, \begin{align*}
			\|(g_\varphi^{-1}-g^{-1})i\d\db\mu^a\|_{C^{0,\alpha}_{0,\epsilon}} &\leq C\|g_\varphi^{-1}-g^{-1}\|_{C^{0,\alpha}_{0,\epsilon}}\|\d(gg^{-1}\db\mu^a)\|_{C^{0,\alpha}_{0,\epsilon}} \\
			&\leq C\|\varphi\|_{C^{4,\alpha}_{2,\epsilon}}\|g\|_{C^{1,\alpha}_{0,\epsilon}}\|\widetilde{\xi}_a\|_{C^{1,\alpha}_{1,\epsilon}} \\
			&\leq C\|\varphi\|_{C^{4,\alpha}_{2,\epsilon}},
			\end{align*} where we used the uniform bound on \(\|\widetilde{\xi}_a\|_{C^{k,\alpha}_{1,\epsilon}}\) from Remark \ref{rem:coordinates}. Finally, \begin{align*}
			\|g_\varphi^{-1}i\d\db(d^c\varphi(\widetilde{\xi}_a))\|_{C^{0,\alpha}_{0,\epsilon}} &\leq C\|g_\varphi^{-1}\|_{C^{0,\alpha}_{0,\epsilon}}\| d^c\varphi(\widetilde{\xi}_a)\|_{C^{2,\alpha}_{2,\epsilon}} \\
			&\leq C\|d^c\varphi\|_{C^{2,\alpha}_{1,\epsilon}}\|\widetilde{\xi}_a\|_{C^{2,\alpha}_{1,\epsilon}} \\
			&\leq C\|\varphi\|_{C^{4,\alpha}_{2,\epsilon}}. \qedhere
			\end{align*}	\end{proof}

		\begin{lemma}
			Let \(u:P\to\mathbb{R}\) be a fixed smooth function. Given \(c_0>0\), there is a constant \(C>0\) independent of \(\epsilon\) such that \(\|u(\mu_{\epsilon,\varphi})\|_{C^{0,\alpha}_{0,\epsilon}}<C\) whenever \(\|\varphi\|_{C^{4,\alpha}_{2,\epsilon}}<c_0\).
		\end{lemma}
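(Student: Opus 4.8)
The plan is to reduce the claim to the already-established bound \(\|\mu_{\epsilon,\varphi}\|_{C^{0,\alpha}_{0,\epsilon}}\leq C\) of Lemma \ref{lem:estimate_epsilon_moment_map}, exploiting the elementary principle that composing with a fixed Lipschitz function cannot increase a Hölder seminorm. First I would record that \(\mu_{\epsilon,\varphi}\) always takes values in a fixed compact set: by Lemma \ref{lem:epsilon_moment_map} the image of \(\mu_\epsilon\) is the polytope \(P_\epsilon\subseteq P\), and adding the potential \(\varphi\) does not change the moment polytope by Lemma \ref{lem:moment_map_change}, so \(\mu_{\epsilon,\varphi}(\Bl_pM)=P_\epsilon\subseteq P\) for all admissible \(\varphi\) and all \(\epsilon\). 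Since \(P\) is compact and convex and \(u\) is smooth, the restriction \(u|_P\) is bounded and, by the mean value inequality along segments of \(P\), Lipschitz with some constant \(L\); both \(\sup_P|u|\) and \(L\) depend only on the fixed data \(u\) (and a fixed norm on \(\mathfrak{t}^*\)).

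Next I would unwind the definition of the weighted norm \(\|\cdot\|_{C^{0,\alpha}_{0,\epsilon}}\) with \(\delta=0\): it is simply the sum of three ordinary \(C^{0,\alpha}\)-norms, namely of the function itself on \(\Bl_pM\setminus\widetilde{B}_1\), of its rescalings \(f(rz)\) on \(\widetilde{B}_2\setminus\widetilde{B}_1\) (with a supremum over \(\epsilon\leq r\leq 1/2\)), and of the pushforward \((\iota_\epsilon)_*f\) on \(\widetilde{D}_1\), all computed with respect to \emph{fixed} background metrics. The key point is that the maps \(z\mapsto rz\) and \(\iota_\epsilon\) act only on the domain, so each of these three pieces, applied to \(u(\mu_{\epsilon,\varphi})\), equals the corresponding \(C^{0,\alpha}\)-norm of \(u\) composed with a rescaling of \(\mu_{\epsilon,\varphi}\). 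Applying the estimate \(\|u\circ g\|_{C^{0,\alpha}}\leq\sup_P|u|+L\,[g]_{C^{0,\alpha}}\leq\sup_P|u|+L\,\|g\|_{C^{0,\alpha}}\), valid for any \(P\)-valued \(g\) on a fixed domain since \(|u(g(x))-u(g(y))|\leq L|g(x)-g(y)|\), piecewise and summing yields \(\|u(\mu_{\epsilon,\varphi})\|_{C^{0,\alpha}_{0,\epsilon}}\leq 3\sup_P|u|+L\,\|\mu_{\epsilon,\varphi}\|_{C^{0,\alpha}_{0,\epsilon}}\), and the right-hand side is bounded independently of \(\epsilon\) by Lemma \ref{lem:estimate_epsilon_moment_map}.

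There is no serious obstacle here; the only points needing a moment's care are verifying that \(\mu_{\epsilon,\varphi}\) stays inside the fixed polytope \(P\) (so that \(u\) is only ever evaluated where it is a fixed Lipschitz function, with no dependence on \(\epsilon\) or \(\varphi\)), and observing that the weighted Hölder norm with weight \(0\) genuinely reduces to unweighted \(C^{0,\alpha}\)-norms on fixed domains with fixed metrics, which is what makes the region-by-region composition argument legitimate. I would also remark that the same argument, combined with the Leibniz-type estimates in Lemma \ref{lem:weighted_norm_properties}, will handle the higher-order compositions \(u(\mu_{\epsilon,\varphi})\) in \(C^{k,\alpha}_{0,\epsilon}\) once derivative bounds on \(\mu_{\epsilon,\varphi}\) are available, but for the present \(C^{0,\alpha}\) statement only the Lipschitz bound on \(u\) is needed.
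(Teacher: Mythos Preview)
Your argument is correct and takes a genuinely different route from the paper's. The paper instead invokes property (6) of Lemma~\ref{lem:weighted_norm_properties} to reduce to a fixed \(C^1\)-bound, then handles the derivative via the chain rule and the moment-map identity \(\nabla_{\epsilon,\varphi}(u(\mu_{\epsilon,\varphi}))=\sum_a u_{,a}(\mu_{\epsilon,\varphi})\,\widetilde{\xi}_a\), bounding each factor in \(C^0\). You bypass this entirely by observing that with weight \(\delta=0\) the norm is a sum of ordinary \(C^{0,\alpha}\)-norms on fixed domains and that composition with the Lipschitz map \(u|_P\) controls each Hölder seminorm directly, so the result follows immediately from the \(C^{0,\alpha}_{0,\epsilon}\)-bound on \(\mu_{\epsilon,\varphi}\) already obtained in Lemma~\ref{lem:estimate_epsilon_moment_map}. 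Your approach is slightly more elementary in that it avoids the metric estimate \(\|d f\|_{C^0}\leq C\|\nabla_{\epsilon,\varphi} f\|_{C^0}\) and the vector-field bounds; the paper's approach, via the explicit expression \(\sum_a u_{,a}(\mu)\widetilde{\xi}_a\), has the advantage of setting up exactly the formulae reused in the later derivative estimates (e.g.\ for \(\nabla^{1,0}(v(\mu))\) and \(\Lich(v(\mu))\) in the proof of Proposition~\ref{prop:weighted_Lich_estimate}).
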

		
		\begin{proof}
			Again it suffices to bound the \(C^1\)-norm. Note the \(C^0\)-norm is bounded, since the image of \(\mu_{\epsilon,\varphi}\) is contained in \(P\) and the image of \(u:P\to\mathbb{R}\) is compact. For the derivative, by the same argument as in the proof of Lemma \ref{lem:estimate_epsilon_moment_map} it is sufficient to estimate \[\nabla_{\epsilon,\varphi}(u(\mu_{\epsilon,\varphi}))=\sum_a u_{,a}(\mu_{\epsilon,\varphi})\widetilde{\xi}_a.\] By the same reasoning as for \(u(\mu_{\epsilon,\varphi})\) there is a \(C^0\)-bound on \(u_{,a}(\mu_{\epsilon,\varphi})\), and \(\widetilde{\xi}_a\) is a fixed vector field, so also has a \(C^0\) bound. 
		\end{proof}

		\begin{lemma}
			Let \(u:P\to\mathbb{R}\) be a fixed smooth function. Given \(c_0>0\), there is a constant \(C>0\) independent of \(\epsilon\) such that \(\|u(\mu_{\epsilon,\varphi})-u(\mu_\epsilon)\|_{C^{0,\alpha}_{0,\epsilon}}\leq C\|\varphi\|_{C^{4,\alpha}_{2,\epsilon}}\) for all \(\varphi\in C^{4,\alpha}_{2,\epsilon}\) satisfying \(\|\varphi\|_{C^{4,\alpha}_{2,\epsilon}}\leq c_0\).
		\end{lemma}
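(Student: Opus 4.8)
The plan is to reduce the estimate to the two preceding lemmas via a first-order Taylor expansion of \(u\) along the segment joining \(\mu_\epsilon\) to \(\mu_{\epsilon,\varphi}\).

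First I would observe that for each \(t\in[0,1]\) the convex combination \(\mu_t:=(1-t)\mu_\epsilon+t\mu_{\epsilon,\varphi}\) is itself a moment map, namely \(\mu_t=\mu_{\epsilon,t\varphi}\): indeed \(\mu_{\epsilon,\varphi}^a-\mu_\epsilon^a=d^c\varphi(\widetilde{\xi}_a)\), so \(\mu_t^a=\mu_\epsilon^a+d^c(t\varphi)(\widetilde{\xi}_a)\), which by Lemma \ref{lem:moment_map_change} is the moment map for \(\omega_\epsilon+t\,i\d\db\varphi\) (a K\"ahler metric, provided \(c_0\) is small enough, which we may assume). In particular, by Lemma \ref{lem:epsilon_moment_map} the image of each \(\mu_t\) lies in \(P\), so \(u\) is smooth along the path \(t\mapsto\mu_t(x)\) for every \(x\in\Bl_pM\), and the fundamental theorem of calculus gives
\[
u(\mu_{\epsilon,\varphi})-u(\mu_\epsilon)=\sum_a F_a\cdot(\mu_{\epsilon,\varphi}^a-\mu_\epsilon^a),\qquad F_a:=\int_0^1 u_{,a}(\mu_{\epsilon,t\varphi})\,dt,
\]
where \(u_{,a}\) denotes the partial derivative of \(u\) in the coordinate on \(\mathfrak{t}^*\) dual to \(\xi_a\), again a fixed smooth function on \(P\).

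Next I would bound the two factors separately in the weighted norm. For \(F_a\), note \(\|t\varphi\|_{C^{4,\alpha}_{2,\epsilon}}=t\|\varphi\|_{C^{4,\alpha}_{2,\epsilon}}\le c_0\) for all \(t\in[0,1]\), so the previous lemma (applied with the fixed smooth function \(u_{,a}\) and potential \(t\varphi\)) yields \(\|u_{,a}(\mu_{\epsilon,t\varphi})\|_{C^{0,\alpha}_{0,\epsilon}}\le C\) with \(C\) independent of both \(t\) and \(\epsilon\); since the \(C^{0,\alpha}_{0,\epsilon}\)-norm is assembled from H\"older seminorms, Minkowski's integral inequality gives \(\|F_a\|_{C^{0,\alpha}_{0,\epsilon}}\le\int_0^1\|u_{,a}(\mu_{\epsilon,t\varphi})\|_{C^{0,\alpha}_{0,\epsilon}}\,dt\le C\). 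For the second factor, Lemma \ref{lem:estimate_epsilon_moment_map} gives \(\|\mu_{\epsilon,\varphi}-\mu_\epsilon\|_{C^{0,\alpha}_{1,\epsilon}}\le C\|\varphi\|_{C^{4,\alpha}_{2,\epsilon}}\), and then property \emph{(1)} of Lemma \ref{lem:weighted_norm_properties} (with \(0\le 1\)) gives \(\|\mu_{\epsilon,\varphi}^a-\mu_\epsilon^a\|_{C^{0,\alpha}_{0,\epsilon}}\le\|\mu_{\epsilon,\varphi}-\mu_\epsilon\|_{C^{0,\alpha}_{1,\epsilon}}\le C\|\varphi\|_{C^{4,\alpha}_{2,\epsilon}}\).

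Finally, applying the multiplicative property \emph{(3)} of Lemma \ref{lem:weighted_norm_properties} to each product \(F_a\cdot(\mu_{\epsilon,\varphi}^a-\mu_\epsilon^a)\) and summing over the fixed finite basis \(\{\xi_a\}\) yields
\[
\|u(\mu_{\epsilon,\varphi})-u(\mu_\epsilon)\|_{C^{0,\alpha}_{0,\epsilon}}\le C\sum_a\|F_a\|_{C^{0,\alpha}_{0,\epsilon}}\,\|\mu_{\epsilon,\varphi}^a-\mu_\epsilon^a\|_{C^{0,\alpha}_{0,\epsilon}}\le C\|\varphi\|_{C^{4,\alpha}_{2,\epsilon}},
\]
as required. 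The only point requiring a little care is the identification \(\mu_t=\mu_{\epsilon,t\varphi}\), which is precisely what lets the previous lemma be applied uniformly in \(t\); beyond that the argument is a routine combination of the preceding estimates, and I do not anticipate a genuine obstacle.
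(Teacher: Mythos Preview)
Your argument is correct and a bit slicker than the paper's. The paper does not work directly with the \(C^{0,\alpha}_{0,\epsilon}\)-norm: it invokes property \emph{(6)} of Lemma~\ref{lem:weighted_norm_properties} to reduce to a \(C^1\)-estimate, then handles the \(C^0\) part by the pointwise mean value theorem (with an intermediate point \(p_x\in P\) on the segment from \(\mu_\epsilon(x)\) to \(\mu_{\epsilon,\varphi}(x)\)), and finally computes \(d(u(\mu_{\epsilon,\varphi})-u(\mu_\epsilon))\) explicitly and estimates each of the resulting terms by hand. Your integral Taylor expansion packages both pieces into a single product \(F_a\cdot(\mu_{\epsilon,\varphi}^a-\mu_\epsilon^a)\), which you then bound by feeding \(t\varphi\) into the previous lemma and applying the multiplicative property~\emph{(3)}; this avoids the explicit derivative computation entirely. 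The trade-off is that you rely on Minkowski's inequality for the H\"older seminorms and on the uniformity in \(t\) of the preceding lemma, while the paper's route is more self-contained but more computational.

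One small remark: your parenthetical ``provided \(c_0\) is small enough, which we may assume'' is unnecessary and slightly misleading. If \(\varphi\) is a K\"ahler potential for \(\omega_\epsilon\) (as is implicit throughout this section), then \(\omega_\epsilon+t\,i\d\db\varphi=(1-t)\omega_\epsilon+t(\omega_\epsilon+i\d\db\varphi)>0\) for every \(t\in[0,1]\) by convexity, so \(t\varphi\) is automatically a K\"ahler potential and \(\mu_{\epsilon,t\varphi}\) lands in \(P_\epsilon\subset P\) regardless of the size of \(c_0\). Alternatively, since \(P\) is convex and \(\mu_{\epsilon,t\varphi}=(1-t)\mu_\epsilon+t\mu_{\epsilon,\varphi}\), the image lies in \(P\) as soon as the endpoints do.
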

		
		\begin{proof}
			We again bound the \(C^1\)-norm. The \(C^0\) bound follows from the mean value theorem, since for \(x\in\Bl_pM\), \[u(\mu_{\epsilon,\varphi}(x))-u(\mu_\epsilon(x))=\sum_{a}u_{,a}(p_x)d^c\varphi(\widetilde{\xi}_a)(x)\] for some \(p_x\in P\) on the line joining \(\mu_{\epsilon,\varphi}(x)\) to \(\mu_\epsilon(x)\), and we can bound the terms on the right hand side as follows: \begin{align*}
				\|u_{,a}(p_x)d^c\varphi(\widetilde{\xi}_a)(x)\|_{C^0}
				&\leq C\|u_{,a}\|_{C^0}\|d^c\varphi\|_{C^0}\|\widetilde{\xi}_a\|_{C^0} \\
				&\leq C \|d\varphi\|_{C^0_{0,\epsilon}} \\
				&\leq C\|\varphi\|_{C^{4,\alpha}_{2,\epsilon}}.
			\end{align*} To estimate the derivative, \[d(u(\mu_{\epsilon,\varphi})-u(\mu_\epsilon))=\sum_a u_{,a}(\mu_{\epsilon,\varphi})d\mu_{\epsilon,\varphi}^a-u_{,a}(\mu_\epsilon)d\mu_\epsilon^a,\] and \begin{align*}
				&u_{,a}(\mu_{\epsilon,\varphi})d\mu_{\epsilon,\varphi}^a-u_{,a}(\mu_\epsilon)d\mu_\epsilon^a \\
				=& 	(u_{,a}(\mu_{\epsilon,\varphi})-u_{,a}(\mu_\epsilon))d\mu_{\epsilon,\varphi}^a+u_{,a}(\mu_\epsilon)d(\mu_{\epsilon,\varphi}^a-\mu_\epsilon^a) \\
				=&(u_{,a}(\mu_{\epsilon,\varphi})-u_{,a}(\mu_\epsilon))d\mu_{\epsilon,\varphi}^a+u_{,a}(\mu_\epsilon)d(d^c\varphi(\widetilde{\xi}_a)).
			\end{align*} The same method as for \(u\) proves a \(C^0\) bound by \(C\|\varphi\|_{C^{4,\alpha}_{2,\epsilon}}\) on \(u_{,a}(\mu_{\epsilon,\varphi})-u_{,a}(\mu_\epsilon)\). For the \(C^0\)-bound on \(d\mu_{\epsilon,\varphi}^a\), as in previous proofs it is enough to note \(\nabla_{\epsilon,\varphi}\mu_{\epsilon,\varphi}^a=\widetilde{\xi}_a\) is bounded. Clearly \(u_{,a}(\mu_\epsilon)\) also has a \(C^0\)-bound. Lastly \begin{align*}
				\|d(d^c\varphi(\widetilde{\xi}_a))\|_{C^0} &\leq
				\|d(d^c\varphi(\widetilde{\xi}_a))\|_{C^{0,\alpha}_{0,\epsilon}} \\ 
				&\leq \|d^c\varphi(\widetilde{\xi}_a)\|_{C^{1,\alpha}_{1,\epsilon}} \\ 
				&\leq C\|\widetilde{\xi}_a\|_{C^{1,\alpha}_{0,\epsilon}}\|\varphi\|_{C^{2,\alpha}_{2,\epsilon}}\\
				&\leq C\|\varphi\|_{C^{4,\alpha}_{2,\epsilon}}. \qedhere
			\end{align*} 
		\end{proof}
	
	\section{Estimates for the weighted linearisation}
		
		Throughout the rest of the paper, we will use the notation and set up of Section \ref{sec:deformation}, and our ultimate goal is to prove Proposition \ref{prop:approx_equation}. Let \(\check{L}_{\epsilon,\varphi}\) denote the linearisation of the weighted scalar curvature operator \[\psi\mapsto S_{v,w}(\omega_{\epsilon}+i\d\db\psi)\] at \(\varphi\in\mathcal{H}^T_{\omega_\epsilon}\). Our aim in this section is to prove: \begin{proposition}\label{prop:weighted_Lich_estimate}
			For any \(\delta<0\) there exist \(c_0,C>0\) independent of \(\epsilon\) such that for all \(\varphi\in C^{4,\alpha}(\Bl_pM)^T\) with \(\|\varphi\|_{C^{4,\alpha}_{2,\epsilon}}<c_0\) and all \(\psi\in C^{4,\alpha}(\Bl_pM)^T\), \[\|\check{L}_{\epsilon,\varphi}(\psi)-\check{L}_\epsilon(\psi)\|_{C^{0,\alpha}_{\delta-4,\epsilon}}\leq C\|\varphi\|_{C^{4,\alpha}_2}\|\psi\|_{C^{4,\alpha}_{\delta,\epsilon}}.\]
		\end{proposition}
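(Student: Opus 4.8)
The plan is to expand both $\check L_{\epsilon,\varphi}$ and $\check L_\epsilon$ by Lemma~\ref{lem:weighted_linearisation} --- applied to the K\"ahler forms $\omega_{\epsilon,\varphi}$ and $\omega_\epsilon$ respectively --- and to estimate the difference summand by summand. In this form $\check L_{\epsilon,\varphi}(\psi)$ is a fixed finite expression, obtained by metric contractions from the tensors $g_{\epsilon,\varphi}$, $g_{\epsilon,\varphi}^{-1}$, $\Riem(g_{\epsilon,\varphi})$, $S(\omega_{\epsilon,\varphi})$, $\mu_{\epsilon,\varphi}$, the fixed smooth functions $v(\mu_{\epsilon,\varphi})$ and $w(\mu_{\epsilon,\varphi})^{-1}$, the term $\Phi_{v,w}(\omega_{\epsilon,\varphi})$ (which by Lemma~\ref{lem:Phi} involves only $\mu_{\epsilon,\varphi}$, the Laplacians $\Delta_{\epsilon,\varphi}\mu_{\epsilon,\varphi}^a$ and the quantities $g_{\epsilon,\varphi}(\widetilde\xi_a,\widetilde\xi_b)$), finitely many derivatives of all of these, and the covariant derivatives of $\psi$ of order at most four; and $\check L_\epsilon(\psi)$ is the same expression with every $\varphi$ deleted. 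It therefore suffices to bound, term by term, the difference of the two versions of each contraction in $C^{0,\alpha}_{\delta-4,\epsilon}$ by $C\|\varphi\|_{C^{4,\alpha}_{2,\epsilon}}\|\psi\|_{C^{4,\alpha}_{\delta,\epsilon}}$.

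Each of the non-$\psi$ building blocks $X$ above satisfies, uniformly in $\epsilon$ and for $\|\varphi\|_{C^{4,\alpha}_{2,\epsilon}}<c_0$, both a bound $\|X_\epsilon\|_{C^{k,\alpha}_{\delta_X,\epsilon}}\leq C$ and a variation bound $\|X_{\epsilon,\varphi}-X_\epsilon\|_{C^{k,\alpha}_{\delta_X,\epsilon}}\leq C\|\varphi\|_{C^{4,\alpha}_{2,\epsilon}}$ in an appropriate weighted norm with weight $\delta_X\leq 0$: for $g_{\epsilon,\varphi}$, $g_{\epsilon,\varphi}^{-1}$, $\Riem(g_{\epsilon,\varphi})$ and hence $S(\omega_{\epsilon,\varphi})$ (of weight $-2$) this is Lemma~\ref{lem:metric_and_gamma_estimates}; for $\mu_{\epsilon,\varphi}$ it is Lemma~\ref{lem:estimate_epsilon_moment_map}; for $v(\mu_{\epsilon,\varphi})$, $w(\mu_{\epsilon,\varphi})^{-1}$ and their derivatives it is the last two lemmas of Section~\ref{sec:estimates_moment_maps}; for $\Delta_{\epsilon,\varphi}\mu_{\epsilon,\varphi}^a$ it is Lemmas~\ref{lem:laplacian_order_tau} and \ref{lem:estimates:Ricci_moment_map}; and for the vector fields $\nabla_{\epsilon,\varphi}\mu_{\epsilon,\varphi}^a=\pm J\widetilde\xi_a$ it is Remark~\ref{rem:coordinates} --- in each case together with the routine higher-order analogues got by iterating the same arguments (which estimate weighted $C^k$-norms directly via Lemma~\ref{lem:weighted_norm_properties}(6)). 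The remaining factor, the covariant derivatives of $\psi$, lies in $C^{0,\alpha}_{\delta-j,\epsilon}$ for $j\leq 4$ derivatives, with norm $\leq\|\psi\|_{C^{4,\alpha}_{\delta,\epsilon}}$. Now expand each difference of a product by the telescoping identity $\prod a_i-\prod b_i=\sum_j a_1\cdots a_{j-1}(a_j-b_j)b_{j+1}\cdots b_m$, and apply the multiplicativity (Lemma~\ref{lem:weighted_norm_properties}(3)) and the embeddings $C^{k,\alpha}_{\delta',\epsilon}\hookrightarrow C^{k,\alpha}_{\delta'',\epsilon}$ for $\delta''\leq\delta'$ (Lemma~\ref{lem:weighted_norm_properties}(1)): each resulting term is bounded by $C\|\varphi\|_{C^{4,\alpha}_{2,\epsilon}}\|\psi\|_{C^{4,\alpha}_{\delta,\epsilon}}$ in a space $C^{0,\alpha}_{\delta',\epsilon}$ with $\delta'\geq\delta-4$, hence a fortiori in $C^{0,\alpha}_{\delta-4,\epsilon}$.

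It only remains to check that no contraction has total weight strictly below $\delta-4$; this is where the weights $\delta_X\leq 0$ are used. The fourth-order part $\frac{v(\mu)}{w(\mu)}\Lich^*\Lich\psi$ lands exactly at weight $0+(\delta-4)=\delta-4$, and this part reproduces the cscK computation carried out in \cite{Sze14}, the only new feature being the undifferentiated factor $v(\mu)/w(\mu)$ of weight $0$. The genuinely new terms $\frac1{w(\mu)}(\db^*\Lich\psi,\nabla^{1,0}v(\mu))$, $\frac1{w(\mu)}(\Lich\psi,\Lich v(\mu))$, $\frac12 S(\omega)\nabla\!\big(\tfrac{v(\mu)}{w(\mu)}\big)\cdot\nabla\psi$ and $\frac12\nabla\Phi_{v,w}(\omega)\cdot\nabla\psi$ are all of lower order in $\psi$ and land at weight $\geq\delta-3>\delta-4$. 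So there is no conceptual obstacle here: the substance of the argument is the bookkeeping --- listing the finitely many contractions and tracking their weights --- together with recording the handful of higher-order moment-map estimates that mildly extend those of Section~\ref{sec:estimates_moment_maps}. Summing over the terms gives the proposition.
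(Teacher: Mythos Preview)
Your proposal is correct and follows essentially the same approach as the paper: expand $\check L_{\epsilon,\varphi}-\check L_\epsilon$ via Lemma~\ref{lem:weighted_linearisation}, telescope each product difference, and feed in the building-block estimates from Lemma~\ref{lem:metric_and_gamma_estimates} and Section~\ref{sec:estimates_moment_maps} together with the multiplicativity and embedding properties of the weighted norms. The only minor presentational difference is that where you invoke ``higher-order analogues'' of the moment-map estimates, the paper instead computes $\nabla^{1,0}(v(\mu))=\sum_a v_{,a}(\mu)\widetilde\xi_a$ and $\Lich(v(\mu))=\sum_{a,b}v_{,ab}(\mu)(\widetilde\xi_b)^\flat\otimes\widetilde\xi_a$ explicitly, thereby reducing directly to the $C^{0,\alpha}_0$-bounds already established.
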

		
		Let us define \(u:=v/w\). For simplicity, we will now drop \(\epsilon\) from the notation, although the reader should keep in mind the base metric is \(g_\epsilon\) with moment map \(\mu_\epsilon\), even though we write these as \(g\) and \(\mu\). We will also write the weighted norms as \(\|\cdot\|_{C^{k,\alpha}_{\delta}}\), although on \(\Bl_pM\) these always depend on \(\epsilon\). As in the previous section, \(C\) denotes a positive constant that is independent of \(\epsilon\) and may vary from line to line.

		\begin{proof}[Sketch proof.]
		From Lemma \ref{lem:weighted_linearisation} we can write \begin{align}
					&\,\,\check{L}_{\varphi}(\psi)-\check{L}(\psi) \nonumber \\
					=&\,\,u(\mu_{\varphi})L_{\varphi}(\psi) - u(\mu)L(\psi) \label{eq:line1} \\
					-&\,\frac{2}{w(\mu_{\varphi})}(\db_{\varphi}^*\Lich_{\varphi}\psi,\nabla_{\varphi}^{1,0}(v(\mu_{\varphi})))_{\varphi}+\frac{2}{w(\mu)}(\db^*\Lich\psi,\nabla^{1,0}(v(\mu))) \label{eq:line2} \\
					+&\,\frac{1}{w(\mu_{\varphi})}(\Lich_{\varphi}\psi,\Lich_{\varphi}(v(\mu_{\varphi})))_{\varphi}-\frac{1}{w(\mu)}(\Lich\psi,\Lich(v(\mu))) \label{eq:line3} \\
					+&\,\frac{1}{2}S(\omega_{\varphi})\nabla_{\varphi}\left(u(\mu_{\varphi})\right)\cdot\nabla_{\varphi}\psi - \frac{1}{2}S(\omega)\nabla\left(u(\mu)\right)\cdot\nabla\psi \label{eq:line4} \\
					+&\,\frac{1}{2}\nabla_\varphi \Phi_{v,w}(\omega_\varphi)\cdot\nabla_\varphi\psi - \frac{1}{2}\nabla \Phi_{v,w}(\omega)\cdot\nabla\psi, \label{eq:line5}
				\end{align} where \(L\) is the linearisation of the unweighted scalar curvature operator. To prove Proposition \ref{prop:weighted_Lich_estimate}, each of these lines can be estimated separately. The techniques are fairly similar for each line, so we will only estimate some terms to demonstrate how this may be done.
				
		The line \eqref{eq:line1} is equal to \begin{equation}\label{eq:the_trick}
					(u(\mu_\varphi)-u(\mu))L_\varphi(\psi)+u(\mu)(L_\varphi(\psi)-L(\psi)).
				\end{equation} From the estimates in Section \ref{sec:estimates_moment_maps}, we have \[\|u(\mu_\varphi)-u(\mu)\|_{C^{0,\alpha}_0}\leq C\|\varphi\|_{C^{4,\alpha}_2} \] and \[\|u(\mu)\|_{C^{0,\alpha}_0}\leq C.\] It therefore suffices to show \[\|L_\varphi(\psi)\|_{C^{0,\alpha}_{\delta-4}}\leq C\|\psi\|_{C^{4,\alpha}_\delta}\] and \[\|L_\varphi(\psi)-L(\psi)\|_{C^{0,\alpha}_{\delta-4}}\leq C\|\varphi\|_{C^{4,\alpha}_2}\|\psi\|_{C^{4,\alpha}_\delta}.\] The latter inequality is already proven \cite[Proposition 18]{Sze12}. To see the norms of \(L_\varphi:C^{4,\alpha}_\delta\to C^{0,\alpha}_{\delta-4}\) are uniformly bounded, we use the formula \[L_\varphi\psi = \Delta_\varphi^2\psi+\Ric(\omega_\varphi)^{j\bar{k}}\d_j\d_{\bar{k}}\psi.\] The estimates in Lemma \ref{lem:metric_and_gamma_estimates} then give the uniform bound: \begin{align*}
					\|\Delta_\varphi^2\psi\|_{C^{0,\alpha}_{\delta-4}} &= \|g_\varphi^{-1}i\d\db(g_\varphi^{-1}i\d\db\psi)\|_{C^{0,\alpha}_{\delta-4}} \\
					&\leq C\|g_\varphi^{-1}\|_{C^{0,\alpha}_0}\|\d\db(g_\varphi^{-1}i\d\db\psi)\|_{C^{0,\alpha}_{\delta-4}} \\
					&\leq C\|g_\varphi^{-1}i\d\db\psi\|_{C^{2,\alpha}_{\delta-2}} \\
					&\leq C\|\psi\|_{C^{4,\alpha}_{\delta}}.
				\end{align*} Similarly \begin{align*}
					\|\Ric(\omega_\varphi)^{j\bar{k}}\d_j\d_{\bar{k}}\psi\|_{C^{0,\alpha}_{\delta-4}} &\leq C\|g_\varphi\|_{C^{0,\alpha}_0}\|g_\varphi^{-1}\|_{C^{0,\alpha}_0}\|\Riem(\omega_\varphi)\|_{C^{0,\alpha}_{-2}}\|i\d\db\psi\|_{C^{0,\alpha}_{\delta-2}} \\
					&\leq C\|\psi\|_{C^{4,\alpha}_\delta}.
				\end{align*}  
				
				To estimate the other lines of \eqref{eq:line1}-\eqref{eq:line5} we apply a similar principle: namely, wherever we see an expression of the form \(A_\varphi B_\varphi-AB\), we write \[A_\varphi B_\varphi-AB=(A_\varphi-A)B_\varphi+A(B_\varphi-B).\] Applying this trick recursively, we reduce to estimating terms of the form \(A_\varphi-A\) and \(A_\varphi\). In particular, these require estimates \begin{equation}\label{eq:general_estimate}
							\|A_\varphi\|_{C^{0,\alpha}_{-k}}\leq C,\quad\quad \|A_\varphi-A_0\|_{C^{0,\alpha}_{-k}}\leq C\|\varphi\|_{C^{4,\alpha}_2}
				\end{equation} for some \(0\leq k\leq 4\) depending on \(A\). For example, when \(A_\varphi=u(\mu_\varphi)\), we have such estimates on the \(C^{0,\alpha}_0\)-norms of \(A_\varphi\) and \(A_\varphi-A\) from Section \ref{sec:estimates_moment_maps}, and when \(A_\varphi=g_\varphi\) or \(g_\varphi^{-1}\), the \(C^{0,\alpha}_0\)-estimates are from Lemma \ref{lem:metric_and_gamma_estimates}.
			
				In some lines we are required to estimate derivatives of \(v(\mu_\varphi)\), and we only have estimates on \(v(\mu_\varphi)\) itself so far. To estimate these derivatives, we use the chain rule together with the definition of the moment map to reduce these estimates to the \(C^{0,\alpha}_0\)-estimates already proven in Section \ref{sec:estimates_moment_maps}. For example,\footnote{Before now we have written \(\widetilde{\xi}\) for the real holomorphic vector field generated by \(\xi\), whereas here we instead denote the associated \((1,0)\)-vector field by the same symbol. It will always be clear from the context to which vector field we are referring, so we allow this small abuse of notation.} \[\nabla^{1,0}(v(\mu))=\sum_a v_{,a}(\mu)\nabla^{1,0}\mu^a=\sum_a v_{,a}(\mu)\widetilde{\xi}_a,\] and \begin{align*}
					\Lich(v(\mu)) &= \db\nabla^{1,0}(v(\mu)) \\
					&= \db\sum_{a}v_{,a}(\mu)\widetilde{\xi}_a \\
					&= \sum_{a,b}v_{,ab}(\mu)\db\mu^b\otimes\widetilde{\xi}_a \\
					&= \sum_{a,b}v_{,ab}(\mu)(\widetilde{\xi}_b)^\flat\otimes\widetilde{\xi}_a,
				\end{align*} where \(\flat\) is conversion from a \((1,0)\)-vector field to a \((0,1)\)-form via the metric, and in the third line we have used that \(\db\widetilde{\xi}_a=0\). From these expressions, the factors \(\nabla^{1,0}_\varphi v(\mu_\varphi)\) and \(\Lich_\varphi(v(\mu_\varphi))\) satisfy estimates of the form \eqref{eq:general_estimate} with \(k=0\). With this in mind, it is straightforward to estimate lines \eqref{eq:line2} and \eqref{eq:line3}; we note the formula \eqref{eq:db_adjoint} for \(\db^*\) from Section 2 can be used to estimate \eqref{eq:line2}.
			
				Line \eqref{eq:line4} is similarly straightforward; we only note the inequalities \[\|S(\omega_\varphi)-S(\omega)\|_{C^{0,\alpha}_{-2}}\leq C\|\varphi\|_{C^{4,\alpha}_2}\] and \[\|S(\omega_\varphi)\|_{C^{0,\alpha}_{-2}}\leq C\] which follow from Lemma \ref{lem:metric_and_gamma_estimates}.
				
				Finally for line \eqref{eq:line5} we recall Lemma \ref{lem:Phi} which states that \(\Phi_{v,w}(\omega)\) can be written as a linear combination of functions of the form \(u_a(\mu)\Delta\mu^a\) and \(u_{ab}(\mu)g(\widetilde{\xi}_a,\widetilde{\xi}_b)\), for finitely many fixed smooth functions \(u_a\) and \(u_{ab}\) on the moment polytope. Taking the gradients of these gives \[\nabla(u_a(\mu)\Delta\mu^a)=u_a(\mu)\Ric(\widetilde{\xi}_a,-)^\#+\sum_b u_{a,b}(\mu)(\Delta\mu^a)\widetilde{\xi}_b\] and \[\nabla(u_{ab}(\mu)g(\widetilde{\xi}_a,\widetilde{\xi}_b))=\sum_c u_{ab,c}(\mu)g(\widetilde{\xi}_a,\widetilde{\xi}_b)\widetilde{\xi}_c+u_{ab}(\mu)(g(\nabla\widetilde{\xi}_a,\widetilde{\xi}_b)+g(\widetilde{\xi}_a,\nabla\widetilde{\xi}_b)),\] where \(\#\) is conversion from a \(1\)-form to a vector field via the metric; here we have also used Remark \ref{rem:Ricci_moment_map} that \(\Delta\mu\) is a moment map for the Ricci curvature. Given the estimates in Lemma \ref{lem:estimates:Ricci_moment_map} on \(\Delta_\varphi\mu_\varphi\), it is straightforward to bound line \eqref{eq:line5}, which completes the proof of Proposition \ref{prop:weighted_Lich_estimate}.
		\end{proof}
		
		As a corollary, we obtain:
		
		\begin{lemma}\label{lem:Q-estimate}
			Let \(\check{Q}_\epsilon\) be the non-linear part of the weighted scalar curvature operator \(\check{S}:=S_{v,w}\) with respect to \(\omega_\epsilon\), so that \[\check{S}(\omega_\epsilon+i\d\db\psi)=\check{S}(\omega_\epsilon)+\check{L}_\epsilon(\psi)+\check{Q}_\epsilon(\psi).\] Given \(\delta<0\), there exist \(C,c_0>0\) such that if \[\|\varphi\|_{C^{4,\alpha}_2},\,\|\psi\|_{C^{4,\alpha}_2}\leq c_0,\] then \[\|\check{Q}_\epsilon(\varphi)-\check{Q}_\epsilon(\psi)\|_{C^{4,\alpha}_{\delta-4}}\leq C\left(\|\varphi\|_{C^{4,\alpha}_2}+\|\psi\|_{C^{4,\alpha}_2}\right)\|\varphi-\psi\|_{C^{4,\alpha}_{\delta}}\]
		\end{lemma}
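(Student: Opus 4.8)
The plan is to deduce this directly from Proposition~\ref{prop:weighted_Lich_estimate} by applying the fundamental theorem of calculus along the segment joining $\psi$ to $\varphi$. By definition of the non-linear remainder, $\check{Q}_\epsilon(\chi) = \check{S}(\omega_\epsilon + i\d\db\chi) - \check{S}(\omega_\epsilon) - \check{L}_\epsilon(\chi)$ for any K\"ahler potential $\chi$. Differentiating in $\chi$ and using that the derivative of $\chi\mapsto\check{S}(\omega_\epsilon+i\d\db\chi)$ at $\chi$ is the linearised operator $\check{L}_{\epsilon,\chi}$, while the derivative of the linear map $\check{L}_\epsilon$ is $\check{L}_\epsilon$ itself, the linearisation of $\check{Q}_\epsilon$ at $\chi$ in a direction $\eta$ is
\[
D\check{Q}_\epsilon\big|_\chi(\eta) = \check{L}_{\epsilon,\chi}(\eta) - \check{L}_\epsilon(\eta).
\]
Shrink $c_0>0$ if necessary so that $\|\chi\|_{C^{4,\alpha}_{2,\epsilon}}\leq c_0$ forces $\omega_\epsilon + i\d\db\chi>0$ and puts $\chi$ in the range where Proposition~\ref{prop:weighted_Lich_estimate} and Lemma~\ref{lem:metric_and_gamma_estimates} apply. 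Set $\chi_t := \psi + t(\varphi-\psi)$ for $t\in[0,1]$. Since the set of K\"ahler potentials is convex, each $\chi_t$ is again a K\"ahler potential, and since the weighted norm is convex, $\|\chi_t\|_{C^{4,\alpha}_{2,\epsilon}} \leq (1-t)\|\psi\|_{C^{4,\alpha}_{2,\epsilon}} + t\|\varphi\|_{C^{4,\alpha}_{2,\epsilon}} \leq c_0$.

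With this set-up in place, we have
\[
\check{Q}_\epsilon(\varphi) - \check{Q}_\epsilon(\psi) = \int_0^1 \frac{d}{dt}\,\check{Q}_\epsilon(\chi_t)\,dt = \int_0^1 \bigl(\check{L}_{\epsilon,\chi_t}(\varphi-\psi) - \check{L}_\epsilon(\varphi-\psi)\bigr)\,dt.
\]
Applying Proposition~\ref{prop:weighted_Lich_estimate} with base potential $\chi_t$ and direction $\varphi-\psi$---this is where the hypothesis $\delta<0$ is used---gives, for each $t\in[0,1]$,
\[
\bigl\|\check{L}_{\epsilon,\chi_t}(\varphi-\psi) - \check{L}_\epsilon(\varphi-\psi)\bigr\|_{C^{0,\alpha}_{\delta-4,\epsilon}} \leq C\,\|\chi_t\|_{C^{4,\alpha}_{2,\epsilon}}\,\|\varphi-\psi\|_{C^{4,\alpha}_{\delta}},
\]
with $C$ independent of $\epsilon$ and of $t$. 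Bounding $\|\chi_t\|_{C^{4,\alpha}_{2,\epsilon}} \leq \|\varphi\|_{C^{4,\alpha}_{2}} + \|\psi\|_{C^{4,\alpha}_{2}}$, passing the $C^{0,\alpha}_{\delta-4,\epsilon}$-norm inside the integral, and integrating over $t$ yields the asserted estimate.

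There is no essential difficulty: all of the analytic content has been packaged into Proposition~\ref{prop:weighted_Lich_estimate} and the moment-map estimates of Section~\ref{sec:estimates_moment_maps}. The only genuinely necessary care is the bookkeeping just indicated---choosing $c_0$ uniformly in $\epsilon$ so that the entire segment $\{\chi_t\}_{t\in[0,1]}$ stays within the region on which Proposition~\ref{prop:weighted_Lich_estimate} is valid (handled by convexity of both the set of K\"ahler potentials and the weighted norm), and the identification of $D\check{Q}_\epsilon$ above, which is just the chain rule together with linearity of $\check{L}_\epsilon$. As a side remark, specialising to $\psi=0$ and using $\check{Q}_\epsilon(0)=0$ recovers the quadratic bound $\|\check{Q}_\epsilon(\varphi)\|_{C^{0,\alpha}_{\delta-4,\epsilon}} \leq C\,\|\varphi\|_{C^{4,\alpha}_{2}}\,\|\varphi\|_{C^{4,\alpha}_{\delta}}$, which is the form in which this estimate feeds into the contraction-mapping argument.
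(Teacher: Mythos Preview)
Your proof is correct and follows essentially the same approach as the paper: both identify the derivative of $\check{Q}_\epsilon$ at $\chi$ as $\check{L}_{\epsilon,\chi}-\check{L}_\epsilon$ and then invoke Proposition~\ref{prop:weighted_Lich_estimate} along the segment joining $\psi$ to $\varphi$. The only cosmetic difference is that the paper phrases this via the mean value theorem (producing a single intermediate $\chi$) whereas you use the integral form of the fundamental theorem of calculus; the latter is in fact the more careful formulation for Banach-space valued maps, but the content is identical.
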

		
		\begin{proof}
		The proof is exactly the same as \cite[Lemma 19]{Sze12}. Namely, by the mean value theorem there exists \(\chi\) on the line joining \(\varphi\) and \(\psi\) such that \[\check{Q}_\epsilon(\varphi)-\check{Q}_\epsilon(\psi)=D\check{Q}_\epsilon|_{\chi}(\varphi-\psi)=(\check{L}_{\epsilon,\chi}-\check{L}_\epsilon)(\varphi-\psi).\] Applying Proposition \ref{prop:weighted_Lich_estimate}, \begin{align*}
		\|\check{Q}_\epsilon(\varphi)-\check{Q}_\epsilon(\psi)\|_{C^{4,\alpha}_{\delta-4}} 
		&\leq C\|\chi\|_{C^{4,\alpha}_2}\|\varphi-\psi\|_{C^{4,\alpha}_{\delta}} \\
		&\leq C\left(\|\varphi\|_{C^{4,\alpha}_2}+\|\psi\|_{C^{4,\alpha}_2}\right)\|\varphi-\psi\|_{C^{4,\alpha}_{\delta}}.	\qedhere
		\end{align*}
		\end{proof}

	\section{A right-inverse of the linearised operator}
	
		Recall from Section \ref{sec:deformation} the lifting operator \(\ell_\epsilon:\overline{\mathfrak{h}}\to C^\infty(\Bl_pM)^{T'}\). We also write \(X:=\nabla_\omega S_{v,w}(\omega)\) for the extremal vector field on \(M\), and write \(\widetilde{X}\) for its lift to \(\Bl_pM\). The aim of this section is to prove:
		
		\begin{proposition}\label{prop:right_inverse}
			For \(n>2\), let \(\delta\in(4-2n,0)\). For sufficiently small \(\epsilon>0\), the operator \(G_\epsilon:C^{4,\alpha}_{\delta,\epsilon}(\Bl_pM)^{T'}\times\overline{\mathfrak{h}}\to C^{0,\alpha}_{\delta-4,\epsilon}(\Bl_pM)^{T'}\), \[G_\epsilon(\psi,f):=\check{L}_{\epsilon}\psi-\frac{1}{2}\widetilde{X}(\psi)-\ell_\epsilon(f)\] has a right inverse \(P_\epsilon\), satisfying \(\|P_\epsilon\|\leq C\) for a constant \(C>0\) independent of \(\epsilon\).
			
			In the case \(n=2\), given \(\delta<0\) sufficiently close to \(0\), there is a right inverse \(P_\epsilon\) for \(G_\epsilon\) satisfying \(\|P_\epsilon\|\leq C\epsilon^\delta\).
		\end{proposition}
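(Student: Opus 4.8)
The plan is to follow the scheme of \cite[Section 5]{Sze12}: build right inverses of two model operators --- one on the punctured base \(M_p:=M\setminus\{p\}\) and one on \(\Bl_0\mathbb{C}^n\) equipped with the Burns--Simanca metric --- patch them with cut-off functions into an approximate right inverse \(\widetilde P_\epsilon\) for \(G_\epsilon\), and then correct by a Neumann series \(P_\epsilon:=\widetilde P_\epsilon(\Id+\mathcal E_\epsilon)^{-1}\) once the error \(\mathcal E_\epsilon:=G_\epsilon\widetilde P_\epsilon-\Id\) is shown to have operator norm \(<\tfrac12\) for small \(\epsilon\). The new work compared to \cite{Sze12} lies in handling the extra terms of the weighted linearisation and the appearance of the moment map near the exceptional divisor.

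On \(M_p\): since \((M,\omega)\) is \((v,w)\)-weighted extremal with extremal field \(X\), Lemma \ref{lem:weighted_linearisation} exhibits \(\psi\mapsto\check L_\omega(\psi)-\tfrac12 X(\psi)\) as a fourth-order elliptic operator whose fourth-order part is \(\tfrac{v(\mu)}{w(\mu)}\Lich^*_\omega\Lich_\omega\), so its indicial roots at \(p\) are those of the Lichnerowicz operator; the interval \((4-2n,0)\) contains none beyond those responsible for the finite-dimensional cokernel. The same weighted-H\"older Fredholm theory as in \cite{AP06,Sze12} then supplies, for \(\delta\in(4-2n,0)\), a bounded right inverse \(Q^M\) of \((\psi,f)\mapsto\check L_\omega\psi-\tfrac12 X(\psi)-\ell(f)\) on \(C^{0,\alpha}_{\delta-4}(M_p)^{T'}\) with values in \(C^{4,\alpha}_\delta(M_p)^{T'}\times\overline{\mathfrak h}\), the \(\overline{\mathfrak h}\)-factor absorbing the cokernel --- this being exactly the purpose for which \(H\) and the maximal torus \(T'\subset G_p\) were introduced in Section \ref{sec:deformation}. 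On \(\Bl_0\mathbb{C}^n\): the Burns--Simanca metric \(\eta\) is scalar-flat, so the model operator is \(\kappa_0\,\Lich^*_\eta\Lich_\eta\) with the coefficient frozen at \(\kappa_0:=v(\mu(p))/w(\mu(p))>0\), the limiting value of \(v(\mu_\epsilon)/w(\mu_\epsilon)\) near \(E\); for \(\delta\in(4-2n,0)\) this has a bounded right inverse \(Q^{BS}\), a fixed multiple of the one for \(\Lich^*_\eta\Lich_\eta\) from \cite{AP06,Sze12}. When \(n=2\) the interval \((4-2n,0)\) is empty; one instead takes \(\delta<0\) close to \(0\), in which case \(Q^{BS}\) still exists, but after transplanting to the rescaled metric \(\epsilon^2\eta\) on \(\widetilde B_\epsilon\subset\Bl_pM\) and passing to the \(\epsilon\)-dependent norms of Section \ref{sec:weighted_norms} its norm scales like \(\epsilon^\delta\) --- this is the source of the weaker \(n=2\) bound.

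For the gluing I would split a given \(\Xi\in C^{0,\alpha}_{\delta-4,\epsilon}(\Bl_pM)^{T'}\) via the cut-offs \(\gamma_{1,\epsilon},\gamma_{2,\epsilon}\) of Section \ref{sec:approx_soln} into a part supported outside \(\widetilde B_{r_\epsilon}\), inverted by \(Q^M\) (where \(\omega_\epsilon=\omega\)), and a part supported inside \(\widetilde B_{2r_\epsilon}\), transplanted to \(\Bl_0\mathbb{C}^n\) via \(\iota_\epsilon\) and the rescaling \(\epsilon^2\eta\leftrightarrow\eta\) and inverted by \(Q^{BS}\); patching the two solutions with cut-offs defines \(\widetilde P_\epsilon\), and Lemma \ref{lem:weighted_norm_properties} gives \(\|\widetilde P_\epsilon\|\le C\) (resp.\ \(\le C\epsilon^\delta\) for \(n=2\)). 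The error \(\mathcal E_\epsilon\) then has two sources: commutators of \(G_\epsilon\) with the cut-offs, supported on the annulus \(\widetilde B_{2r_\epsilon}\setminus\widetilde B_{r_\epsilon}\) where cut-off derivatives carry factors \(r_\epsilon^{-1}\) --- the choice \(r_\epsilon=\epsilon^{(2n-1)/(2n+1)}\) being precisely what drives these to \(0\) --- and the discrepancy between \(\check L_\epsilon-\tfrac12\widetilde X\) and the two model operators over the overlaps. Away from \(E\) the latter is controlled by \(\omega_\epsilon\to\omega\) together with Proposition \ref{prop:weighted_Lich_estimate}.

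The hard part will be controlling this discrepancy near \(E\), where the weighted nature of the problem is felt. By Lemma \ref{lem:weighted_linearisation}, on \(\widetilde B_\epsilon\) the operator \(\check L_\epsilon\) is \(\tfrac{v(\mu_\epsilon)}{w(\mu_\epsilon)}\) times the Lichnerowicz operator of \(\epsilon^2\eta\) (using \(S(\epsilon^2\eta)=0\)) plus lower-order terms built from \(\Delta_\epsilon\mu_\epsilon^a\), the functions \(g_\epsilon(\widetilde\xi_a,\widetilde\xi_b)\), and derivatives of \(v,w\) at \(\mu_\epsilon\), while \(\widetilde X\) is \(\O(\epsilon)\) there since \(X\) vanishes at \(p\). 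Lemma \ref{lem:epsilon_moment_map} shows that on \(\widetilde B_{r_\epsilon}\) the image of \(\mu_\epsilon\) lies in a region of \(P\) shrinking to the point \(\mu(p)\) as \(\epsilon\to0\), so \(v(\mu_\epsilon)/w(\mu_\epsilon)\to\kappa_0\) there; combined with the moment-map and curvature estimates of Section \ref{sec:estimates_moment_maps} and Lemma \ref{lem:metric_and_gamma_estimates}, each correction term is bounded in \(C^{0,\alpha}_{\delta-4,\epsilon}\) by an expression tending to \(0\). Bookkeeping the resulting powers of \(\epsilon\) and \(r_\epsilon\) --- and, in the \(n=2\) case, verifying that the extra \(\epsilon^\delta\) coming from \(Q^{BS}\) is still absorbed --- should give \(\|\mathcal E_\epsilon\|<\tfrac12\) for \(\epsilon\) small, whence \(\|P_\epsilon\|\le 2\|\widetilde P_\epsilon\|\) yields the stated bounds.
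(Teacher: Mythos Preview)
Your overall architecture matches the paper's: model right inverses on \(M_p\) and \(\Bl_0\mathbb{C}^n\), patch, then Neumann series. Your identification of the model operator on \(M_p\) as \(\frac{v(\mu)}{w(\mu)}\Lich_v^*\Lich\) (equivalently \(\check L_\omega-\tfrac12 X\)) and of the model near \(E\) as the constant-coefficient \(\kappa_0\,\Lich_\eta^*\Lich_\eta\) is exactly what the paper does, and your handling of the weighted extra terms near \(E\) via Lemma~\ref{lem:epsilon_moment_map} and Section~\ref{sec:estimates_moment_maps} is the right idea.

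There is, however, a genuine gap in your patching step. You propose to split \(\Xi\) with \(\gamma_{1,\epsilon},\gamma_{2,\epsilon}\) and then patch the inverted pieces back with the \emph{same} cut-offs, asserting that the commutator errors vanish because ``cut-off derivatives carry factors \(r_\epsilon^{-1}\) --- the choice \(r_\epsilon=\epsilon^{(2n-1)/(2n+1)}\) being precisely what drives these to \(0\)''. This is not correct: in the weighted norms one has \(\|\nabla^k\gamma_{j,\epsilon}\|_{C^{0,\alpha}_{-k,\epsilon}}\sim r_\epsilon^{k}\cdot r_\epsilon^{-k}=O(1)\), so the commutators of a fourth-order operator with \(\gamma_{j,\epsilon}\) are uniformly bounded but do \emph{not} tend to zero as \(\epsilon\to0\). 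The specific value of \(r_\epsilon\) plays no role here; it was chosen to optimise the approximate solution \(\omega_\epsilon\), not the patching.

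The paper resolves this by introducing a second pair of cut-offs \(\beta_{1,\epsilon},\beta_{2,\epsilon}\) with slightly larger support, built from \emph{logarithmic} profiles \(\chi_j(\log|z|/\log\epsilon)\). These satisfy \(\beta_{j,\epsilon}\equiv 1\) on \(\mathrm{supp}\,\gamma_{j,\epsilon}\) and, crucially, \(\|\nabla_\epsilon\beta_{j,\epsilon}\|_{C^{3,\alpha}_{-1,\epsilon}}\le C/|\log\epsilon|\to 0\). One splits with the \(\gamma\)'s but patches with the \(\beta\)'s, setting \(\widetilde P_\epsilon(\Xi):=(\beta_1 P_1(\gamma_1\Xi)+\beta_2 P_2(\gamma_2\Xi),\,f_\Xi)\). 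The commutator terms in \(G_\epsilon\widetilde P_\epsilon-\Id\) now all carry at least one derivative of a \(\beta\), hence are \(O(1/|\log\epsilon|)\) in \(C^{0,\alpha}_{\delta-4,\epsilon}\). Without this logarithmic trick the Neumann-series step does not close, so you should incorporate it before the rest of your error analysis (which is otherwise sound) can be applied.
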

	
		Following \cite[Proposition 20]{Sze12}, the rough approach is to glue together right-inverses for linearised operators on \(M_p:=M\backslash\{p\}\) and \(\Bl_0\mathbb{C}^n\) to construct an approximate right-inverse, and then to deform this to a genuine right-inverse. However, instead of gluing an inverse for the weighted linearization on \(\Bl_0\mathbb{C}^n\), we can glue in an inverse for the usual unweighted linearization. We will see that near the exceptional divisor, the extra terms from the weighted setting are sufficiently small that we can still deform this to a genuine weighted right-inverse regardless.
		
		Before giving the next proof, we recall the notion of \emph{indicial roots} in the weighted Fredholm theory. On \(\mathbb{R}^m\backslash\{0\}\), the indicial roots of the Laplacian are the growth rates of radially symmetric harmonic functions on \(\mathbb{R}^m\backslash\{0\}\). That is, \(\delta\in\mathbb{R}\) is an \emph{indicial root} of \(\Delta_{\mathbb{R}^n}\) if there exists a non-zero harmonic function \(f(r)\) on \(\mathbb{R}^m\backslash\{0\}\) such that \(f\in C^{k,\alpha}_\delta\) for all \(k\) and \(\alpha\), where \(r\) is the radial coordinate.
		
		\begin{lemma}[{\cite[Theorem 1.7]{Bar86}}]\label{lem:indicial_roots}
		For \(m\geq4\), the indicial roots of the Laplacian on \(\mathbb{R}^m\backslash\{0\}\) are \(\mathbb{Z}\backslash\{-1,-2,\ldots,3-m\}\).
		\end{lemma}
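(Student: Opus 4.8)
The plan is to reduce the statement to the classical separation-of-variables computation for the flat Laplacian on the cone $\mathbb{R}^m\backslash\{0\}$. In polar coordinates $(r,\theta)\in(0,\infty)\times S^{m-1}$ one has
\[
\Delta_{\mathbb{R}^m}=\d_r^2+\frac{m-1}{r}\,\d_r+\frac{1}{r^2}\Delta_{S^{m-1}},
\]
where $\Delta_{S^{m-1}}$ is the round-sphere Laplacian, with spectrum $\{-\lambda_j:j\geq0\}$, $\lambda_j=j(j+m-2)$, and eigenspaces the degree-$j$ spherical harmonics. Since the indicial roots are the degrees of homogeneity of homogeneous harmonic functions on $\mathbb{R}^m\backslash\{0\}$, and by separation of variables these are spanned by functions $r^\delta\phi_j$ with $\phi_j$ a degree-$j$ spherical harmonic, the first step is to substitute this ansatz and compute
\[
\Delta_{\mathbb{R}^m}\bigl(r^\delta\phi_j\bigr)=r^{\delta-2}\bigl(\delta(\delta+m-2)-\lambda_j\bigr)\phi_j.
\]
This vanishes exactly when $\delta^2+(m-2)\delta-j(j+m-2)=0$, i.e.\ when $(\delta-j)(\delta+j+m-2)=0$, so the homogeneous harmonic functions attached to degree $j$ have growth rate $\delta=j$ or $\delta=2-m-j$.

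The second step is to check that these exhaust the indicial roots. The ``if'' direction is immediate: each of $r^j\phi_j$ and $r^{2-m-j}\phi_j$ is smooth on $\mathbb{R}^m\backslash\{0\}$ and genuinely homogeneous, hence lies in $C^{k,\alpha}_\delta$ for every $k$ and $\alpha$ with $\delta$ its degree, so every element of $\{0,1,2,\ldots\}\cup\{2-m,1-m,-m,\ldots\}$ is an indicial root. For the converse I would expand an arbitrary harmonic $f\in C^{k,\alpha}_\delta$ into spherical-harmonic components $f=\sum_j f_j(r)\phi_j(\theta)$; each $f_j$ satisfies the Euler equation $f_j''+\frac{m-1}{r}f_j'-\frac{\lambda_j}{r^2}f_j=0$, whose solution space is spanned by $r^j$ and $r^{2-m-j}$, so $f$ is a superposition of the homogeneous solutions above and its precise growth rate must be one of the exponents $j$ or $2-m-j$. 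Hence $\delta$ is an indicial root if and only if $\delta\in\{0,1,2,\ldots\}\cup\{2-m,1-m,-m,\ldots\}$.

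The final step is pure bookkeeping: this union contains every integer $\geq0$ and every integer $\leq2-m$, so its complement in $\mathbb{Z}$ is exactly $\{-1,-2,\ldots,3-m\}$, which is nonempty precisely when $3-m\leq-1$, i.e.\ $m\geq4$; this gives the asserted set. I expect the only genuinely delicate point to be the convergence and regularity bookkeeping in the spherical-harmonic expansion used to rule out harmonic functions of other growth rates — justifying term-by-term differentiation and the reduction to the Euler equation requires elliptic estimates on dyadic annuli — but this is precisely the content of \cite[Theorem 1.7]{Bar86}, which we cite.
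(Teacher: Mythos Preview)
The paper does not give its own proof of this lemma; it is simply quoted from \cite[Theorem 1.7]{Bar86}. Your separation-of-variables argument is the standard one and is correct.

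One remark worth making: the paper's informal definition of indicial root just above the lemma speaks of ``radially symmetric harmonic functions'' and writes $f(r)$, which if read literally would yield only the two exponents $0$ and $2-m$ (the constant and the fundamental solution). You have, correctly, interpreted indicial roots as the homogeneity degrees of \emph{all} homogeneous harmonic functions on the punctured space --- i.e.\ separable solutions $r^\delta\phi_j(\theta)$ with $\phi_j$ a spherical harmonic --- and this is indeed the notion relevant to the weighted Fredholm theory invoked in Lemma~\ref{lem:indicial_roots_Fredholm}. Your computation $(\delta-j)(\delta+j+m-2)=0$ and the resulting set $\{j:j\geq0\}\cup\{2-m-j:j\geq0\}=\mathbb{Z}\setminus\{-1,\ldots,3-m\}$ is exactly what is needed.
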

		
		On the manifolds \(M_p\) and \(\Bl_0\mathbb{C}^n\), the K{\"a}hler Laplacians agree asymptotically with the Euclidean Laplacian. Using this, one can prove:
		
		\begin{lemma}[{\cite[Theorem 8.6]{Sze14}}]\label{lem:indicial_roots_Fredholm}
		Let \(n\geq2\). If \(\delta\in\mathbb{R}\) is not an indicial root of the Euclidean Laplacian on \(\mathbb{R}^{2n}\backslash\{0\}\), then the operators \[\Delta_\omega:C^{k,\alpha}_\delta(M_p)\to C^{k-2,\alpha}_{\delta-2}(M_p)\] and \[\Delta_\eta:C^{k,\alpha}_{\delta}(\Bl_0\mathbb{C}^n)\to C^{k-2,\alpha}_{\delta-2}(\Bl_0\mathbb{C}^n)\] are Fredholm.
		
		Hence, for \(n>2\) let \(\delta\in(4-2n,0)\), and for \(n=2\) let \(\delta\in(-1,0)\). Then \(\Delta_\omega^2\) and \(\Delta_\eta^2\) are Fredholm as maps \(C^{k,\alpha}_\delta\to C^{k-4,\alpha}_{\delta-4}\) on the manifolds \(M_p\) and \(\Bl_0\mathbb{C}^n\) respectively. 
		\end{lemma}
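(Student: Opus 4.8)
The plan is to address the two assertions of the lemma in turn. The first --- Fredholmness of \(\Delta_\omega\) and \(\Delta_\eta\) between weighted H\"older spaces whenever \(\delta\) avoids the indicial roots of Lemma~\ref{lem:indicial_roots} --- is quoted from \cite[Theorem 8.6]{Sze14}, but let me recall its mechanism, since it controls the second part. Both \(M_p\) near the puncture \(p\) (in the coordinates \(z\) of Section~\ref{sec:approx_soln}) and \(\Bl_0\mathbb{C}^n\) near infinity (in the coordinates \(\zeta\)) are asymptotically Euclidean: by \eqref{eq:omega_about_p} and \eqref{eq:BS_metric}, \(\omega\) and \(\eta\) differ from the flat metric by terms that decay at the end, so the associated K\"ahler Laplacians differ from the Euclidean Laplacian \(\Delta_{\mathrm{Euc}}\) on \(\mathbb{R}^{2n}\backslash\{0\}\) by an operator with coefficients decaying at the end. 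For the model operator \(\Delta_{\mathrm{Euc}}\) one separates into spherical harmonics on \(S^{2n-1}\); on the degree-\(k\) block it becomes an Euler-type ODE with indicial equation \(\lambda(\lambda+2n-2)=k(k+2n-2)\), whose roots \(\lambda=k\) and \(\lambda=2-2n-k\) for \(k\geq0\) are precisely the indicial roots of Lemma~\ref{lem:indicial_roots}. When \(\delta\) avoids these, the model is invertible on the weighted spaces with norm bounds uniform in \(k\), and a parametrix for \(\Delta_\omega\) (resp.\ \(\Delta_\eta\)) is obtained by gluing the model parametrix near the end to an interior Schauder parametrix on the compact part, the error being a compact operator by a Rellich-type argument in weighted H\"older spaces; this yields Fredholmness.

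Granting the first assertion, the second is a matter of composing Fredholm operators and checking weights. On each of \(M_p\) and \(\Bl_0\mathbb{C}^n\) I factor
\[
\Delta^2 \;:\; C^{k,\alpha}_\delta \xrightarrow{\ \Delta\ } C^{k-2,\alpha}_{\delta-2} \xrightarrow{\ \Delta\ } C^{k-4,\alpha}_{\delta-4},
\]
so it suffices that neither \(\delta\) nor \(\delta-2\) is an indicial root of \(\Delta_{\mathrm{Euc}}\) on \(\mathbb{R}^{2n}\backslash\{0\}\), i.e.\ by Lemma~\ref{lem:indicial_roots} that neither lies in \(\mathbb{Z}\backslash\{-1,-2,\ldots,3-2n\}\). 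For \(n>2\) and \(\delta\in(4-2n,0)\): if \(\delta\notin\mathbb{Z}\) there is nothing to check, and if \(\delta\in\mathbb{Z}\) then \(\delta\in\{5-2n,\ldots,-1\}\subset\{3-2n,\ldots,-1\}\), so \(\delta\) is not an indicial root; likewise \(\delta-2\in(2-2n,-2)\) is, when integral, in \(\{3-2n,\ldots,-3\}\), again not an indicial root. For \(n=2\) and \(\delta\in(-1,0)\) both \(\delta\) and \(\delta-2\in(-3,-2)\) are non-integers, hence not indicial roots. In either case both factors of \(\Delta^2\) are Fredholm, and a composition of Fredholm operators is Fredholm.

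The one genuinely substantial ingredient is the part I have only sketched: the weighted Fredholm theory for \(\Delta_{\mathrm{Euc}}\) and its transfer to \(\Delta_\omega\), \(\Delta_\eta\). Its heart is the uniform-in-\(k\) invertibility of the Euler ODE on each spherical-harmonic block --- exactly what breaks down when \(\delta\) meets an indicial root --- after which the gluing and the compactness of the parametrix error are routine, provided one works throughout with the weighted norms of Section~\ref{sec:weighted_norms}. Since this is precisely \cite[Theorem 1.7]{Bar86} together with \cite[Theorem 8.6]{Sze14}, in the write-up I would state the reduction, cite those results, and include only the indicial-root arithmetic above.
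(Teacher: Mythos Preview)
Your proposal is correct and matches the paper's treatment: the paper gives no proof of this lemma at all, simply citing \cite[Theorem 8.6]{Sze14} for the first assertion and leaving the ``Hence'' as an implicit exercise. Your sketch of the weighted Fredholm mechanism and, in particular, your explicit verification that for the stated ranges of \(\delta\) both \(\delta\) and \(\delta-2\) avoid the indicial roots of Lemma~\ref{lem:indicial_roots} (so that \(\Delta^2\) factors as a composition of Fredholm operators) is exactly the intended deduction, only spelled out in more detail than the paper bothers with.
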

		
		Using this information, we can study the properties of the weighted Lichnerowicz operator on weighted H{\"o}lder spaces:
	
		\begin{lemma}\label{lem:M_p_right_inverse}
			For \(n>2\) let \(\delta\in(4-2n,0)\), and for \(n=2\) let \(\delta\in(-1,0)\). The operator \(C^{4,\alpha}_\delta(M_p)^{T'}\times\overline{\mathfrak{h}}\to C^{0,\alpha}_{\delta-4}(M_p)^{T'}\), \[(\varphi,f)\mapsto \frac{v(\mu)}{w(\mu)}\mathcal{D}_v^*\mathcal{D}\varphi-f|_{M_p}\] has a bounded right-inverse.
		\end{lemma}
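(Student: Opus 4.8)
The plan is to follow the proof of \cite[Proposition 20\,ff.]{Sze12} essentially verbatim: the only structural change is that the weights enter by replacing the Riemannian pairing $\int_M(\cdot)(\cdot)\,\omega^n$ with the weighted pairing $\langle\varphi,\psi\rangle_w:=\int_M\varphi\psi\,w(\mu)\omega^n$ (and $\langle\varphi,\psi\rangle_v$ with $v$ in place of $w$), which is uniformly equivalent to the unweighted one since $v,w>0$ are smooth and bounded with bounded inverse.

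First I would record the analytic structure of $L_0:=\frac{v(\mu)}{w(\mu)}\mathcal{D}_v^{*}\mathcal{D}$. Since $\mathcal{D}^{*}\mathcal{D}=\Delta_\omega^{2}+(\text{order}\le 3)$, the operator $L_0$ is fourth-order elliptic with the same principal symbol as $\frac{v(\mu)}{w(\mu)}\Delta_\omega^{2}$, and it differs from $\frac{v(\mu)}{w(\mu)}\Delta_\omega^{2}$ by an operator of order $\le 3$ with smooth coefficients. Hence its indicial roots at $p$ are exactly those of $\Delta^{2}$ on $\mathbb{R}^{2n}\setminus\{0\}$ computed from Lemma \ref{lem:indicial_roots}; a positive smooth multiplier and lower-order terms do not affect Fredholmness on weighted Hölder spaces, so by the argument of Lemma \ref{lem:indicial_roots_Fredholm} the map $L_0:C^{4,\alpha}_\delta(M_p)\to C^{0,\alpha}_{\delta-4}(M_p)$ is Fredholm for all $\delta$ in the stated ranges, and remains Fredholm after restricting to $T'$-invariant functions (average over the compact torus $T'$). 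A direct integration by parts, using that $\mathcal{D}_v^{*}$ is the formal adjoint of $\mathcal{D}$ for $\langle\,,\rangle_v$, shows that $L_0$ is formally self-adjoint for $\langle\,,\rangle_w$ and satisfies $\langle L_0\varphi,\varphi\rangle_w=\int_M v(\mu)\,|\mathcal{D}\varphi|^{2}\,\omega^n\ge 0$.

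Next I would identify the cokernel. By the duality in the weighted Fredholm theory, $\operatorname{coker}\!\bigl(L_0:C^{4,\alpha}_\delta\to C^{0,\alpha}_{\delta-4}\bigr)$ is identified via $\langle\,,\rangle_w$ with $\ker\!\bigl(L_0:C^{4,\alpha}_{\delta^{*}}(M_p)^{T'}\to C^{0,\alpha}_{\delta^{*}-4}(M_p)^{T'}\bigr)$, where $\delta^{*}=4-2n-\delta$. Now I claim that for any weight $\delta'$ lying strictly below the first positive indicial root $0$ (which covers $\delta$ in all cases, and also $\delta^{*}$: for $n>2$ one has $\delta^{*}\in(4-2n,0)$, while for $n=2$ one has $\delta^{*}\in(0,1)$), a solution of $L_0 f=0$ of polynomial growth at $p$ extends to a $C^{4,\alpha}$ function on all of $M$ — indeed such $f$ has a polyhomogeneous expansion at $p$ in powers $r^{\lambda}$ over the indicial roots $\lambda$, and the only indicial roots $\ge\delta'$ are non-negative integers, so the expansion has no singular part. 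Then $\langle L_0 f,f\rangle_w=0$ forces $\mathcal{D}f=0$ on $M$, so $f$ is a $T'$-invariant real holomorphy potential on $(M,\omega)$ (vanishing at $p$ in the case $n=2$), and conversely every such potential restricts to an element of the relevant kernel. Therefore $\operatorname{coker}(L_0)$ is finite-dimensional and dual, via $\langle\,,\rangle_w$, to a subspace $\mathcal{K}\subseteq\overline{\mathfrak{h}}$ of $T'$-invariant holomorphy potentials; the inclusion $\mathcal{K}\subseteq\overline{\mathfrak{h}}$ is the standard correspondence between real holomorphy potentials and hamiltonians of Killing fields, together with the observation that $T'$-invariance of $f$ makes the associated Killing field commute with $\mathfrak{t}'$, hence lie in $\mathfrak{h}$.

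Finally I would assemble the right inverse. The composite $\overline{\mathfrak{h}}\to\operatorname{coker}(L_0)\cong\mathcal{K}^{*}$, $f\mapsto\bigl(\phi\mapsto-\int_M f\phi\,w(\mu)\omega^n\bigr)$, is well-defined (elements of $\overline{\mathfrak{h}}$ are globally smooth and the integral converges since $\phi=O(1)$ and $w(\mu)\omega^n=O(r^{2n-1})$ near $p$) and surjective, because the $w$-pairing is non-degenerate on $\mathcal{K}$: for $0\ne\phi\in\mathcal{K}\subseteq\overline{\mathfrak{h}}$ one has $\int_M\phi^{2}\,w(\mu)\omega^n>0$. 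Hence $\operatorname{im}(L_0)+\{-f|_{M_p}:f\in\overline{\mathfrak{h}}\}$ exhausts $C^{0,\alpha}_{\delta-4}(M_p)^{T'}$, so $G(\varphi,f):=L_0\varphi-f|_{M_p}$ is surjective with closed range, and $\ker G$ maps to a finite-dimensional subspace of $\overline{\mathfrak{h}}$ with fibres cosets of the finite-dimensional $\ker L_0$, so $\ker G$ is finite-dimensional. A surjective bounded operator between Banach spaces with finite-dimensional (hence complemented) kernel admits a bounded right inverse, which is the assertion. The main obstacle is the cokernel computation, specifically the removable-singularity step showing $\ker L_0$ on $C^{4,\alpha}_\delta(M_p)$ consists of genuine holomorphy potentials on $M$ and the Lie-theoretic identification $\mathcal{K}\subseteq\overline{\mathfrak{h}}$; the case $n=2$ requires a little extra bookkeeping since the allowed weight interval is no longer symmetric about $2-n$, but the same extension argument applies and merely shrinks the cokernel.
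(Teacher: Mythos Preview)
Your proposal is correct and follows essentially the same approach as the paper: establish Fredholmness of $L_0$ from its principal part $\frac{v(\mu)}{w(\mu)}\Delta_\omega^2$, use formal self-adjointness with respect to $\langle\,,\rangle_w$ to identify the cokernel with the kernel at the dual weight $4-2n-\delta$, and show that this kernel consists of $T'$-invariant holomorphy potentials on $M$, i.e.\ $\overline{\mathfrak{h}}$. The only cosmetic differences are that the paper handles the removable-singularity step by citing the weighted regularity results of Pacard (improving the weight across the gap $(4-2n,0)$ and then peeling off the contribution at the indicial root $0$), whereas you invoke the polyhomogeneous expansion directly; and you spell out the final surjectivity $\overline{\mathfrak{h}}\twoheadrightarrow\mathcal{K}^*$ more explicitly than the paper does.
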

	
		\begin{proof}
			We first note that the leading order term of \(\check{\mathcal{L}}\) is the linear isomorphism \(v(\mu)/w(\mu)\) composed with \(\Delta^2_\omega\), and \(\Delta_\omega^2\) is Fredholm on \(C^{4,\alpha}_\delta(M_p)\) by Lemma \ref{lem:indicial_roots_Fredholm}. The remaining terms in \(\check{\mathcal{L}}\) define bounded linear maps \(C^{4,\alpha}_\delta(M_p)\to C^{1,\alpha}_{\delta-3}(M_p)\), and the inclusion \(C^{1,\alpha}_{\delta-3}(M_p)\subset C^{0,\alpha}_{\delta-4}(M_p)\) is compact. Hence \(\check{\mathcal{L}}\) is Fredholm on \(C^{4,\alpha}_\delta(M_p)\). Furthermore, \(\check{\mathcal{L}}\) is formally self-adjoint with respect to the \(L^2\)-inner product \(\langle f,g\rangle_w:=\int fg \,w(\mu)\omega^n\). It follows that the image of \(\check{\mathcal{L}}:C^{4,\alpha}_\delta\to C^{0,\alpha}_{\delta-4}\) is the \(L^2\)-orthogonal complement of the kernel of \(\check{\mathcal{L}}:C^{4,\alpha}_{4-2n-\delta}\to C^{0,\alpha}_{-2n-\delta}\).\footnote{ Strictly speaking, the \(L^2\)-inner product does not give a well-defined pairing between the spaces \(C^{0,\alpha}_{\delta-4}\) and \(C^{4,\alpha}_{4-2n-\delta}\), for example since \(|z|^{\delta-4}|z|^{4-2n-\delta}=|z|^{-2n}\) is not integrable. However, since \(4-2n-\delta\in(4-2n,0)\), the weight \(4-2n-\delta\) is not an indicial root of \(\check{\mathcal{L}}\). Hence elements in the kernel of \(\check{\mathcal{L}}:C^{4,\alpha}_{4-2n-\delta}\to C^{0,\alpha}_{-2n-\delta}\) are in fact contained in weighted H{\"o}lder spaces with strictly higher weights \cite[Lemma 12.1.2]{Pac08}, so have a well-defined \(L^2\)-pairing with elements of \(C^{0,\alpha}_{\delta-4}\). I thank Lars Sektnan for explaining this point to me.}
			
			We now claim that the kernel of \(\check{\mathcal{L}}:C^{4,\alpha}_{4-2n-\delta}\to C^{0,\alpha}_{-2n-\delta}\) is precisely \(\overline{\mathfrak{h}}\). Since \(\check{\mathcal{L}}\) has no indicial roots in \((4-2n,0)\) when \(n>2\), any \(f\in\ker(\check{\mathcal{L}})\cap C^{4,\alpha}_{4-2n-\delta}\) lies in \(C^{k,\alpha}_{\delta'}\) for all \(\delta'<0\) \cite[Proposition 12.2.1]{Pac08}. Now, \(0\) is an indicial root of the Laplacian, so there exists \(g\in\ker(\check{\mathcal{L}})\cap C^{4,\alpha}_0\) such that \(f-g\in C^{4,\alpha}_{\delta'}\) for \(\delta'>0\) sufficiently small \cite[Proposition 12.4.1]{Pac08}. Note that elements of \(C^{4,\alpha}_0\) are integrable on \(M_p\), and hence define distributions on \(M\). By elliptic regularity, the kernel of \(\check{\mathcal{L}}\) on \(C^{4,\alpha}_0\) is therefore \(\overline{\mathfrak{h}}\). Hence both \(g\) and \(f-g\) lie in \(\overline{\mathfrak{h}}\), and therefore so does \(f\). In the case of \(n=2\), note \(C^{4,\alpha}_{4-2n-\delta}(M_p)=C^{4,\alpha}_{-\delta}(M_p)\) and \(-\delta>0\), so \(f\) automatically lies in \(\overline{\mathfrak{h}}\) in this case.
		\end{proof}
	
		We also have the following result \cite[Proposition 16]{Sze12}.
	
		\begin{lemma}\label{lem:BS_right_inverse}
			For \(n>2\), let \(\delta>4-2n\). Then the operator \(C^{4,\alpha}_\delta(\Bl_0\mathbb{C}^n)^{T'}\to C^{0,\alpha}_{\delta-4}(\Bl_0\mathbb{C}^n)^{T'}\), \[\varphi\mapsto \Lich_\eta^*\Lich_\eta\varphi\] has a bounded inverse.
			
			When \(n=2\), let \(\delta\in(-1,0)\) and choose a compactly supported smooth \(T'\)-invariant function \(\chi\) on \(\Bl_0\mathbb{C}^2\) with non-zero integral. Then the operator \(C^{4,\alpha}_\delta(\Bl_0\mathbb{C}^2)^{T'}\times\mathbb{R}\to C^{4,\alpha}_{\delta-4}(\Bl_0\mathbb{C}^2)^{T'}\), \[(\varphi,t)\mapsto\Lich_\eta^*\Lich_\eta\varphi+t\chi\] has a bounded inverse.
		\end{lemma}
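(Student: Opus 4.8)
The plan is to follow \cite[Proposition 16]{Sze12}. Since the Burns--Simanca metric $\eta$ is scalar flat, the zeroth- and first-order terms in the linearisation of the scalar curvature drop out, so $\Lich_\eta^*\Lich_\eta$ is a fourth-order elliptic operator on functions whose leading part is the bi-Laplacian $\Delta_\eta^2$, with all lower-order terms having coefficients controlled by the curvature of $\eta$ and hence decaying at infinity. First I would check Fredholmness: in the range $\delta\in(4-2n,0)$ for $n>2$ (respectively $\delta\in(-1,0)$ for $n=2$), Lemma \ref{lem:indicial_roots_Fredholm} gives that $\Delta_\eta^2\colon C^{4,\alpha}_\delta(\Bl_0\mathbb{C}^n)\to C^{0,\alpha}_{\delta-4}(\Bl_0\mathbb{C}^n)$ is Fredholm, while the remaining terms of $\Lich_\eta^*\Lich_\eta$ factor through the compact inclusion $C^{2,\alpha}_{\delta-2}\hookrightarrow C^{0,\alpha}_{\delta-4}$, so $\Lich_\eta^*\Lich_\eta$ is Fredholm between the same spaces; since $\eta$ and the $T'$-action are invariant under the linear torus action on $\mathbb{C}^n$, the operator restricts to the $T'$-invariant subspaces, remaining Fredholm there.

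Next I would establish injectivity. Given $\varphi\in C^{4,\alpha}_\delta(\Bl_0\mathbb{C}^n)^{T'}$ with $\delta<0$ in the kernel, the decay of $\varphi$ and its derivatives at infinity justifies integration by parts, yielding $\int_{\Bl_0\mathbb{C}^n}|\Lich_\eta\varphi|^2=0$; hence $\nabla^{1,0}\varphi$ is a holomorphic vector field on $\Bl_0\mathbb{C}^n$ that decays at infinity. Pushing forward by the blowdown $\Bl_0\mathbb{C}^n\to\mathbb{C}^n$ and removing the point singularity by Hartogs produces a decaying holomorphic vector field on $\mathbb{C}^n$, which vanishes identically; therefore $\varphi$ is constant, and, being decaying, is zero. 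Thus $\Lich_\eta^*\Lich_\eta$ is injective on $C^{4,\alpha}_\delta(\Bl_0\mathbb{C}^n)^{T'}$ for every $\delta<0$.

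For surjectivity when $n>2$ I would invoke weighted Fredholm duality for the formally self-adjoint operator $\Lich_\eta^*\Lich_\eta$: the cokernel of $C^{4,\alpha}_\delta\to C^{0,\alpha}_{\delta-4}$ is identified with the kernel of $\Lich_\eta^*\Lich_\eta$ on the dual weighted space $C^{4,\alpha}_{4-2n-\delta}$. Exactly as in the footnote to Lemma \ref{lem:M_p_right_inverse}, since $4-2n-\delta$ is not an indicial root, any such kernel element lies in a weighted space of strictly larger weight \cite[Lemma 12.1.2]{Pac08}; this both legitimises the $L^2$-pairing with $C^{0,\alpha}_{\delta-4}$ and---because $4-2n-\delta<0$ for $\delta\in(4-2n,0)$---keeps the element in a space of negative weight, so the injectivity argument above forces it to vanish. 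Hence $\Lich_\eta^*\Lich_\eta$ is a bijection of the $T'$-invariant spaces, and boundedness of the inverse follows from the open mapping theorem.

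Finally, the case $n=2$ requires one modification, which I expect to be the only genuine subtlety. Here the dual weight is $4-2n-\delta=-\delta\in(0,1)$, so the constant functions lie in $C^{4,\alpha}_{-\delta}(\Bl_0\mathbb{C}^2)$, and the argument of the previous paragraph (a growth rate $<1$ forces $\nabla^{1,0}\varphi$ to decay, hence vanish by Hartogs) shows the kernel of $\Lich_\eta^*\Lich_\eta$ there is exactly the one-dimensional space of constants; accordingly the cokernel of $\Lich_\eta^*\Lich_\eta\colon C^{4,\alpha}_\delta\to C^{0,\alpha}_{\delta-4}$ is one-dimensional. Since $\chi$ is compactly supported with $\int_{\Bl_0\mathbb{C}^2}\chi\neq0$, it pairs non-trivially with the constant, hence spans a complement to the image, and $(\varphi,t)\mapsto\Lich_\eta^*\Lich_\eta\varphi+t\chi$ is surjective. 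For injectivity of the extended operator, pairing $\Lich_\eta^*\Lich_\eta\varphi+t\chi=0$ against the constant function $1$ and integrating by parts gives $t\int_{\Bl_0\mathbb{C}^2}\chi=0$, so $t=0$, and then $\varphi=0$ by the injectivity argument above; one last application of the open mapping theorem produces the bounded inverse.
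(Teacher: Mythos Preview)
The paper gives no proof of its own here, simply citing \cite[Proposition~16]{Sze12}; your sketch reproduces exactly that argument and is correct. Note only that you treat \(\delta\in(4-2n,0)\) while the lemma as printed assumes merely \(\delta>4-2n\)---this is not a defect on your side, since for \(\delta\geq0\) the constants lie in \(C^{4,\alpha}_\delta(\Bl_0\mathbb{C}^n)\) and destroy injectivity, and only the narrower range is invoked in Proposition~\ref{prop:right_inverse}.
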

	
		Recall the functions \(\gamma_{j,\epsilon}(z)\) defined in \eqref{eq:gammas}, for \(j=1,2\). Before beginning the proof of Proposition \ref{prop:right_inverse}, following \cite[p. 16]{Sze12} for each \(j\) we will need to define a function \(\beta_{j,\epsilon}(z)\) that is equal to \(1\) on  \(\mathrm{supp}\,\gamma_{j,\epsilon}(z)\), and has slightly larger support than \(\gamma_{j,\epsilon}(z)\). 
	
		Write \(a:=\frac{2n-1}{2n+1}\), and recall \(r_\epsilon:=\epsilon^a\). Let us choose \(\overline{a}\) such that \(a<\overline{a}<1\), and let \(\chi_1:\mathbb{R}\to\mathbb{R}\) be a smooth function such that \(\chi_1(x)=1\) for \(x\leq a\) and \(\chi_1(x)=0\) for \(x\geq\overline{a}\). With this choice, let \(\beta_{1,\epsilon}\) be the function on \(\Bl_pM\) defined by \[\beta_{1,\epsilon}(z):=\chi_1\left(\frac{\log|z|}{\log\epsilon}\right),\] for \(z\in\widetilde{B}_1\backslash E\), extended to the constant function \(1\) on \(\Bl_pM\backslash\widetilde{B}_1\) and \(0\) on \(E\). Then \(\beta_{1,\epsilon}=0\) on \(\widetilde{B}_{\epsilon^{\overline{a}}}\), \(\beta_{1,\epsilon}=1\) on \(\Bl_pM\backslash \widetilde{B}_{r_\epsilon}\), and there are uniform bounds \[\|\beta_{1,\epsilon}\|_{C^{4,\alpha}_{0,\epsilon}}\leq C,\quad\quad\|\nabla_\epsilon\beta_{1,\epsilon}\|_{C^{3,\alpha}_{-1,\epsilon}}\leq \frac{C}{|\log\epsilon|}\] where \(C>0\) is independent of \(\epsilon\). In particular \(\beta_{1,\epsilon}\) is equal to 1 on \(\mathrm{supp}(\gamma_{1,\epsilon})\).
	
		Similarly, we choose \(\underline{a}\in\mathbb{R}\) such that \(0<\underline{a}<a\), and let \(\chi_2:\mathbb{R}\to\mathbb{R}\) be a smooth function such that \(\chi_2(x)=0\) for \(x<\underline{a}/a\) and \(\chi_2(x)=1\) for \(x>1\). We define the function \[\beta_{2,\epsilon}(z):=\chi_2\left(\frac{\log(|z|/2)}{\log r_\epsilon}\right)\] for \(z\in\widetilde{B}_1\backslash E\), extended similarly to \(\Bl_pM\). Then \(\beta_{2,\epsilon}\) satisfies \(\beta_{2,\epsilon}=1\) on \(\widetilde{B}_{2r_\epsilon}\), \(\beta_{2,\epsilon}=0\) on \(\Bl_pM\backslash\widetilde{B}_{2\epsilon^{\underline{a}}}\), and  there exists \(C>0\) independent of \(\epsilon\) such that \begin{equation}\label{eq:beta2_estimates}
			\|\beta_{2,\epsilon}\|_{C^{4,\alpha}_{0,\epsilon}}\leq C,\quad\quad\|\nabla_\epsilon\beta_{2,\epsilon}\|_{C^{3,\alpha}_{-1,\epsilon}}\leq\frac{C}{|\log\epsilon|}.
		\end{equation}
	
		\begin{proof}[Proof of Proposition \ref{prop:right_inverse}]
			
			We again drop \(\epsilon\) from the notation, writing \(G\) in place of \(G_\epsilon\), \(\mu\) for \(\mu_\epsilon\), and so on. We will prove the case \(n>2\); the case \(n=2\) requires only slight alterations, and we refer to \cite[Proposition 20]{Sze12} for the details. Given \(\varphi\in C^{0,\alpha}_{\delta-4}(\Bl_pM)^{T'}\), let us consider \(\gamma_1\varphi\) as a function on \(M_p\) and write \begin{equation}\label{eq:dummy}
				\widetilde{P}_1(\gamma_1\varphi):=(P_1(\gamma_1\varphi),f_\varphi)
			\end{equation} for the right-inverse operator \(\widetilde{P}_1\) of Lemma \ref{lem:M_p_right_inverse} applied to \(\gamma_1\varphi\). The function \(\beta_1P_1(\gamma_1\varphi)\) can then be considered as a function on \(\Bl_pM\) instead of \(M_p\). 
		
			Similarly, we let \(c_0:=v(p_0)/w(p_0)\), identify \(\gamma_2\varphi\) with the function \(\zeta\mapsto\gamma_2(\iota_\epsilon^{-1}(\zeta))\varphi(\iota_\epsilon^{-1}(\zeta))\) on \(\Bl_0\mathbb{C}^n\), and write \(P_2(\gamma_2\varphi)\) for the inverse operator \(P_2\) of \(c_0L_{\epsilon^2\eta}\) from Lemma \ref{lem:BS_right_inverse} applied to \(\gamma_2\varphi\). We then write \(\beta_2 P_2(\gamma_2\varphi)\) for the function \(\beta_2(z)P_2(\gamma_2\varphi)(\iota_\epsilon(z))\) on \(\Bl_pM\). Note that \(L_{\epsilon^2\eta}=\epsilon^{-4}L_\eta\), so \(P_2\) is \(\epsilon^4\) times a fixed operator.
			
			Using all this information, we now define the operators \(P:C^{0,\alpha}_{\delta-4}(\Bl_pM)^{T'}\to C^{4,\alpha}_{\delta}(\Bl_pM)^{T'}\) and \(\widetilde{P}:C^{0,\alpha}_{\delta-4}(\Bl_pM)^{T'}\to C^{4,\alpha}_{\delta}(\Bl_pM)^{T'}\times\overline{\mathfrak{h}}\) by \[P(\varphi):=\beta_1P_1(\gamma_1\varphi)+\beta_2P_2(\gamma_2\varphi),\] and \[\widetilde{P}(\varphi):=(P\varphi,f_\varphi),\] where \(f_\varphi\) is defined in \eqref{eq:dummy}. Note these operators depend on the parameter \(\epsilon\), and act on the corresponding weighted spaces defined in terms of \(\epsilon\). Our aim is now to prove that for \(\epsilon>0\) sufficiently small, \begin{equation}\label{eq:dummy_aim}
				\|(G\circ \widetilde{P})(\varphi)-\varphi\|_{C^{0,\alpha}_{\delta-4}}\leq\frac{1}{2}\|\varphi\|_{C^{0,\alpha}_{\delta-4}}.
			\end{equation} From this it follows that \(\|G\circ\widetilde{P}-\Id\|\leq \frac{1}{2}\), so the operator \(G\circ\widetilde{P}\) is invertible. Writing \(Q\) for the inverse, we have \(G\circ(\widetilde{P}\circ Q)=\Id\), and so \(\widetilde{P}\circ Q\) is a right inverse for \(G\). Since \(Q\) is constructed via a geometric series, \(\|Q\|\leq 2\). For the norm of \(\widetilde{P}\circ Q\) to be uniformly bounded, it then suffices to show the norm of \(\widetilde{P}\) is uniformly bounded independent of \(\epsilon\): \begin{align*}
				&\quad\,\, \|\widetilde{P}\varphi\|_{C^{4,\alpha}_\delta(\Bl_pM)\times\overline{\mathfrak{h}}} \\
				&= \|(P\varphi,f_\varphi)\|_{C^{4,\alpha}_\delta(\Bl_pM)\times\overline{\mathfrak{h}}} \\
				&= \|P\varphi\|_{C^{4,\alpha}_\delta}+|f_\varphi| \\
				&\leq \|\beta_1P_1(\gamma_1\varphi)\|_{C^{4,\alpha}_\delta}+\|\beta_2P_2(\gamma_2\varphi)\|_{C^{4,\alpha}_\delta}+\|\widetilde{P}_1(\gamma_1\varphi)\|_{C^{4,\alpha}_\delta(M_p)} \\
				&\leq \|\beta_1\|_{C^{4,\alpha}_0}\|P_1(\gamma_1\varphi)\|_{C^{4,\alpha}_\delta(M_p)}+\|\beta_2\|_{C^{4,\alpha}_0}\epsilon^{-\delta}\|P_2(\gamma_2\varphi)\|_{C^{4,\alpha}_\delta(\Bl_0\mathbb{C}^n)}+C\|\gamma_1\varphi\|_{C^{4,\alpha}_{\delta-4}(M_p)} \\
				&\leq C\left(\|\gamma_1\varphi\|_{C^{0,\alpha}_{\delta-4}(M_p)}+\epsilon^{-(\delta-4)}\|\gamma_2\varphi\|_{C^{0,\alpha}_{\delta-4}(\Bl_0\mathbb{C}^n)}\right) \\
				&\leq C\|\varphi\|_{C^{0,\alpha}_{\delta-4}}.
			\end{align*} In the fourth and sixth lines we have used the equivalence of norms \emph{(5)} from Lemma \ref{lem:weighted_norm_properties}, as well as the bounds on the norms of the \(\gamma_j\) from Lemma \ref{lem:metric_and_gamma_estimates}. We also used that \(\|P_2\|=\O(\epsilon^4)\), as we remarked above. Note in particular, \begin{equation}\label{eq:P2_bound}
			\|\beta_2P_2(\gamma_2\varphi)\|_{C^{4,\alpha}_\delta}\leq C\|\varphi\|_{C^{0,\alpha}_{\delta-4}}.
			\end{equation}
			
			It remains to prove \eqref{eq:dummy_aim}. We can write \((G\circ\widetilde{P})(\varphi)-\varphi\) as \begin{align}
				 & \check{L}(\beta_1P_1(\gamma_1\varphi))-\frac{1}{2}\widetilde{X}(\beta_1P_1(\gamma_1\varphi))-\gamma_1\ell(f_\varphi)-\gamma_1\varphi \label{eq:line_a1}\\
				 +&\check{L}(\beta_2P_2(\gamma_2\varphi))-\frac{1}{2}\widetilde{X}(\beta_2P_2(\gamma_2\varphi))-\gamma_2\ell(f_\varphi)-\gamma_2\varphi, \label{eq:line_a2}
			\end{align} and so to prove \eqref{eq:dummy_aim}, it suffices to estimate each of these expressions separately. The line \eqref{eq:line_a1} is supported in \(\Bl_pM\backslash \widetilde{B}_{\epsilon^{\overline{a}}}\), and \eqref{eq:line_a2} is supported in \(\widetilde{B}_{2\epsilon^{\underline{a}}}\). The estimate for \eqref{eq:line_a1} is essentially unchanged from \cite[Proposition 20]{Sze12}, only we use Proposition \ref{prop:weighted_Lich_estimate} instead of the unweighted version \cite[Proposition 18]{Sze12}, so we omit this. However, \eqref{eq:line_a2} introduces new challenges, since we are seeking an inverse of the weighted linearisation but have glued in an inverse for the unweighted linearisation on \(\Bl_0\mathbb{C}^n\). 
		
			Our task is therefore to estimate \eqref{eq:line_a2} on the region \(\widetilde{B}_{2\epsilon^{\underline{a}}}\). Using Lemma \ref{lem:weighted_linearisation} and defining \(\psi:=\beta_2P_2(\gamma_2\varphi)\), \eqref{eq:line_a2} can be written \begin{align}
				&\frac{v(\mu)}{w(\mu)}L\psi-\frac{2}{w(\mu)}(\db^*\Lich\psi,\nabla^{1,0}(v(\mu)))\label{eq:goal}\\
				&+\frac{1}{w(\mu)}(\Lich\psi,\Lich(v(\mu)))+ \frac{1}{2}S(\omega)\nabla\left(\frac{v(\mu)}{w(\mu)}\right)\cdot\nabla\psi \nonumber\\
				& +\frac{1}{2}\nabla\Phi_{v,w}(\omega)\cdot\nabla\psi -\frac{1}{2}\widetilde{X}(\psi) -\gamma_2\ell(f_\varphi)-\gamma_2\varphi. \nonumber
			\end{align} Setting \(c_0:=v(p_0)/w(p_0)\), we have \[\frac{v(\mu)}{w(\mu)}L\psi=\left(\frac{v(\mu)}{w(\mu)}-c_0\right)L\psi+c_0L\psi.\] We first estimate \[\left\|\left(\frac{v(\mu)}{w(\mu)}-c_0\right)L(\beta_2 P_2(\gamma_2\varphi))\right\|_{C^{0,\alpha}_{\delta-4}},\] for which we will show the uniform bound \begin{equation}\label{eq:bound}
			\left\|\frac{v(\mu)}{w(\mu)}-c_0\right\|_{C^{0,\alpha}_0\left(\widetilde{B}_{4\epsilon^{\underline{a}}}\right)}\leq C\epsilon^{2\underline{a}}.
			\end{equation} By the mean value theorem, it suffices to bound \[\left\|\mu-p_0\right\|_{C^{0,\alpha}_0\left(\widetilde{B}_{4\epsilon^{\underline{a}}}\right)}.\] On the region \(\widetilde{B}_{r_\epsilon}\), we have \(\mu-p_0=\epsilon^2\iota_\epsilon^*\mu_\eta\), where \(\iota_\epsilon:\widetilde{B}_{r_\epsilon}\to\widetilde{D}_{R_\epsilon}\subset\Bl_0\mathbb{C}^n\) is the scaling isomorphism. From this, \begin{align*}
				\|\mu-p_0\|_{C^{0,\alpha}_0\left(\widetilde{B}_{r_\epsilon}\right)} &= \epsilon^2\|\mu_\eta\|_{C^{0,\alpha}_0\left(\widetilde{D}_{R_\epsilon}\right)} \\
				&\leq \epsilon^2R_\epsilon^2\|\mu_\eta\|_{C^{0,\alpha}_2\left(\widetilde{D}_{R_\epsilon}\right)} \\
				&\leq C\epsilon^{2\underline{a}},
			\end{align*} where we use that \(\mu_\eta^a=\mu_{\mathrm{Euc}}^a+d^cg(\widetilde{\xi}_a)\) is \(\O(|\zeta|^2)\) to get a uniform bound \(\|\mu_\eta\|_{C^{0,\alpha}_2\left(\widetilde{B}_{R_\epsilon}\right)}\leq C\). Then on \(\widetilde{B}_{4\epsilon^{\underline{a}}}\backslash\widetilde{B}_{r_\epsilon}\), \[\mu-p_0=\mu_{\mathrm{Euc}}+d^c(\gamma_1(z)f(z)+\epsilon^2\gamma_2(z)g(\epsilon^{-1}z))\] is uniformly \(\O(|z|^2)\), so there is a uniform bound \[\|\mu-p_0\|_{C^{0,\alpha}_0\left(\widetilde{B}_{4\epsilon^{\underline{a}}}\backslash\widetilde{B}_{r_\epsilon}\right)}\leq C\epsilon^{2\underline{a}}\] and the bound \eqref{eq:bound} is achieved. Using this, \begin{align*}
				\left\|\left(\frac{v(\mu)}{w(\mu)}-c_0\right)L(\beta_2 P_2(\gamma_2\varphi))\right\|_{C^{0,\alpha}_{\delta-4}} &\leq C\epsilon^{2\underline{a}}\|L(\beta_2 P_2(\gamma_2\varphi))\|_{C^{0,\alpha}_{\delta-4}} \\
				&\leq C\epsilon^{2\underline{a}}\|\varphi\|_{C^{0,\alpha}_{\delta-4}},
			\end{align*} where we used \eqref{eq:P2_bound}.
		
			Next, consider the term \[\frac{1}{w(\mu)}(\Lich(\beta_2P_2(\gamma_2\varphi)),\Lich(v(\mu)))\] from \eqref{eq:goal}. Note that \[\Lich(v(\mu))=\sum_{a,b}v_{,ab}(\mu)(\widetilde{\xi}_b)^\flat\otimes\widetilde{\xi}_a\] and the right hand side has a uniform \(C^{0,\alpha}_0\)-bound on \(\Bl_pM\) independent of \(\epsilon\). Hence \begin{align*}
				&\left\|\frac{1}{w(\mu)}(\Lich(\beta_2P_2(\gamma_2\varphi)),\Lich(v(\mu)))\right\|_{C^{0,\alpha}_{\delta-4}} \\
				\leq & C\|\Lich(\beta_2P_2(\gamma_2\varphi))\|_{C^{0,\alpha}_{\delta-2}}\|\Lich(v(\mu))\|_{C^{0,\alpha}_{-2}\left(\widetilde{B}_{4\epsilon^{\underline{a}}}\right)} \\
				\leq & C\|\beta_2P_2(\gamma_2\varphi)\|_{C^{4,\alpha}_\delta}\left\|\sum_{a,b}v_{,ab}(\mu)(\widetilde{\xi}_b)^\flat\otimes\widetilde{\xi}_a\right\|_{C^{0,\alpha}_{-2}\left(\widetilde{B}_{4\epsilon^{\underline{a}}}\right)} \\
				\leq & C\epsilon^{2\underline{a}}\|\varphi\|_{C^{0,\alpha}_{\delta-4}}.
			\end{align*} Similarly for the second term in \eqref{eq:goal}, \begin{align*}
				&\left\|\frac{2}{w(\mu)}(\db^*\Lich\psi,\nabla^{1,0}(v(\mu)))\right\|_{C^{0,\alpha}_{\delta-4}} \\
				\leq & C\|\db^*\Lich\psi\|_{C^{0,\alpha}_{\delta-3}}\|\nabla^{1,0}(v(\mu))\|_{C^{0,\alpha}_{-1}\left(\widetilde{B}_{4\epsilon^{\underline{a}}}\right)} \\
				\leq & C\|\beta_2P_2(\gamma_2\varphi)\|_{C^{4,\alpha}_\delta}\left\|\sum_a v_{,a}(\mu)\widetilde{\xi}_a\right\|_{C^{0,\alpha}_{-1}\left(\widetilde{B}_{4\epsilon^{\underline{a}}}\right)} \\
				\leq & C\epsilon^{\underline{a}}\|\varphi\|_{C^{0,\alpha}_{\delta-4}},
			\end{align*} where \(\db^*:C^{1,\alpha}_{\delta-2}\to C^{0,\alpha}_{\delta-3}\) is seen to have uniformly bounded norm from \eqref{eq:db_adjoint}.
	
			For the term \(-\frac{1}{2}\widetilde{X}(\psi)\) from \eqref{eq:goal}, \begin{align*}
				&\|\widetilde{X}(\beta_2 P_2(\gamma_2\varphi))\|_{C^{0,\alpha}_{\delta-4}} \\
				\leq & \|\widetilde{X}\|_{C^{0,\alpha}_{-3}\left(\widetilde{B}_{4\epsilon^{\underline{a}}}\right)}\left(\|\nabla\beta_2\|_{C^{0,\alpha}_{-1}}\|P_2(\gamma_2(\varphi))\|_{C^{4,\alpha}_\delta\left(\widetilde{B}_{4\epsilon^{\underline{a}}}\right)}+\|\beta_2\|_{C^{0,\alpha}_0}\|\nabla(P_2(\gamma_2\varphi))\|_{C^{0,\alpha}_{\delta-1}\left(\widetilde{B}_{4\epsilon^{\underline{a}}}\right)}\right) \\
				\leq & C\epsilon^{3\underline{a}}\|\varphi\|_{C^{0,\alpha}_{\delta-4}}.
			\end{align*} 
			
			Next we bound the term \(\frac{1}{2}S(\omega)\nabla\left(\frac{v(\mu)}{w(\mu)}\right)\cdot\nabla\psi\) in \eqref{eq:goal}. Since \(\|S(\omega)\|_{C^{4,\alpha}_{-2}}\leq C\) we have \(\|S(\omega)\|_{C^{4,\alpha}_{-3}\left(\widetilde{B}_{4\epsilon^{\underline{a}}}\right)}\leq C\epsilon^{\underline{a}}\). Writing \(u=v/w\), note \(\nabla(u(\mu))=\sum_a u_{,a}(\mu)\widetilde{\xi}_a\) is uniformly bounded in \(C^{0,\alpha}_{0}\). Hence \begin{align*}
				\left\|S(\omega)\nabla\left(\frac{v(\mu)}{w(\mu)}\right)\cdot\nabla(\beta_2P_2(\gamma_2\varphi))\right\|_{C^{0,\alpha}_{\delta-4}} &\leq C\epsilon^{\underline{a}}\|\nabla(\beta_2P_2(\gamma_2\varphi))\|_{C^{0,\alpha}_{\delta-1}}\\
				&\leq C\epsilon^{\underline{a}}\|\varphi\|_{C^{0,\alpha}_{\delta-4}}.
			\end{align*} 
		
			For the term \(\nabla\Phi_{v,w}(\omega)\cdot\nabla(\beta_2P_2(\gamma_2\varphi))\) in \eqref{eq:goal}, it is enough to show that \[\|\nabla\Phi_{v,w}(\omega)\|_{C^{0,\alpha}_{-3}\left(\widetilde{B}_{4\epsilon^{\underline{a}}}\right)}\leq C\epsilon^{\underline{a}}.\] Lemma \ref{lem:Phi} states that \(\Phi_{v,w}(\omega)\) is a linear combination of terms of two kinds: \(u_{a}(\mu)\Delta\mu^a\), and \(u_{ab}(\mu)g(\widetilde{\xi}_a,\widetilde{\xi}_b)\), where \(u_a\) and \(u_{ab}\) are among finitely many fixed smooth functions on the moment polytope. We first have \[\nabla(u_a(\mu)\Delta\mu^a)=\sum_b u_{a,b}(\mu)\widetilde{\xi}_b\Delta\mu^a+u_a(\mu)\Ric(\widetilde{\xi}_a,-)^{\#}.\] By Lemma \ref{lem:estimates:Ricci_moment_map}, \[\|\Delta\mu^a\|_{C^{0,\alpha}_{-3}\left(\widetilde{B}_{4\epsilon^{\underline{a}}}\right)}\leq C\epsilon^{3\underline{a}}.\] Also, \[\|\Ric(\widetilde{\xi}_a,-)^{\#}\|_{C^{0,\alpha}_{-3}\left(\widetilde{B}_{4\epsilon^{\underline{a}}}\right)}\leq C\epsilon^{\underline{a}}\] by Lemma \ref{lem:metric_and_gamma_estimates}. The remaining factors \(u_{a,b}(\mu)\), \(u_a(\mu)\) and \(\widetilde{\xi}_b\) are uniformly bounded in \(C^{0,\alpha}_0\), hence \[\|\nabla(u_a(\mu)\Delta\mu^a)\|_{C^{0,\alpha}_{-3}\left(\widetilde{B}_{4\epsilon^{\underline{a}}}\right)}\leq C\epsilon^{\underline{a}}.\] Next, we must estimate \[\nabla(u_{ab}(\mu)g(\widetilde{\xi}_a,\widetilde{\xi}_b))=\sum_c u_{ab,c}(\mu)\widetilde{\xi}_c g(\widetilde{\xi}_a,\widetilde{\xi}_b)+u_{ab}(\mu)(g(\nabla\widetilde{\xi}_a,\widetilde{\xi}_b)+g(\widetilde{\xi}_a,\nabla\widetilde{\xi}_b)).\] The factors \(u_{ab,c}(\mu)\), \(\widetilde{\xi}_c\) and \(g_\epsilon(\widetilde{\xi}_a,\widetilde{\xi}_b)\) all have uniform \(C^{0,\alpha}_{0,\epsilon}\) bounds, so \[\|u_{ab,c}(\mu)\widetilde{\xi}_c g(\widetilde{\xi}_a,\widetilde{\xi}_b)\|_{C^{0,\alpha}_{-3}\left(\widetilde{B}_{4\epsilon^{\underline{a}}}\right)}\leq C\epsilon^{3\underline{a}}.\] For the term \(u_{ab}(\mu)(g(\nabla\widetilde{\xi}_a,\widetilde{\xi}_b)+g(\widetilde{\xi}_a,\nabla\widetilde{\xi}_b))\), the factors \(u_{ab}(\mu)\), \(g\), \(\widetilde{\xi}_a\) and \(\widetilde{\xi}_b\) have uniform \(C^{0,\alpha}_0\)-bounds. The covariant derivatives \(\nabla\widetilde{\xi}_a\) are uniformly bounded in \(C^{0,\alpha}_{-1}\) by Remark \ref{rem:coordinates}. Hence \[\|u_{ab}(\mu)(g(\nabla\widetilde{\xi}_a,\widetilde{\xi}_b)+g(\widetilde{\xi}_a,\nabla\widetilde{\xi}_b))\|_{C^{0,\alpha}_{-3}\left(\widetilde{B}_{4\epsilon^{\underline{a}}}\right)}\leq C\epsilon^{2\underline{a}}.\]
			
			Next consider the term \(\gamma_2\ell(f_\varphi)\) from \eqref{eq:goal}. By Lemma \ref{lem:ell_estimate}, \(\|\ell(f_\varphi)\|_{C^{0,\alpha}_0}\leq C|f_\varphi|\) where \(|\cdot|\) is a fixed norm on \(\overline{\mathfrak{h}}\), and by \eqref{eq:dummy} we have \(|f_\varphi|\leq C\|\varphi\|_{C^{0,\alpha}_{\delta-4}}\). It follows that \[\|\gamma_2\ell(f_\varphi)\|_{C^{0,\alpha}_{\delta-4}}\leq C\|\gamma_2\|_{C^{0,\alpha}_{\delta-4}}\|\ell(f_\varphi)\|_{C^{0,\alpha}_{0}}\leq Cr_\epsilon^{4-\delta}\|\varphi\|_{C^{0,\alpha}_{\delta-4}}.\]
			
			In summary, we have reduced estimating \eqref{eq:goal} to estimating \[c_0L(\beta_2P_2(\gamma_2\varphi))-\gamma_2\varphi.\] We apply \cite[Proposition 18]{Sze12}, which we recall is the unweighted analogue of Proposition \ref{prop:weighted_Lich_estimate}, to show the norm of \(L_\omega-L_{\epsilon^2\eta}\) tends to \(0\) over \(\widetilde{B}_{4\epsilon^{\underline{a}}}\) as \(\epsilon\to0\), for any \(k\geq0\) and \(\alpha\in(0,1)\). Here we write \(\epsilon^2\eta\) for what is strictly speaking \(\iota_\epsilon^*(\epsilon^2\eta)\). First note this operator vanishes identically on \(\widetilde{B}_{r_\epsilon}\), since \(\omega=\epsilon^2\eta\) on this region. So it suffices to estimate the norm over the annulus \(\widetilde{B}_{4\epsilon^{\underline{a}}}\backslash\widetilde{B}_{r_\epsilon}\). On the annulus we have \(\epsilon^2\eta-\omega=i\d\db\rho\), where \[\rho:=\gamma_1(z)(\epsilon^2g(\epsilon^{-1}z)-f(z)).\] Using the fact that \(f\) is \(\O(|z|^4)\) on \(M\) near \(p\), and \(g\) is \(\O(|\zeta|^{4-2n})\) on \(\Bl_0\mathbb{C}^n\) near \(\infty\), we can proceed as in the proof of Lemma \ref{lem:laplacian_order_tau} to show that \(\epsilon^2g(\epsilon^{-1}z)-f(z)\) is \(\O(|z|^{3})\) on \(\widetilde{B}_{4\epsilon^{\underline{a}}}\backslash\widetilde{B}_{r_\epsilon}\) for \(\tau>0\) small, so that \[\|\rho\|_{C^{k,\alpha}_2\left(\widetilde{B}_{4\epsilon^{\underline{a}}}\backslash \widetilde{B}_{r_\epsilon}\right)}\leq C\epsilon^{\underline{a}}\to0\] as \(\epsilon\to0\). It follows from \cite[Proposition 18]{Sze12} that the operator norm of \[L_\omega-L_{\epsilon^2\eta}:C^{4,\alpha}_\delta(\widetilde{B}_{4\epsilon^{\underline{a}}})\to C^{0,\alpha}_{\delta-4}(\widetilde{B}_{4\epsilon^{\underline{a}}})\] tends to \(0\) as \(\epsilon\to0\). Note the bound on \(\rho\) also implies the estimates in Lemma \ref{lem:metric_and_gamma_estimates} and \eqref{eq:beta2_estimates} hold with \({\epsilon^2\eta}\) in place of \(g_\epsilon\). 
			
			We have further reduced to estimating \[c_0L_{\epsilon^2\eta}(\beta_2P_2(\gamma_2\varphi))-\gamma_2\varphi.\] Now, \begin{align*}
				c_0L_{\epsilon^2\eta}(\beta_2P_2(\gamma_2\varphi))-\gamma_2\varphi &=  (\nabla^{1,0})^*\db^*\db(P_2(\gamma_2\varphi)\nabla^{1,0}\beta_2) \\
				&+(\nabla^{1,0})^*\db^*(\db\beta_2\otimes\nabla^{1,0}(P_2(\gamma_2\varphi))) \\
				&-2(\db^*\Lich (P_2(\gamma_2\varphi)),\nabla^{1,0}\beta_2)+(\Lich\beta_2,\Lich(P_2(\gamma_2\varphi))),
			\end{align*} where all gradients and adjoints are with respect to \(\epsilon^2\eta\). Note by the estimate \eqref{eq:beta2_estimates}, the right hand side satisfies \(\|\mathrm{RHS}\|_{C^{0,\alpha}_{\delta-4}\left(\widetilde{B}_{4\epsilon^{\underline{a}}}\right)}\leq C\frac{1}{|\log\epsilon|}\|\varphi\|_{C^{0,\alpha}_{\delta-4}}\). For example,  \begin{align*}
				\|(\Lich\beta_2,\Lich(P_2(\gamma_2\varphi)))\|_{C^{0,\alpha}_{\delta-4}\left(\widetilde{B}_{4\epsilon^{\underline{a}}}\right)} &\leq C\|\db \nabla^{1,0}\beta_2\|_{C^{0,\alpha}_{-2}}\|\Lich(P_2(\gamma_2\varphi))\|_{C^{0,\alpha}_{\delta-2}\left(\widetilde{B}_{4\epsilon^{\underline{a}}}\right)} \\
				&\leq C\|\nabla^{1,0}\beta_2\|_{C^{3,\alpha}_{-1}}\|P_2(\gamma_2\varphi)\|_{C^{4,\alpha}_\delta\left(\widetilde{B}_{4\epsilon^{\underline{a}}}\right)} \\
				&\leq C\frac{1}{|\log\epsilon|}\|\varphi\|_{C^{0,\alpha}_{\delta-4}}.
			\end{align*} 
			
			To summarise, we have shown that the \(C^{0,\alpha}_{\delta-4}\)-norm of \eqref{eq:goal} is bounded by \(c_\epsilon\|\varphi\|_{C^{0,\alpha}_{\delta-4}}\), where \(c_\epsilon\to0\) as \(\epsilon\to0\). Since \eqref{eq:goal} was equal to \((G\circ\widetilde{P})(\varphi)-\varphi\), we have shown the inequality \eqref{eq:dummy_aim} holds, which was our aim.
		\end{proof}
	
	\section{Completing the proof}
	
		In this section we will finish the proof of Proposition \ref{prop:approx_equation}, which we recall implies Theorem \ref{thm:main}, our main result. The proof is by a contraction mapping argument, which will deform our approximate solution \(\omega_\epsilon\) to \(\omega_\epsilon+i\d\db\varphi_\epsilon\) solving the equation \[S_{v,w}(\omega_\epsilon+i\d\db\varphi_\epsilon)-\frac{1}{2}\nabla_\epsilon\ell_\epsilon(f_\epsilon)\cdot\nabla_\epsilon\varphi_\epsilon=\ell_\epsilon(f_\epsilon),\] for some \(f_\epsilon\in\overline{\mathfrak{h}}\). We replace \(f_\epsilon\) with \(f_\epsilon+s\) where \(s:=S_{v,w}(\omega)\in\overline{\mathfrak{h}}\) generates the extremal field on \(M\). So we are trying to solve \[S_{v,w}(\omega_\epsilon+i\d\db\varphi_\epsilon)-\frac{1}{2}\nabla_\epsilon\ell_\epsilon(f_\epsilon+s)\cdot\nabla_\epsilon\varphi_\epsilon=\ell_\epsilon(f_\epsilon+s).\] We also require that \(f_\epsilon\) satisfies the expansion \[f_\epsilon=c_n\epsilon^{2n-2}\overline{\mu_H^{\#}(p)}+f_\epsilon'\] from Proposition \ref{prop:approx_equation} where \(c_n\) is a fixed constant and \(|f_\epsilon'|\leq C\epsilon^\kappa\) for some \(\kappa>2n-2\) and \(C>0\) independent of \(\epsilon\). We recall \(\mu_H^{\#}\) is the composition of the moment map \(\mu_H\) with the linear isomorphism \(\mathfrak{h}^*\to\mathfrak{h}\) determined by the inner product \eqref{eq:inner_product}, and \(\overline{\mu_H^{\#}(p)}\) denotes a fixed lift of \(\mu_H^{\#}(p)\) to \(\overline{\mathfrak{h}}\).
		
		As in \cite{APS11} and \cite{Sze12}, we will first modify \(\omega\) so that it matches with the Burns--Simanca metric to higher order near \(p\). To do this, we need:

		\begin{lemma}\label{lem:Gamma}
			There exists a \(T'\)-invariant smooth function \(\Gamma:M_p\to\mathbb{R}\) and an element \(h\in\overline{\mathfrak{h}}\) satisfying the equation \[\frac{v(\mu)}{w(\mu)}\Lich_v^*\Lich\Gamma=h|_{M_p}.\] Moreover, \(h=c_n\overline{\mu_H^{\#}(p)}\in\overline{\mathfrak{h}}\) for some \(c_n\in\mathbb{R}\), and \(\Gamma\) has the asymptotic behaviour \[\Gamma(z)=-|z|^{4-2n}+\O(|z|^{5-2n})\] near \(p\) when \(n>2\), and \[\Gamma(z)=\log|z|+\O(|z|^\tau)\] for all \(0<\tau<4/5\) when \(n=2\).
		\end{lemma}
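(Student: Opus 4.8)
The plan is to realise $\Gamma$ as a suitable constant multiple of the Green's function of the weighted Lichnerowicz operator $\mathcal{L}:=\frac{v(\mu)}{w(\mu)}\Lich_v^*\Lich$ with pole at $p$, and to read off $h$ from the $w$-weighted $L^2$ structure on the compact manifold $M$. First I would record the relevant functional-analytic facts, exactly as in the proof of Lemma~\ref{lem:M_p_right_inverse}: acting on $T'$-invariant functions, $\mathcal{L}$ is a fourth order elliptic operator with principal part $\frac{v(\mu)}{w(\mu)}\Delta^2$, it is formally self-adjoint for $\langle f,g\rangle_w:=\int_M fg\,w(\mu)\,\omega^n$ (indeed $\langle\mathcal{L}f,g\rangle_w=\int_M v(\mu)(\Lich f,\Lich g)\,\omega^n$), and $\ker\mathcal{L}=\overline{\mathfrak{h}}$: the inclusion $\overline{\mathfrak{h}}\subseteq\ker\mathcal{L}$ is immediate, while $\mathcal{L}f=0$ forces $\Lich f=0$, so for real $T'$-invariant $f$ the field $J\nabla f$ is a holomorphic Killing field commuting with $T'$ and hence lies in $\mathfrak{h}$. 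Normalising the Dirac distribution so that $\langle\delta_p,f\rangle_w=f(p)$, and letting $h_0\in\overline{\mathfrak{h}}$ be the unique element with $\langle h_0,f\rangle_w=f(p)$ for all $f\in\overline{\mathfrak{h}}$, the difference $\delta_p-h_0$ is $\langle\,,\rangle_w$-orthogonal to $\ker\mathcal{L}$; since $\mathcal{L}$ is self-adjoint Fredholm of index zero there is a $T'$-invariant $G_p$, smooth on $M_p$ by interior elliptic regularity, with $\mathcal{L}G_p=\delta_p-h_0$ on $M$, hence $\mathcal{L}G_p=-h_0|_{M_p}$ on $M_p$.

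Next I would identify $h_0$, which is where the weighted data enters. Using the normalisation $\overline{\mu}_H=\int_M\mu_H\,w(\mu)\omega^n=0$, one has $\overline{\mathfrak{h}}=\mathbb{R}\cdot\mathbf{1}\oplus\{\langle\mu_H,\xi\rangle:\xi\in\mathfrak{h}\}$, the second summand being the zero-$w$-average holomorphy potentials. Writing $h_0=a\mathbf{1}+\langle\mu_H,\eta\rangle$ and testing $\langle h_0,f\rangle_w=f(p)$ against $f=\mathbf{1}$ gives $a=(\int_M w(\mu)\omega^n)^{-1}$, and against $f=\langle\mu_H,\xi\rangle$ --- invoking the definition \eqref{eq:inner_product} of the inner product on $\mathfrak{h}$ --- gives $\langle\eta,\xi\rangle_{\mathfrak{h}}=\langle\mu_H(p),\xi\rangle=\langle\mu_H^{\#}(p),\xi\rangle_{\mathfrak{h}}$ for all $\xi\in\mathfrak{h}$, so $\eta=\mu_H^{\#}(p)$. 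Thus $h_0$ is a lift of $\mu_H^{\#}(p)$ to $\overline{\mathfrak{h}}$, which we fix as $\overline{\mu_H^{\#}(p)}$; rescaling $G_p$ by the constant that normalises its leading singularity (see below) yields $\Gamma$ with $\mathcal{L}\Gamma=h|_{M_p}$ and $h=c_n\overline{\mu_H^{\#}(p)}$ for a constant $c_n$.

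Finally I would establish the asymptotic behaviour. In the normal coordinates $z$ of Section~\ref{sec:approx_soln} the metric is $i\d\db(|z|^2+\O(|z|^4))$, the moment map obeys $\mu-\mu(p)=\O(|z|^2)$, and each Hamiltonian field $\xi_a$ vanishes at $p$; expanding $\mathcal{L}$ via the product rule of Lemma~\ref{lem:Lich_product} one gets $\mathcal{L}=\tfrac{v(\mu(p))}{w(\mu(p))}\Delta^2_{\mathrm{Euc}}+E$ near $p$, where the weight-dependent terms in $E$ (those involving $\nabla^{1,0}(v(\mu))=\sum_a v_{,a}(\mu)\xi_a$ and $\Lich(v(\mu))$) have coefficients vanishing at $p$, so that the analysis of the singularity of $G_p$ reduces to the unweighted extremal case of \cite{APS11,Sze12}. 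Hence $G_p$ has the same leading singularity at $p$ as the fundamental solution of $\Delta^2$ on $\mathbb{R}^{2n}$, namely $|z|^{4-2n}$ for $n>2$ and $\log|z|$ for $n=2$; normalising $\Gamma\sim-|z|^{4-2n}$, resp.\ $\Gamma\sim\log|z|$, fixes $c_n$. To upgrade this to $\Gamma=-|z|^{4-2n}+\O(|z|^{5-2n})$ I would subtract a cutoff multiple of $|z|^{4-2n}$, observe --- using $\Delta^2_{\mathrm{Euc}}|z|^{4-2n}=0$ off the origin --- that the remainder solves an equation whose right-hand side is controlled near $p$, and bootstrap with the weighted Fredholm theory on $M_p$ together with the indicial roots of the Laplacian (Lemma~\ref{lem:indicial_roots}); note that $5-2n$ is \emph{not} an indicial root for $n\ge3$, which is precisely why this rate holds uniformly, while for $n=2$ the rate $\O(|z|^\tau)$, $0<\tau<4/5$, arises as in the estimates of Lemma~\ref{lem:laplacian_order_tau}. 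I expect this last step --- the asymptotic expansion --- to be the main obstacle: the construction of $G_p$ and the identification of $h$ are routine once the weighted Fredholm package is in place, and the delicate point is to verify carefully that the new weight-dependent terms in $\mathcal{L}$ are genuinely lower order near $p$ (which rests on $\mu-\mu(p)=\O(|z|^2)$ and $\xi_a(p)=0$), so that the error in the expansion of $\Gamma$ is governed by the flat bi-Laplacian and the decay rates of the cscK and extremal cases carry over, with the mild extra care needed at the resonant weights absorbed into the stated (non-sharp) bound.
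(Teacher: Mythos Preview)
Your approach is correct and arrives at the same result, but is organised in the reverse order from the paper. The paper proceeds constructively: it starts with an explicit cutoff of the fundamental solution of $\Delta^2_{\mathrm{Euc}}$ (namely $G=|z|^{4-2n}$ for $n>2$, $G=\log|z|$ for $n=2$), computes that $\mathcal{L}G$ lies in $C^{0,\alpha}_{1-2n}(M_p)$ (respectively $C^{0,\alpha}_{-4+\tau}(M_p)$), and then invokes the right-inverse of Lemma~\ref{lem:M_p_right_inverse} at the appropriate weight to produce a correction $\varphi$ with $\mathcal{L}(\varphi-G)=h|_{M_p}$; setting $\Gamma:=\varphi-G$ gives the asymptotic expansion for free, since $\varphi\in C^{4,\alpha}_{5-2n}$ (respectively $C^{4,\alpha}_\tau$). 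Only \emph{then} is $h$ identified, by observing that $\Gamma$ is a distributional solution of $\mathcal{L}\Gamma=h-c_n\delta_p$ on $M$ and pairing against $\langle\mu_H,\xi\rangle\in\ker\mathcal{D}$---which is exactly your computation of $h_0$.

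Your route is top-down: build an abstract Green's function $G_p$ for $\mathcal{L}$ on the compact manifold $M$, read off $h_0$ immediately as the $\langle\,,\rangle_w$-projection of $\delta_p$ onto $\overline{\mathfrak{h}}$, and only afterwards establish the asymptotics of $G_p$ near $p$. The advantage of the paper's ordering is that the step you flag as the ``main obstacle''---the precise remainder $\O(|z|^{5-2n})$---is no obstacle at all, because the correction $\varphi$ lives \emph{by construction} in the weighted space $C^{4,\alpha}_{5-2n}$. In your direction, extracting this rate from the abstractly-constructed $G_p$ requires exactly the subtraction-and-bootstrap you sketch, which amounts to running the paper's argument backwards. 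Both are valid; the paper's order is simply more economical here. One minor correction: your citation of Lemma~\ref{lem:laplacian_order_tau} for the $n=2$ rate is slightly off---that lemma treats the glued metric $\omega_\epsilon$, whereas here one is on $(M,\omega)$ in normal coordinates, where the Laplacian error is genuinely $\O(|z|^2)$; the restriction $\tau<4/5$ in the statement is chosen for compatibility with later estimates rather than being the sharp rate forced by the local geometry.
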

	
		\begin{proof}
			We first treat the case \(n>2\). Let \(G:M_p\to\mathbb{R}\) be a smooth \(T'\)-invariant function equal to \(|z|^{4-2n}\) on \(B_{1/2}\backslash\{p\}\), and \(0\) on \(M\backslash B_1\). The highest order term of \(\frac{v}{w}\Lich^*_v\Lich\) is \(\frac{v}{w}\Delta^2\), and \(\Delta\) is asymptotic to \(\Delta_{\mathrm{Euc}}\) near \(p\) in the sense that \(\Delta-\Delta_{\mathrm{Euc}}=H^{j\bar{k}}\d_j\d_{\bar{k}}\) where \(H^{j\bar{k}}\) is \(\O(|z|)\); see the proof of Lemma \ref{lem:laplacian_order_tau}. From this, \(\Delta^2-\Delta_{\mathrm{Euc}}^2\) defines a bounded map \(C^{k,\alpha}_{\delta}(M_p)\to C^{k-4,\alpha}_{\delta-3}(M_p)\) for any weight \(\delta\). Since \(|z|^{4-2n}\) is the fundamental solution for \(\Delta_{\mathrm{Euc}}^2\) in Euclidean space, it follows that \[\frac{v}{w}\Lich_v^*\Lich G\in C^{k-4,\alpha}_{1-2n}(M_p).\] By Lemma \ref{lem:M_p_right_inverse}, there exist \(\varphi\in C^{4,\alpha}_{5-2n}(M_p)^{T'}\) and \(h\in\overline{\mathfrak{h}}\) such that \[\frac{v}{w}\Lich_v^*\Lich(\varphi-G)=h|_{M_p}.\] By elliptic regularity \(\varphi\) is smooth, and we have \[\Gamma:=\varphi-G=-|z|^{4-2n}+\O(|z|^{5-2n})\] near \(p\). From this expansion, \(\Gamma\) is a distributional solution to the equation \[\frac{v(\mu)}{w(\mu)}\Lich^*_v\Lich\Gamma=h-c_n\delta_p\] on \(M\), where \(c_n\) is a constant depending only on \(n\) and the weights \(v\), \(w\).
			
			We next treat the \(n=2\) case; let \(G:M_p\to\mathbb{R}\) be a smooth \(T'\)-invariant function equal to \(\log|z|\) on \(B_{1/2}\backslash\{p\}\), and \(0\) on \(M\backslash B_1\). In this case, the difference \(\Delta-\Delta_{\mathrm{Euc}}=H^{j\bar{k}}\d_j\d_{\bar{k}}\), where \(H^{j\bar{k}}\) is now \(\O(|z|^\tau)\) for all \(0<\tau<4/5\); see again the proof of Lemma \ref{lem:laplacian_order_tau}. The function \(-\log|z|\) is the fundamental solution of \(\Delta_{\mathrm{Euc}}^2\) on \(\mathbb{C}^2\), so \[\frac{v}{w}\mathcal{D}_v^*\mathcal{D}G\in C^{k-4,\alpha}_{-4+\tau}(M_p).\] Now, the same proof as for the \(n>2\) case in Lemma \ref{lem:M_p_right_inverse} in fact shows that \[(\varphi,f)\mapsto \frac{v(\mu)}{w(\mu)}\Lich_v^*\Lich\varphi-f\] on \(C^{4,\alpha}_{\delta'}(M_p)^{T'}\times\overline{\mathfrak{h}}\) has a right inverse for any \(\delta'\in(0,1)\) (as opposed to \(\delta'\in(-1,0)\)). Hence there exist \(\varphi\in C^{4,\alpha}_\tau(M_p)^{T'}\) and \(h\in\overline{\mathfrak{h}}\) such that \[\frac{v}{w}\Lich_v^*\Lich(\varphi-G)=h|_{M_p}.\] Defining \(\Gamma:=\varphi-G\) once again, we have by the expansion \(\Gamma=\log|z|+\O(|z|^\tau)\) that \(\Gamma\) is a distributional solution to the equation \[\frac{v(\mu)}{w(\mu)}\Lich^*_v\Lich\Gamma=h-c_2\delta_p\] on \(M\), where \(c_2\) is a constant depending only on the weights \(v\), \(w\).
			
			For any \(n\geq2\), denote by \(\pr:\overline{\mathfrak{h}}\to\mathfrak{h}\) the natural projection; we wish to show that \(\pr(h)=c_n\mu_H^{\#}(p)\). To see this, first note \(\langle\mu_H,\pr(h)\rangle = h+\mathrm{const}\). Recalling \(\mu_H\) is normalised to have integral \(0\) with respect to \(w(\mu)\omega^n\), for any \(\xi\in\mathfrak{h}\), \begin{align*}
			\langle \xi,\pr(h)\rangle_{\mathfrak{h}} 
			&=\int_M\langle\mu_H,\xi\rangle\langle\mu_H,\pr(h)\rangle w(\mu)\omega^n \\
			&= \int_M\langle\mu_H,\xi\rangle h\,w(\mu)\omega^n \\
			&= \int_M\langle\mu_H,\xi\rangle\left(\frac{v(\mu)}{w(\mu)}\Lich^*_v\Lich\Gamma+c_n\delta_p\right)w(\mu)\omega^n \\
			&= c_n\langle\mu_H(p),\xi\rangle \\
			&=c_n\langle \xi,\mu_H^{\#}(p)\rangle_{\mathfrak{h}},
			\end{align*} where in the fourth line we used that \(\langle\mu_H,\xi\rangle\in\ker\mathcal{D}\).
		\end{proof}

		On \(M_p\) define the metric \(\widetilde{\omega}\) by \[\widetilde{\omega}=\omega+\epsilon^{2n-2}i\d\db\Gamma.\] For \(n>2\), \(\widetilde{\omega}\) takes the form \begin{align*}
			\widetilde{\omega} &= i\d\db\left(|z|^2+\epsilon^{2n-2}\Gamma(z)+f(z)\right) \\
			&= i\d\db\left(|z|^2-\epsilon^2|\epsilon^{-1}z|^{4-2n}+\epsilon^{2n-2}\widetilde{\Gamma}(z)+f(z)\right),
		\end{align*} near \(p\), where \(\Gamma(z)=-|z|^{4-2n}+\widetilde{\Gamma}(z)\). Recall the Burns--Simanca metric satisfies \(\eta=i\d\db(|\zeta|^2+g(\zeta))\), where \(g(\zeta)=-|\zeta|^{4-2n}+\O(|\zeta|^{3-2n})\) as \(|\zeta|\to\infty\). Let us write explicitly \[g=-|\zeta|^{4-2n}+\widetilde{g},\] where \(\widetilde{g}\) is \(\O(|\zeta|^{3-2n})\). Gluing \(\widetilde{\omega}\) to the pullback of \(\epsilon^2\eta\) from \(\widetilde{D}_{R_\epsilon}\) to \(\widetilde{B}_{r_\epsilon}\), we produce the metric \begin{align}
		\widetilde{\omega}_\epsilon:= i\d\db\left(\vphantom{\widetilde{\Gamma}}\right.&|z|^2-\epsilon^2|\epsilon^{-1}z|^{4-2n}+\gamma_1(z)(\epsilon^{2n-2}\widetilde{\Gamma}(z)+f(z))\label{eq:modified_omega}\\
		&\left.+\gamma_2(z)\epsilon^2\widetilde{g}(\epsilon^{-1}z)\vphantom{\widetilde{\Gamma}}\right).\nonumber
		\end{align} Outside of \(\widetilde{B}_{2r_\epsilon}\) we have \(\widetilde{\omega}_\epsilon=\widetilde{\omega}\), and on \(\widetilde{B}_{r_\epsilon}\) we have \(\widetilde{\omega}_\epsilon=\iota_\epsilon^*(\epsilon^2\eta)\). Furthermore, \[\widetilde{\omega}_\epsilon=\omega_\epsilon+i\d\db(\epsilon^{2n-2}\gamma_1(z)\Gamma(z)).\] In the case \(n=2\), we define \(\widetilde{\omega}_\epsilon\) in the same way: \begin{equation}\label{eq:modified_omega2}
		\widetilde{\omega}_\epsilon=i\d\db\left(|z|^2+\epsilon^2\log|\epsilon^{-1}z|+\gamma_1(z)(\epsilon^{2}\widetilde{\Gamma}(z)+f(z))\right),
		\end{equation} where \(\Gamma(z)=\log|z|+\widetilde{\Gamma}(z)=\log|z|+\O(|z|^\tau)\) for \(\tau>0\) small, and \(g(\zeta)=\log|\zeta|\) so \(\widetilde{g}=0\).

		The equation we wish to solve is \[S_{v,w}(\omega_\epsilon+i\d\db \varphi_\epsilon)-\nabla_\epsilon\ell_\epsilon(f_\epsilon+s)\cdot\nabla_\epsilon \varphi_\epsilon=\ell_\epsilon(f_\epsilon+s),\] where \(\varphi_\epsilon\) is a \(T'\)-invariant smooth K{\"a}hler potential and \(f_\epsilon\in\overline{\mathfrak{h}}\). As done previously we will now drop the ever cumbersome \(\epsilon\), favouring \[S_{v,w}(\omega+i\d\db \varphi)-\nabla\ell(f+s)\cdot\nabla \varphi=\ell(f+s).\] For the original K{\"a}hler metric on \(M\) we will write \(\omega'\) when needed. Expanding the scalar curvature \(\check{S}:=S_{v,w}\) at \(\omega_\epsilon\), we can rewrite this as \[\check{L} (\varphi)-\frac{1}{2}\widetilde{X}(\varphi)-\ell(f)=\ell(s)-\check{S}(\omega)-\check{Q}(\varphi)+\frac{1}{2}\nabla\ell(f)\cdot\nabla \varphi.\] In order to incorporate the metric \(\widetilde{\omega}_\epsilon\) later on, we define \[\varphi':=\varphi-\epsilon^{2n-2}\gamma_1\Gamma\] and \[f':=f-\epsilon^{2n-2}h,\] where \(\Gamma\) and \(h\) are from Lemma \ref{lem:Gamma}. The equation can then be written \begin{align*}
			\check{L}(\varphi')-\frac{1}{2}\widetilde{X}(\varphi')-\ell(f') &= \ell(s)-\check{S}(\omega)-\check{Q}(\varphi)+\frac{1}{2}\nabla\ell(f)\cdot\nabla \varphi \\
			&-\check{L}(\epsilon^{2n-2}\gamma_1\Gamma)+\frac{1}{2}X(\epsilon^{2n-2}\gamma_1\Gamma)+\epsilon^{2n-2}\ell(h).
		\end{align*} We now use the operator \(P\) from Proposition \ref{prop:right_inverse}, which is a right-inverse for the operator on the left-hand side of this equation, to rewrite this as a fixed point problem \((\varphi',f')=\mathcal{N}(\varphi',f')\), where \begin{align*}
			\mathcal{N}(\varphi',f'):=P\left(\vphantom{\int}\right.&\ell(s)-\check{S}(\omega)-\check{Q}(\epsilon^{2n-2}\gamma_1\Gamma+\varphi') \\
			&+\frac{1}{2}\nabla(\epsilon^{2n-2}\ell(h)+\ell(f'))\cdot\nabla(\epsilon^{2n-2}\gamma_1\Gamma+\varphi') \\
			&\left. -\check{L}(\epsilon^{2n-2}\gamma_1\Gamma)+\frac{1}{2}X(\epsilon^{2n-2}\gamma_1\Gamma)+\epsilon^{2n-2}\ell(h)\vphantom{\int}\right).
		\end{align*}
	
		\begin{lemma}\label{lem:N_contraction}
			For \(n>2\) let \(\delta\in(4-2n,0)\), and for \(n=2\) let \(\delta<0\) be sufficiently close to \(0\). Then there exist constants \(c_0,\epsilon_0>0\) such that for all positive \(\epsilon<\epsilon_0\), if \(\varphi_1',\varphi_2'\in(C^{4,\alpha}_\delta)^{T'}\) satisfy \(\|\varphi_j'\|_{C^{4,\alpha}_2}<c_0\) and \(f_1',f_2'\in\mathfrak{h}\) satisfy \(|f_j'|<c_0\), then \[\|\mathcal{N}(\varphi_1',f_1')-\mathcal{N}(\varphi_2',f_2')\|_{C^{4,\alpha}_\delta}\leq \frac{1}{2}(\|\varphi_1'-\varphi_2'\|_{C^{4,\alpha}_\delta}+|f_1'-f_2'|).\]
		\end{lemma}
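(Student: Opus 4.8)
Here is a proof plan.

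\medskip

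The plan is to follow the contraction-mapping bookkeeping of the corresponding step in \cite{Sze12}, feeding the new weighted terms into the estimates assembled in Sections~\ref{sec:estimates_moment_maps}--\ref{sec:weighted_norms}. The first thing I would observe is that $\mathcal{N}$ has the shape $(\varphi',f')\mapsto P\bigl(E+A(\varphi',f')\bigr)$, where
\[
E:=\ell(s)-\check{S}(\omega)-\check{L}(\epsilon^{2n-2}\gamma_1\Gamma)+\frac{1}{2}X(\epsilon^{2n-2}\gamma_1\Gamma)+\epsilon^{2n-2}\ell(h)
\]
does not depend on $(\varphi',f')$, and
\[
A(\varphi',f'):=-\check{Q}(\epsilon^{2n-2}\gamma_1\Gamma+\varphi')+\frac{1}{2}\nabla\bigl(\epsilon^{2n-2}\ell(h)+\ell(f')\bigr)\cdot\nabla\bigl(\epsilon^{2n-2}\gamma_1\Gamma+\varphi'\bigr).
\]
Since $P$ is linear, $\mathcal{N}(\varphi_1',f_1')-\mathcal{N}(\varphi_2',f_2')=P\bigl(A(\varphi_1',f_1')-A(\varphi_2',f_2')\bigr)$, so by the operator bound of Proposition~\ref{prop:right_inverse} it suffices to show
\[
\|A(\varphi_1',f_1')-A(\varphi_2',f_2')\|_{C^{0,\alpha}_{\delta-4}}\leq C(c_0+o_\epsilon(1))\bigl(\|\varphi_1'-\varphi_2'\|_{C^{4,\alpha}_\delta}+|f_1'-f_2'|\bigr),
\]
where $o_\epsilon(1)\to0$ as $\epsilon\to0$.

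Before that I would record the auxiliary bound $\|\epsilon^{2n-2}\gamma_1\Gamma\|_{C^{4,\alpha}_2}\to0$ as $\epsilon\to0$, which follows from the asymptotics $\Gamma=\O(|z|^{4-2n})$ (resp.\ $\Gamma=\O(\log|z|)$ when $n=2$) of Lemma~\ref{lem:Gamma} together with the cut-off scale $r_\epsilon=\epsilon^{(2n-1)/(2n+1)}$, by the same computation as in the proof of Lemma~\ref{lem:laplacian_order_tau}. After shrinking $\epsilon_0$ I may then assume $\|\epsilon^{2n-2}\gamma_1\Gamma+\varphi_j'\|_{C^{4,\alpha}_2}<2c_0$, so that Lemma~\ref{lem:Q-estimate} applies and bounds the difference of the $\check{Q}$-terms by $C(c_0+o_\epsilon(1))\|\varphi_1'-\varphi_2'\|_{C^{4,\alpha}_\delta}$. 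For the bilinear term I would split it: writing $A_j:=\epsilon^{2n-2}\ell(h)+\ell(f_j')$ and $B_j:=\epsilon^{2n-2}\gamma_1\Gamma+\varphi_j'$, and using linearity of $\ell$,
\[
\nabla A_1\cdot\nabla B_1-\nabla A_2\cdot\nabla B_2=\nabla\ell(f_1'-f_2')\cdot\nabla B_1+\nabla A_2\cdot\nabla(\varphi_1'-\varphi_2'),
\]
and estimate each piece via Lemma~\ref{lem:ell_estimate} (which gives $\|\nabla\ell(f_1'-f_2')\|_{C^{0,\alpha}_{-1}}\leq C|f_1'-f_2'|$ and $\|\nabla A_2\|_{C^{0,\alpha}_{-1}}\leq C(c_0+o_\epsilon(1))$), the bounds on $g_\epsilon^{-1}$ from Lemma~\ref{lem:metric_and_gamma_estimates} (to control $\nabla B_1$ and $\nabla(\varphi_1'-\varphi_2')$ in $C^{0,\alpha}_{\delta-1}$), and the inclusion $C^{4,\alpha}_2\subset C^{4,\alpha}_\delta$ from \emph{(1)} of Lemma~\ref{lem:weighted_norm_properties} (which gives $\|\varphi_j'\|_{C^{4,\alpha}_\delta}<c_0$). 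The multiplicative estimate \emph{(3)} of that lemma then puts each product in $C^{0,\alpha}_{\delta-2}$, and \emph{(1)} yields the $C^{0,\alpha}_{\delta-4}$-bound at no further cost in $\epsilon$, which completes the estimate on $A(\varphi_1',f_1')-A(\varphi_2',f_2')$.

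To finish, for $n>2$ I would use $\|P\|\leq C$ and then choose $c_0>0$ and $\epsilon_0>0$ small enough that the composite constant is $\leq\frac{1}{2}$. The case $n=2$ is more delicate: there $\|P_\epsilon\|\leq C\epsilon^\delta$ grows as $\epsilon\to0$, so the bound on $A$ must itself be small enough to absorb this. As in \cite{Sze12}, this is arranged by taking $\delta<0$ sufficiently close to $0$ and by keeping track of the additional powers of $\epsilon$ and factors $|\log\epsilon|^{-1}$ produced by the cut-offs $\gamma_{1,\epsilon}$ and $\beta_{2,\epsilon}$ (cf.\ \eqref{eq:beta2_estimates}); I would import this part of the argument directly from that reference. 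I expect the only genuine difficulty to be the exponent bookkeeping: at every step one must verify that the weighted-H\"older orders add up so that the various products land in $C^{0,\alpha}_{\delta-4,\epsilon}$ — not in a space of higher weight, from which passing to $C^{0,\alpha}_{\delta-4,\epsilon}$ via \emph{(2)} of Lemma~\ref{lem:weighted_norm_properties} would cost a negative power of $\epsilon$ — and, in the $n=2$ case, the extra care needed to absorb the factor $\epsilon^\delta$ in $\|P_\epsilon\|$. No fundamentally new idea beyond the estimates already established and those of \cite{Sze12} should be required.
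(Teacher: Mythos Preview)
Your proposal is correct and follows essentially the same route as the paper: reduce via the bound on $P$ to estimating the difference of the $\check{Q}$-terms and the bilinear $\nabla\ell\cdot\nabla\varphi$-terms, apply Lemma~\ref{lem:Q-estimate} together with the auxiliary bound $\|\epsilon^{2n-2}\gamma_1\Gamma\|_{C^{4,\alpha}_2}\to0$, and split the bilinear piece using Lemma~\ref{lem:ell_estimate}. One small misdirection: in the $n=2$ case the paper does not invoke the cut-offs $\beta_{2,\epsilon}$ or the $|\log\epsilon|^{-1}$ factors here (those belong to the proof of Proposition~\ref{prop:right_inverse}); the only additional observation needed is that $\epsilon^\delta\|\epsilon^2\gamma_1\Gamma\|_{C^{4,\alpha}_2}\leq C\epsilon^{2+\delta}|\log r_\epsilon|r_\epsilon^{-2}\to0$, so the $o_\epsilon(1)$ term still absorbs the growth of $\|P_\epsilon\|$.
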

	
		\begin{proof}
			For \(n>2\), the operator \(P\) has norm uniformly bounded independent of \(\epsilon\), hence we must estimate the \(C^{0,\alpha}_{\delta-4}\)-norm of \[\check{Q}(\varphi_2)-\check{Q}(\varphi_1)+\frac{1}{2}\left(\nabla\ell(f_1)\cdot\nabla\varphi_1-\nabla\ell(f_2)\cdot\nabla\varphi_2\right).\] By Lemma \ref{lem:Q-estimate}, \[\|\check{Q}(\varphi_2)-\check{Q}(\varphi_1)\|_{C^{0,\alpha}_{\delta-4}}\leq C(\|\varphi_1\|_{C^{4,\alpha}_2}+\|\varphi_2\|_{C^{4,\alpha}_2})\|\varphi_1-\varphi_2\|_{C^{4,\alpha}_\delta}.\] Now, \(\varphi_j=\varphi_j'+\epsilon^{2n-2}\gamma_1\Gamma\), and since \(\Gamma\) is \(\O(|z|^{4-2n})\), \[\|\epsilon^{2n-2}\gamma_1\Gamma\|_{C^{4,\alpha}_2}\leq C\|\epsilon^{2n-2}\Gamma\|_{C^{4,\alpha}_2(M\backslash B_{r_\epsilon})}\leq C\epsilon^{2n-2}r_\epsilon^{4-2n}r_\epsilon^{-2}\to0\] as \(\epsilon\to0\). Hence, choosing \(c_0\) and \(\epsilon_0\) sufficiently small, we can ensure \[C(\|\varphi_1\|_{C^{4,\alpha}_2}+\|\varphi_2\|_{C^{4,\alpha}_2})\leq\frac{1}{4}.\] Noting that \(\|\varphi_1-\varphi_2\|_{C^{4,\alpha}_\delta}=\|\varphi_1'-\varphi_2'\|_{C^{4,\alpha}_\delta}\), this implies \[\|\check{Q}(\varphi_2)-\check{Q}(\varphi_1)\|_{C^{0,\alpha}_{\delta-4}}\leq\frac{1}{4}\|\varphi_1'-\varphi_2'\|_{C^{4,\alpha}_\delta}.\]
			
			For the remaining term, we use the uniform estimate \(\|\ell(f)\|_{C^{1,\alpha}_0}\leq C|f|\) from Lemma \ref{lem:ell_estimate}: \begin{align*}
			&\|\nabla\ell(f_1)\cdot\nabla\varphi_1-\nabla\ell(f_2)\cdot\nabla\varphi_2\|_{C^{0,\alpha}_{\delta-4}} \\
			\leq & \|\nabla\ell(f_1)\cdot(\nabla\varphi_1-\nabla\varphi_2)\|_{C^{0,\alpha}_{\delta-4}}+\|(\nabla\ell(f_1)-\nabla\ell(f_2))\cdot\nabla\varphi_2\|_{C^{0,\alpha}_{\delta-4}}\\
			\leq & C|f_1|\cdot\|\varphi_1-\varphi_0\|_{C^{1,\alpha}_{\delta-2}}+C|f_1-f_2|\cdot\|\varphi_2\|_{C^{1,\alpha}_{\delta-2}}.
			\end{align*} Again, choosing \(c_0\) sufficiently small, \[\|\nabla\ell(f_1)\cdot\nabla\varphi_1-\nabla\ell(f_2)\cdot\nabla\varphi_2\|_{C^{0,\alpha}_{\delta-4}}\leq\frac{1}{4}(\|\varphi_1'-\varphi_2'\|_{C^{4,\alpha}_\delta}+|f_1'-f_2'|).\]
			
			When \(n=2\), the norm of \(P\) is only bounded by \(C\epsilon^\delta\). However, since \(\Gamma\) is \(\O(\log|z|)\) we have \[\epsilon^\delta\|\epsilon^{2}\gamma_1\Gamma\|_{C^{4,\alpha}_2}\leq C\epsilon^{2+\delta}|\log r_\epsilon|r_\epsilon^{-2}\to0\] as \(\epsilon\to0\), so the proof still goes through in this case.
		\end{proof}
	
		\begin{proposition}\label{prop:N_bounded_norm}
			For \(n>2\) let \(\delta\in(4-2n,0)\) be sufficiently close to \(4-2n\), and for \(n=2\) let \(\delta<0\) be sufficiently close to \(0\). Then there exists \(C>0\) independent of \(\epsilon\) such that \[\|\mathcal{N}(0,0)\|_{C^{4,\alpha}_{\delta}}\leq Cr_\epsilon^{4-\delta}\epsilon^\theta,\] where \(\theta:=0\) for \(n>2\) and \(\theta:=\delta\) for \(n=2\).
		\end{proposition}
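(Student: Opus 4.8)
The plan is to follow the scheme of \cite[Proposition 21]{Sze12}, keeping careful track of the extra terms coming from the weights. The first step is to rewrite $\mathcal{N}(0,0)$ using that the modified metric satisfies $\widetilde{\omega}_\epsilon=\omega_\epsilon+i\d\db(\epsilon^{2n-2}\gamma_1\Gamma)$: expanding the weighted scalar curvature at $\omega_\epsilon$ gives the identity $\check{S}(\omega_\epsilon)+\check{L}_\epsilon(\epsilon^{2n-2}\gamma_1\Gamma)+\check{Q}_\epsilon(\epsilon^{2n-2}\gamma_1\Gamma)=S_{v,w}(\widetilde{\omega}_\epsilon)$, so that $\mathcal{N}(0,0)=P(E)$ where
\[E:=\ell(s)-S_{v,w}(\widetilde{\omega}_\epsilon)+\tfrac12\widetilde{X}(\epsilon^{2n-2}\gamma_1\Gamma)+\tfrac12\epsilon^{2n-2}\nabla\ell(h)\cdot\nabla(\epsilon^{2n-2}\gamma_1\Gamma)+\epsilon^{2n-2}\ell(h).\]
By Proposition \ref{prop:right_inverse} the norm of $P:C^{0,\alpha}_{\delta-4,\epsilon}\to C^{4,\alpha}_{\delta,\epsilon}$ is at most $C$ for $n>2$ and at most $C\epsilon^{\delta}$ for $n=2$; this is exactly the factor $\epsilon^{\theta}$ in the statement. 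It therefore suffices to show $\|E\|_{C^{0,\alpha}_{\delta-4,\epsilon}}\leq Cr_\epsilon^{4-\delta}$, and I would do this by estimating $E$ separately on $\Bl_pM\setminus\widetilde{B}_{2r_\epsilon}$, on the gluing annulus $\widetilde{B}_{2r_\epsilon}\setminus\widetilde{B}_{r_\epsilon}$, and on $\widetilde{B}_{r_\epsilon}$, and summing.

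On $\Bl_pM\setminus\widetilde{B}_{2r_\epsilon}$ one has $\omega_\epsilon=\omega'$, $\widetilde{\omega}_\epsilon=\widetilde{\omega}=\omega'+\epsilon^{2n-2}i\d\db\Gamma$, $\gamma_1\equiv1$, $\ell(s)=\pi^*s$ and $\ell(h)=\pi^*h$. Expanding $S_{v,w}(\widetilde{\omega})$ about the weighted extremal metric $\omega'$ and using $S_{v,w}(\omega')=s$ together with the fact that the linearisation of $S_{v,w}$ at $\omega'$ sends $\Gamma$ to $\frac{v(\mu)}{w(\mu)}\Lich_v^*\Lich\Gamma+\tfrac12 X(\Gamma)=h+\tfrac12 X(\Gamma)$ (Lemma \ref{lem:Gamma} and $\nabla_{\omega'}s=X$), every term of order $\leq\epsilon^{2n-2}$ cancels identically, so that on this region $E$ equals $-\check{Q}_\epsilon(\epsilon^{2n-2}\gamma_1\Gamma)+\tfrac12\epsilon^{4n-4}\nabla h\cdot\nabla\Gamma$. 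The asymptotics $\Gamma=-|z|^{4-2n}+\O(|z|^{5-2n})$ near $p$ (with $\log|z|$ replacing $|z|^{4-2n}$ when $n=2$) give $\|\epsilon^{2n-2}\gamma_1\Gamma\|_{C^{4,\alpha}_{2,\epsilon}}\leq C\epsilon^{2n-2}r_\epsilon^{2-2n}\to0$ and $\|\epsilon^{2n-2}\gamma_1\Gamma\|_{C^{4,\alpha}_{\delta,\epsilon}}\leq C\epsilon^{2n-2}r_\epsilon^{4-2n-\delta}$, exactly as in the proof of Lemma \ref{lem:N_contraction}; feeding these into Lemma \ref{lem:Q-estimate} bounds the quadratic term by $C\epsilon^{4n-4}r_\epsilon^{6-4n-\delta}$, and with $r_\epsilon=\epsilon^{(2n-1)/(2n+1)}$ a short computation shows this, together with the last term (estimated directly from $\nabla\Gamma=\O(|z|^{3-2n})$), is $\leq Cr_\epsilon^{4-\delta}$ for $\delta$ in the allowed range.

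On $\widetilde{B}_{r_\epsilon}$ one has $\gamma_1\equiv0$, so the $\widetilde{X}$- and $\nabla\ell(h)$-terms vanish and $\widetilde{\omega}_\epsilon=\iota_\epsilon^*(\epsilon^2\eta)$. The Burns--Simanca metric is scalar flat, so $S(\widetilde{\omega}_\epsilon)=0$ and $S_{v,w}(\widetilde{\omega}_\epsilon)$ reduces to the purely weighted correction $w(\mu_\epsilon)^{-1}\big(-2\Delta_\epsilon(v(\mu_\epsilon))+\tfrac12\Tr(g_\epsilon\circ\Hess(v)(\mu_\epsilon))\big)$, which is uniformly bounded by Lemma \ref{lem:laplacian_order_tau}, the bounds on $g_\epsilon$ in Lemma \ref{lem:metric_and_gamma_estimates}, and boundedness of $\mu_\epsilon,v,w$; moreover $\ell(s)$ and $\ell(h)$ are uniformly bounded by Lemma \ref{lem:ell_estimate} and $\epsilon^{2n-2}\ell(h)\to0$. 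Hence $E$ has uniformly bounded $C^{0,\alpha}_{0,\epsilon}$-norm on $\widetilde{B}_{r_\epsilon}$, and since $4-\delta>0$ the definition of the weighted norm forces any such function supported in $\widetilde{B}_{r_\epsilon}$ to have $C^{0,\alpha}_{\delta-4,\epsilon}$-norm $\leq Cr_\epsilon^{4-\delta}$. This is the region where the fact that the glued model metric has merely bounded---not small---weighted scalar curvature is absorbed by the weight; note that this term has no counterpart in the unweighted construction.

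The gluing annulus $\widetilde{B}_{2r_\epsilon}\setminus\widetilde{B}_{r_\epsilon}$ is the main obstacle. There $\widetilde{\omega}_\epsilon$ interpolates between $\widetilde{\omega}$ and $\iota_\epsilon^*(\epsilon^2\eta)$, and one must estimate $S_{v,w}(\widetilde{\omega}_\epsilon)$ together with the cutoff derivatives $\nabla\gamma_j=\O(r_\epsilon^{-1})$, $i\d\db\gamma_j=\O(r_\epsilon^{-2})$ and the new weighted terms $\Delta_{\widetilde{\omega}_\epsilon}(v(\mu_\epsilon))$, $\Tr(g_{\widetilde{\omega}_\epsilon}\circ\Hess(v)(\mu_\epsilon))$, which have no analogue in \cite{Sze12}. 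Using the comparisons $\epsilon^2g(\epsilon^{-1}z)-f(z)=\O(|z|^{3})$, $\epsilon^{2n-2}\widetilde{\Gamma}(z)=\O(|z|^{5-2n})$ and $\epsilon^2\widetilde{g}(\epsilon^{-1}z)=\O(|z|^{3})$ on $1\leq |z|/r_\epsilon\leq2$---obtained exactly as in the proofs of Lemmas \ref{lem:laplacian_order_tau} and \ref{lem:N_contraction} and of Proposition \ref{prop:right_inverse}---one checks that $g_{\widetilde{\omega}_\epsilon}$, $g_{\widetilde{\omega}_\epsilon}^{-1}$, $\Riem(g_{\widetilde{\omega}_\epsilon})$, $\Delta_{\widetilde{\omega}_\epsilon}\mu_\epsilon^a$ and the derivatives of $v(\mu_\epsilon)$ all satisfy the expected weighted bounds on this annulus, so that again $E$ has uniformly controlled rescaled norm there and contributes $\leq Cr_\epsilon^{4-\delta}$. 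It is in balancing the competing powers of $\epsilon$ produced by the three scales against one another that the exponent $\tfrac{2n-1}{2n+1}$ in $r_\epsilon$, and the restriction of $\delta$ to a subinterval near $4-2n$ (resp. near $0$ for $n=2$), are used. Summing the three regional estimates gives $\|E\|_{C^{0,\alpha}_{\delta-4,\epsilon}}\leq Cr_\epsilon^{4-\delta}$, hence $\|\mathcal{N}(0,0)\|_{C^{4,\alpha}_{\delta}}\leq Cr_\epsilon^{4-\delta}\epsilon^{\theta}$. I expect the careful bookkeeping of the weighted curvature terms on this degenerating annulus---rather than any conceptual point---to be the most delicate part of the argument.
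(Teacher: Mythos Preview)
Your overall architecture---rewrite $\mathcal{N}(0,0)=P(E)$ via $\check{S}(\widetilde{\omega}_\epsilon)=\check{S}(\omega_\epsilon)+\check{L}_\epsilon(\epsilon^{2n-2}\gamma_1\Gamma)+\check{Q}_\epsilon(\epsilon^{2n-2}\gamma_1\Gamma)$, then estimate $E$ on the three regions---is exactly the paper's, and your treatments of the outer region $\Bl_pM\setminus\widetilde{B}_{2r_\epsilon}$ and the inner region $\widetilde{B}_{r_\epsilon}$ match the paper's and are correct.

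The gap is on the annulus $A_\epsilon=\widetilde{B}_{2r_\epsilon}\setminus\widetilde{B}_{r_\epsilon}$, and it is not merely bookkeeping. Writing $\widetilde{\omega}_\epsilon=i\d\db(|z|^2+\rho)$ there, the bounds you list only give $\rho=\O(|z|^3)$ on $A_\epsilon$, i.e.\ $\|\rho\|_{C^{4,\alpha}_\delta(A_\epsilon)}\leq Cr_\epsilon^{3-\delta}$. Feeding this into any uniform bound on $\check{S}$ or $\check{L}$ yields at best $\|\check{S}(\widetilde{\omega}_\epsilon)\|_{C^{0,\alpha}_{\delta-4}(A_\epsilon)}\leq Cr_\epsilon^{3-\delta}$, which is one power of $r_\epsilon$ short; and this shortfall is fatal, since $r_\epsilon^{3-\delta}$ does not beat $\epsilon^{2n-2}$ for $\delta$ near $4-2n$. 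The paper recovers the missing factor by linearising about the \emph{Euclidean} metric $\omega_0$ (via the mean value theorem along the segment $\omega_t=i\d\db(|z|^2+t\rho)$), and then using that the leading term $-\epsilon^{2n-2}|z|^{4-2n}$ (respectively $\epsilon^2\log|\epsilon^{-1}z|$ when $n=2$) of $\rho$ is annihilated by the top-order part $\tfrac{v(\mu_0)}{w(\mu_0)}\Delta_0^2$ of $\check{L}_0$, so that only the genuinely $\O(|z|^4)$ remainder $\widetilde{\rho}$ contributes to the fourth-order piece. This biharmonic cancellation is the conceptual point on the annulus, and it is precisely why $\Gamma$ was constructed with that leading asymptotic. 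A second, weighted-specific subtlety you pass over is that to even make sense of $\check{L}_{\omega_t}$ one must supply moment maps $\mu_t$ with image in $P$ for every $t\in[0,1]$; the paper does this by interpolating between $\mu_\epsilon+d^c(\epsilon^{2n-2}\gamma_1\Gamma)$ and an explicit Euclidean moment map built from $\omega'-i\d\db(\beta_2 f)$. Finally, the third-order piece $-\tfrac{2}{w(\mu_0)}(\db_0^*\Lich_0\rho,\nabla_0^{1,0}(v(\mu_0)))$ of $\check{L}_0\rho$ also naively costs only $r_\epsilon^{3-\delta}$; the paper gains the extra $r_\epsilon$ here by using that $\nabla_0^{1,0}(v(\mu_0))=\sum_a v_{,a}(\mu_0)\xi_a$ with $\xi_a$ vanishing at $p$, so $\|\xi_a\|_{C^{0,\alpha}_1(A_\epsilon)}\leq C$.
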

	
		\begin{proof}
			 Since the norm of \(P\) is uniformly bounded by \(C\epsilon^\theta\), it is enough to bound the \(C^{0,\alpha}_{\delta-4}\)-norm of \begin{align*}
				F :=&\ell(s)-\check{S}(\omega)-\check{Q}(\epsilon^{2n-2}\gamma_1\Gamma)
				+\frac{1}{2}\nabla(\epsilon^{2n-2}\ell(h))\cdot\nabla(\epsilon^{2n-2}\gamma_1\Gamma) \\
				&-\check{L}(\epsilon^{2n-2}\gamma_1\Gamma)+\frac{1}{2}X(\epsilon^{2n-2}\gamma_1\Gamma)+\epsilon^{2n-2}\ell(h)
			\end{align*} by \(Cr_\epsilon^{4-\delta}\).
			
			We first estimate \(F\) in the region \(\widetilde{B}_{r_\epsilon}\). Here the terms involving \(\gamma_1\) vanish, and we are left with \[F=\ell(s)-\check{S}(\omega)+\epsilon^{2n-2}\ell(h).\] By Lemma \ref{lem:ell_estimate}, \[\|\ell(s)\|_{C^{0,\alpha}_0}+\|\ell(h)\|_{C^{0,\alpha}_0}\leq C,\] which gives \[\|\ell(s)\|_{C^{0,\alpha}_{\delta-4}(\widetilde{B}_{r_\epsilon})}+\|\ell(h)\|_{C^{0,\alpha}_{\delta-4}(\widetilde{B}_{r_\epsilon})}\leq Cr_\epsilon^{4-\delta}\to0\] as \(\epsilon\to0\). For the term \(\check{S}(\omega)\), note that \(\omega=\epsilon^2\eta\) is scalar flat in this region, so we only need to estimate \(\Phi_{v,w}(\omega)\). The term \(\Phi_{v,w}(\omega)\) is a linear combination of terms of the form \(u_a(\mu)\Delta\mu^a\) and \(u_{ab}(\mu)g(\widetilde{\xi}_a,\widetilde{\xi}_b)\), where the \(u_a\) and \(u_{ab}\) are among finitely many fixed smooth functions on the moment polytope \(P\). From Sections \ref{sec:weighted_norms} and \ref{sec:estimates_moment_maps} we have \(C^{0,\alpha}_0\)-bounds for \(u_a(\mu)\), \(u_{ab}(\mu)\), \(g\), \(\widetilde{\xi}_a\), and \(\Delta\mu_\epsilon^a\). Using these bounds, we get \[\|\Phi_{v,w}(\omega)\|_{C^{0,\alpha}_{\delta-4}(\widetilde{B}_{r})}\leq Cr_\epsilon^{4-\delta}\to0\] as \(\epsilon\to0\).
			
			Next we estimate \(F\) on the region \(\Bl_pM\backslash\widetilde{B}_{2r_\epsilon}\). Here \(F\) reduces to \[\check{Q}_{\omega'}(\epsilon^{2n-2}\Gamma)+\epsilon^{4n-4}\frac{1}{2}\nabla_{\omega'} h\cdot\nabla_{\omega'}\Gamma,\] where \(\omega'\) is the original metric on \(M\). First note for \(n\geq2\) that \(\Gamma\) is \(\O(|z|^{4-2n-\tau})\) for all \(\tau>0\) small, so \[\|\Gamma\|_{C^{4,\alpha}_2(\Bl_pM\backslash\widetilde{B}_{2r_\epsilon})}\leq Cr_\epsilon^{4-2n-\tau}r_\epsilon^{-2}=Cr_\epsilon^{2-2n-\tau}.\] In particular \(\|\epsilon^{2n-2}\Gamma\|_{C^{4,\alpha}_2(\Bl_pM\backslash\widetilde{B}_{2r_\epsilon})}\leq CR_\epsilon^{2-2n}r_\epsilon^{-\tau}\to0\) as \(\epsilon\to0\) for \(\tau\) sufficiently small, and we can  apply Lemma \ref{lem:Q-estimate} to get \begin{align*}
			&\|\check{Q}_{\omega'}(\epsilon^{2n-2}\Gamma)\|_{C^{0,\alpha}_{\delta-4}(\Bl_pM\backslash\widetilde{B}_{2r_\epsilon})}\\
			\leq & C\|\epsilon^{2n-2}\Gamma\|_{C^{4,\alpha}_2(\Bl_pM\backslash\widetilde{B}_{2r_\epsilon})}\|\epsilon^{2n-2}\Gamma\|_{C^{4,\alpha}_\delta(\Bl_pM\backslash\widetilde{B}_{2r_\epsilon})}\\
			\leq & C\epsilon^{4n-4}r_\epsilon^{8-4n-2\tau}r_\epsilon^{-2}r_\epsilon^{-\delta} \\
			=& Cr_\epsilon^{4-\delta}\epsilon^{4n-4}r_\epsilon^{2-4n-2\tau}\\
			\leq & Cr_\epsilon^{4-\delta},
			\end{align*} where in the last line we used the explicit definition \(r_\epsilon:=\epsilon^{
			\frac{2n-1}{2n+1}}\) to conclude \(\epsilon^{4n-4}r_\epsilon^{2-4n-2\tau}=\epsilon^{\frac{4n-6}{2n+1}-2\tau\frac{2n-1}{2n+1}}\to0\) as \(\epsilon\to0\) for small \(\tau\). For the term \(\epsilon^{4n-4}\nabla_{\omega'} h\cdot\nabla_{\omega'}\Gamma\), \begin{align*}
			\|\epsilon^{4n-4}\nabla_{\omega'} h\cdot\nabla_{\omega'}\Gamma\|_{C^{0,\alpha}_{\delta-4}(\Bl_pM\backslash\widetilde{B}_{2r_\epsilon})} &\leq C\epsilon^{4n-4}\|h\|_{C^{1,\alpha}_0}\|\Gamma\|_{C^{1,\alpha}_{\delta-3}(\Bl_pM\backslash\widetilde{B}_{2r_\epsilon})} \\
			&\leq C\epsilon^{4n-4}r_\epsilon^{4-2n-\tau}r_\epsilon^{3-\delta} \\
			&\leq Cr_\epsilon^{4-\delta}.
			\end{align*} Here we used Lemma \ref{lem:ell_estimate} to bound \(\|h\|_{C^{1,\alpha}_0}\), and the definition of \(r_\epsilon\) to conclude \(\epsilon^{4n-4}r_\epsilon^{7-2n-\tau}\leq r_\epsilon^4\).

			Lastly we estimate \(F\) on the annulus \(A_\epsilon:=\widetilde{B}_{2r_\epsilon}\backslash\widetilde{B}_{r_\epsilon}\). Here, note that since \(\widetilde{\omega}_\epsilon=\omega_\epsilon+i\d\db(\epsilon^{2n-2}\gamma_1(z)\Gamma(z))\), we have \[\check{S}(\omega)+\check{L}(\epsilon^{2n-2}\gamma_1\Gamma)+\check{Q}(\epsilon^{2n-2}\gamma_1\Gamma)=\check{S}(\widetilde{\omega}).\] So \(F\) is given by \[\ell(s)-\check{S}(\widetilde{\omega})+\frac{1}{2}\epsilon^{4n-4}\nabla\ell(h)\cdot\nabla(\gamma_1\Gamma)+\frac{1}{2}X(\epsilon^{2n-2}\gamma_1\Gamma)+\epsilon^{2n-2}\ell(h).\] The same estimates as for the region \(\widetilde{B}_{r_\epsilon}\) will bound the terms  \(\ell(s)\) and \(\epsilon^{2n-2}\ell(h)\), and the terms \(\frac{1}{2}\epsilon^{4n-4}\nabla\ell(h)\cdot\nabla(\gamma_1\Gamma)\) and \(\frac{1}{2}X(\epsilon^{2n-2}\gamma_1\Gamma)\) are bounded similarly as \(\epsilon^{4n-4}\nabla h\cdot\nabla\Gamma\) was on the region \(\Bl_pM\backslash\widetilde{B}_{r_\epsilon}\). 
			
			The only term remaining to estimate is \(\check{S}(\widetilde{\omega})\). On the annulus \(A_\epsilon\) we can write \(\widetilde{\omega}_\epsilon=i\d\db\left(|z|^2+\rho\right)\), where  \[\rho:=-\epsilon^2|\epsilon^{-1}z|^{4-2n}+\gamma_1(z)(\epsilon^{2n-2}\widetilde{\Gamma}(z)+f(z))+\gamma_2(z)\epsilon^2\widetilde{g}(\epsilon^{-1}z)\] for \(n>2\), and \[\rho:=\epsilon^2\log|\epsilon^{-1}z|+\gamma_1(z)(\epsilon^{2}\widetilde{\Gamma}(z)+f(z))\] for \(n=2\). For each \(t\in[0,1]\) we define the metric \(\omega_t:=i\d\db\left(|z|^2+t\rho\right)\) on the annulus, so \(\omega_1=\widetilde{\omega}_\epsilon\) and \(\omega_0\) is the Euclidean metric. To each \(\omega_t\) we associate a moment map as follows. First, for \(\omega_1=\widetilde{\omega}_\epsilon\) we take the moment map \(\mu_1:=\mu_\epsilon+d^c(\epsilon^{2n-2}\gamma_1\Gamma)\). For the Euclidean metric \(\omega_0\), we proceed as follows. First, recall the original metric \(\omega'\) on \(M\) can be written \(\omega'=i\d\db\left(|z|^2+f(z)\right)\) near \(p\), where \(f\in\O(|z|^4)\). Using the function \(\beta_2\) from the proof of Lemma \ref{lem:BS_right_inverse}, we define \[\omega_s':=\omega'-si\d\db(\beta_2(z)f(z))\] on all of \(M\). Since \(f\) is \(\O(|z|^4)\), \(\omega_s'\) is a K{\"a}hler metric in the same class as \(\omega\) for \(\epsilon\) sufficiently small, and \[\mu_s'=\mu'-sd^c(\beta_2(z)f(z))\] is a moment map for \(\omega_s'\) with image in \(P\). Taking \(s=1\) and restricting to \(A_\epsilon\), we produce a moment map \(\mu_0\) for \(\omega_0\) on the annulus whose image lies in \(P\). By taking the convex combination of moment maps for \(\omega_0\) and \(\omega_1\), we have a moment map \(\mu_t\) for every \(\omega_t\) on the annulus with image lying in \(P\). It follows the functions \(v(\mu_t)\) and \(w(\mu_t)\) are well defined, and we can consider the operator \(S_{v,w}(\omega_t)\) and its linearisation for all \(t\). 
			
			By the mean value theorem, there exists \(t\in[0,1]\) such that \[\check{S}(\widetilde{\omega}_\epsilon)=\check{S}(\omega_0)+\check{L}_{\omega_t}\rho\] on \(A_\epsilon\). We estimate each term on the right. First, since \(\omega_0\) is scalar flat, the term \(\check{S}(\omega_0)\) is a sum of terms of the form \(u_\alpha(\mu_0)\Delta_0\mu_0\) and \(u_{ab}(\mu_0)g_0(\xi_a,\xi_b)\). These are fixed smooth functions independent of \(\epsilon\), so their \(C^{0,\alpha}_{\delta-4}\) norms over the region \(A_\epsilon\) are bounded by \(Cr_\epsilon^{4-\delta}\). For \(n>2\), \begin{align}
			\|\rho\|_{C^{4,\alpha}_\delta(A_\epsilon)} \leq & Cr_\epsilon^{-\delta}(\epsilon^{2n-2}r_\epsilon^{4-2n}+\epsilon^{2n-2}r_\epsilon^{5-2n}+r_\epsilon^4+\epsilon^2R_\epsilon^{3-2n}) \label{eq:rho1} \\
			\leq & Cr_\epsilon^{-\delta}(r_\epsilon^3+r_\epsilon^4+r_\epsilon^4+r_\epsilon^4)\nonumber\\
			\leq & Cr_\epsilon^{3-\delta}.\nonumber
			\end{align} Note to obtain sufficiently sharp bounds in the third line, we used the formula \(r_\epsilon:=\epsilon^{\frac{2n-1}{2n+1}}\). For example, writing \(\epsilon^{2n-2}r_\epsilon^{4-2n}=r_\epsilon^a\) for some \(a\), we solve \(a=5-\frac{2n+1}{2n-1}>3\) for all \(n>2\), so \(\epsilon^{2n-2}r_\epsilon^{4-2n}\leq r_\epsilon^3\). For \(n=2\), \begin{align}
						\|\rho\|_{C^{4,\alpha}_\delta(A_\epsilon)}
						\leq & Cr_\epsilon^{-\delta}(\epsilon^{2}|\log R_\epsilon|+\epsilon^{2}r_\epsilon^{\tau}+r_\epsilon^4) \label{eq:rho2} \\
						\leq & Cr_\epsilon^{-\delta}(r_\epsilon^3+r_\epsilon^4+r_\epsilon^4) \nonumber \\
						\leq & Cr_\epsilon^{3-\delta}, \nonumber
						\end{align} where we used \(\epsilon^2=r_\epsilon^{10/3}\) and \(\tau<4/5\) so \(\epsilon^2r_\epsilon^\tau\leq r_\epsilon^4\) for \(\tau\) sufficiently close to \(4/5\).
			
			Similarly, \(\|\rho\|_{C^{4,\alpha}_2(A_\epsilon)}\leq Cr_\epsilon^{3-2}=Cr_\epsilon\) for \(n\geq2\). Writing \(\check{L}_0\) for the linearisation of the weighted scalar curvature operator at \(\omega_0\), \[\check{L}_{\omega_t}\rho=\check{L}_0\rho+(\check{L}_{\omega_t}\rho-\check{L}_0\rho).\] We now claim that the analogue of Proposition \ref{prop:weighted_Lich_estimate} applies for the metric \(\omega_0\) on the region \(A_\epsilon\) in place of \(\omega_\epsilon\). To see this is rather straightforward; in particular one needs estimates \(\|g_0\|_{C^{k,\alpha}_{0,\epsilon}(A_\epsilon)}\leq C\) independent of \(\epsilon\), but this is even easier than the estimates for \(g_\epsilon\), since the metric is fixed independent of \(\epsilon\). The estimates on the metrics \(g_\epsilon\) were used to prove all of the corresponding moment map estimates in Section \ref{sec:estimates_moment_maps}, so these also hold for \(\omega_0\). In turn, the moment map estimates were applied to prove Proposition \ref{prop:weighted_Lich_estimate}, and so the Proposition holds for \(\omega_0\) in place of \(\omega_\epsilon\). Applying this, \[\|\check{L}_{\omega_t}\rho-\check{L}_0\rho\|_{C^{0,\alpha}_{\delta-4}(A_\epsilon)}\leq C \|\rho\|_{C^{4,\alpha}_2(A_\epsilon)}\|\rho\|_{C^{4,\alpha}_\delta(A_\epsilon)}\leq Cr_\epsilon^{4-\delta}\] on \(A_\epsilon\). The final term to estimate is \(\check{L}_0(\rho)\). The leading order term of \(\check{L}_0\) is \(D:=\frac{v(\mu_0)}{w(\mu_0)}\Delta_0^2\), which annihilates the leading order term \(-\epsilon^{2n-2}|z|^{4-2n}\) (or \(\epsilon^2\log|\epsilon^{-1}z|\) when \(n=2\)) of \(\rho\). Writing \(\widetilde{\rho}\) for \(\rho\) minus its leading order term, \[\|\check{L}_0(\rho)\|_{C^{0,\alpha}_{\delta-4}(A_\epsilon)}\leq \|(\check{L}_0-D)\rho\|_{C^{0,\alpha}_{\delta-4}(A_\epsilon)}+\|D\widetilde{\rho}\|_{C^{0,\alpha}_{\delta-4}(A_\epsilon)}.\] Now, from Lemma \ref{lem:weighted_linearisation}, \[(\check{L}_0-D)\rho=-\frac{2}{w(\mu_0)}(\db_0^*\mathcal{D}_0\rho,\nabla^{1,0}_0(v(\mu_0)))_0+D'\rho,\] where \(D'\) is a differential operator of order 2. For the first term on the right-hand side,  \[\nabla^{1,0}_0(v(\mu_0))=\sum_av_{,a}(\mu_0)\xi_a.\] This has uniformly bounded \(C^{0,\alpha}_{1}\)-norm on \(A_\epsilon\), since \(\xi_a\) vanishes at \(p\) and the norm on this region can be computed as the weighted norm on \(M_p\). It follows that \begin{align*}
			&\left\|-\frac{2}{w(\mu_0)}(\db_0^*\mathcal{D}_0\rho,\nabla^{1,0}_0(v(\mu_0)))_0\right\|_{C^{0,\alpha}_{\delta-4}(A_\epsilon)} \\
			\leq &C\|\db_0^*\mathcal{D}_0\rho\|_{C^{0,\alpha}_{\delta-5}(A_\epsilon)}\sum_a\|\xi_a\|_{C^{0,\alpha}_{1}(A_\epsilon)}\\
			\leq & C\|\rho\|_{C^{3,\alpha}_{\delta-2}(A_\epsilon)} \\
			\leq & Cr_\epsilon^{2-\delta}r_\epsilon^3 \\
			\leq&Cr_\epsilon^{4-\delta}.
			\end{align*} Since \(D'\) has order 2 with coefficients having bounded \(C^{k,\alpha}_0\)-norms, \[\|D'\rho\|_{C^{0,\alpha}_{\delta-4}(A_\epsilon)}\leq\|\rho\|_{C^{2,\alpha}_{\delta-2}(A_\epsilon)}\leq Cr_\epsilon^{4-\delta}.\] Lastly, to estimate \(D\widetilde{\rho}\), we use parts of the bounds in \eqref{eq:rho1} and \eqref{eq:rho2} to get \begin{align*}
			\|D\widetilde{\rho}\|_{C^{0,\alpha}_{\delta-4}(A_\epsilon)} & \leq \|\widetilde{\rho}\|_{C^{4,\alpha}_\delta(A_\epsilon)} \\
			&\leq Cr_\epsilon^{-\delta}r_\epsilon^4 \\
			&= Cr_\epsilon^{4-\delta}.
			\end{align*} We have bound all terms in \(F\) by \(Cr_\epsilon^{4-\delta}\), so the proof is complete.
		\end{proof}
		
		We can at last prove Proposition \ref{prop:approx_equation}, which we recall implies Theorem \ref{thm:main}.
		
		\begin{proof}[Proof of Proposition \ref{prop:approx_equation}]
		Given \(\delta\) so that Proposition \ref{prop:N_bounded_norm} holds, we have \[\|\mathcal{N}_\epsilon(0,0)\|_{C^{4,\alpha}_{\delta}}\leq C_1r_\epsilon^{4-\delta}\epsilon^\theta\] for all \(\epsilon>0\) sufficiently small, for a fixed constant \(C_1>0\). Define the set \[S_\epsilon:=\{(\varphi,f)\in(C^{4,\alpha}_\delta)^{T'}\times\overline{\mathfrak{h}}:\|\varphi\|_{C^{4,\alpha}_\delta}+|f|\leq 2C_1r_\epsilon^{4-\delta}\epsilon^\theta\}.\] For \(\epsilon\) sufficiently small, we will have \(2C_1r_\epsilon^{4-\delta}\epsilon^\theta<c_0\), where \(c_0\) is the constant of Lemma \ref{lem:N_contraction}. Hence for \((\varphi,f)\in S_\epsilon\), \begin{align*}
		\|\mathcal{N}_\epsilon(\varphi,f)\|_{C^{4,\alpha}_\delta} &\leq \|\mathcal{N}_\epsilon(0,0)\|_{C^{4,\alpha}_\delta}+\|\mathcal{N}_\epsilon(\varphi,f)-\mathcal{N}_\epsilon(0,0)\|_{C^{4,\alpha}_\delta} \\
		&\leq C_1r_\epsilon^{4-\delta}\epsilon^\theta+\frac{1}{2}(\|\varphi\|_{C^{4,\alpha}_\delta}+|f|) \\
		&\leq 2C_1r_\epsilon^{4-\delta}\epsilon^\theta.
		\end{align*} It follows that \(\mathcal{N}_\epsilon\) maps the set \(S_\epsilon\) to itself, and by Lemma \ref{lem:N_contraction}, \(\mathcal{N}_\epsilon\) is a contraction on \(S_\epsilon\). By the contraction mapping theorem, there exists a unique fixed point \((\varphi_\epsilon,f_\epsilon)\) of \(\mathcal{N}_\epsilon\) on the set \(S\). By construction, this fixed point solves the approximate weighted extremal equation \[S_{v,w}(\omega_\epsilon+i\d\db\varphi_\epsilon)-\nabla_\epsilon\ell_\epsilon(s+f_\epsilon+\epsilon^{2n-2}h)\cdot\nabla_\epsilon\varphi_\epsilon=\ell_\epsilon(s+f_\epsilon+\epsilon^{2n-2}h),\] where we recall \(h\) is the fixed function from Lemma \ref{lem:Gamma}. Thus, relating to Proposition \ref{prop:approx_equation}, we define \(h_{p,\epsilon}:=s+f_\epsilon+\epsilon^{2n-2}h\), and we have solved the required equation; note the solution is smooth, by elliptic regularity. All that remains to be seen is that \(h_{p,\epsilon}\) has the expansion \[h_{p,\epsilon}=s+\epsilon^{2n-2}c_n\overline{\mu_H^{\#}(p)}+h_{p,\epsilon}',\] where \(h_{p,\epsilon}'\) satisfies \(|h_{p,\epsilon}'|\leq C\epsilon^\kappa\) for some \(\kappa>2n-2\). By construction of \(h\) we have \(h=c_n\overline{\mu_H^{\#}(p)}\), and \(h_{p,\epsilon}'=f_\epsilon\) satisfies \[|f_\epsilon|\leq 2C_1r_\epsilon^{4-\delta}\epsilon^\theta\leq C\epsilon^{\frac{2n-1}{2n+1}(4-\delta)+\theta}.\] We can choose \(\delta\) as close to \(4-2n\) as required so that \(\frac{2n-1}{2n+1}(4-\delta)+\theta>2n-2\). This completes the proof.
		\end{proof}
		
		\section{Examples}
		
		We end by applying our theorem to specific choices of weight functions. Perhaps the best new result is in the extremal Sasaki case, where we can genuinely produce new extremal Sasaki metrics.
		
		\begin{enumerate}
		\item {\bf Extremal metrics.} Our first example is that of constant weight functions \(v\) and \(w\). In this setting, our main theorem \ref{thm:main} recovers Sz{\'e}kelyhidi's refinement of the Arezzo--Pacard--Singer theorem, that the blowup of an extremal manifold at a relatively stable fixed point of the extremal field admits an extremal metric in classes making the exceptional divisor small \cite{APS11,Sze12}.
		\item {\bf Extremal Sasaki metrics.} For our next example, we prove Corollary \ref{cor:Sasaki} on extremal Sasaki metrics.
				
				\begin{proof}[Proof of Corollary \ref{cor:Sasaki}]
				Consider the weight functions \[v:=(a+\ell_\xi)^{-n-1},\quad w:=(a+\ell_\xi)^{-n-3}.\] With this choice, a \((v,w)\)-weighted extremal metric on \(M\) in the class \(c_1(L)\) corresponds to an extremal Sasaki metric on the unit sphere bundle \(S\) of \(L^*\); see \cite[Theorem 1]{AC21}. Suppose we blow up \(M\) at a relatively stable fixed point \(p\) of the torus action and the extremal field. For \(\epsilon>0\) sufficiently small, by Theorem \ref{thm:main} there exists a weighted extremal metric in the class \([\pi^*L-\epsilon E]\), where \(E\) is the exceptional divisor of the blowup. If \(\epsilon\) is rational, the class \([\pi^*L-\epsilon E]\) will be that of a \(\mathbb{Q}\)-line bundle. The weighted extremal property is invariant under rescalings of the metric. Thus, if we rescale the class \([\pi^*L-\epsilon E]\) by an integer \(k\) such that \(k\epsilon\in\mathbb{Z}\), there will exist a \((v,w)\)-extremal metric in the class \([k\pi^*L-k\epsilon E]\), which is the first Chern class of an ample line bundle. It follows that the unit sphere bundle \(S_{\epsilon,k}\) of \((k\pi^*L-k\epsilon E)^*\) admits an extremal Sasaki metric. 
				\end{proof}
				\item {\bf K{\"a}hler--Ricci solitons and \(\mu\)-cscK metrics.} Finally, let us consider K{\"a}hler--Ricci solitons. We mentioned in the introduction that our result can never produce a K{\"a}hler--Ricci soliton on the blowup. However, we can produce a weighted extremal metric with weights \(v=w=e^{\langle\xi,-\rangle}\). This is almost a \(\mu\)-cscK metric in the sense of Inoue \cite{Ino22}, the only obstruction is that the extremal field for this metric might not equal \(\xi\). However, it is a small deformation of \(\xi\), since the weighted extremal metric on the blowup is a small deformation of \(\omega_\epsilon\). It would be interesting to know when this extremal field is equal to \(\xi\) itself, so that the blowup is genuinely \(\mu\)-cscK. 
		\end{enumerate}
		
	\bibliographystyle{alpha}
	\bibliography{references}
	
\end{document}